\DeclareMathOperator\defect{def}
\DeclareMathOperator\Gal{Gal}
\DeclareMathOperator\conv{conv}
\DeclareMathOperator\QB{QB}
\DeclareMathOperator\wt{wt}
\DeclareMathOperator\inv{inv}
\DeclareMathOperator\avg{avg}
\DeclareMathOperator\supp{supp}
\DeclareMathOperator\LP{LP}
\DeclareMathOperator\cl{cl}
\DeclareMathOperator\Stab{Stab}
\DeclareMathOperator\rk{rk}
\def\GL{{\mathrm{GL}}}
\def\dom{{\mathrm{dom}}}
\author{Felix Schremmer}\date{\today}
\title{Generic Newton points and cordial elements}
\numberwithin{equation}{section}
\newtheorem{theorem}[equation]{Theorem}
\newtheorem{proposition}[equation]{Proposition}
\newtheorem{lemma}[equation]{Lemma}
\newtheorem{corollary}[equation]{Corollary}
\theoremstyle{definition}
\newtheorem{definition}[equation]{Definition}
\theoremstyle{remark}
\newtheorem{example}[equation]{Example}
\newtheorem{remark}[equation]{Remark}
\def\abs#1{{\left\lvert{#1}\right\rvert}}
\def\doubleparen#1{{(\!({#1})\!)}}
\def\doublebrack#1{{[\![{#1}]\!]}}
\let\oldqedsymbol\qedsymbol
\def\qedaddendum{}
\def\qedsymbol{\oldqedsymbol\qedaddendum}
\def\af{{\mathrm{af}}}
\def\rightqed{\pushQED{\qed}\qedhere\popQED}
\def\presig{\prescript\sigma{}}
\newif\ifthesis
\def\weightEstimateShort{L\ref{lem:weightEstimate}}
\begin{document}
\maketitle
\begin{abstract}
We describe the generic $\sigma$-conjugacy classes associated with the Iwahori-Bruhat decomposition of a reductive group. As an application, we classify cordial elements as introduced by Mili\'cevi\'c-Viehmann.
\end{abstract}
\section{Introduction}
Affine Deligne-Lusztig varieties play a central role in the study of Shimura varieties, especially the special fibre of integral models. While affine Deligne-Lusztig varieties in the affine flag variety have been studied with great interest in the past, fully satisfactory results only have been achieved under a certain regularity assumption that is typically not satisfied for the examples that come from Shimura varieties. In this paper, we explain how to systematically overcome this regularity condition, allowing us to fully describe the generic Newton points and classify the elements with the most desirable properties.

Let $G$ be a reductive group over a local field $F$. Denote the completion of the maximal unramified extension by $\breve F$ and the Frobenius by $\sigma\in \Gal(\breve F/F)$. There are two important decompositions of the topological space $G(\breve F)$.

Two elements $x_1, x_2\in G(\breve F)$ are $\sigma$-conjugate if $x_1 = y^{-1} x_2 \sigma(y)$ for some $y\in G(\breve F)$. We denote the set of $\sigma$-conjugacy classes by $B(G)$. By Kottwitz \cite{Kottwitz1985, Kottwitz1997}, we know that the $\sigma$-conjugacy class $[g]\in B(G)$ of an element $g\in G(\breve F)$ is uniquely determined by two invariants, namely the Kottwitz point $\kappa(g)\in \pi_1(G)_{\Gamma}$ and the dominant Newton point $\nu(g)\in X_\ast(T)_{\Gamma_0}\otimes\mathbb Q$. Here, $X_\ast(T)$ is the cocharacter group of a maximal torus, $\pi_1(G) = X_\ast(T)/\mathbb Z\Phi^\vee$ its quotient by the root lattice (i.e.\ the Borovoi fundamental group), $\Gamma = \Gal(\overline F/F)$ the absolute Galois group of $F$ and $\Gamma_0 = \Gal(\overline F/\breve F)$ its inertia subgroup.

The closure of a $\sigma$-conjugacy class in the topological space $G(\breve F)$ is a union of $\sigma$-conjugacy classes, defining a partial order on $B(G)$. This order has an accessible combinatorial description in terms of Kottwitz and Newton points, due to Rapoport-Richartz, Viehmann and He, \cite{Rapoport1996, Viehmann2013, He2016}. From Chai \cite{Chai2000}, we know a full description of the set of Kottwitz and Newton points, yielding an explicit description of $B(G)$.

The Iwahori-Bruhat decomposition expresses $G(\breve F)$ as
\begin{align*}
G(\breve F) = \bigsqcup_{x\in \widetilde W} IxI,
\end{align*}
where $I\subseteq G(\breve F)$ denotes a $\sigma$-stable Iwahori subgroup and $\widetilde W$ is the extended affine Weyl group. This decomposition and its applications to the Bruhat-Tits building \cite[Section~4]{Bruhat1972} also have been studied intensively and are well-understood. The extended affine Weyl group $\widetilde W$ is the semi-direct product of the finite Weyl group $W$ and the $\Gamma_0$-coinvariants of the cocharacter group $X_\ast(T)_{\Gamma_0}$.

We are interested in the intersections $IxI\cap [b]$ for $x\in \widetilde W$ and $[b]\in B(G)$, known as \emph{Newton strata}. It is an important open question which Newton strata are non-empty, i.e.\ to describe the set
\begin{align*}
B(G)_x := \{[b]\in B(G)\mid IxI\cap [b]\neq\emptyset\}.
\end{align*}
Related to these intersections are the \emph{affine Deligne-Lusztig varieties} (cf.\ \cite{Rapoport2002}), defined by
\begin{align*}
X_x(b)(\overline{\mathbb F_q}) =\{g\in G(\breve F)/I\mid g^{-1} b\sigma(g)\in IxI\}.
\end{align*}
The dimension and the question of equi-dimensionality of $X_x(b)$ have been intensively studied in the past, yet both problems remain largely open \cite{Goertz2006, Goertz2010, Goertz2010b, He2014, Milicevic2019}. Affine Deligne-Lusztig varieties for certain groups of small rank have been studied explicitly \cite{Reuman2002, Beazley2009, Yang2014}.

Affine Deligne-Lusztig varieties have been introduced by Rapoport~\cite{Rapoport2002} to define Rapoport-Zink moduli spaces, which play an important role for the study of Shimura varieties.

The construction of affine Deligne-Lusztig varieties resembles a classical construction of certain varieties due to Deligne-Lusztig \cite{Deligne1976}. They used the cohomology of these Deligne-Lusztig varieties to describe all complex representations of finite groups of Lie type.

If one replaces the Iwahori subgroup by a hyperspecial subgroup, the resulting affine Deligne-Lusztig varieties in the affine Grassmannian have been well-understood after concentrated effort by many researchers, e.g.\ \cite{Kottwitz2006, Goertz2006, Viehmann2006, Hamacher2015}.

For the affine Deligne-Lusztig varieties considered in this paper, there are a number of important partial results describing their geometry.

It is proved by Görtz-He-Nie \cite{Goertz2015} and Viehmann \cite{Viehmann2021} that $B(G)_x$ always contains a uniquely determined smallest element, which is explicitly described. Moreover, $B(G)_x$ always contains a uniquely determined largest element. This follows from the specialization theorem of Rapoport-Richartz \cite[Theorem~3.6]{Rapoport1996}, as explained by Viehmann~\cite[Proof of Corollary~5.6]{Viehmann2014}. Rapoport-Richartz also prove a version of \emph{Mazur's inequality}, which states that for $[b]\in B(G)_x$ with $x=w\varepsilon^\mu$, we must have an identity of Kottwitz points $\kappa(b) = \kappa(x)$ and the inequality $\nu(b)\leq \mu^{\dom}\in X_\ast(T)_{\Gamma_0}\otimes\mathbb Q$.

While the dimension $\dim X_x(b)$ is difficult to compute, the \emph{virtual dimension} $d_x(b)$ introduced by He \cite{He2014} is easy to evaluate and always an upper bound for $\dim X_x(b)$. Moreover, we have $\dim X_x(b) = d_x(b)$ for a number of cases, but not always. See for example \cite{He2014, Milicevic2020, He2021a}, affirming conjectures of Reuman and others \cite{Reuman2002, Goertz2006}. The virtual dimension is defined as
\begin{align*}
d_x(b) = \frac 12\left(\ell(x) + \ell(\eta_\sigma(x)) - \langle \nu(b),2\rho\rangle-\defect(b)\right).
\end{align*}
Here, $\ell(x)$ denotes the length of $x$ in $\widetilde W$, as explained in Section~\ref{sec:notation}. By $\eta_\sigma(x)$, we denote a certain element in the finite Weyl group associated with $x$, as explained in Section~\ref{sec:root-functionals}. These two terms only depend on the element $x\in \widetilde W$.

The \emph{defect} of a $\sigma$-conjugacy class is a non-negative integer that is bounded by the rank of the root system. We will focus on this invariant in Section~\ref{sec:defect}.

While the results using the virtual dimension are promising, they have important shortcomings. First, it is often assumed that $x$ must lie in a shrunken Weyl chamber, a regularity condition typically not satisfied for examples coming from Shimura varieties. Second, the properties of the virtual dimension are, in general, a lot simpler than those of the actual dimension $\dim X_x(b)$. One can find plenty of examples where the virtual dimension fails to capture the delicate interplay between the elements $x\in \widetilde W$ and $[b]\in B(G)$.

The uniquely determined largest element of $B(G)_x$ is called \emph{generic $\sigma$-conjugacy class} of $x$ and denoted  $[b_x]$. It is the unique $\sigma$-conjugacy class such that $[b_x]\cap IxI$ is dense in $IxI$. The Kottwitz point of $[b_x]$ coincides with the Kottwitz point of $x$, which is easy to compute. The calculation of its Newton point, i.e.\ the \emph{generic Newton point} of $x$, is an important open problem. Our first main result fully solves it, generalizing earlier partial results \cite{Milicevic2021, He2021a, Sadhukhan2021, He2021c}.

In order to write down a concise formula, we have to introduce some important invariants associated with elements in the affine Weyl group (Section~\ref{sec:root-functionals}), elements of the finite Weyl group (Section~\ref{sec:quantumBruhatGraph}) and $\sigma$-conjugacy classes of $G$ (Section~\ref{sec:defect}). These invariants may seem technical at first glance, but each of them has been of fundamental importance in the past, so their relevance for the study of generic Newton points should not surprise.

The first invariant is the set of length positive elements, which we newly introduce in Section~\ref{sec:root-functionals}. It associates to each element $x = w\varepsilon^\mu\in \widetilde W$ a subset $\LP(x)$ of the finite Weyl group. To each $x\in\widetilde W$, we can associate a corresponding alcove in the Bruhat-Tits building, and describe the Weyl chamber containing that alcove by a finite Weyl group element $c\in W$. If we write $x=w\varepsilon^\mu$, then $w^{-1}c\in \LP(x)$. However, the map $\widetilde W\rightarrow W, x\mapsto c$ is somewhat non-canonical, for it depends on the choice of base point in the Bruhat-Tits building. One may consider certain automorphisms of the group $G$ that preserve the Iwahori subgroup $I$, hence induce automorphisms of the Bruhat-Tits building that preserve the base alcove, but not the base point. These automorphisms do not send Weyl chambers to Weyl chambers, explaining why the map associating an element in $\widetilde W$ to its Weyl chamber is ill-behaved in general (see also Example~\ref{ex:usualLPelement} below).

One can think of the set $w\LP(x)$ to encode all Weyl chambers to which $x\in\widetilde W$ is reasonably close. The way this set is defined, it is naturally equivariant under automorphisms as described above. We show that $x$ lies in a shrunken Weyl chamber if and only if $\LP(x)$ consists of only one element, which explains why studying only one particular element in it (i.e.\ the Weyl chamber) is sufficient only for these shrunken cases.

While we are the first to assign a name and a symbol to the concept of length positivity, the concept itself (more or less disguised) plays an important role in a number of different contexts. We mention the Bruhat order and Demazure products of affine Weyl groups, especially the admissible sets relevant to Shimura varieties \cite{Schremmer2022_bruhat}, fundamental elements and $P$-alcoves, especially the non-emptiness of affine Deligne-Lusztig varieties for basic $\sigma$-conjugacy classes \cite{Goertz2015, Lim2023} as well as Kazhdan-Lusztig cells and minimal automata for affine Coxeter groups \cite{Shi1986, Ericksson1994}.

The second invariant needed to state our main results is the weight function of the quantum Bruhat graph. This finite graph was introduced by Brenti-Fomin-Postnikov \cite{Brenti1998} in order to analyse the quantum Chevalley-Monk formula. It since has been studied regularly in the literature on quantum cohomology, e.g.\ in \cite{Postnikov2005}. The connection between the quantum Bruhat graph and the Bruhat order of the affine Weyl group was discovered by Lam-Shimozono \cite{Lam2010}. This allowed Mili\'cevi\'c \cite{Milicevic2021} to use the quantum Bruhat graph in conjunction with a result of Viehmann \cite{Viehmann2014} to obtain a description of generic Newton points in special cases. Since this initial article, the quantum Bruhat graph has been widely used to study affine Deligne-Lusztig varieties \cite{Milicevic2020, He2021d, He2021c, Sadhukhan2021, Sadhukhan2022}.

The quantum Bruhat graph is a directed graph with vertex set $W$, the finite Weyl group. By considering weights of shortest paths, we associate a value $\wt(u\Rightarrow v)\in X_\ast(T)_{\Gamma_0}$ for arbitrary elements $u,v\in W$. It should be clear that some invariant of this form is needed, since the generic $\sigma$-conjugacy class of an element $x = w\varepsilon^\mu$ should certainly depend on the finite Weyl group element $w$ and the Weyl chamber containing $x$, or more naturally the set $\LP(x)$, which gives a second element of the finite Weyl group.

The final invariant needed to state our first main result is the $\lambda$-invariant $\lambda(b)\in X_\ast(T)_{\Gamma}$ of a $\sigma$-conjugacy class $[b]\in B(G)$. It was introduced by Hamacher-Viehmann \cite{Hamacher2018} in order to study irreducible components of affine Deligne-Lusztig varieties in the affine Grassmannian. It generalizes the \enquote{best integral approximation} of the Newton point $\nu(b)$ studied e.g.\ by Kottwitz \cite{Kottwitz2006} for split groups $G$ whose derived subgroup is simply connected. The $\lambda$-invariant and the Newton point are closely related, which we will discuss in Sections \ref{sec:parabolic-averages} and \ref{sec:defect}.

For simplicity, we state our main results only for the case of quasi-split groups, referring the reader interested in non quasi-split groups to Section~\ref{sec:gnpArbitraryGroups}.
\begin{theorem}[Cf.\ Theorem~\ref{thm:genericGKP} and Corollary~\ref{cor:genericGKPMinDistance}]\label{thm:introGNP}
Let $x = w\varepsilon^\mu\in \widetilde W$. Then the $\lambda$-invariant of the generic $\sigma$-conjugacy class $[b_x]\in B(G)_x$ is given by
\begin{align*}
\lambda(b_x) = \max_{v\in W} v^{-1}\mu -\wt(v\Rightarrow\sigma(wv))\in X_\ast(T)_{\Gamma}.
\end{align*}
It can be calculated more explicitly as follows: Pick an element $v\in \LP(x)$ such that the distance $d(v\Rightarrow\sigma(wv))$ in the quantum Bruhat graph becomes minimal. Then
\begin{align*}
\lambda(b_x) = v^{-1}\mu - \wt(v\Rightarrow\sigma(wv))\in X_\ast(T)_{\Gamma}.
\end{align*}
The generic Newton point is given by
\begin{align*}
\nu(b_x) = \max_J\pi_J(\lambda(b_x)),
\end{align*}
where the maximum is taken over all Frobenius-stable sets of simple roots $J$ and the function $\pi_J$ is the corresponding averaging function, cf.\ \cite[Definition~3.2]{Chai2000} or Section~\ref{sec:parabolic-averages} below.
\end{theorem}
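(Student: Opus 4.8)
The plan is to treat the formula for $\lambda(b_x)$ as the essential content and to deduce the Newton-point formula from it. Once $\lambda(b_x)$ is known, the identity $\nu(b_x)=\max_J\pi_J(\lambda(b_x))$ should follow from the machinery relating the $\lambda$-invariant, the defect, and the Chai averaging maps $\pi_J$ developed in Sections~\ref{sec:parabolic-averages} and~\ref{sec:defect} (cf.\ \cite{Chai2000}): applied to the --- in general non-dominant --- vector $v^{-1}\mu-\wt(v\Rightarrow\sigma(w v))$, these maps recover exactly the dominant Newton point. So I would concentrate on proving $\lambda(b_x)=v_0^{-1}\mu-\wt(v_0\Rightarrow\sigma(w v_0))$ for a length-positive $v_0\in\LP(x)$ minimizing $d(v_0\Rightarrow\sigma(w v_0))$, and then that this coincides with $\max_{v\in W}\bigl(v^{-1}\mu-\wt(v\Rightarrow\sigma(w v))\bigr)$.

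For the lower estimate I would produce an explicit $\sigma$-conjugacy class inside $B(G)_x$ attaining the claimed value. Fixing $v_0\in\LP(x)$, length positivity means the alcove attached to $x$ is compatibly positioned with respect to the Weyl chamber $v_0$, which lets one realize $x$ --- or a length-non-increasing reduction of it --- as the top of a chain in the Demazure, or Bruhat, order of $\widetilde W$ starting from a straight translation-type element built from $v_0^{-1}\mu$, with the non-translation part of the chain read off a shortest path $v_0\Rightarrow\sigma(w v_0)$ in the quantum Bruhat graph; here I lean on the Lam--Shimozono dictionary \cite{Lam2010} and its refinements \cite{Schremmer2022_bruhat, Milicevic2021}. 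Running this chain through the Iwahori double cosets by He's reduction method \cite{He2014} (the length-preserving and length-dropping conjugation steps, which relate the Newton strata of $IxI$ to those of the reduced element) exhibits a $\sigma$-straight $y\in\widetilde W$ with $[y]\in B(G)_x$ whose Newton point is $\avg\bigl(v_0^{-1}\mu-\wt(v_0\Rightarrow\sigma(w v_0))\bigr)$; since $[b_x]$ is the largest class in $B(G)_x$ (\cite{Goertz2015, Viehmann2014}), this already bounds $\lambda(b_x)$ from below by the asserted quantity, and the distance-minimality of $v_0$ is what guarantees we picked the chain producing the largest such class.

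For the matching upper estimate I would establish a sharpened form of Mazur's inequality at Iwahori level. For an arbitrary $[b]\in B(G)_x$, a Hodge--Newton-type descent to the Levi subgroups attached to Frobenius-stable subsets of simple roots, combined with the elementary pairing bound between $\mu$ and $\rho$ and the length information carried by a shortest path in the quantum Bruhat graph, should give $\nu(b)\le v_0^{-1}\mu-\wt(v_0\Rightarrow\sigma(w v_0))$ (in dominance order, after passing to dominant representatives) for the distinguished $v_0\in\LP(x)$; together with the construction above this pins down $\lambda(b_x)$, and hence $\nu(b_x)$ via Section~\ref{sec:parabolic-averages}. The reformulation as $\max_{v\in W}$ --- and, in passing, the independence of the answer from the particular distance-minimizing $v_0$ --- then follows from combinatorial properties of the weight function: the triangle inequality $\wt(v\Rightarrow v')\le\wt(v\Rightarrow v'')+\wt(v''\Rightarrow v')$, together with the fact that any $v\in W$ that fails to be length-positive for $x$, or is length-positive but farther from $\sigma(w v)$, only introduces extra cancellation and so yields a value $\le\lambda(b_x)$.

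The hardest step will be the simultaneous bookkeeping of three combinatorial layers --- the length function and Demazure order of $\widetilde W$, the set $\LP(x)$, and the weights and distances in the quantum Bruhat graph --- precisely in the regime where $x$ does \emph{not} lie in a shrunken Weyl chamber. There $\LP(x)$ has more than one element, the ``Weyl chamber of $x$'' is not canonical, and the earlier arguments of \cite{Milicevic2021, Sadhukhan2021, He2021c} do not apply; one must show both that the class manufactured from the distance-minimizing $v_0\in\LP(x)$ dominates every class in $B(G)_x$, and that no competing $v$ --- outside $\LP(x)$, or inside but farther away --- beats it. Making this quantitative seems to require new structural input on length positivity and on how a suboptimal choice of chamber converts into a measurable amount of length cancellation matching a quantum Bruhat graph distance defect, which is presumably what Sections~\ref{sec:root-functionals} and~\ref{sec:quantumBruhatGraph} are set up to provide.
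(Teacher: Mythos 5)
Your overall skeleton matches the paper (prove the $\lambda$-formula, deduce $\nu(b_x)=\max_J\pi_J(\lambda(b_x))$ from the $\conv$/$\pi_J$ machinery of Sections~\ref{sec:parabolic-averages} and~\ref{sec:defect}, and recover the $\LP(x)$-minimal-distance form from the $\max_{v\in W}$ form via Lemma~\ref{lem:genericGKPImprovements}), but both core estimates are left as sketches whose proposed mechanisms do not actually cover the case the theorem is about. For the lower bound you propose to manufacture a $\sigma$-straight element below $x$ by a Lam--Shimozono-type chain read off a shortest quantum Bruhat graph path; this is essentially Mili\'cevi\'c's argument, and it is exactly the step that needs (super)regularity of $\mu$ to keep the chain inside the Bruhat interval below $x$ --- you acknowledge this at the end but offer no substitute. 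The paper avoids any such explicit chain: Lemma~\ref{lem:genericKottwitzLowerBound} is an induction on $\ell(x)$ built on the notion of a \emph{maximal} $v$ and the dichotomy of Lemma~\ref{lem:maximalityConsequences}/Corollary~\ref{cor:genericGKPDichotomy} (either the value is transported to some $x'<x$, or $v\mapsto\presig(wv)$ can be iterated and an averaging argument shows the value is already bounded by $\nu(x)$ itself), and it proves the bound for \emph{every} $v\in W$, not only a distance-minimizing $v_0\in\LP(x)$; also note the correct comparison is with $\conv$ of the vector, not with its dominant $W$-representative.

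For the upper bound, ``a sharpened Mazur inequality via Hodge--Newton-type descent'' is close to restating the theorem rather than proving it, and it misses the two ingredients the paper actually uses: (i) an induction along the Bruhat order, where for $x'=xr_a<x$ with $[b_{x'}]=[b_x]$ one tracks how $v^{-1}\mu-\wt(v\Rightarrow\presig(wv))$ changes, controlled by the weight estimate $\wt(ws_\alpha\Rightarrow w)\leq\alpha^\vee\Phi^+(w\alpha)$ (Lemma~\ref{lem:weightEstimate}, itself deduced from the regular split case plus Theorem~\ref{thm:truncations}); and (ii) the terminal case, where no $x'<x$ carries the same generic class, so $x$ is \emph{fundamental} by Lemma~\ref{lem:nonEmptynessBruhatCondition}, and there $\lambda_x$ is pinned down by the defect identity $\defect(b)=\langle\nu(b),2\rho\rangle-\langle\lambda_G(b),2\rho\rangle$ (Proposition~\ref{prop:defect}) together with Lemma~\ref{lem:fundamentalDefect} and $d(v\Rightarrow\presig(wv))\leq\ell(v^{-1}\presig(wv))$. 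Your proposal never invokes fundamental/$\sigma$-straight elements or the defect at this point, and without them I do not see how your descent argument closes. So the proposal is a reasonable road map, but the decisive combinatorial inductions --- precisely in the non-shrunken regime where $\LP(x)$ has several elements --- are missing.
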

This theorem may be seen as a refinement of the aforementioned Mazur inequality, as it gives a sharp upper bound for $\{\nu(b)\mid [b]\in B(G)_x\}$. The concise formula for the $\lambda$-invariant is useful for our second main result.

If the dimension coincides with the virtual dimension for the generic $\sigma$-conjugacy class, i.e.\ $\dim X_x(b_x) = d_x(b_x)$, the element $x$ is called \emph{cordial} following Mili\'cevi\'c-Viehmann \cite{Milicevic2020}. They prove in \cite[Corollary~3.17, Theorem~1.1]{Milicevic2020} that cordial elements satisfy the most desirable properties. In particular, the set $B(G)_x$ is explicitly described as a closed interval in $B(G)$, and for each $[b]\in B(G)_x$, the affine Deligne-Lusztig variety $X_x(b)$ is equi-dimensional of dimension $d_x(b)$. Using our result on generic Newton points, we are able to fully classify the cordial elements in $\widetilde W$.
\begin{theorem}[Cf.\ Corollary~\ref{cor:cordial}]
Let $x=w\varepsilon^\mu\in \widetilde W$ and $v\in W$ such that $v^{-1}\mu$ is dominant and $v$ has minimal length. Equivalently, this is the element such that the alcove of $x$ lies in the Weyl chamber defined by $wv\in W$. Then $x$ is cordial if and only if the following two conditions are both satisfied:
\begin{enumerate}[(a)]
\item The element $v$ satisfies the minimal distance condition of Theorem~\ref{thm:introGNP}, i.e.\
\begin{align*}
d(v\Rightarrow\sigma(wv))\leq d(v'\Rightarrow \sigma(wv'))
\end{align*}
for every $v'\in\LP(x)$.
\item We have $d(v\Rightarrow\sigma(wv)) = \ell(v^{-1}\sigma(wv))$.
\end{enumerate}
\end{theorem}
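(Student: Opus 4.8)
The plan is to combine the explicit formulas for $\nu(b_x)$ and $\lambda(b_x)$ from Theorem~\ref{thm:introGNP} with the defining equality of cordiality, $\dim X_x(b_x) = d_x(b_x)$, rewriting the difference $d_x(b_x) - \dim X_x(b_x)$ as a manifestly non-negative combinatorial quantity whose vanishing will turn out to be equivalent to (a) and (b) together. Throughout I write $v$ for the distinguished element of the statement, so that $v^{-1}\mu$ is dominant, $v\in\LP(x)$ is the length positive element attached to the Weyl chamber $wv$ containing the alcove of $x$, and, by the description of $\eta_\sigma$ in Section~\ref{sec:root-functionals}, $\ell(\eta_\sigma(x)) = \ell(v^{-1}\sigma(wv))$.

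The first step is to collect the numerical ingredients. By \positiveLengthFormulaLong\ the length $\ell(x)$ is computed from any length positive element, in particular from $v$ and, more generally, from any $v'\in\LP(x)$. By the relation between the defect and the $\lambda$-invariant established in Section~\ref{sec:defect}, the combination $\langle\nu(b_x),2\rho\rangle + \defect(b_x)$ appearing in the virtual dimension is governed by $\langle\lambda(b_x),2\rho\rangle$, so that the parabolic averaging operators $\pi_J$ of Theorem~\ref{thm:introGNP} play no further role here. Finally, Theorem~\ref{thm:introGNP} together with the standard identity
\begin{align*}
\langle\wt(u\Rightarrow y),2\rho\rangle = d(u\Rightarrow y) + \ell(u) - \ell(y),
\end{align*}
obtained by adding up the contributions of the up-edges and of the quantum down-edges along a shortest path, turns $\langle\lambda(b_x),2\rho\rangle$ into an expression in $\langle v'^{-1}\mu,2\rho\rangle$, $\ell(v')$, $\ell(wv')$ and the quantum Bruhat graph distance $d(v'\Rightarrow\sigma(wv'))$, evaluated at a distance-minimizing $v'\in\LP(x)$.

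Substituting all of this into $2\dim X_x(b_x) = \ell(x) + \ell(\eta_\sigma(x)) - \langle\nu(b_x),2\rho\rangle - \defect(b_x)$, and cancelling the terms contributed by $\langle v'^{-1}\mu,2\rho\rangle$ and by the length functionals — this is where \weightEstimateLong\ enters, controlling how $\wt(v'\Rightarrow\sigma(wv'))$ and these functionals vary as $v'$ ranges over $\LP(x)$ — the cordiality condition should collapse to the single identity
\begin{align*}
\ell(\eta_\sigma(x)) = \min_{v'\in\LP(x)} d(v'\Rightarrow\sigma(wv')).
\end{align*}
It then remains to see that this is equivalent to (a) and (b) together. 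Since $\ell(\eta_\sigma(x)) = \ell(v^{-1}\sigma(wv))$ with $v\in\LP(x)$, and since for any $u,y\in W$ a reduced word for $u^{-1}y$ produces a directed path from $u$ to $y$ in the quantum Bruhat graph — each elementary right multiplication by a simple reflection changes the length by $\pm 1$, hence is either an up-edge or a quantum down-edge — we always have $d(u\Rightarrow y)\leq\ell(u^{-1}y)$, so
\begin{align*}
\ell(\eta_\sigma(x)) = \ell(v^{-1}\sigma(wv)) &\geq d(v\Rightarrow\sigma(wv))\\
&\geq \min_{v'\in\LP(x)} d(v'\Rightarrow\sigma(wv')).
\end{align*}
Equality holds throughout precisely when the first inequality is an equality, which is exactly condition (b), and the second is an equality, which is exactly condition (a); this yields the claimed characterization.

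I expect the genuine obstacle to be the middle step: establishing rigorously that $d_x(b_x) - \dim X_x(b_x)$ is half of $\ell(\eta_\sigma(x)) - \min_{v'\in\LP(x)} d(v'\Rightarrow\sigma(wv'))$, equivalently that $\dim X_x(b_x)$ is obtained from the virtual dimension by replacing $\ell(\eta_\sigma(x))$ with this minimal distance. In the shrunken case $\LP(x) = \{v\}$ this is the content of Mili\'cevi\'c--Viehmann \cite{Milicevic2020}, where condition (a) is vacuous; the new input for arbitrary $x$ is precisely the length positivity machinery of the earlier sections, the formula for $\lambda(b_x)$, and the weight estimate \weightEstimateLong, which together guarantee that it is exactly the quantum Bruhat graph distance that measures the discrepancy. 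Everything else reduces to bookkeeping with the length formulas and the elementary quantum Bruhat graph inequality above.
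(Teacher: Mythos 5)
Your proposal is correct and is essentially the paper's own argument (Proposition~\ref{prop:cordial} combined with Corollary~\ref{cor:cordial}): from Theorem~\ref{thm:genericGKP}/Corollary~\ref{cor:genericGKPMinDistance}, the relation $\defect(b_x)=\langle\nu_x,2\rho\rangle-\langle\lambda_x,2\rho\rangle$ of Proposition~\ref{prop:defect} and the weight--$2\rho$ identity one gets $\langle\lambda_x,2\rho\rangle=\ell(x)-\min_{v'\in\LP(x)}d(v'\Rightarrow\presig(wv'))$, so cordiality collapses to $\ell(v^{-1}\presig(wv))=\min_{v'\in\LP(x)}d(v'\Rightarrow\presig(wv'))$, which splits into (a) and (b) through the chain $\ell(v^{-1}\presig(wv))\geq d(v\Rightarrow\presig(wv))\geq\min_{v'\in\LP(x)}d(v'\Rightarrow\presig(wv'))$ exactly as you argue. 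Two cosmetic remarks only: the cancellation in your middle step is really \positiveLengthFormulaShort\ via Lemma~\ref{lem:genericGKPImprovements}(b) rather than \weightEstimateShort\ (which enters earlier, in the proof of the $\lambda$-formula and of the equivalence (a)$\iff$(a')), and the geometric input $\dim X_x(b_x)=\ell(x)-\langle\nu_x,2\rho\rangle$ that you leave implicit is precisely what the paper's numerical definition of cordiality packages, so the step you flag as the \enquote{genuine obstacle} is already covered by the quoted results.
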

Condition (a) of the above theorem can be thought of as a regularity condition on the affine Weyl group elements, since it is automatic for elements in a shrunken Weyl chamber. Condition (b) is purely about combinatorics of the finite Weyl group. It is an extremality condition, since the inequality $d(u\Rightarrow v)\leq \ell(u^{-1} v)$ is true for all $u,v\in W$.

The theory of cordial elements has been used by He \cite{He2021a} to compute the dimensions of many affine Deligne-Lusztig varieties, even for non-cordial elements $x\in \widetilde W$.

In order to prove our main results, we introduce new methods and refine existing ones. We introduce the language of length functionals in Section~\ref{sec:root-functionals}, which is essential when studying elements $x\in \widetilde W$ without regularity assumptions (e.g.\ minuscule $x$). A few new insights on the quantum Bruhat graph complement the combinatorics needed to prove our results.

As a preparation for the more geometric aspects of our proofs, we review and refine a number of known results on $\sigma$-conjugacy classes in Section~\ref{chap:sigma-conjugation}. Our main results hold true whenever $G$ is connected and reductive. Following Görtz-He-Nie \cite{Goertz2015}, we can prove this via a reduction to the case where $G$ is quasi-split. However, many important foundational results have been proved only under the somewhat stricter assumption that $G$ should be unramified. We show how to generalize these classical results to the quasi-split case, allowing us to prove our main results in this setting (Corollaries \ref{cor:genericGKPMinDistance} and \ref{cor:cordial}). This enables us to conclude them for arbitrary connected reductive groups (Theorem~\ref{thm:gnpGeneralGroups} and Proposition~\ref{prop:cordialGeneralGroups}).

This paper covers parts of the author's PhD thesis.
\subsection{Acknowledgements}
First and foremost, I would like to thank my advisor Eva Viehmann for her constant support throughout my PhD time. I am deeply thankful for her invaluable help in both mathematical and administrative matters.

I would like to thank Paul Hamacher and Xuhua He for inspiring discussions.

The author was partially supported by the ERC Consolidator Grant 770936: \emph{NewtonStrat}, the German Academic Scholarship Foundation, the Marianne-Plehn-Program and the DFG Collaborative Research Centre 326: \emph{GAUS}.

\section{The affine root system}\label{chap:affine-root-system}
\subsection{Group-theoretic setup}\label{sec:notation}
We fix a non-archimedian local field $F$ whose completion of the maximal unramified extension will be denoted $L = \breve F$. We write $\mathcal O_F$ and $\mathcal O_L$ for the respective rings of integers. Let $\varepsilon \in F$ be a uniformizer. The Galois group $\Gamma = \Gal(L/F)$ is generated by the Frobenius $\sigma$.

Concretely, this means we have one of the following situations:
\begin{itemize}
\item Mixed characteristic case: $F/\mathbb Q_p$ is a finite extension for some prime $p$. Then $\mathcal O_F$ is the set of integral elements of $F$.
\item Equal characteristic case: $\mathcal O_F$ is a ring of formal power series $\mathbb F_q\doublebrack\varepsilon$, $F = \mathbb F_q\doubleparen\varepsilon$ is its fraction field, $\mathcal O_L = \overline{\mathbb F_q}\doublebrack\varepsilon$ and $L = \overline{\mathbb F_q}\doubleparen\varepsilon$. The Frobenius $\sigma$ acts on $L$ via
\begin{align*}
\sigma\left(\sum a_n \varepsilon^n\right) = \sum a_n^q \varepsilon^n.
\end{align*}
\end{itemize}

We consider a connected and reductive group $G$ over $F$. We construct its associated affine root system and affine Weyl group following Haines-Rapoport \cite{Haines2008} and Tits \cite{Tits1979}.

Fix a maximal $L$-split torus $S\subseteq G_L$ and  write $T$ for its centralizer in $G_L$, so $T$ is a maximal torus of $G_L$. Write $\mathcal A = \mathcal A(G_L,S)$ for the apartment of the Bruhat-Tits building of $G_L$ associated with $S$. We pick a $\sigma$-invariant alcove $\mathfrak a$ in $\mathcal A$. This yields a $\sigma$-stable Iwahori subgroup $I\subset G(L)$.

Denote the normalizer of $T$ in $G$ by $N(T)$. Then the quotient \begin{align*}\widetilde W = N_G(T)(L) / (T(L)\cap I)\end{align*} is called \emph{extended affine Weyl group}, and $W = N_G(T)(L)/T(L)$ is the \emph{(finite) Weyl group}. The Weyl group $W$ is naturally a quotient of $\widetilde W$.

The affine roots as constructed in \cite[Section~1.6]{Tits1979} are denoted $\Phi_\af$. Each of these roots $a\in \Phi_\af$ defines an affine function $a:\mathcal A\rightarrow\mathbb R$. The vector part of this function is denoted $\cl(a) \in V^\ast$, where $V = X_\ast(S)\otimes\mathbb R = X_\ast(T)_{\Gamma_0}\otimes \mathbb R$. Here, $\Gamma_0 = \Gal(\overline L/L)$ is the absolute Galois group of $L$, i.e.\ the inertia group of $\Gamma = \Gal(\overline F/F)$. The set of \emph{(finite) roots} is\footnote{This is different from the root system that \cite{Tits1979} and \cite{Haines2008} denote by $\Phi$; it coincides with the root system called $\Sigma$ in \cite{Haines2008}.} $\Phi := \cl(\Phi_\af)$.

The affine roots in $\Phi_\af$ whose associated hyperplane is adjacent to our fixed alcove $\mathfrak a$ are called \emph{simple affine roots} and denoted $\Delta_\af\subseteq \Phi_\af$.

Writing $W_\af$ for the extended affine Weyl group of the simply connected quotient of $G$, we get a natural $\sigma$-equivariant short exact sequence (cf.\ \cite[Lemma~14]{Haines2008})
\begin{align*}
1\rightarrow W_\af\rightarrow\widetilde W\rightarrow \pi_1(G)_{\Gamma_0}\rightarrow 1.
\end{align*}
Here, $\pi_1(G) := X_\ast(T)/\mathbb Z\Phi^\vee$ denotes the Borovoi fundamental group.

For each $x\in \widetilde W$, we denote by $\ell(x)\in \mathbb Z_{\geq 0}$ the length of a shortest alcove path from $\mathfrak a$ to $x\mathfrak a$. The elements of length zero are denoted $\Omega$. The above short exact sequence yields an isomorphism of $\Omega$ with $\pi_1(G)_{\Gamma_0}$, realizing $\widetilde W$ as semidirect product $\widetilde W = \Omega\ltimes W_\af$.

Each affine root $a\in \Phi_\af$ defines an affine reflection $r_a$ on $\mathcal A$. The group generated by these reflections is naturally isomorphic to $W_\af$ (cf.\ \cite{Haines2008}), so we also write $r_a\in W_\af$ for the corresponding element. We define $S_\af := \{r_a\mid a\in \Delta_\af\}$, called the set of \emph{simple affine reflections}. The pair $(W_\af, S_\af)$ is a Coxeter group with length function $\ell$ as defined above.

We pick a special vertex $\mathfrak x\in \mathcal A$ that is adjacent to $\mathfrak a$. We identify $\mathcal A$ with $V$ via $\mathfrak x\mapsto 0$. This allows us to decompose $\Phi_\af = \Phi\times\mathbb Z$, where $a = (\alpha,k)$ corresponds to the function
\begin{align*}
V\rightarrow \mathbb R, v\mapsto \alpha(v)+k.
\end{align*}
From \cite[Proposition~13]{Haines2008}, we moreover get decompositions $\widetilde W = W\ltimes X_\ast(T)_{\Gamma_0}$ and $W_\af = W\ltimes \mathbb Z\Phi^\vee$. Using this decomposition, we write elements $x\in \widetilde W$ as $x = w\varepsilon^\mu$ with $w\in W$ and $\mu\in X_\ast(T)_{\Gamma_0}$. For $a = (\alpha,k)\in \Phi_\af$, we have $r_a = s_\alpha \varepsilon^{k\alpha^\vee}\in W_\af$, where $s_\alpha\in W$ is the reflection associated with $\alpha$. The natural action of $\widetilde W$ on $\Phi_\af$ can be expressed as
\begin{align*}
(w\varepsilon^\mu)(\alpha,k) = (w\alpha,k-\langle\mu,\alpha\rangle).
\end{align*}
We define the \emph{dominant chamber} $C\subseteq V$ to be the Weyl chamber containing our fixed alcove $\mathfrak a$. This gives a Borel subgroup $B\subseteq G$, and corresponding sets of positive/negative/simple roots $\Phi^+, \Phi^-, \Delta\subseteq \Phi$.

By abuse of notation, we denote by $\Phi^+$ also the indicator function of the set of positive roots, i.e.
\begin{align*}
\Phi^+:\Phi\rightarrow\{0,1\},\quad \alpha\mapsto\begin{cases}1,&\alpha\in \Phi^+,\\
0,&\alpha\in \Phi^-.\end{cases}
\end{align*}
The following easy facts will be used often, usually without further reference:
\begin{lemma}\label{lem:phiPlusFacts}
Let $\alpha\in \Phi$.
\begin{enumerate}[(a)]
\item $\Phi^+(\alpha) + \Phi^+(-\alpha)=1$.
\item If $\beta\in \Phi$ and $k,\ell\geq 1$ are such that $k\alpha+\ell \beta\in \Phi$, we have
\begin{align*}
&0\leq \Phi^+(\alpha)+\Phi^+(\beta)-\Phi^+(k\alpha+\ell\beta)\leq 1.\rightqed
\end{align*}
\end{enumerate}
\end{lemma}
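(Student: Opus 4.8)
The plan is to reduce both claims to the standard structural fact that the positive roots of $\Phi$ are precisely the nonzero roots lying in the $\mathbb{Z}_{\geq 0}$-span of the simple roots $\Delta$ determined by the dominant chamber $C$.

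Part (a) is immediate from the definition of the indicator function: for $\alpha\in\Phi$ exactly one of $\alpha$ and $-\alpha$ belongs to $\Phi^+$ while the other belongs to $\Phi^-$, so $\Phi^+(\alpha)+\Phi^+(-\alpha)=1$.

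For part (b) the key step is the following sign-coherence statement: if $k,\ell\geq 0$ and $\alpha,\beta\in\Phi^+$ are such that $k\alpha+\ell\beta\in\Phi$, then $k\alpha+\ell\beta\in\Phi^+$; dually, if $\alpha,\beta\in\Phi^-$ then $k\alpha+\ell\beta\in\Phi^-$. Indeed, writing $\alpha$ and $\beta$ as nonnegative integral combinations of the simple roots, the combination $k\alpha+\ell\beta$ has all simple-root coordinates $\geq 0$; since it is a nonzero root (as $0\notin\Phi$) it cannot be negative, hence is positive, and the negative case is symmetric.

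Granting this, I conclude by a case distinction on the signs of $\alpha$ and $\beta$. If both are positive, then by the sign-coherence statement the quantity in question equals $1+1-1=1$; if both are negative it equals $0+0-0=0$; and if exactly one of them is positive, then $\Phi^+(\alpha)+\Phi^+(\beta)=1$ by part (a) while $\Phi^+(k\alpha+\ell\beta)\in\{0,1\}$, so the quantity is $0$ or $1$. In every case it lies in $\{0,1\}\subseteq[0,1]$, which gives both desired inequalities. There is no genuine obstacle here; the only point requiring a word of care is that the structural fact about nonnegative combinations of roots applies to $\Phi=\cl(\Phi_\af)$, which is a (possibly non-reduced) finite root system whose positive system is the one fixed by the choice of $C$, so the argument goes through verbatim.
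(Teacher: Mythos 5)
Your proof is correct. The paper in fact states Lemma~\ref{lem:phiPlusFacts} without any proof (it is offered as an ``easy fact'', with the qed symbol placed directly after the statement), and your argument --- part (a) from the dichotomy $\Phi=\Phi^+\sqcup\Phi^-$, part (b) by a case distinction on signs using the sign-coherence of nonnegative integral combinations of simple roots, which indeed remains valid for the possibly non-reduced system $\Phi=\cl(\Phi_\af)$ --- is exactly the standard justification the authors implicitly have in mind.
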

The sets of positive and negative affine roots can be defined as
\begin{align*}
\Phi_\af^+:=&(\Phi^+\times \mathbb Z_{\geq 0})\sqcup (\Phi^-\times \mathbb Z_{\geq 1}) = \{(\alpha,k)\in \Phi_\af\mid k\geq \Phi^+(-\alpha)\},
\\\Phi_\af^- :=&-\Phi_\af^+ = \Phi_\af\setminus \Phi_\af^+= \{(\alpha,k)\in \Phi_\af\mid k< \Phi^+(-\alpha)\}.
\end{align*}
One checks that $\Phi_\af^+$ are precisely the affine roots that are sums of simple affine roots.

Decompose $\Phi = \Phi_1\sqcup\cdots\sqcup \Phi_r$ as a direct sum of irreducible root systems. Each irreducible factor contains a uniquely determined longest root $\theta_i\in \Phi_i^+$. Now the set of simple affine roots is
\begin{align*}
\Delta_\af = \{(\alpha,0)\mid \alpha\in \Delta\}\sqcup\{(-\theta_i,1)\mid i=1,\dotsc,r\}\subset \Phi_\af^+.
\end{align*}

The \emph{Bruhat order} on $W_a$ is the usual Coxeter-theoretic notion. The Bruhat order on $\widetilde W$ can be defined as $\omega x\leq \omega'x'$ iff $\omega = \omega'$ and $x\leq x'$ for $\omega,\omega'\in \Omega$ and $x,x'\in W_a$.

We call an element $\mu\in X_\ast(T)_{\Gamma_0}\otimes \mathbb Q$ \emph{dominant} if $\langle \mu,\alpha\rangle\geq 0$ for all $\alpha\in \Phi^+$. For elements $\mu,\mu'$ in $X_\ast(T)_{\Gamma_0}\otimes \mathbb Q$ (resp.\ $X_\ast(T)_{\Gamma_0}$ or $X_\ast(T)_{\Gamma}$), we write $\mu\leq \mu'$ if the difference $\mu'-\mu$ is a $\mathbb Q_{\geq 0}$-linear combination of positive coroots.

The induced action of $\Gamma_0$ on $\mathcal A, \Phi_\af, \widetilde W, W_\af$ and $W$ is trivial by construction. The Frobenius action on $\mathcal A, X_\ast(T)_{\Gamma_0}, \Phi_\af$ and $\Phi$ will be denoted by $\sigma$. Note that $\sigma$ preserves the set of simple affine roots. The Frobenius action on $W, \widetilde W$ and $W_\af$ will be denoted by $x\mapsto \presig x$. Then the action of $\presig x$ on $X_\ast(T)_{\Gamma_0}$ is the same as the composed action $\sigma\circ x\circ\sigma^{-1}$ ($x\in W$ or $\widetilde W$).

For the most part, we consider the case where $G$ is quasi-split over $F$. This is a convenient assumption that lightens the notational burden significantly. In Section~\ref{sec:gnpArbitraryGroups}, we return to the more general setting of connected reductive $G$ and generalize our main results via a reduction to the quasi-split case.

If $G$ is quasi-split, we may and do choose the vertex $\mathfrak{x}$ to be $\sigma$-invariant. With this choice, the decompositions $\Phi_\af = \Phi\times\mathbb Z$ and $\widetilde W = W\ltimes X_\ast(T)_{\Gamma_0}$ are Frobenius equivariant. This means
\begin{align*}
\forall(\alpha,k)\in \Phi_\af:~&\sigma(\alpha,k) = (\sigma(\alpha),k),\\
\forall w\varepsilon^\mu\in \widetilde W:~&\presig{}\!\left(w\varepsilon^\mu\right) = (\presig w)\varepsilon^{\sigma(\mu)}.
\end{align*}
In particular, $\sigma$ preserves the set of simple roots $\Delta$.

The case where $G$ is unramified has often been studied in the literature. In this case, $S$ is a maximal torus of $G_L$, so $S=T$ and $\Phi$ is the usual root system of $(G,T)$. Each root system $\Phi$ together with a Frobenius action comes from such an unramified group. However, care has to be taken when using results proved for unramified groups in the quasi-split setting, as $X_\ast(T)_{\Gamma_0}$ may have a torsion part if $G$ is not unramified. In particular, the map $X_\ast(T)_{\Gamma_0}\rightarrow X_\ast(T)_{\Gamma_0}\otimes\mathbb R=V\cong \mathcal A$ might fail to be injective.
\subsection{Root functionals}\label{sec:root-functionals}
For every coweight $\mu$, there exists a uniquely determined dominant coweight in the $W$-orbit of $\mu$. In other words, there exists some $w\in W$ such that $\mu(w\alpha)\geq 0$ for all $\alpha\in \Phi^+$.

In this section, we introduce and study certain functions $\varphi:\Phi\rightarrow\mathbb Z$ which are more general than coweights, but still enjoy this property.
\begin{definition}\label{def:rootFunctionals}
\begin{enumerate}[(a)]
\item
A \emph{root functional} is a function $\varphi:\Phi\rightarrow\mathbb Z$ satisfying the following two conditions for all $\alpha,\beta\in \Phi$:
\begin{enumerate}[(1)]
\item $\abs{\varphi(\alpha)+\varphi(-\alpha)}\leq 1$.
\item If $\alpha+\beta\in \Phi$, then
\begin{align*}
\abs{\varphi(\alpha+\beta)-\varphi(\alpha)-\varphi(\beta)}\leq 1.
\end{align*}
\end{enumerate}
\item If $\varphi$ is a root functional, the \emph{dual root functional} $\varphi^\vee$ is defined for $\alpha\in \Phi$ by $\varphi^\vee(\alpha) = -\varphi(-\alpha)$.
\item Let $v\in W$. The set of \emph{inversions} of $v$ with respect to $\varphi$ is
\begin{align*}
\inv_\varphi(v)=\{\alpha\in \Phi^+\mid \varphi(v\alpha)<0\}\cup \{\alpha\in \Phi^-\mid \varphi(v\alpha)>0\}.
\end{align*}
We call $v$ \emph{positive} for $\varphi$ if $\inv_\varphi(v)=\emptyset$. If $\alpha\in \inv_\varphi(v)$, we call $vs_\alpha\in W$ an \emph{adjustment} of $v$ for $\varphi$.
\end{enumerate}
\end{definition}
\begin{lemma}\label{lem:rootFunctionalAdjustment}
Let $\varphi : \Phi\rightarrow\mathbb Z$ be a root functional and $v\in W$ be \emph{not} positive for $\varphi$. If $v'$ is an adjustment of $v$ for $\varphi$, then
\begin{align*}
\#\inv_\varphi(v')<\#\inv_\varphi(v).
\end{align*}
\end{lemma}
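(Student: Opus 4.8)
The plan is to imitate the classical Coxeter-group fact that $\ell(vs_\alpha)<\ell(v)$ whenever $v\alpha\in\Phi^-$, by tracking exactly how the $\varphi$-inversion set changes under the adjustment $v\rightsquigarrow v'=vs_\alpha$. I would write $f\colon\Phi\to\Phi$ for the involution $\delta\mapsto s_\alpha\delta$; it interchanges $\alpha$ and $-\alpha$, restricts to an involution of $\Phi\setminus\{\alpha,-\alpha\}$, and on that complement preserves signs, i.e.\ $\delta\in\Phi^+\iff f(\delta)\in\Phi^+$. Moreover $v'f(\delta)=vs_\alpha s_\alpha\delta=v\delta$ for all $\delta$, while $v'\alpha=-v\alpha$ and $v'(-\alpha)=v\alpha$.

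The first step would be to check that $f$ restricts to a bijection between $\inv_\varphi(v)\setminus\{\alpha,-\alpha\}$ and $\inv_\varphi(v')\setminus\{\alpha,-\alpha\}$. For $\delta\notin\{\alpha,-\alpha\}$ the root $f(\delta)$ again avoids $\{\alpha,-\alpha\}$, has the same sign as $\delta$, and satisfies $\varphi(v'f(\delta))=\varphi(v\delta)$; comparing against the defining conditions of $\inv_\varphi$ then gives $f(\delta)\in\inv_\varphi(v')\iff\delta\in\inv_\varphi(v)$, and since $v=v's_\alpha$ the same reasoning run backwards gives surjectivity. This yields
\begin{align*}
\#\bigl(\inv_\varphi(v)\setminus\{\alpha,-\alpha\}\bigr)=\#\bigl(\inv_\varphi(v')\setminus\{\alpha,-\alpha\}\bigr),
\end{align*}
so it only remains to compare the contributions of the pair $\{\alpha,-\alpha\}$ to the two inversion sets.

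The second step is this finite bookkeeping, which I expect to be the only delicate part. Set $\epsilon=1$ if $\alpha\in\Phi^+$ and $\epsilon=-1$ otherwise, and let $P=\epsilon\,\varphi(v\alpha)$, $Q=\epsilon\,\varphi(-v\alpha)$. Property~(1) of Definition~\ref{def:rootFunctionals}, applied to the root $v\alpha$, gives $\abs{P+Q}\le 1$, and the hypothesis $\alpha\in\inv_\varphi(v)$ unwinds to $P<0$, hence $P\le -1$ and therefore $Q\ge -1-P\ge 0$. Using $v'\alpha=-v\alpha$, $v'(-\alpha)=v\alpha$ and that $-\alpha$ has sign $-\epsilon$, one then reads off that $\alpha\in\inv_\varphi(v)$, that $-\alpha\in\inv_\varphi(v)$ precisely when $Q>0$, and that neither $\alpha$ nor $-\alpha$ belongs to $\inv_\varphi(v')$ (because $P<0$ and $Q\ge 0$). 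Combining with the displayed equality gives
\begin{align*}
\#\inv_\varphi(v')=\#\inv_\varphi(v)-\#\bigl(\{\alpha,-\alpha\}\cap\inv_\varphi(v)\bigr)\le\#\inv_\varphi(v)-1,
\end{align*}
which is the assertion, and in fact shows the number of inversions drops by $1$ or $2$. The one subtlety to be careful about is exactly the sign handling around $\{\alpha,-\alpha\}$: allowing the given $\alpha$ to be positive or negative, remembering that $\alpha$ and $-\alpha$ can both be $\varphi$-inversions of $v$, and invoking $\abs{P+Q}\le 1$ at the right moment. It is worth noting that property~(2) of a root functional is not used in this argument.
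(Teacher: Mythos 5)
There is a genuine gap at the first step. Your bijection argument rests on the claim that $f(\delta)=s_\alpha\delta$ has the same sign as $\delta$ for every $\delta\in\Phi\setminus\{\alpha,-\alpha\}$. That is true only when $\alpha$ is a \emph{simple} root (more precisely, when $\ell(s_\alpha)=1$), but an adjustment is allowed to use any $\alpha\in\inv_\varphi(v)$, and $\inv_\varphi(v)$ is a subset of $\Phi$, not of $\pm\Delta$. For a non-simple $\alpha$ the set $I=\{\beta\in\Phi^+\setminus\{\alpha\}\mid s_\alpha\beta\in\Phi^-\}$ is nonempty (in type $A_2$, with $\theta=\alpha_1+\alpha_2$, one has $s_\theta\alpha_1=-\alpha_2$), and on $I\cup(-I)$ your equivalence $f(\delta)\in\inv_\varphi(v')\iff\delta\in\inv_\varphi(v)$ breaks down, because the sign condition in the definition of $\inv_\varphi$ flips while the value $\varphi(v'f(\delta))=\varphi(v\delta)$ does not. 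So the count of inversions coming from $I\cup(-I)$ is exactly the part your argument does not control, and it is where all the work lies: the paper isolates these roots and, using the involution $\beta\mapsto-s_\alpha(\beta)$ on $I$ together with the root-string estimate coming from property~(2) of a root functional (applied along $k\alpha-\beta$, $k=0,\dotsc,\langle\alpha^\vee,\beta\rangle$), shows their contribution does not increase.

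Your closing remark that property~(2) is never used is in fact a reliable sign that the argument cannot be complete: the lemma is \emph{false} for functions satisfying only property~(1). For instance, in type $A_2$ take $\varphi(\theta)=-1$, $\varphi(-\theta)=1$, $\varphi(\pm\alpha_i)=\pm5$; this satisfies (1) but not (2). Then $\inv_\varphi(e)=\{\theta\}$, and the adjustment $v'=s_\theta$ has $\inv_\varphi(s_\theta)=\{\alpha_1,\alpha_2,-\alpha_1,-\alpha_2\}$, so the number of inversions jumps from $1$ to $4$. Your bookkeeping at $\{\alpha,-\alpha\}$ (the analysis of $P$, $Q$ via property~(1)) is correct and matches the easy part of the paper's proof, but the lemma needs the root-string argument on $I$ and $-I$; to repair the proof you would have to add that analysis, essentially reproducing the paper's treatment of the sets $I$ and $-I$.
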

\begin{proof}
Let $\alpha\in \inv_\varphi(v)$ with $v' = vs_\alpha$. Up to replacing $(\alpha,\varphi)$ by $(-\alpha,\varphi^\vee)$, we may assume $\alpha\in \Phi^+$, so $\varphi(v\alpha)<0$.
Define
\begin{align*}
I := \{\beta\in \Phi^+\setminus\{\alpha\}\mid s_\alpha(\beta)\in \Phi^-\}.
\end{align*}
We write
\begin{align*}
\#\inv_{\varphi}(v') =& \#\{\beta\in \Phi^+\setminus I\mid \varphi(v'\beta)<0\} + \#\{\beta \in I\mid \varphi(v'\beta)<0\}
\\&+\#\{\beta\in \Phi^-\setminus (-I)\mid \varphi(v'\beta)>0\} + \#\{\beta \in -I\mid \varphi(v'\beta)>0\}
\end{align*}
Note that $\varphi(v'\alpha) = \varphi(-v\alpha) \geq -1-\varphi(v\alpha)\geq 0$ and $s_\alpha(\Phi^+\setminus (I\cup\{\alpha\})) = \Phi^+\setminus(I\cup\{\alpha\})$. Thus
\begin{align*}
\#\{\beta\in \Phi^+\setminus I\mid \varphi(v'\beta)<0\} =& \#\{\beta\in \Phi^+\setminus (I\cup\{\alpha\})\mid \varphi(v s_\alpha\beta)<0\}
\\=&\#\{\beta\in \Phi^+\setminus (I\cup\{\alpha\})\mid \varphi(v \beta)<0\}
\\=&\#\{\beta\in \Phi^+\setminus I\mid \varphi(v \beta)<0\}-1.
\end{align*}
Similarly, we have
\begin{align*}
\#\{\beta\in \Phi^-\setminus (-I)\mid \varphi(v'\beta)>0\}=&\#\{\beta\in \Phi^-\setminus(-I\cup\{-\alpha\})\mid \varphi(v'\beta)>0\}
\\=&\#\{\beta\in \Phi^-\setminus(-I\cup\{-\alpha\})\mid \varphi(v\beta)>0\}
\\\leq &\#\{\beta\in \Phi^-\setminus (-I)\mid \varphi(v \beta)>0\}.
\end{align*}

Therefore, it suffices to prove the following estimates:
\begin{align*}
&\#\{\beta \in I\mid \varphi(v'\beta)<0\} \leq \#\{\beta \in I\mid \varphi(v\beta)<0\},\tag{1}
\\&\#\{\beta \in -I\mid \varphi(v'\beta)>0\} \leq \#\{\beta \in -I\mid \varphi(v\beta)>0\}.\tag{2}
\end{align*}
We only prove (1), as the proof of (2) is similar.

In order to prove (1), we consider the involution $\beta\mapsto -s_\alpha(\beta)$, which acts freely on $I$. Let $o = \{\beta,-s_\alpha(\beta)\}\subseteq I$ be an orbit for this involution. It suffices to show
\begin{align*}
\#\{\beta \in o\mid \varphi(v'\beta)<0\} \leq \#\{\beta \in o\mid \varphi(v\beta)<0\}.\tag{$\ast$}
\end{align*}
In order to prove this, we calculate
\begin{align*}
\#\{\beta \in o\mid \varphi(v'\beta)<0\} =& \#\{\beta \in -s_{\alpha}(o)\mid \varphi(v'\beta)<0\}
\\=&\#\{\beta \in o\mid \varphi(-v\beta)<0\}
\\\leq&\#\{\beta\in o\mid \varphi(v\beta)\geq 0\}
\\=&2-\#\{\beta \in o\mid \varphi(v\beta)<0\}.
\end{align*}
If $\#\{\beta \in o\mid \varphi(v\beta)<0\}\geq 1$, we immediately get $(\ast)$.

Now suppose that $\varphi(v\beta)\geq 0$ for all $\beta\in o$. Fix an element $\beta\in o$ and write \begin{align*}\beta' := -s_\alpha(\beta) = \langle \alpha^\vee,\beta\rangle \alpha-\beta.
\end{align*}
Note that $k\alpha-\beta\in \Phi$ for $k=0,\dotsc,\langle \alpha^\vee,\beta\rangle$. Thus
\begin{align*}
&\abs{\varphi(v\beta') - \langle \alpha^\vee,\beta\rangle \varphi(v\alpha) - \varphi(-v\beta)}
\\\leq&\sum_{k=1}^{\langle \alpha^\vee,\beta\rangle} \abs{\varphi(v(k\alpha-\beta)) - \varphi(v\alpha) - \varphi(v(k-1)\alpha-\beta)}
\\\leq&\langle \alpha^\vee,\beta\rangle.
\end{align*}
In particular, we get
\begin{align*}
\varphi(v\beta') - \varphi(-v\beta) \leq \langle \alpha^\vee,\beta\rangle(1+\varphi(v\alpha)) \leq 0.
\end{align*}
Thus $\varphi(-v\beta)\geq \varphi(v\beta')\geq 0$.

Since $\beta\in o$ was arbitrary, we get $\varphi(v'\beta) = \varphi(-v(-s_\alpha)\beta)\geq 0$ for all $\beta\in o$. This proves $(\ast)$, which finishes the proof of the lemma.
\end{proof}
\begin{corollary}\label{cor:rootFunctionalAdjustments}
If $\varphi: \Phi\rightarrow\mathbb Z$ is a root functional and $v\in W$ is any element, there is a sequence
\begin{align*}
v = v_1,\dotsc,v_k\in W
\end{align*}
such that $v_{i+1}$ is an adjustment for $v_i$ for $\varphi$ (where $i=1,\dotsc,k-1$), and $v_k$ is positive for $\varphi$. In particular, positive elements exist for each root functional.\rightqed
\end{corollary}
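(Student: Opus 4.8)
The plan is to iterate the adjustment operation and use Lemma~\ref{lem:rootFunctionalAdjustment} as a termination argument. Starting with $v = v_1$, I would check whether $v_1$ is positive for $\varphi$: if so, we are done with $k = 1$. Otherwise, by the definition of ``not positive'', there is some $\alpha \in \inv_\varphi(v_1)$, so an adjustment $v_2 = v_1 s_\alpha$ exists. Repeating this process produces a sequence $v_1, v_2, \dotsc$ where each $v_{i+1}$ is an adjustment of $v_i$, and the process continues as long as the current element fails to be positive.

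The key point is that this process must terminate. By Lemma~\ref{lem:rootFunctionalAdjustment}, whenever $v_i$ is not positive and $v_{i+1}$ is an adjustment of it, we have $\#\inv_\varphi(v_{i+1}) < \#\inv_\varphi(v_i)$. Since $\#\inv_\varphi(v_i)$ is a non-negative integer that strictly decreases at each step, the sequence cannot continue indefinitely; it must reach some $v_k$ for which no adjustment exists, i.e.\ $\inv_\varphi(v_k) = \emptyset$, which is exactly the statement that $v_k$ is positive for $\varphi$. This yields the desired finite sequence. The final sentence of the corollary follows by taking $v = 1$ (or any element) as the starting point: the terminal element $v_k$ is positive for $\varphi$, so positive elements exist.

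There is essentially no obstacle here — the corollary is a routine ``strict monotone descent on $\mathbb{Z}_{\geq 0}$'' consequence of the preceding lemma, and the only thing to be careful about is the bookkeeping: one should note that an adjustment exists precisely when the element is not positive (immediate from Definition~\ref{def:rootFunctionals}(c)), so the process is well-defined at every step until it halts, and it halts exactly at a positive element.
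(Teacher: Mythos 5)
Your proposal is correct and is exactly the argument the paper intends: the corollary is stated with no written proof precisely because it follows by iterating adjustments and using the strict decrease of $\#\inv_\varphi$ from Lemma~\ref{lem:rootFunctionalAdjustment} as the termination criterion. Your bookkeeping remark (an adjustment exists if and only if the element is not positive) is the only point worth making explicit, and you made it.
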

The most important root functional for us will be the length functional associated to an element $x\in \widetilde W$, which we introduce now.
\begin{definition}
Let $x = w\varepsilon^\mu\in \widetilde W$ and $\alpha\in \Phi$. We define
\begin{align*}
\ell(x,\alpha) := \langle \mu,\alpha\rangle +\Phi^+(\alpha) - \Phi^+(w\alpha).
\end{align*}
\end{definition}
The absolute value $\abs{\ell(x,\alpha)}$ can be understood as counting affine root hyperplanes between the base alcove and $x\mathfrak a$, while the sign accounts for the orientations (cf.\ Lemma~\ref{lem:lengthFunctionalAsCountingAffineRoots}).
\begin{lemma}
Let $x=w\varepsilon^\mu\in \widetilde W$. Then $\ell(x,\cdot)$ is a root functional. For each $\alpha\in \Phi$, we have
$
\ell(x,\alpha) + \ell(x,-\alpha)=0.
$
\end{lemma}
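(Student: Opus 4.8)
The plan is to read off all three assertions directly from the definition $\ell(x,\alpha)=\langle\mu,\alpha\rangle+\Phi^+(\alpha)-\Phi^+(w\alpha)$, reducing everything to the elementary properties of the indicator function $\Phi^+$ recorded in Lemma~\ref{lem:phiPlusFacts}. Note first that $\ell(x,\cdot)$ takes values in $\mathbb Z$, so it is indeed a function $\Phi\rightarrow\mathbb Z$, and it remains to verify conditions~(1) and~(2) of Definition~\ref{def:rootFunctionals}(a). I would begin with the antisymmetry identity $\ell(x,\alpha)+\ell(x,-\alpha)=0$: adding the two defining expressions, the pairing terms $\langle\mu,\alpha\rangle$ and $\langle\mu,-\alpha\rangle$ cancel, and what remains is $\bigl(\Phi^+(\alpha)+\Phi^+(-\alpha)\bigr)-\bigl(\Phi^+(w\alpha)+\Phi^+(-w\alpha)\bigr)=1-1=0$ by Lemma~\ref{lem:phiPlusFacts}(a), applied once to $\alpha$ and once to $w\alpha$. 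In particular $\abs{\ell(x,\alpha)+\ell(x,-\alpha)}=0\leq 1$, which is condition~(1).

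For condition~(2), fix $\alpha,\beta\in\Phi$ with $\alpha+\beta\in\Phi$ and consider the ``defect'' $\ell(x,\alpha+\beta)-\ell(x,\alpha)-\ell(x,\beta)$. Linearity of the pairing in the first argument kills the $\mu$-terms, so this defect equals
\[
\bigl[\Phi^+(\alpha+\beta)-\Phi^+(\alpha)-\Phi^+(\beta)\bigr]-\bigl[\Phi^+(w\alpha+w\beta)-\Phi^+(w\alpha)-\Phi^+(w\beta)\bigr].
\]
Since $w\in W$ permutes $\Phi$ and $\alpha+\beta\in\Phi$, we have $w\alpha+w\beta=w(\alpha+\beta)\in\Phi$, so Lemma~\ref{lem:phiPlusFacts}(b) with $k=\ell=1$ applies both to the pair $(\alpha,\beta)$ and to the pair $(w\alpha,w\beta)$; it shows that each of the two bracketed integers lies in $\{-1,0\}$. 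Hence their difference lies in $\{-1,0,1\}$, so the defect has absolute value at most $1$, establishing condition~(2). The final assertion of the lemma is precisely the antisymmetry identity already proved.

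I do not expect any genuine difficulty here; it is a short bookkeeping argument. The only point that deserves a moment's attention is the legitimacy of invoking Lemma~\ref{lem:phiPlusFacts}(b) for $(w\alpha,w\beta)$, which requires $w\alpha+w\beta\in\Phi$ — exactly what the hypothesis $\alpha+\beta\in\Phi$ provides once one uses that $W$ acts on $\Phi$. Degenerate configurations such as $\beta=-\alpha$ cause no trouble, since then $\alpha+\beta=0\notin\Phi$ and are thus excluded from condition~(2) altogether.
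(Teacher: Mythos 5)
Your argument is correct and is essentially the paper's proof: both parts reduce to the definition of $\ell(x,\cdot)$, cancel the $\mu$-terms, and invoke Lemma~\ref{lem:phiPlusFacts} for $\alpha$ and $w\alpha$ (resp.\ the pairs $(\alpha,\beta)$ and $(w\alpha,w\beta)$), the only cosmetic difference being that the paper groups the six indicator terms as one summand in $\{-1,0\}$ and one in $\{0,1\}$ rather than as a difference of two brackets each in $\{-1,0\}$. No gaps.
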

\begin{proof}
\begin{enumerate}[(1)]
\item We have
\begin{align*}
&\ell(x,\alpha) + \ell(x,-\alpha) \\=& \langle \mu,\alpha\rangle +\Phi^+(\alpha) - \Phi^+(w\alpha) + \langle \mu,-\alpha\rangle + \Phi^+(-\alpha) - \Phi^+(-w\alpha)
\\=&\Phi^+(\alpha) + \Phi^+(-\alpha) - (\Phi^+(w\alpha) + \Phi^+(-w\alpha)) = 1-1=0.
\end{align*}
\item 
Let $\alpha,\beta\in \Phi$ such that $\alpha+\beta\in \Phi$. We know that
\begin{align*}
0\leq \Phi^+(\alpha) + \Phi^+(\beta)-\Phi^+(\alpha+\beta) \leq 1.
\end{align*}
Thus, we obtain
\begin{align*}
&\abs{\ell(x,\alpha+\beta)-\ell(x,\alpha)-\ell(x,\beta)}
\\=&\lvert\underbrace{\Phi^+(\alpha+\beta)-\Phi^+(\alpha)-\Phi^+(\beta)}_{\in \{-1,0\}} \underbrace{- \Phi^+(w(\alpha+\beta)) + \Phi^+(w\alpha)+\Phi^+(w\beta)}_{\in \{0,1\}}\rvert\leq 1.
\end{align*}
\end{enumerate}
This finishes the proof.
\end{proof}
\begin{definition}Let $x\in \widetilde W$ and $v\in W$.
We say that $v$ is \emph{length positive for $x$} and write $v\in \LP(x)$ if $v$ is positive for the length functional $\ell(x,\cdot)$. Explicitly, $v$ is length positive for $x$ if $\ell(x,v\alpha)\geq 0$ for all $\alpha\in \Phi^+$.
\end{definition}
\begin{example}\label{ex:usualLPelement}
Let $x = w\varepsilon^\mu\in \widetilde W$. The $W$-orbit of $\mu$ contains a unique dominant element of $X_\ast(T)_{\Gamma_0}$, and there is a unique $v\in W$ of minimal length such that $v^{-1}\mu$ is dominant. The element $v$ is uniquely determined by the following condition for each positive root $\alpha$:
\begin{align*}
\langle v^{-1}\mu,\alpha\rangle \geq \Phi^+(-v\alpha).
\end{align*}
It follows that
\begin{align*}
\ell(x,v\alpha) = \langle v^{-1}\mu,\alpha\rangle - \Phi^+(-v\alpha) + \Phi^+(-wv\alpha)\geq 0.
\end{align*}
We see that this particular $v$ is length positive. This gives an alternative proof that length positive elements always exist.

With $v$ as above, one checks that the alcove of $x$ in the Bruhat-Tits building lies in the Weyl chamber determined by $wv\in W$.

Recall the definition of the virtual dimension for $x\in \widetilde W$ and $b\in B(G)$.
\begin{align*}
d_x(b) = \frac 12\left(\ell(x) +\ell(\eta_\sigma(x))-\langle\nu(b),2\rho\rangle-\defect(b)\right).
\end{align*}
Here, $2\rho\in X_\ast(T)^{\Gamma}$ denotes the sum of positive roots.
With $v\in W$ constructed as above, we have
\begin{align*}
\eta_\sigma(x) = \prescript{\sigma^{-1}}{}(v) ^{-1}wv\in W.
\end{align*}
Because of the importance of the virtual dimension, the specific $v$ constructed in this example is of particular interest.

However, the construction of this $v\in W$ is not quite natural in terms of $x\in \widetilde W$, e.g.\ in view of certain automorphisms of $\widetilde W$ that preserve dimensions of affine Deligne-Lusztig varieties. These can be automorphisms of the affine Dynkin diagram, e.g.\ coming from conjugation with a length zero element in $\widetilde W$ or the longest element $w_0\in W$. If $G$ is split, then $X_x(b)\cong X_{x^{-1}}(b^{-1})$ for all $x,b\in \widetilde W$.

Studying the group $\GL_3$ for a concrete example, there are three simple affine reflections $s_0, s_1, s_2$ in $\widetilde W$. Each of these satisfies $\ell(s_i)=\dim X_{s_i}(1)=1$. The two simple affine reflections $s_1$ and $s_2$ that come from $W$ also satisfy $\ell(\eta_\sigma(s_1)) = \ell(\eta_\sigma(s_2))=1$, so that
\begin{align*}
d_{s_i}([1]_\sigma) = \frac 12\left(1+1-0-0\right)=1 = \dim X_{s_i}(1),\qquad i=1,2.
\end{align*}
For the remaining affine simple reflection $s_0$, we have $\ell(\eta_\sigma(s_0))=3$. Thus $d_{s_0}(1) = 2 > \dim X_{s_0}(1)$.

We see that $s_1, s_2$ satisfy $\dim X_{s_i}(1)= d_{s_i}(1)$ (so both are cordial), whereas $s_0$ does not have this property. This is problematic insofar as there exists an automorphism of the affine Dynkin diagram sending $s_1$ to $s_0$, hence naturally $X_{s_0}(1) \cong X_{s_1}(1)$. This natural isomorphism is not reflected in the corresponding virtual dimensions, which comes precisely from the term $\ell(\eta_\sigma(x))$.

Searching for a replacement of this specific $v$ that is invariant under such automorphisms, we found the notion of length positive elements. The set of length positive elements is well-behaved under such automorphisms, as it allows the following root-theoretic interpretation.
\end{example}
\begin{lemma}[{cf. \cite[Lemma~3.12]{Lenart2015}}]\label{lem:lengthFunctionalAsCountingAffineRoots}
Let $x = w\varepsilon^\mu \in \widetilde W$ and $\alpha\in \Phi$. Then
\begin{align*}
\#\{k\in\mathbb Z\mid (\alpha,k)\in \Phi^+_\af\text{ and }x(\alpha,k)\in \Phi^-_\af\} = \max(0,\ell(x,\alpha)).
\end{align*}
\end{lemma}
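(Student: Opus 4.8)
The plan is to count, directly and explicitly, the integers $k$ for which $(\alpha,k) \in \Phi_\af^+$ but $x(\alpha,k) \in \Phi_\af^-$, using the description of $\Phi_\af^\pm$ from the excerpt, namely $(\beta,m) \in \Phi_\af^+ \iff m \ge \Phi^+(-\beta)$. First I would compute $x(\alpha,k)$ using the action formula $(w\varepsilon^\mu)(\alpha,k) = (w\alpha, k - \langle \mu,\alpha\rangle)$. So the condition ``$(\alpha,k) \in \Phi_\af^+$ and $x(\alpha,k) \in \Phi_\af^-$'' translates into the pair of inequalities
\begin{align*}
\Phi^+(-\alpha) \le k < \langle\mu,\alpha\rangle + \Phi^+(-w\alpha).
\end{align*}
Hence the set in question is the set of integers $k$ in the half-open interval $[\,\Phi^+(-\alpha),\ \langle\mu,\alpha\rangle + \Phi^+(-w\alpha)\,)$, whose cardinality is $\max\bigl(0,\ \langle\mu,\alpha\rangle + \Phi^+(-w\alpha) - \Phi^+(-\alpha)\bigr)$ since all the quantities involved are integers.

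It then remains to identify $\langle\mu,\alpha\rangle + \Phi^+(-w\alpha) - \Phi^+(-\alpha)$ with $\ell(x,\alpha) = \langle\mu,\alpha\rangle + \Phi^+(\alpha) - \Phi^+(w\alpha)$. This is immediate from Lemma~\ref{lem:phiPlusFacts}(a): $\Phi^+(-\alpha) = 1 - \Phi^+(\alpha)$ and $\Phi^+(-w\alpha) = 1 - \Phi^+(w\alpha)$, so the two ``$1$''s cancel and the expressions agree. Assembling these two steps gives the claimed equality
\begin{align*}
\#\{k\in\mathbb Z\mid (\alpha,k)\in \Phi^+_\af\text{ and }x(\alpha,k)\in \Phi^-_\af\} = \max(0,\ell(x,\alpha)).
\end{align*}

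There is essentially no serious obstacle here: the argument is a direct unwinding of definitions plus the trivial counting fact that $\#\bigl([a,b)\cap\mathbb Z\bigr) = \max(0, b-a)$ for integers $a,b$. The only point requiring a moment's care is getting the boundary conditions of the interval right --- in particular using the characterizations $\Phi_\af^+ = \{(\beta,m)\mid m \ge \Phi^+(-\beta)\}$ and $\Phi_\af^- = \{(\beta,m)\mid m < \Phi^+(-\beta)\}$ as stated in the excerpt (so that the upper endpoint is strict and the lower endpoint is not), and checking that $\langle\mu,\alpha\rangle$, $\Phi^+(-\alpha)$ and $\Phi^+(-w\alpha)$ are all integers so that the interval-counting formula applies verbatim. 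One could alternatively phrase the whole thing without case distinctions by observing that an affine root $a$ lies in $\Phi_\af^+$ with $xa \in \Phi_\af^-$ precisely when $a$ is a sum of simple affine roots that $x$ sends to a negative affine root, but the explicit interval computation is the cleanest route.
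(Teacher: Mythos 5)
Your argument is correct and is essentially the paper's own proof: both unwind the definitions of $\Phi_\af^\pm$ and the action $(w\varepsilon^\mu)(\alpha,k)=(w\alpha,k-\langle\mu,\alpha\rangle)$ to reduce to counting integers in an interval, the only cosmetic difference being that you write the interval half-open via $\Phi^+(-w\alpha)$ while the paper writes the equivalent closed interval with $-\Phi^+(w\alpha)$ as upper endpoint. Your identification of the resulting count with $\max(0,\ell(x,\alpha))$ via $\Phi^+(\beta)+\Phi^+(-\beta)=1$ matches the paper exactly.
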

\begin{proof}
We have
\begin{align*}
&\{(\alpha,k)\in \Phi_\af^+ \mid x(\alpha,k) \in \Phi_\af^-\}
\\=& \{(\alpha,k) \in \Phi_\af \mid k\geq \Phi^+(-\alpha)\text{ and }(w\alpha,k-\langle \mu,\alpha\rangle)\in \Phi_\af^-\}
\\=&\{(\alpha,k) \in \Phi_\af \mid k\geq \Phi^+(-\alpha)\text{ and }k-\langle \mu,\alpha\rangle \leq -\Phi^+(w\alpha)\}.
\\\cong&\{k\in \mathbb Z\mid \Phi^+(-\alpha)\leq k \leq \langle \mu,\alpha\rangle -\Phi^+(w\alpha)\}.
\end{align*}
The cardinality of this set is given by
\begin{align*}
&\max(0, \langle \mu,\alpha\rangle +1-\Phi^+(w\alpha)-\Phi^+(-\alpha)) = \max(0,\ell(x,\alpha)).\qedhere
\end{align*}
\end{proof}
\begin{corollary}[{\cite[Proposition~1.23]{Iwahori1965}}]\label{cor:IwahoriMatsumoto}
Let $x = w\varepsilon^\mu\in \widetilde W$. Then
\begin{align*}
\ell(x) = \sum_{\alpha \in \Phi} \max(0,\ell(x,\alpha)).
\end{align*}
\end{corollary}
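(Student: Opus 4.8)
The plan is to reduce both sides of the asserted identity to the cardinality of the set $\{a\in\Phi_\af^+\mid x(a)\in\Phi_\af^-\}$ and then to split that count along the decomposition $\Phi_\af=\Phi\times\mathbb Z$. For the left-hand side, I would first invoke the Iwahori-Matsumoto description of the Coxeter length \cite[Proposition~1.23]{Iwahori1965}: for $y\in W_\af$ one has $\ell(y)=\#\{a\in\Phi_\af^+\mid y(a)\in\Phi_\af^-\}$, where $\Phi_\af^+$ is the set of affine roots that are non-negative combinations of simple affine roots. (If one prefers a self-contained argument, this is the usual induction on $\ell(y)$: each $s\in S_\af$ flips exactly its own simple affine root and permutes the rest of $\Phi_\af^+$, so $\ell(sy)=\ell(y)\pm 1$ matches the change of the cardinality.) To pass from $W_\af$ to the extended group $\widetilde W=\Omega\ltimes W_\af$, I would write $x=\omega y$ with $\omega\in\Omega$ and $y\in W_\af$. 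Since $\omega$ has length zero it fixes the base alcove $\mathfrak a$ and acts on the building as an isometry fixing $\mathfrak a$; this gives $\ell(x)=\ell(y)$, and it also shows that $\omega$ permutes $\Delta_\af$, hence stabilises $\Phi_\af^+$ and $\Phi_\af^-$ separately. Thus $x(a)=\omega(y(a))$ lies in $\Phi_\af^-$ exactly when $y(a)$ does, so the counting set is the same for $x$ as for $y$, and we obtain $\ell(x)=\#\{a\in\Phi_\af^+\mid x(a)\in\Phi_\af^-\}$ for every $x\in\widetilde W$.

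Next, since $\Phi_\af=\Phi\times\mathbb Z$ as a set, this counting set is the disjoint union over $\alpha\in\Phi$ of $\{k\in\mathbb Z\mid(\alpha,k)\in\Phi_\af^+\text{ and }x(\alpha,k)\in\Phi_\af^-\}$, so its cardinality is the sum over $\alpha$ of the cardinalities of these pieces. Lemma~\ref{lem:lengthFunctionalAsCountingAffineRoots} identifies the $\alpha$-th piece with $\max(0,\ell(x,\alpha))$, and summing over $\alpha\in\Phi$ yields $\ell(x)=\sum_{\alpha\in\Phi}\max(0,\ell(x,\alpha))$, which is the claim.

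There is essentially no hard step here: the fibrewise count is precisely Lemma~\ref{lem:lengthFunctionalAsCountingAffineRoots}, which is already available, and the length formula for a Coxeter group is classical. The only point that needs a word of care is the reduction from the Coxeter group $W_\af$ to the full extended affine Weyl group $\widetilde W$, namely noting that length-zero elements fix the base alcove and therefore preserve positivity of affine roots; once this is observed, the corollary follows at once.
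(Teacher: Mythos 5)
Your argument is correct and is essentially the paper's proof: reduce to the count $\#\{a\in\Phi_\af^+\mid x(a)\in\Phi_\af^-\}$ and decompose it fibrewise over $\alpha\in\Phi$ using Lemma~\ref{lem:lengthFunctionalAsCountingAffineRoots}. The only difference is that you spell out the passage from $W_\af$ to $\widetilde W$ via length-zero elements preserving $\Phi_\af^+$, which the paper leaves implicit.
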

\begin{proof}
Use that
$
\ell(x) = \#\{(\alpha,k) \in \Phi_\af^+\mid x\alpha \in \Phi_\af^-\},
$
and decompose the latter set depending on the $\alpha\in \Phi$.
\end{proof}
\begin{corollary}\label{cor:positiveLengthFormula}
Let $x = w\varepsilon^\mu\in \widetilde W$ and $v\in W$. Then
\begin{align*}
\ell(x)\geq \langle v^{-1}\mu,2\rho\rangle - \ell(v) + \ell(wv).
\end{align*}
Equality holds if and only if $v$ is length positive for $x$.
\end{corollary}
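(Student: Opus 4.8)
The plan is to reduce the inequality to the length formula from Corollary~\ref{cor:IwahoriMatsumoto} by a substitution. Write $y = (wv)\varepsilon^{v^{-1}\mu}$; since $\varepsilon^{v^{-1}\mu} = v^{-1}\varepsilon^\mu v$ in $\widetilde W$ (or more directly, conjugating the translation part), we have $y = v^{-1} x v$ as elements of $\widetilde W$. Conjugation by $v\in W$ fixes the length function on $W_\af$-cosets only up to the length-zero part, but in any case $\ell(y)$ need not equal $\ell(x)$ — this is precisely where the inequality comes from. Applying Corollary~\ref{cor:IwahoriMatsumoto} to $y$ gives
\begin{align*}
\ell(y) = \sum_{\alpha\in\Phi}\max(0,\ell(y,\alpha)),
\end{align*}
and by definition $\ell(y,\alpha) = \langle v^{-1}\mu,\alpha\rangle + \Phi^+(\alpha) - \Phi^+(wv\alpha)$. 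So the first key step is to relate $\sum_\alpha \ell(y,\alpha)$ (the sum without the $\max$) to the right-hand side of the claimed inequality. Since $\ell(y,\alpha)+\ell(y,-\alpha)=0$ for every $\alpha$, summing only over $\Phi^+$ suffices once we handle the $\max$; indeed $\sum_{\alpha\in\Phi}\max(0,\ell(y,\alpha)) \geq \sum_{\alpha\in\Phi^+}\ell(y,\alpha)$, with equality iff $\ell(y,\alpha)\geq 0$ for all $\alpha\in\Phi^+$, i.e.\ iff the identity element is length positive for $y$, i.e.\ iff $v$ is length positive for $x$.

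The second key step is the bookkeeping computation
\begin{align*}
\sum_{\alpha\in\Phi^+}\ell(y,\alpha) = \sum_{\alpha\in\Phi^+}\langle v^{-1}\mu,\alpha\rangle + \sum_{\alpha\in\Phi^+}\Phi^+(\alpha) - \sum_{\alpha\in\Phi^+}\Phi^+(wv\alpha) = \langle v^{-1}\mu,2\rho\rangle + \#\Phi^+ - \#(wv\Phi^+\cap\Phi^+).
\end{align*}
Now $\#\Phi^+ - \#(wv\Phi^+\cap\Phi^+) = \#\{\alpha\in\Phi^+ : wv\alpha\in\Phi^-\} = \ell(wv)$, the standard inversion-set description of the length in the finite Weyl group. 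Hence $\sum_{\alpha\in\Phi^+}\ell(y,\alpha) = \langle v^{-1}\mu,2\rho\rangle + \ell(wv)$... but wait, this is missing the $-\ell(v)$ term, which means $\ell(y)$ is \emph{not} what we want; we actually need $\ell(x)$ on the left. So the correct route is: relate $\ell(x)$ to $\ell(y)$ directly. Using Corollary~\ref{cor:IwahoriMatsumoto} for $x$ itself and the change of variables $\alpha\mapsto v\alpha$ in the sum, $\ell(x) = \sum_{\alpha\in\Phi}\max(0,\ell(x,\alpha)) = \sum_{\alpha\in\Phi}\max(0,\ell(x,v\alpha))$, and one computes $\ell(x,v\alpha) = \langle v^{-1}\mu,\alpha\rangle + \Phi^+(v\alpha) - \Phi^+(wv\alpha)$. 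Splitting into $\alpha\in\Phi^+$ and $\alpha\in\Phi^-$ and using $\ell(x,v\alpha)+\ell(x,-v\alpha)=0$, we get $\ell(x)\geq \sum_{\alpha\in\Phi^+}\ell(x,v\alpha)$, with equality iff $\ell(x,v\alpha)\geq 0$ for all $\alpha\in\Phi^+$, i.e.\ iff $v\in\LP(x)$. Finally $\sum_{\alpha\in\Phi^+}\ell(x,v\alpha) = \langle v^{-1}\mu,2\rho\rangle + \#\{\alpha\in\Phi^+: v\alpha\in\Phi^+\} - \#\{\alpha\in\Phi^+ : wv\alpha\in\Phi^+\}$.

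The last arithmetic step is to identify those two cardinalities: $\#\{\alpha\in\Phi^+: v\alpha\in\Phi^-\} = \ell(v)$ and $\#\{\alpha\in\Phi^+: wv\alpha\in\Phi^-\} = \ell(wv)$, so $\#\{\alpha\in\Phi^+: v\alpha\in\Phi^+\} = \#\Phi^+ - \ell(v)$ and likewise for $wv$. Therefore $\sum_{\alpha\in\Phi^+}\ell(x,v\alpha) = \langle v^{-1}\mu,2\rho\rangle - \ell(v) + \ell(wv)$, which is exactly the claimed lower bound, and the equality criterion $v\in\LP(x)$ falls out of the $\max$-versus-sum comparison. I do not expect any genuine obstacle here; the only delicate point is keeping the substitution $\alpha\mapsto v\alpha$ and the pairing $\alpha\leftrightarrow -\alpha$ straight so that the $\max(0,\cdot)$ terms collapse correctly, and making sure the two inversion-counting identities are applied to $v$ and to $wv$ (not to $w$), which is why the $-\ell(v)$ rather than a term in $w$ alone appears.
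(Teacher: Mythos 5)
Your final argument is correct and is essentially the paper's proof: apply the Iwahori--Matsumoto formula of Corollary~\ref{cor:IwahoriMatsumoto}, substitute $\alpha\mapsto v\alpha$, pair $\alpha$ with $-\alpha$ using $\ell(x,v\alpha)+\ell(x,-v\alpha)=0$ to get $\ell(x)\geq\sum_{\alpha\in\Phi^+}\ell(x,v\alpha)$ with equality exactly when $v\in\LP(x)$, and then count inversions of $v$ and $wv$. The initial detour via $y=(wv)\varepsilon^{v^{-1}\mu}$ (which, incidentally, is not $v^{-1}xv$) is a dead end, but you correctly abandon it, and the argument you settle on is sound.
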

\begin{proof}
We calculate
\begin{align*}
\ell(x)\geq&\sum_{\alpha\in \Phi^+}\ell(x,v\alpha)
\\=&\sum_{\alpha\in \Phi^+}\left(\langle \mu,v\alpha\rangle - \Phi^+(-v\alpha) + \Phi^+(-wv\alpha)\right)
\\=&\langle v^{-1}\mu,2\rho\rangle - \ell(v) + \ell(wv).\qedhere
\end{align*}
\end{proof}
\begin{lemma}\label{lem:lengthFunctionalForProducts}
Let $x = w\varepsilon^\mu, x' = w'\varepsilon^{\mu'}\in \widetilde W$ and $\alpha \in \Phi$.
\begin{enumerate}[(a)]
\item $\ell(xx',\alpha) = \ell(x,w'\alpha) + \ell(x',\alpha).$
\item $\ell(x^{-1},\alpha) = -\ell(x,w^{-1}\alpha)$ and $\LP(x^{-1}) = w \LP(x) w_0$.
\end{enumerate}
\end{lemma}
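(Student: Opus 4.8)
The plan is to deduce everything from the defining formula $\ell(x,\alpha)=\langle\mu,\alpha\rangle+\Phi^+(\alpha)-\Phi^+(w\alpha)$ together with the semidirect product structure of $\widetilde W$.

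For part (a) I would first record that in $\widetilde W=W\ltimes X_\ast(T)_{\Gamma_0}$ one has $xx'=(ww')\varepsilon^{(w')^{-1}\mu+\mu'}$; this is the one place where I must be careful to match the multiplication convention against the action formula $(w\varepsilon^\mu)(\alpha,k)=(w\alpha,k-\langle\mu,\alpha\rangle)$, so that the translation part of $xx'$ comes out as $(w')^{-1}\mu+\mu'$ and not some other twist. Expanding the left-hand side of (a) then gives
\[
\ell(xx',\alpha)=\langle\mu,w'\alpha\rangle+\langle\mu',\alpha\rangle+\Phi^+(\alpha)-\Phi^+(ww'\alpha),
\]
while expanding $\ell(x,w'\alpha)+\ell(x',\alpha)$ yields the same expression once the two occurrences of $\pm\Phi^+(w'\alpha)$ cancel. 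So (a) is a one-line identity after the product is written down correctly.

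For the first identity in (b) I would simply apply (a) with $x'=x^{-1}$: since $\widetilde W\to W$ is a homomorphism, $x^{-1}$ has linear part $w^{-1}$, and $xx^{-1}$ is the identity, for which $\ell(\cdot,\alpha)=\Phi^+(\alpha)-\Phi^+(\alpha)=0$ identically. Hence $0=\ell(xx^{-1},\alpha)=\ell(x,w^{-1}\alpha)+\ell(x^{-1},\alpha)$, which is the claim. (Alternatively one computes $x^{-1}=w^{-1}\varepsilon^{-w\mu}$ directly and expands, but piggybacking on (a) is shorter.)

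For the equality $\LP(x^{-1})=w\LP(x)w_0$ I would translate membership into inequalities for the length functional. Using the antisymmetry $\ell(x,\beta)=-\ell(x,-\beta)$ already established for $\ell(x,\cdot)$, an element $u\in W$ is length positive for $x$ iff $\ell(x,u\alpha)\geq 0$ for all $\alpha\in\Phi^+$, equivalently iff $\ell(x,u\gamma)\leq 0$ for all $\gamma\in\Phi^-$. Now $v'\in\LP(x^{-1})$ means $\ell(x^{-1},v'\alpha)\geq 0$ for all $\alpha\in\Phi^+$, which by the first part of (b) is $\ell(x,w^{-1}v'\alpha)\leq 0$ for all $\alpha\in\Phi^+$; setting $v:=w^{-1}v'w_0$ and using $w_0\Phi^+=\Phi^-$, this becomes $\ell(x,v\gamma)\leq 0$ for all $\gamma\in\Phi^-$, i.e.\ $v\in\LP(x)$. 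Running the equivalence backwards gives $v'\in\LP(x^{-1})\iff w^{-1}v'w_0\in\LP(x)$, that is, $\LP(x^{-1})=w\LP(x)w_0$. I do not anticipate a genuine obstacle; the only delicate bookkeeping is fixing the group-multiplication convention in (a) so that the argument of $\Phi^+$ is correct, and then correctly tracking the $w_0$-conjugation together with the sign reversal from the first part of (b) in the final step.
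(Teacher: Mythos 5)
Your proposal is correct and follows essentially the same route as the paper: compute $xx'=(ww')\varepsilon^{(w')^{-1}\mu+\mu'}$ and expand for (a), deduce the first identity of (b) from (a) applied to $xx^{-1}=1$, and translate the length-positivity condition via the antisymmetry $\ell(x,\beta)=-\ell(x,-\beta)$ together with $w_0\Phi^+=\Phi^-$ for the second identity of (b). The paper phrases the last step by substituting $\beta\mapsto -w_0\beta$ rather than passing through the negative roots, but this is the same manipulation.
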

\begin{proof}
\begin{enumerate}[(a)]
\item Note that $xx' = ww'\varepsilon^{(w')^{-1}\mu + \mu'}$ such that
\begin{align*}
&\ell(x,w'\alpha) + \ell(x',\alpha)  \\=& \,\langle \mu,w'\alpha\rangle + \langle \mu',\alpha\rangle - \Phi^+(ww'\alpha) + \Phi^+(w'\alpha) - \Phi^+(w'\alpha) + \Phi^+(\alpha)
\\=&\,\langle (w')^{-1}\mu + \mu',\alpha\rangle -\Phi^+(ww'\alpha) + \Phi^+(\alpha)=\ell(xx',\alpha).
\end{align*}
\item By (a), we have
\begin{align*}
0 = \ell(1,\alpha) = \ell(x x^{-1},\alpha)= \ell(x,w^{-1}\alpha) + \ell(x^{-1},\alpha).
\end{align*}
Now observe that for $v\in W$,
\begin{align*}
v\in \LP(x^{-1})\iff&\forall \beta\in \Phi^+:~\ell(x^{-1},v\beta)\geq 0
\\\iff&\forall \beta\in \Phi^+:~\ell(x^{-1},v(-w_0\beta))\geq 0
\\\iff&\forall\beta\in \Phi^+:~\ell(x,w^{-1} v w_0\beta)\geq 0 \iff v\in w\LP(x) w_0.
\qedhere
\end{align*}
\end{enumerate}
\end{proof}
\begin{lemma}\label{lem:lengthAdditivity}
Let $x = w\varepsilon^\mu, x' = w'\varepsilon^{\mu'}\in \widetilde W$. The following are equivalent:
\begin{enumerate}[(i)]
\item $\ell(xx') = \ell(x) + \ell(x')$.
\item For each root $\alpha\in \Phi$, the values $\ell(x,w'\alpha)$ and $\ell(x',\alpha)\in \mathbb Z$ never have opposite signs, i.e.\
\begin{align*}
\ell(x,w'\alpha) \cdot \ell(x',\alpha)\geq 0.
\end{align*}
\item $\left((w')^{-1} \LP(x)\right)\cap \LP(x')\neq \emptyset$.
\end{enumerate}
In this case, $\LP(xx') = \left((w')^{-1} \LP(x)\right)\cap \LP(x')$.
\end{lemma}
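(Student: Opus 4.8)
The plan is to prove the chain of equivalences by first establishing the length formula $\ell(xx') = \ell(x) + \ell(x') - 2\sum_{\alpha\in\Phi} \min(\max(0,\ell(x,w'\alpha)), \max(0,\ell(x',-\alpha)))$, or rather a more convenient cancellation bookkeeping. Concretely, I would start from Corollary~\ref{cor:IwahoriMatsumoto}, which gives $\ell(xx') = \sum_{\alpha\in\Phi}\max(0,\ell(xx',\alpha))$, and combine it with Lemma~\ref{lem:lengthFunctionalForProducts}(a), which says $\ell(xx',\alpha) = \ell(x,w'\alpha) + \ell(x',\alpha)$. Reindexing the sum for $\ell(x)$ by $\alpha \mapsto w'\alpha$, I can write all three lengths as sums over the same index set $\Phi$, with summands $\max(0,\ell(x,w'\alpha))$, $\max(0,\ell(x',\alpha))$, and $\max(0,\ell(x,w'\alpha)+\ell(x',\alpha))$ respectively. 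The elementary inequality $\max(0,s+t) \le \max(0,s) + \max(0,t)$ with equality iff $s,t$ do not have strictly opposite signs (i.e.\ $st\ge 0$, interpreting the degenerate cases correctly) then gives $\ell(xx') \le \ell(x)+\ell(x')$ always, with equality iff $\ell(x,w'\alpha)\cdot\ell(x',\alpha)\ge 0$ for every $\alpha\in\Phi$. This is precisely (i)$\Leftrightarrow$(ii); the only subtlety is handling the case where one of the two values is zero, which is harmless since then the product is $\ge 0$ and the max-inequality is an equality regardless.

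For (ii)$\Leftrightarrow$(iii), the implication (iii)$\Rightarrow$(ii) is nearly immediate: if $v \in \bigl((w')^{-1}\LP(x)\bigr)\cap\LP(x')$, then for every $\alpha\in\Phi^+$ we have $\ell(x',v\alpha)\ge 0$ and $\ell(x,w'v\alpha)\ge 0$, so the products $\ell(x,w'\beta)\cdot\ell(x',\beta)$ are nonnegative for all $\beta$ in the $v$-image of $\Phi^+$; since $v$ is a bijection of $\Phi$ and we may also run the same argument with $-\alpha$ (using that the length functional is odd, $\ell(x,-\gamma)=-\ell(x,\gamma)$, so the product is unchanged under $\beta\mapsto-\beta$), this covers all of $\Phi$. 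For (ii)$\Rightarrow$(iii) I would use Corollary~\ref{cor:rootFunctionalAdjustments}: pick any $v'\in\LP(x')$, so $\ell(x',v'\alpha)\ge 0$ for all $\alpha\in\Phi^+$; then condition (ii) forces $\ell(x,w'v'\alpha)\ge 0$ whenever $\ell(x',v'\alpha) > 0$. The remaining roots, those $\alpha\in\Phi^+$ with $\ell(x',v'\alpha)=0$, are exactly the roots on which the functional $\ell(x',\cdot)$ vanishes in the direction $v'\alpha$; I would argue that adjusting $v'$ within this "$\ell(x',\cdot)=0$ wall" does not disturb length positivity for $x'$, and shows one can further adjust to also achieve $\ell(x,w'v'\alpha)\ge 0$ there — so that the resulting element lies in both sets.

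The last sentence, $\LP(xx') = \bigl((w')^{-1}\LP(x)\bigr)\cap\LP(x')$ under the equivalent conditions, I would get directly from the equality case of the $\max$-inequality analysis: $v\in\LP(xx')$ means $\ell(xx',v\alpha) = \ell(x,w'v\alpha)+\ell(x',v\alpha)\ge 0$ for all $\alpha\in\Phi^+$, and when the total lengths add, every term in the relevant sum must individually be an equality, which (by a short argument on signs, again using oddness of the functionals to pass between $\pm$) forces both $\ell(x,w'v\alpha)\ge 0$ and $\ell(x',v\alpha)\ge 0$ for all $\alpha\in\Phi^+$, i.e.\ $v\in(w')^{-1}\LP(x)$ and $v\in\LP(x')$; the reverse inclusion is the trivial direction.

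I expect the main obstacle to be the degenerate-sign bookkeeping in (ii)$\Rightarrow$(iii): handling the roots where $\ell(x',\cdot)$ vanishes requires knowing that one can "walk around the wall" $\{\alpha : \ell(x',v'\alpha) = 0\}$ within the set of length-positive elements for $x'$ while simultaneously fixing up positivity for $x$. This is the kind of statement that is intuitively clear from the alcove-geometry picture (the intersection of the two relevant closed chambers/faces is nonempty exactly when the alcoves are "on the same side"), but making it combinatorially rigorous via the adjustment mechanism of Lemma~\ref{lem:rootFunctionalAdjustment} and Corollary~\ref{cor:rootFunctionalAdjustments} is the delicate part.
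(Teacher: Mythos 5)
Your steps (i)$\iff$(ii), (iii)$\Rightarrow$(ii), and the derivation of $\LP(xx') = \left((w')^{-1}\LP(x)\right)\cap\LP(x')$ from (ii) (sum $\geq 0$ together with product $\geq 0$ forces both summands $\geq 0$) are all correct and coincide with the paper's argument. The genuine gap is the implication (ii)$\Rightarrow$(iii), which you leave as an acknowledged sketch: the proposed ``walk around the wall'' adjustment of an element $v'\in\LP(x')$ along the roots where $\ell(x',v'\cdot)$ vanishes is not set up anywhere in your argument, and it is not routine. The set $\{\alpha\in\Phi \mid \ell(x',v'\alpha)=0\}$ is not closed under root addition (the root-functional property only bounds $\abs{\ell(x',\alpha+\beta)}$ by $1$ when both summands vanish), so an adjustment chosen to repair positivity for $\ell(x,w'\cdot)$ may leave this set, and you give no mechanism guaranteeing that it preserves positivity for $x'$ while strictly improving the situation for $x$; Lemma~\ref{lem:rootFunctionalAdjustment} as stated controls inversions of a single root functional, not of two simultaneously.

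The fix is already contained in what you wrote, and it is exactly the paper's route: do not try to adjust inside $\LP(x')$ at all. Assume (ii) and prove the set identity $\LP(xx') = \left((w')^{-1}\LP(x)\right)\cap\LP(x')$ first, by the pointwise sign argument from your last paragraph applied to $\ell(xx',v\alpha)=\ell(x,w'v\alpha)+\ell(x',v\alpha)$. Since $\ell(xx',\cdot)$ is itself a root functional, Corollary~\ref{cor:rootFunctionalAdjustments} (or Example~\ref{ex:usualLPelement}) gives $\LP(xx')\neq\emptyset$, and (iii) follows at once from the identity. This single observation removes the ``delicate part'' you flag as the main obstacle; with it, your proposal becomes a complete proof essentially identical to the paper's.
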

\begin{proof}
(i) $\iff$ (ii): By Corollary~\ref{cor:IwahoriMatsumoto} and the equation $\ell(x,\alpha) = -\ell(x,-\alpha)$, we get
\begin{align*}
\ell(xx') =& \sum_{\alpha\in \Phi^+}\abs{\ell(xx',\alpha)}
\\\underset{\text{L\ref{lem:lengthFunctionalForProducts}(a)}}=&\sum_{\alpha\in \Phi^+}\abs{\ell(x,w'\alpha) + \ell(x',\beta)}\\\underset{(\ast)}\leq&\sum_{\alpha\in \Phi^+}\abs{\ell(x,w'\alpha)}+\abs{\ell(x',\alpha)}
\\=&\ell(x) + \ell(x').
\end{align*}
Equality holds at $(\ast)$ iff the values $\ell(x,w'\alpha)$ and $\ell(x',\alpha)$ never have opposite signs. We see that (i) $\iff$ (ii).

(iii) $\Rightarrow$ (ii): Pick $v\in \left((w')^{-1} \LP(x)\right)\cap \LP(x')$. If $\alpha\in \Phi^+$, then both $\ell(x,w'v\alpha)$ and $\ell(x',v\alpha)$ must be non-negative by length positivity. If conversely $\alpha\in \Phi^-$, then both $\ell(x,w'v\alpha)$ and $\ell(x',v\alpha)$ must be non-positive. We see that (ii) holds true.

Finally, let us assume that (ii) holds. It suffices to show that
\begin{align*}
\LP(xx') = \left((w')^{-1} \LP(x)\right)\cap \LP(x'),
\end{align*}
as (iii) follows from this identity. Now for $v\in W$, we have
\begin{align*}
v\in \LP(xx')\iff& \forall \alpha\in \Phi^+:~\ell(xx',v\alpha)\geq 0\\
\underset{\text{L\ref{lem:lengthFunctionalForProducts}(a)}}\iff&\forall \alpha\in \Phi^+:~\ell(x,w'v\alpha) + \ell(x',v\alpha)\geq 0\\
\underset{(ii)}\iff&\forall \alpha\in \Phi^+:~\ell(x,w'v\alpha)\geq 0\text{ and }\ell(x',v\alpha)\geq 0\\
\iff&v\in \left((w')^{-1} \LP(x)\right)\cap \LP(x').\qedhere
\end{align*}
\end{proof}
Given one element $v\in \LP(x)$, one can use it to iteratively enumerate all of $\LP(x)$.
\begin{lemma}\label{lem:LPEnumeration}
Let $x =w\varepsilon^\mu\in \widetilde W$ and $v\in \LP(x)$.
\begin{enumerate}[(a)]
\item For every simple root $\alpha\in \Delta$, we have
\begin{align*}
\ell(x,v\alpha)=0\iff vs_\alpha\in \LP(x).
\end{align*}
\item If the root $\alpha\in \Phi^+$ satisfies $\ell(x,v\alpha)=0$, then there also exists a simple root with this property.
\item Consider the undirected graph $G_{\LP(x)}$ whose vertices are given by $\LP(x)$ and whose edges are of the form $(v,vs_\alpha)$ for $\alpha\in \Delta$ and $v,vs_\alpha\in \LP(x)$. Then $G_{\LP(x)}$ is connected.
\end{enumerate}
\end{lemma}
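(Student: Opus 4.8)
My plan is to prove the three parts in order, with part (a) as the computational core and parts (b), (c) following by combinatorial arguments on the root system.

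For part (a), let $\alpha \in \Delta$ be simple and suppose first that $vs_\alpha \in \LP(x)$. Since $\ell(x,\cdot)$ is a root functional and $\ell(x,v\alpha) = -\ell(x,-v\alpha) = -\ell(x,vs_\alpha\alpha)$, length positivity of both $v$ (applied to $\alpha \in \Phi^+$) and $vs_\alpha$ (applied to $\alpha\in\Phi^+$, noting $vs_\alpha\cdot\alpha = v(-\alpha) = -v\alpha$ so $\ell(x,vs_\alpha\alpha) = -\ell(x,v\alpha)\geq 0$) forces $\ell(x,v\alpha) = 0$. Conversely, assume $v\in\LP(x)$ and $\ell(x,v\alpha) = 0$; I must check $\ell(x,vs_\alpha\beta)\geq 0$ for every $\beta\in\Phi^+$. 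For $\beta = \alpha$ this is $\ell(x,-v\alpha) = -\ell(x,v\alpha) = 0$. For $\beta\in\Phi^+\setminus\{\alpha\}$, since $\alpha$ is simple we have $s_\alpha\beta\in\Phi^+$, and I want to transfer the inequality $\ell(x,v s_\alpha\beta)\geq 0$ (which holds since $v\in\LP(x)$ and $s_\alpha\beta\in\Phi^+$) to the claim — but $\ell(x,vs_\alpha\beta)$ is exactly the quantity I want, so this direction is immediate once I observe $vs_\alpha\cdot\beta = v(s_\alpha\beta)$ and $s_\alpha\beta\in\Phi^+$. So part (a) reduces entirely to the simple-root stabilization property of $s_\alpha$ together with the identity $\ell(x,\gamma) = -\ell(x,-\gamma)$.

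For part (b), suppose $\alpha\in\Phi^+$ with $\ell(x,v\alpha) = 0$ but no simple root has this property, i.e.\ $\ell(x,v\beta) > 0$ for every $\beta\in\Delta$ (it is $\geq 0$ by length positivity). I would write $\alpha = \sum_{\beta\in\Delta} c_\beta\beta$ with $c_\beta\geq 0$ integers, and argue by induction on the height $\sum c_\beta$: pick a simple $\beta$ with $c_\beta > 0$ such that $\alpha - \beta\in\Phi^+$ (standard for root systems when $\alpha$ is not simple and has height $\geq 2$), apply the root-functional subadditivity $\ell(x,v\alpha)\geq \ell(x,v\beta) + \ell(x,v(\alpha-\beta)) - 1$; since $\ell(x,v\beta)\geq 1$ and $\ell(x,v(\alpha-\beta))\geq 0$ we get $\ell(x,v\alpha)\geq 0$, but that is not quite the contradiction I need. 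The cleaner route: if $\alpha$ is not simple, there is $\beta\in\Delta$ with $\langle\alpha,\beta^\vee\rangle > 0$ and $\alpha\neq\beta$, so $s_\beta\alpha = \alpha - \langle\alpha,\beta^\vee\rangle\beta\in\Phi^+$ has strictly smaller height; using $\ell(x,v\alpha) = \ell(x,vs_\beta s_\beta\alpha)$ and the additivity/reflection identities from Lemma~\ref{lem:lengthFunctionalForProducts} together with part (a) applied after replacing $v$ by $vs_\beta\in\LP(x)$ (which is legitimate once $\ell(x,v\beta)=0$, but here $\ell(x,v\beta)>0$) — so instead I argue directly: from $0 = \ell(x,v\alpha)$ and $\ell(x,v\alpha) = \langle\alpha,\beta^\vee\rangle\ell(x,v\beta) + \ell(x,vs_\beta\alpha)$ up to an error bounded by $\langle\alpha,\beta^\vee\rangle$ (iterating root-functional subadditivity along $k\beta - (-\alpha)$ as in the proof of Lemma~\ref{lem:rootFunctionalAdjustment}), combined with $\ell(x,v\beta)\geq 1$, I get $\ell(x,vs_\beta\alpha)\leq 0$, hence $= 0$ by length positivity of $v$; now $s_\beta\alpha\in\Phi^+$ has smaller height, and I descend. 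The base case of the descent is a simple root, contradicting the assumption. I expect this height-descent argument, and pinning down the exact inequality relating $\ell(x,v\alpha)$, $\ell(x,v\beta)$, $\ell(x,vs_\beta\alpha)$, to be the main obstacle, as it is the one place genuine root-system bookkeeping is unavoidable.

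For part (c), connectivity follows from (a) and (b) by the standard argument that $\LP(x)$ is a connected region in the weak order. Given $v, v'\in\LP(x)$, I would induct on $\ell(v^{-1}v')$; if this is positive there is a simple root $\alpha$ with $\ell(v^{-1}v' s_\alpha)$... rather, working on the left: there is $\alpha\in\Delta$ with $v' = v\cdot(\text{reduced word})$, pick the first letter, i.e.\ write $v' = vu$ with $\ell(v') = \ell(v) + \ell(u)$ — hmm, better: among the inversions distinguishing $v$ from $v'$, choose a simple root $\alpha$ such that $v\alpha\in\Phi^+$ but $(v')^{-1}$-side... Concretely: since $v\neq v'$, there is a simple root $\alpha$ with $v^{-1}v'$ not positive for... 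Let me instead use that $v, v'\in\LP(x)$ means both are positive for the functional $\ell(x,\cdot)$; I claim if $v\neq v'$ there is $\alpha\in\Delta$ with $vs_\alpha\in\LP(x)$ and $\ell((vs_\alpha)^{-1}v') < \ell(v^{-1}v')$. To find it: pick $\alpha\in\Delta$ with $v\alpha\in \Phi^+$ and $v'{}^{-1}v(-\alpha)\in\Phi^-$, equivalently $\ell(v s_\alpha \Rightarrow v')$ decreases — such $\alpha$ exists since $v\neq v'$ by the exchange condition. Then $v\alpha \in \Phi^+$ and I need $\ell(x, v\alpha) = 0$ to invoke (a). This must follow because the "direction" $v\alpha$ points toward $v'$, which is also length positive, so $\ell(x,v\alpha)$ cannot be strictly positive — I would make this precise by noting that if $\ell(x,v\alpha) > 0$ then $v\alpha$ would be a genuine inversion obstruction shared incompatibly, contradicting $v'\in\LP(x)$; the cleanest formulation uses Lemma~\ref{lem:lengthAdditivity}. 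Then $vs_\alpha\in\LP(x)$ by (a), it is strictly closer to $v'$, and induction finishes. The edge case where no such $\alpha$ with $\ell(x,v\alpha) = 0$ exists should be impossible precisely by the same reasoning that drives (b). I would present (c) last, leaning on (a) and (b), and flag that the only subtlety is choosing the simple reflection that both lies in $\LP(x)$ and moves us toward $v'$.
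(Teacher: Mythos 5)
Your parts (a) and (b) are correct and essentially follow the paper. In (a) both directions come down to the antisymmetry $\ell(x,-\gamma)=-\ell(x,\gamma)$ plus the fact that $s_\alpha$ permutes $\Phi^+\setminus\{\alpha\}$, exactly as in the paper. In (b) your final reflection-descent works: assuming $\ell(x,v\beta)\geq 1$ for every $\beta\in\Delta$, pick a simple $\beta$ with $m:=\langle\alpha,\beta^\vee\rangle>0$; since the $\beta$-string through $\alpha$ is unbroken, iterating the root-functional inequality gives $\ell(x,vs_\beta\alpha)\leq \ell(x,v\alpha)-m\,\ell(x,v\beta)+m\leq 0$, hence $=0$ by length positivity, and you descend on height until you hit a simple root, a contradiction. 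The paper's version is a bit lighter: write $\alpha=\beta+\gamma$ with $\beta,\gamma\in\Phi^+$ arbitrary; if both values were $\geq 1$, the root-functional property would force $\ell(x,v\alpha)\geq 1$, so one of them is $0$, and one iterates --- no reflections or root strings needed.

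For (c) your route is genuinely different from the paper's, and it does work, but you stop just short of the step that makes it go through, and the bookkeeping is off. Given $v\neq v'$ in $\LP(x)$, choose $\alpha\in\Delta$ to be a left descent of $v^{-1}v'$, i.e.\ $(v')^{-1}v\alpha\in\Phi^-$ (your condition \enquote{$v\alpha\in\Phi^+$ and $(v')^{-1}v(-\alpha)\in\Phi^-$} has the sign reversed, and the extra requirement $v\alpha\in\Phi^+$ is neither needed nor always satisfiable). Then $\ell(x,v\alpha)=0$ holds automatically: $\alpha\in\Phi^+$ and $v\in\LP(x)$ give $\ell(x,v\alpha)\geq 0$, while $\beta:=-(v')^{-1}v\alpha\in\Phi^+$ satisfies $v'\beta=-v\alpha$, so $v'\in\LP(x)$ gives $\ell(x,v\alpha)=-\ell(x,v'\beta)\leq 0$. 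This is the precise form of your \enquote{shared inversion obstruction}; Lemma~\ref{lem:lengthAdditivity} is not the relevant tool, part (b) is not needed, and the \enquote{edge case} you worry about never occurs. Now (a) gives $vs_\alpha\in\LP(x)$, and $\ell((vs_\alpha)^{-1}v')=\ell(v^{-1}v')-1$, so induction on $\ell(v^{-1}v')$ produces an explicit path in $G_{\LP(x)}$ from $v$ to $v'$. The paper instead proves (c) by showing that every connected component contains the distinguished length-positive element of Example~\ref{ex:usualLPelement}: it takes $v'$ in the component with $\ell(wv')$ minimal and verifies $\langle\mu,v'\alpha\rangle+\Phi^+(v'\alpha)\geq 1$ for all $\alpha\in\Delta$, which characterizes that element. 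Your argument is more direct and self-contained; the paper's buys a canonical basepoint in $\LP(x)$, which is the same element exploited again in the proof of Proposition~\ref{prop:signTypeFromLP}.
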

\begin{proof}
\begin{enumerate}[(a)]
\item If $vs_\alpha\in \LP(x)$, then $\ell(x,v\alpha)$ and $\ell(x,vs_\alpha \alpha) = -\ell(x,v\alpha)$ must both be non-negative. This is only possible if $\ell(x,v\alpha)=0$.

If $\ell(x,v\alpha)=0$, confirm that $\ell(x,v\beta)\geq 0$ for all $\beta\in \Phi^+\cup\{-\alpha\}$. The latter set is preserved by $s_\alpha$.
\item Suppose $\alpha \in \Phi^+\setminus\Delta$ satisfies $\ell(x,v\alpha)=0$. We can write $\alpha=\beta+\gamma$ for positive roots $\beta,\gamma\in \Phi^+$. By length positivity, $\ell(x,v\beta),\ell(x,v\gamma)\geq 0$. If both of these values are $\geq 1$, we get $\ell(x,v\alpha)\geq 1$ by the root functional property. Hence $\ell(x,v\beta)=0$ or $\ell(x,v\gamma)=0$. We can iterate this argument.
\item Let $C\subseteq \LP(x)$ denote the connected component that contains $v$. Among all $v'\in C$, pick one such that $\ell(wv')$ is minimal.

We claim that \begin{align*}\forall\alpha\in \Delta:~\langle \mu,v'\alpha\rangle+\Phi^+(v'\alpha)\geq 1.\tag{$\ast$}\end{align*}
\begin{itemize}
\item If $\ell(x,v'\alpha)=0$, then $v's_\alpha\in C$. Minimality of $\ell(wv')$ ensures $\ell(wv's_\alpha)\geq \ell(wv')$, i.e.\ $wv'\alpha\in \Phi^+$. By definition, $\ell(xv'\alpha)=0$ implies $\langle \mu,v'\alpha\rangle + \Phi^+(v'\alpha)=1$.
\item If $\ell(x,v'\alpha)\geq 1$, we get
\begin{align*}
\langle \mu,v'\alpha\rangle+\Phi^+(v'\alpha)\geq \ell(x,v'\alpha)\geq 1.
\end{align*}
\end{itemize}
Let us re-read condition $(\ast)$: Not only is $(v')^{-1}\mu$ dominant, we have $v'\alpha\in \Phi^+$ for all $\alpha\in \Delta$ with $\langle (v')^{-1}\mu,\alpha\rangle=0$. This describes exactly the length positive element constructed in Example~\ref{ex:usualLPelement}.

To summarize: No matter which connected component of $G_{\LP(x)}$ we consider, it will always contain the one length positive element from Example~\ref{ex:usualLPelement}. Hence $G_{\LP(x)}$ is connected.\qedhere
\end{enumerate}
\end{proof}
We obtain the following description of the shrunken Weyl chambers:
\begin{proposition}\label{prop:shrunkenWeylChambers}
For $x \in \widetilde W$, the following are equivalent:
\begin{enumerate}[(a)]
\item $x$ lies in the lowest two-sided Kazhdan-Lusztig cell of $\widetilde W$.
\item For all $\alpha\in \Phi$, $\ell(x,\alpha)\neq 0$.
\item The set $\LP(x)$ contains only one element.
\end{enumerate}
In this case, we say that $x$ lies in a \emph{shrunken Weyl chamber}.
\end{proposition}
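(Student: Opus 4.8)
The plan is to derive (b)$\iff$(c) directly from the combinatorics of length-positive elements proved above, and then to identify condition (b) with (a) via the classical description of the lowest two-sided Kazhdan--Lusztig cell. For the equivalence (b)$\iff$(c), suppose first that (b) fails, say $\ell(x,\beta)=0$ for some $\beta\in\Phi$. Choose a length-positive $v\in\LP(x)$, which exists by Corollary~\ref{cor:rootFunctionalAdjustments}; after replacing $\beta$ by $-\beta$ if necessary --- harmless since $\ell(x,-\beta)=-\ell(x,\beta)$ --- we may assume $v^{-1}\beta\in\Phi^+$, so that $\ell\bigl(x,v(v^{-1}\beta)\bigr)=0$ with $v^{-1}\beta$ positive. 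Lemma~\ref{lem:LPEnumeration}(b) then provides a simple root $\alpha\in\Delta$ with $\ell(x,v\alpha)=0$, whence $vs_\alpha\in\LP(x)\setminus\{v\}$ by Lemma~\ref{lem:LPEnumeration}(a); thus (c) fails. Conversely, if $\#\LP(x)\geq 2$, the graph $G_{\LP(x)}$ is connected (Lemma~\ref{lem:LPEnumeration}(c)) and has more than one vertex, so it contains an edge $(v,vs_\alpha)$ with $\alpha\in\Delta$; by Lemma~\ref{lem:LPEnumeration}(a) this forces $\ell(x,v\alpha)=0$, so (b) fails.

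For (a)$\iff$(b) I would first note that all of (a), (b), (c) are invariant under left multiplication by $\Omega$: for (b) and (c) because $\ell(\omega,\cdot)\equiv 0$ (from $\ell(\omega)=0$, Corollary~\ref{cor:IwahoriMatsumoto}, and $\ell(\omega,\alpha)=-\ell(\omega,-\alpha)$) forces $\ell(\omega x,\cdot)=\ell(x,\cdot)$ by Lemma~\ref{lem:lengthFunctionalForProducts}(a); for (a) because the lowest two-sided cell of $\widetilde W$ is the union of the $\Omega$-translates of the (unique) lowest two-sided cell $c_0$ of $W_\af$. Hence we may assume $x=w\varepsilon^\mu\in W_\af$, and (a) becomes the assertion $x\in c_0$. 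Taking $v\in W$ as in Example~\ref{ex:usualLPelement}, so that $v^{-1}\mu$ is dominant, $v\in\LP(x)$, and the alcove $x\mathfrak a$ lies in the Weyl chamber $wvC$, the formula $\ell(x,v\alpha)=\langle v^{-1}\mu,\alpha\rangle-\Phi^+(-v\alpha)+\Phi^+(-wv\alpha)$ combined with Lemma~\ref{lem:lengthFunctionalAsCountingAffineRoots} turns condition (b) into an explicit system of strict inequalities recording that $x\mathfrak a$ is not adjacent to any affine hyperplane of the wall system bordering its Weyl chamber --- that is, that $x\mathfrak a$ lies in a \emph{shrunken} Weyl chamber. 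By the structure theory of the lowest two-sided cell of an affine Weyl group (Shi~\cite{Shi1986}, and subsequent work of Shi and of Bremke--Xi), this is exactly the condition $x\in c_0$, which establishes (a)$\iff$(b).

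The step I expect to be the main obstacle is this last identification: pinning down the precise dictionary between the length-functional condition (b) and the combinatorial description of the lowest two-sided cell found in the Kazhdan--Lusztig literature (there formulated through Shi's admissible sign types), and confirming that the translation survives the passage from $W_\af$ to the extended group $\widetilde W$. The rest --- the equivalence (b)$\iff$(c) and the reduction to $W_\af$ --- is routine given the results on $\LP(x)$ already at our disposal.
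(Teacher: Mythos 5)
Your proof of (b)$\iff$(c) is exactly the paper's: it is the direct application of Lemma~\ref{lem:LPEnumeration}(a)--(c), and your write-up of it is correct. For (a)$\iff$(b) the paper, like you, simply invokes the known description of the lowest two-sided cell (it cites \cite[Section~3.1]{He2021c} as well known), so your $\Omega$-reduction followed by an appeal to Shi's structure theory is essentially the same move with a different reference; just note that the geometric content of condition (b) is the existence, for every root direction, of an affine hyperplane separating $x\mathfrak a$ from $\mathfrak a$ (via Lemma~\ref{lem:lengthFunctionalAsCountingAffineRoots}), rather than non-adjacency of $x\mathfrak a$ to the walls of its Weyl chamber.
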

\begin{proof}
The equivalence (1) $\iff$ (2) is well known, cf.\ \cite[Section~3.1]{He2021c}.

The equivalence (2) $\iff$ (3) follows directly from Lemma~\ref{lem:LPEnumeration}.
\end{proof}
\begin{remark}
The length functional presented here is related to the $k$-function from \cite{Shi1987a}. For $w\in W, \mu\in X^\ast(T)$ and $\alpha\in \Phi$, Shi proves
\begin{align*}
k(wt^\mu,\alpha) = \langle \mu,\alpha^\vee\rangle + \Phi^+((\alpha)(w^{-1}))-\Phi^+(\alpha).
\end{align*}
This result is a translation of \cite[Lemma~3.1]{Shi1987a} and \cite[Theorem~3.3]{Shi1987a} into our \enquote{$\Phi^+(\cdot)$}-notation. Up to a few changes of conventions, this recovers exactly our length functional. We will make these changes to express a few of Shi's ideas in terms of the length functional.

Shi classifies the functions $\Phi\rightarrow\mathbb Z$ that are of the form $\ell(x,\cdot)$ in \cite[Proposition~5.1]{Shi1987a}.

Associated to each element $x\in \widetilde W$ and root $\alpha\in \Phi$, he defines $X(x,\alpha)\in \{+,\bigcirc,-\}$ as
\begin{align*}
X(x,\alpha) = \begin{cases}+,&\ell(x,\alpha)>0,\\
\bigcirc,&\ell(x,\alpha)=0,\\
-,&\ell(x,\alpha)<0.
\end{cases}
\end{align*}
The \emph{sign type} of $x$ is defined as $\zeta(x) = (X(x,\alpha))_{\alpha\in \Phi}$. The \emph{admissible} sign types, i.e.\ the image of $\zeta:\widetilde W\rightarrow\{+,\bigcirc,-\}^\Phi$, is explicitly described in \cite[Theorem~2.1]{Shi1987b}. Shi also computes the number of sign types and canonical representatives in $W_a$ for each.

For root systems of type $A_n$, the preimages $\zeta^{-1}(S)$ for the different admissible sign types $S$ form exactly the set of left Kazhdan-Lusztig cells for $W_a$ \cite{Shi1986}. An explicitly described equivalence relation of sign types then classifies the two-sided Kazhdan-Lusztig cells.

The question to fully describe the Kazhdan-Lusztig cells for all affine Weyl groups seems to be open.

The sign type $\zeta(x)$ determines the set of length positive elements for $x$. The converse is not true, i.e.\ it is possible to find groups $G$ and elements $x,y\in \widetilde W$ with $\LP(x) = \LP(y)$ but $\zeta(x)\neq \zeta(y)$. Computer searches have revealed such counterexamples for root systems of types $G_2$ and $B_2$, thus for every non simply-laced root system. For simply-laced root systems, we can prove that the set $\LP(x)$ determines the sign type $\zeta(x)$.
\end{remark}
\begin{proposition}\label{prop:signTypeFromLP}
Assume that $\Phi$ is simply laced, $x \in \widetilde W$ and $\alpha\in \Phi$. Then the following are equivalent:
\begin{enumerate}[(i)]
\item $\ell(x,\alpha)>0$.
\item For all $v\in \LP(x)$, we have $v^{-1}\alpha\in \Phi^+$.
\end{enumerate}
\end{proposition}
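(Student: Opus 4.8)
The statement is an equivalence, and I would prove the two implications separately; only (ii)$\Rightarrow$(i) will use that $\Phi$ is simply laced.

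\emph{(i)$\Rightarrow$(ii).} Suppose $\ell(x,\alpha)>0$ and let $v\in\LP(x)$. If $v^{-1}\alpha\in\Phi^-$ then $-v^{-1}\alpha\in\Phi^+$, so length positivity of $v$ gives $\ell(x,-\alpha)=\ell(x,v(-v^{-1}\alpha))\geq 0$, i.e.\ $\ell(x,\alpha)\leq 0$, a contradiction. Hence $v^{-1}\alpha\in\Phi^+$. For (ii)$\Rightarrow$(i) I argue the contrapositive: assume $\ell(x,\alpha)\leq 0$ and produce $v\in\LP(x)$ with $v^{-1}\alpha\in\Phi^-$. If $\ell(x,\alpha)<0$, any $v\in\LP(x)$ works, since $v^{-1}\alpha\in\Phi^+$ would give $\ell(x,\alpha)=\ell(x,v(v^{-1}\alpha))\geq 0$ by length positivity. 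So the whole content is the case $\ell(x,\alpha)=0$, which I isolate as a claim: if $\ell(x,\alpha)=0$ there is $v\in\LP(x)$ with $v^{-1}\alpha\in\Phi^-$.

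To prove the claim, set $\psi:=\ell(x,\cdot)$ and $\Phi^+_\psi:=\{\gamma\in\Phi\mid\psi(\gamma)>0\}$. Two structural observations: first, $\Phi^+_\psi$ is closed under addition, because for $\gamma,\gamma'\in\Phi^+_\psi$ with $\gamma+\gamma'\in\Phi$ the root functional inequality gives $\psi(\gamma+\gamma')\geq\psi(\gamma)+\psi(\gamma')-1\geq 1$; second, using $\psi(-\gamma)=-\psi(\gamma)$, an element $v\in W$ lies in $\LP(x)$ iff $\psi\geq 0$ on $v\Phi^+$ iff no $\gamma\in v\Phi^+$ has $\psi(\gamma)<0$ iff $\Phi^+_\psi\subseteq v\Phi^+$. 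Thus $\LP(x)=\{v\in W\mid\Phi^+_\psi\subseteq v\Phi^+\}$, and since $\LP(x)\neq\emptyset$ the set $\Phi^+_\psi$ lies in some positive system, so $0\notin\conv(\Phi^+_\psi)$. Finding $v\in\LP(x)$ with $v^{-1}\alpha\in\Phi^-$ now amounts to finding a positive system $v\Phi^+$ containing $\Phi^+_\psi\cup\{-\alpha\}$, and by a separating-hyperplane argument together with density of regular vectors such a positive system exists iff $0\notin\conv(\Phi^+_\psi\cup\{-\alpha\})$, which (given $0\notin\conv(\Phi^+_\psi)$ and $\alpha\neq 0$) is equivalent to $\alpha\notin\operatorname{cone}(\Phi^+_\psi)$. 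So the claim reduces to showing: $\psi(\alpha)=0$ implies $\alpha\notin\operatorname{cone}(\Phi^+_\psi)$.

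I would prove this by contradiction. Suppose $\alpha\in\operatorname{cone}(\Phi^+_\psi)$. The key step is to write $\alpha$ as an ordered sum $\alpha=\gamma_1+\dots+\gamma_m$ with every $\gamma_i\in\Phi^+_\psi$ and every partial sum $\gamma_1+\dots+\gamma_j$ a root; granting this, iterating the root functional inequality gives $\psi(\alpha)\geq\sum_i\psi(\gamma_i)-(m-1)\geq m-(m-1)=1$, contradicting $\psi(\alpha)=0$. To obtain the decomposition I would induct on the height of $\alpha$ (after translating by an element of $\LP(x)$ so that $\Phi^+_\psi\subseteq\Phi^+$, whence $\alpha\in\operatorname{cone}(\Phi^+_\psi)$ is automatically a positive root): choose a representation $\alpha=\sum_{\gamma\in\Phi^+_\psi}c_\gamma\gamma$ with $c_\gamma\geq 0$ and $\sum_\gamma c_\gamma$ minimal; minimality forces $(\gamma,\gamma')\geq 0$ for any two roots of positive coefficient (else $\gamma+\gamma'\in\Phi^+_\psi$ and trading $\gamma+\gamma'$ against $\gamma,\gamma'$ lowers $\sum c_\gamma$), hence $(\alpha,\gamma)>0$ for each such $\gamma$, and since $\Phi$ is simply laced either $\alpha=\gamma\in\Phi^+_\psi$ (done) or $\langle\alpha,\gamma^\vee\rangle=1$, so $\alpha-\gamma=s_\gamma\alpha$ is a root of smaller height; one then selects such a $\gamma$ with $\alpha-\gamma\in\Phi^+$ and $\alpha-\gamma\in\operatorname{cone}(\Phi^+_\psi)$, applies the inductive hypothesis to $\alpha-\gamma$, and prepends $\gamma$.

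The main obstacle is precisely the last point — guaranteeing that the peeled-off $\gamma$ can be chosen so that $\alpha-\gamma$ is still a positive root \emph{lying in} $\operatorname{cone}(\Phi^+_\psi)$. Minimality alone does not give this: it is false for arbitrary addition-closed subsets of a simply-laced root system (for instance $C=\{e_1\pm e_2,\,e_3\pm e_4\}\subseteq D_4$ with $\alpha=e_1+e_3=\tfrac12\sum C$), so the argument must genuinely exploit that $\Phi^+_\psi$ comes from an element of $\widetilde W$ — equivalently, that $\Phi^+_\psi$ and a suitable companion set are both closed — in combination with simple-lacedness. This is also exactly where the proposition breaks for the non-simply-laced types $B_2$ and $G_2$: there one can find $x$ and $\alpha$ with $\ell(x,\alpha)=0$ yet $\alpha\in\operatorname{cone}(\Phi^+_\psi)$, so no $v\in\LP(x)$ inverts $\alpha$.
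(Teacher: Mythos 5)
Your reduction is sound as far as it goes: (i)$\Rightarrow$(ii) is immediate, the case $\ell(x,\alpha)<0$ is immediate, the identification $\LP(x)=\{v\in W\mid \Phi^+_\psi\subseteq v\Phi^+\}$ is correct, and the separating-hyperplane reformulation correctly reduces the remaining case $\ell(x,\alpha)=0$ to the assertion that $\alpha\notin\operatorname{cone}(\Phi^+_\psi)$. But that assertion is exactly the content of the proposition --- it is the only place where simple-lacedness and the fact that $\psi=\ell(x,\cdot)$ is a length functional of an actual element of $\widetilde W$ (rather than an arbitrary root functional whose positive part is closed under addition) can enter --- and you do not prove it. Your induction on height needs, at each step, a $\gamma\in\Phi^+_\psi$ with positive coefficient such that $\alpha-\gamma$ is again a positive root still lying in $\operatorname{cone}(\Phi^+_\psi)$; as your own $D_4$ example $\{e_1\pm e_2,\,e_3\pm e_4\}$ shows, minimality of $\sum_\gamma c_\gamma$ plus closure under addition does not yield such a $\gamma$, and you do not identify which extra property of $\{\gamma\mid\ell(x,\gamma)>0\}$ would. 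So the proposal has a genuine, self-acknowledged gap, and it sits precisely at the crux of the statement.

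For comparison, the paper never passes through the cone statement. It first uses the distinguished length-positive elements of Example~\ref{ex:usualLPelement}, applied to $x$ and (via Lemma~\ref{lem:lengthFunctionalForProducts}) to $x^{-1}$, to dispose of the cases $\alpha\in\Phi^+$ and $w\alpha\in\Phi^-$, reducing to $\alpha\in\Phi^-$, $w\alpha\in\Phi^+$, $\langle\mu,\alpha\rangle=-1$. It then runs a lexicographic-minimality induction on \enquote{root sequences}: chains of positive roots $v^{-1}\alpha=\beta_1>\cdots>\beta_\ell$ attached to some $v\in\LP(x)$ with $\langle\mu,v\beta_i\rangle=-1$ and consecutive terms differing by a root. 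Minimality forces $\beta_\ell$ to be simple, so $\ell(x,v\beta_\ell)=0$ and $vs_{\beta_\ell}\in\LP(x)$ by Lemma~\ref{lem:LPEnumeration}; simple-lacedness gives $\langle\beta_\ell^\vee,\beta_{\ell-1}\rangle=1$, which lets one reflect the sequence by $s_{\beta_\ell}$ and contradict minimality unless $\ell=1$, in which case $vs_{\beta_1}$ inverts $\alpha$ as required. If you want to salvage your route, you would need an argument of comparable substance establishing $\alpha\notin\operatorname{cone}(\Phi^+_\psi)$ when $\ell(x,\alpha)=0$; nothing in the current sketch supplies it, although the reformulation itself (and the observation of exactly where non-simply-laced types break) is a nice way to frame the problem.
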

\begin{proof}
The implication (i) $\Rightarrow$ (ii) follows from the definition of length positivity.

Now assume (ii). The condition $v^{-1}\alpha\in \Phi^+$ for one $v\in \LP(x)$ already implies $\ell(x,\alpha)\geq 0$. Aiming for a contradiction, we thus assume that $\ell(x,\alpha)=0$.

Recall from Example~\ref{ex:usualLPelement} that there exists an element $v\in \LP(x)$ such that
\begin{align*}
\forall \beta\in \Phi^+:~\langle \mu,v\beta\rangle + \Phi^+(v\beta)\geq 1.
\end{align*}
Considering the case $\beta = v^{-1}\alpha\in \Phi^+$ (by (ii)), we see
\begin{align*}
\ell(x,\alpha) = \langle \mu,v\beta\rangle + \Phi^+(v\beta) - \Phi^+(w\alpha)\geq 1-\Phi^+(w\alpha).
\end{align*}
So if $w\alpha \in \Phi^-$, we conclude (i).

Considering the same situation for $x^{-1}$ by Lemma~\ref{lem:lengthFunctionalForProducts}, we find an element $v\in \LP(x)$ such that
\begin{align*}
\forall \beta\in \Phi^+:~\langle \mu,v\beta\rangle - \Phi^+(wv\beta)\geq 0.
\end{align*}
Considering the case $\beta = v^{-1}\alpha\in \Phi^+$, we see
\begin{align*}
\ell(x,\alpha) = \langle \mu,v\beta\rangle + \Phi^+(\alpha) - \Phi^+(wv\beta)\geq \Phi^+(\alpha).
\end{align*}
So if $\alpha\in \Phi^+$, we are done again.

Let us thus assume from now on that $\alpha\in \Phi^-$ and $w\alpha\in \Phi^+$. In light of the assumption $\ell(x,\alpha)=0$, we can restate this as $\langle \mu,\alpha\rangle = -1$.

For roots $\beta,\gamma\in \Phi$, we write $\beta\leq \gamma$ if the difference $\gamma-\beta$ is a sum of positive roots, and we write $\beta<\gamma$ is moreover $\beta\neq\gamma$.

We define a \emph{root sequence} associated to an element $v\in \LP(x)$ to be a sequence
\begin{align*}
v^{-1}\alpha=\beta_1>\cdots>\beta_\ell\in \Phi^+
\end{align*}
such that $\beta_{i+1}-\beta_i\in\Phi^+$ for $i=1,\dotsc,\ell-1$ and $\langle \mu,v\beta_i\rangle=-1$ for $i=1,\dotsc,\ell$.

Certainly, we can find a root sequence for each $v\in \LP(x)$ of length $1$ by setting $\beta_1 =v^{-1}\alpha$.

We order the set of root sequences lexicographically. Explicitly, consider root sequences $(\beta_1,\dotsc,\beta_\ell)$ associated with $v\in \LP(x)$ and $(\beta_1',\dotsc,\beta'_{\ell'})$ associated with $v'\in \LP(x)$. We write $(\beta_1,\dotsc,\beta_\ell)<(\beta_1',\dotsc,\beta'_{\ell'})$ if one of the following conditions is satisfied:
\begin{itemize}
\item There is $i\in \{1,\dotsc,\min\{\ell,\ell'\}\}$ with $\beta_{i'} = \beta_{i'}'$ for $i'=1,\dotsc,i-1$ and $\beta_i<\beta_i'$.
\item We have $\ell>\ell'$ and $\beta_i = \beta_i'$ for $i=1,\dotsc,\ell'$.
\end{itemize}

Among all possible $v\in \LP(x)$ and root sequences $(\beta_1,\dotsc,\beta_\ell)$ associated with them, we choose a pair such that the root sequence becomes minimal with respect to the above order.

We first claim that $\beta_\ell$ is simple: Indeed, if we had $\beta_\ell = \gamma_1+\gamma_2$ for positive roots $\gamma_1,\gamma_2$, then $\ell(x,v\gamma_1),\ell(v,\gamma_2)\geq 0$ by length positivity. Thus
\begin{align*}
\langle \mu,v\gamma_1\rangle\geq -1,\quad \langle \mu,v\gamma_2\rangle\geq -1,\quad \langle \mu,v\gamma_1+v\gamma_2\rangle = -1.
\end{align*}
Hence $\langle \mu,v\gamma_i\rangle=-1$ for one of the roots $\gamma_1,\gamma_2$. We see that we can extend the root sequence $(\beta_1,\dotsc,\beta_\ell)$, which contradicts minimality by definition.

Note that $\langle \mu,v\beta_\ell\rangle=-1$ and $\ell(x,v\beta_\ell)\geq 0$ implies $\ell(x,v\beta_\ell)=0$. By Lemma~\ref{lem:LPEnumeration}, this means $v' = vs_{\beta_\ell}\in \LP(x)$.

If $\ell=1$, then $(v')^{-1}\alpha = -v^{-1}\alpha$, so we get the desired contradiction to (ii). Therefore, $\ell>1$.

We claim that $\langle \beta_\ell^\vee,\beta_i\rangle\geq 0$ for $i=1,\dotsc,\ell$: Indeed, if we had $\langle \beta_\ell^\vee,\beta_i\rangle<0$, then $\beta_i+\beta_\ell\in \Phi^+$. So we get
\begin{align*}
\ell(x,v(\beta_i+\beta_\ell))\geq 0\text{ and }\langle \mu,v\beta_i + v\beta_\ell\rangle=-2.
\end{align*}
This is impossible.

Note that $\langle \beta_\ell^\vee,\beta_{\ell-1}\rangle=1$, as $\beta_{\ell-1}$ is the sum of $\beta_\ell$ with another root, and $\Phi$ is simply laced.

We thus may pick $\ell'\in\{1,\dotsc,\ell-1\}$ minimally such that $\langle \beta_\ell^\vee,\beta_{\ell'}\rangle>0$. Consider the root sequence
\begin{align*}
\beta_i' = s_{\beta_\ell}(\beta_i),\quad i=1,\dotsc,\ell'.
\end{align*}
This is a root sequence associated with $v' = vs_{\beta_\ell}\in \LP(x)$. Since $\beta_i' = \beta_i$ for $i=1,\dotsc,\ell'-1$ (by choice of $\ell'$), and $\beta_{\ell'}' < \beta_{\ell'}$, it is a smaller root sequence.

This is finally a contradiction to minimality.
\end{proof}
\begin{remark}
The above proof encodes an algorithm, which finds for each root $\alpha\in \Phi$ with $\ell(x,\alpha)=0$ and each $v\in \LP(x)$ a sequence for elements in $\LP(x)$ as in Lemma~\ref{lem:LPEnumeration}. The sequence starts at $v$ and ending in an element $v'\in \LP(x)$ satisfying $(v')^{-1}\alpha\in \Phi^-$. As noted before, this proposition is false for every non simply laced root system.

There is a different way to formulate the above proposition. Deciding automata for reduced words in Coxeter groups have been studied with great interest in the past, cf.\ \cite[Sections 4.8 and 4.9]{Bjorner2005} and the plethora of references provided there. For an affine Weyl group, the nodes of the \emph{canonical automaton} are precisely given by Shi's sign types, cf.\ \cite[Proposition~76]{Ericksson1994}. The nodes of the \emph{minimal automaton} are precisely given by the sets $\LP(x)$ for $x\in\widetilde W$, this follows from Lemma~\ref{lem:lengthAdditivity}. So Proposition~\ref{prop:signTypeFromLP} can be re-phrased as follows: The canonical automaton of an affine Weyl group is minimal if and only if the underlying root system is simply laced.
\end{remark}
\subsection{Quantum Bruhat graph}\label{sec:quantumBruhatGraph}
Associated to the root system $\Phi$, we have the \emph{quantum Bruhat graph} $\QB(W)$ as introduced by Brenti-Fomin-Postnikov \cite{Brenti1998}. This graph and its associated weight function play a crucial role in Section~\ref{chap:generic-sigma-conjugation} when discussing generic $\sigma$-conjugacy classes. In the past, the quantum Bruhat graph was used as a technical tool in a number of different contexts \cite{Postnikov2005, Lam2010, Lenart2015, Milicevic2021, Milicevic2020, Sadhukhan2021, He2021c, He2021d}.
\begin{definition}
\begin{enumerate}[(a)]
\item The \emph{quantum Bruhat graph} associated with $\Phi$, denoted $\QB(W)$, is a $\mathbb Z\Phi^\vee$-weighted directed graph with vertex set $W$. Its edges are of the form $w\rightarrow ws_\alpha$ for $w\in W$ and $\alpha\in \Phi^+$ such that one of the following conditions is satisfied:
\begin{itemize}
\item[(B)] $\ell(ws_\alpha) = \ell(w)+1$ or
\item[(Q)] $\ell(ws_\alpha) = \ell(w) +1-\langle \alpha^\vee,2\rho\rangle$.
\end{itemize}
\item Edges of type (B) are called \emph{Bruhat edges} and have weight $0\in \mathbb Z\Phi^\vee$. Edges of type (Q) are called \emph{quantum edges} and have weight $\alpha^\vee\in \mathbb Z\Phi^\vee$.
\item If $w, w'\in W$, a \emph{path from $w$ to $w'$} is a sequence of adjacent edges in $\QB(W)$
\begin{align*}
p:w = w_1\rightarrow w_2\rightarrow\cdots\rightarrow w_{\ell(p)+1} = w'.
\end{align*}
The \emph{length} of $p$ is the number of edges, denoted $\ell(p)$. The \emph{weight} of $p$ is the sum of its edges weights', denoted $\wt(p)\in \mathbb Z\Phi^\vee$.
\item A path $p$ from $w$ to $w'$ is \emph{shortest} if there is no path $p'$ from $w$ to $w'$ with $\ell(p')<\ell(p)$. In that case, we define $d(w\Rightarrow w') := \ell(p)$.
\end{enumerate}
\end{definition}
\begin{lemma}[{\cite[Lemma~1]{Postnikov2005}}]Let $w, w'\in W$
\begin{enumerate}[(a)]
\item There exists a path from $w$ to $w'$ in $\QB(W)$.
\item Any two shortest paths from $w$ to $w'$ have the same weight, denoted $\wt(w\Rightarrow w')\in \mathbb Z\Phi^\vee$.
\item Any path $p$ from $w$ to $w'$ has weight $\wt(p)\geq \wt(w\Rightarrow w')$.\rightqed
\end{enumerate}
\end{lemma}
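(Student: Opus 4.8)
The three statements are really one about the structure of $\QB(W)$, so I would prove them together, following Postnikov's original argument, but organized so that the combinatorics is transparent. The plan is to induct on $\ell(w) + \ell(w')$, or rather to set up a "tilting" argument: one shows that for any $w \neq w'$ there is at least one edge out of $w$ that is part of a shortest path to $w'$, and that the weight of this first edge is forced.

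First I would establish part (a) (existence of a path). The cleanest way: if $w \neq w'$, pick a simple root $\alpha \in \Delta$ with $\ell(w_0 w^{-1} w' \cdot \text{something}) $ — more simply, pick $\alpha \in \Delta$ such that $w^{-1}w' $ is not the identity and there is a simple reflection $s_\alpha$ with $\ell(w s_\alpha) \neq \ell(w)$ bringing $w$ "closer" to $w'$ in Bruhat order from below, or use a quantum edge when no Bruhat edge down-to-$w'$ is available. The key combinatorial input is: for every $w \in W$ and every simple root $\alpha$, exactly one of the two edge conditions (B), (Q) holds for the pair $(w,\alpha)$ — either $\ell(ws_\alpha) = \ell(w)+1$ (Bruhat edge $w \to ws_\alpha$) or $\ell(ws_\alpha) = \ell(w)-1$, in which case $(ws_\alpha, \alpha)$ gives a quantum edge $ws_\alpha \to w$ is wrong direction, so instead one checks $w \to ws_\alpha$ is a quantum edge precisely when $\ell(ws_\alpha)=\ell(w)+1-\langle\alpha^\vee,2\rho\rangle$; for $\alpha$ simple and $\ell(ws_\alpha)=\ell(w)-1$ this fails in general. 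So existence needs a genuine argument: I would invoke the fact (Brenti–Fomin–Postnikov) that $\QB(W)$ is strongly connected, or reprove it by induction on $\ell(v_0 w^{-1} w')$ using that from any $w$ one can always move to any cover $w s_\beta \gtrdot w$ in Bruhat order via a Bruhat edge, and from any $w$ one can reach $w s_{\theta}$ via a quantum edge for $\theta$ a suitable highest root, allowing one to "wrap around."

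For parts (b) and (c) — the real content — I would argue by induction on $d(w \Rightarrow w')$ (granting (a), this is a well-defined nonnegative integer). The crux is a local lemma: if $p: w \to w_2 \to \cdots \to w'$ and $q: w \to w_2' \to \cdots \to w'$ are two shortest paths with $w_2 \neq w_2'$, then one can find a shortest path through $w_2$ and a shortest path through $w_2'$ with equal weight, by a "diamond" / exchange argument inside the quantum Bruhat graph, reducing to the case of paths of length $2$ between $w_2$ and $w_2'$ and checking the weight identity there using the relation $\langle \alpha^\vee, 2\rho\rangle + \langle \beta^\vee, 2\rho\rangle = \langle (\text{sum})^\vee, 2\rho\rangle + \cdots$ coming from $\ell$ being additive along the path. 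Part (c) (minimality of weight among all paths) then follows: any path can be shortened or its weight decreased using that a quantum edge contributes $\alpha^\vee \geq 0$ and going "down" in Bruhat order along a non-shortest detour only adds non-negative coroots — one formalizes this by noting that the weight of any path minus $\wt(w\Rightarrow w')$ is a sum of positive coroots, which can be extracted from the length bookkeeping $\ell(w') \equiv \ell(w) + \ell(p) - \langle \wt(p), 2\rho\rangle \pmod{?}$, in fact the exact identity $\ell(w') - \ell(w) = \ell(p) - \langle \wt(p), 2\rho \rangle$ holds for every path $p$, which pins down $\ell(p) = d(w\Rightarrow w') \iff \wt(p)$ is minimal.

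The main obstacle I expect is the diamond lemma for weights of shortest paths — i.e., proving that two shortest paths sharing endpoints but diverging at the first step can be "bridged" without changing weight. This requires a careful case analysis of the four possible edge types (BB, BQ, QB, QQ) for the two length-two sub-paths, together with the root-system identity that makes the weights match; it is exactly the place where the precise normalization of quantum edge weights ($\alpha^\vee$, not $\alpha^\vee/2$ or similar) and the formula $\ell(ws_\alpha) = \ell(w) + 1 - \langle\alpha^\vee, 2\rho\rangle$ are used. Everything else — existence of paths, the translation between "shortest" and "minimal weight," and the inductive packaging — is comparatively routine once this local statement is in hand. Since the lemma is quoted as \cite[Lemma~1]{Postnikov2005}, I would in practice just cite it, but the above is the self-contained route.
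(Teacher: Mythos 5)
The paper does not prove this lemma at all: it is quoted with a reference to \cite[Lemma~1]{Postnikov2005} and closed with a QED mark, so simply citing Postnikov (as you say you would do in practice) is exactly what the paper does. Judged as the self-contained argument you sketch, however, the proposal has genuine gaps. First, your discussion of part (a) contains an error that also makes it more complicated than necessary: for a \emph{simple} root $\alpha$ one has $\langle\alpha^\vee,2\rho\rangle=2$, so condition (Q) reads $\ell(ws_\alpha)=\ell(w)-1$; hence for every $w$ and every simple $\alpha$ the edge $w\rightarrow ws_\alpha$ exists in $\QB(W)$ (Bruhat edge if $s_\alpha$ is an ascent, quantum edge if it is a descent). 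Your claim that ``for $\alpha$ simple and $\ell(ws_\alpha)=\ell(w)-1$ this fails in general'' is false, and once corrected, (a) is immediate by writing $w^{-1}w'$ as a product of simple reflections; no ``wrap-around'' via highest roots is needed.

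Second, the actual content of the lemma, parts (b) and (c), is not established in your sketch. The diamond/exchange lemma for shortest paths, which you correctly identify as the crux, is only announced, not proved; this is precisely the nontrivial input (in Postnikov's treatment it rests on the corresponding lemma of Brenti--Fomin--Postnikov), and without it neither (b) nor (c) follows. Moreover, your claim that (c) can be ``extracted from the length bookkeeping'' conflates two different orders: the identity $\langle\wt(p),2\rho\rangle=\ell(p)+\ell(w)-\ell(w')$ only controls the pairing of $\wt(p)$ against $2\rho$. It shows that all shortest paths have weights with the same pairing against $2\rho$, and that longer paths have larger such pairing, but it does not show that two shortest paths have \emph{equal} weight in $\mathbb Z\Phi^\vee$, nor that $\wt(p)-\wt(w\Rightarrow w')$ is a nonnegative combination of positive coroots. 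Both of these stronger statements require the local exchange argument you postponed, so as it stands the proposal proves only the easy numerical shadow of (b) and (c), not the lemma itself.
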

One interpretation of the weight function $\wt(w\Rightarrow w')$ is that it measures the failure of the inequality $w\leq w'$ in the Bruhat order on $W$. Indeed, $\wt(w\Rightarrow w')=0$ if and only if $w\leq w'$.

We have the following converse to part (c) of the above lemma.
\begin{lemma}[{\cite[Equation~(4.3)]{Milicevic2020}}]\label{lem:weight2rho}For any path $p$ from $w$ to $w'$, we have
\begin{align*}
\langle \wt(p),2\rho\rangle = \ell(w') - \ell(w) + \ell(p).
\end{align*}
In particular,
\begin{align*}
&\langle \wt(w\Rightarrow w'),2\rho\rangle = \ell(w') - \ell(w) + d(w\Rightarrow w').\rightqed
\end{align*}
\end{lemma}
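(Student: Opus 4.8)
The plan is to reduce the identity to the case of a single edge, exploiting that each of the three quantities appearing in it is additive when paths are concatenated. Write $p$ as a sequence of edges $e_1,\dots,e_n$ with $n=\ell(p)$, where $e_i$ runs from $w_i$ to $w_{i+1}=w_i s_{\alpha_i}$, so $w_1=w$ and $w_{n+1}=w'$. Then $\wt(p)=\sum_{i=1}^n\wt(e_i)$ by definition, $\ell(p)=\sum_{i=1}^n 1$, and $\ell(w')-\ell(w)=\sum_{i=1}^n\bigl(\ell(w_{i+1})-\ell(w_i)\bigr)$ telescopes. Hence it suffices to prove the identity for each single edge and sum the resulting equations; in particular there is nothing to prove when $\ell(p)=0$.

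For a single edge $e:w\to ws_\alpha$ with $\alpha\in\Phi^+$, the definition of $\QB(W)$ leaves exactly two possibilities. If $e$ is a Bruhat edge, then $\wt(e)=0$ and $\ell(ws_\alpha)-\ell(w)=1$, and the claimed identity for $e$ is just a rearrangement of this length relation. If $e$ is a quantum edge, then $\wt(e)=\alpha^\vee$ and $\ell(ws_\alpha)=\ell(w)+1-\langle\alpha^\vee,2\rho\rangle$; applying $\langle\,\cdot\,,2\rho\rangle$ to $\wt(e)$ and substituting this length relation yields the claimed identity for $e$ directly. Summing over $e_1,\dots,e_n$ gives the first displayed formula.

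The \enquote{in particular} statement then follows immediately: choose $p$ to be a shortest path from $w$ to $w'$, which exists by \cite[Lemma~1]{Postnikov2005}(a); then $\ell(p)=d(w\Rightarrow w')$ by the definition of $d$, while $\wt(p)=\wt(w\Rightarrow w')$ since all shortest paths have the same weight by \cite[Lemma~1]{Postnikov2005}(b). Substituting these into the first formula finishes the proof.

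I do not expect a serious obstacle: the whole argument is the telescoping reduction together with a two-line check for each of the two edge types, and no deeper input on the quantum Bruhat graph is needed. The only point that requires care is matching the signs and the roles of $w$ and $w'$ against the orientation conventions in the definition of Bruhat and quantum edges, so that the single-edge identities come out as the defining length relations themselves rather than their negatives.
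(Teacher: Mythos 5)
Your overall strategy is the natural one, and in fact the paper offers no argument of its own here: the lemma is quoted from \cite[Equation~(4.3)]{Milicevic2020} and closed with a qed sign, so decomposing the path into edges, using additivity of $\wt$, $\ell(p)$ and the telescoping length difference, and checking the two edge types is exactly the standard proof one would supply.

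However, the one point you yourself flagged as delicate --- the signs --- is precisely where the proposal fails, because you asserted rather than performed the edgewise check. For a single Bruhat edge $w\to ws_\alpha$ one has $\wt(e)=0$ and $\ell(ws_\alpha)-\ell(w)=1$, so the identity as printed would read $0=\ell(w')-\ell(w)+\ell(p)=1+1=2$; for a quantum edge it would read $\langle\alpha^\vee,2\rho\rangle=2-\langle\alpha^\vee,2\rho\rangle$. Neither is "just a rearrangement of the length relation". What the edge check actually proves is $\langle\wt(p),2\rho\rangle=\ell(w)-\ell(w')+\ell(p)$, i.e.\ the statement with the roles of $\ell(w)$ and $\ell(w')$ interchanged. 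That corrected version is the one in \cite[Equation~(4.3)]{Milicevic2020} and the one the paper actually uses later: in Lemma~\ref{lem:genericGKPImprovements}(b) the term $-\langle\wt(v\Rightarrow\presig(wv)),2\rho\rangle$ is replaced by $-\ell(v)+\ell(wv)-d(v\Rightarrow\presig(wv))$, which forces $\langle\wt(v\Rightarrow\presig(wv)),2\rho\rangle=\ell(v)-\ell(wv)+d(v\Rightarrow\presig(wv))$, start minus end. So the displayed formula in the lemma contains a sign typo, and your proof as written claims to verify the typo'd identity edge by edge, which it cannot; had you done the two-line check you promised, you would have caught this. With the sign corrected, your telescoping argument and the passage to shortest paths via \cite[Lemma~1]{Postnikov2005} go through verbatim.
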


\section{$\sigma$-conjugacy classes}\label{chap:sigma-conjugation}
In this section, we review various descriptions of the set $B(G)$ of $\sigma$-conjugacy classes in $G(L)$. This serves mostly as a preparation for the next section, which discusses the \emph{generic} $\sigma$-conjugacy class of an element $x\in \widetilde W$. Throughout this section, we assume that $G$ is quasi-split.

We begin with the classical result of Kottwitz \cite{Kottwitz1985, Kottwitz1997} that shows that the $\sigma$-conjugacy class of an element $g\in G(L)$ is uniquely determined by two invariants. These are called \emph{Kottwitz point} $\kappa(g)\in \pi_1(G)_{\Gamma}=(X_\ast(T)/\mathbb Z\Phi^\vee)_{\Gamma}$ and \emph{(dominant) Newton point} $\nu(g) \in X_\ast(T)_{\Gamma_0}\otimes \mathbb Q$.

If $g$ lies in the normalizer of the maximal torus, $g\in N_G(T)(L)$, then it corresponds to an element in $w\varepsilon^\mu\in \widetilde W$. In this case, $\kappa(g)$ is the image of $\mu$ in $\pi_1(G)_{\Gamma}$.

Viewing both $w$ and $\sigma$ as automorphisms of $X_\ast(T)_{\Gamma_0}$, we write $\sigma\circ w$ for their composition. Let $N\geq 1$ such that the $(\sigma\circ w)^N$ is the identity map. Then $\nu(g)\in X_\ast(T)_{\Gamma_0}\otimes\mathbb Q$ is the unique dominant element in the $W$-orbit of
\begin{align*}
\frac 1N\sum_{k=1}^N (\sigma\circ w)^k\mu.
\end{align*}
It is true, e.g.\ by \cite[Section~3.3]{He2014}, that each $\sigma$-conjugcacy class $[b]\in B(G)$ contains an element of $N_G(T)(L)$, so that the above descriptions of $\kappa(g)$ and $\nu(g)$ actually cover all $\sigma$-conjugacy classes.

In this section, we review a few important results related to these invariants. Our main concern is to bridge the gap between the unramified case, which is often studied in the relevant literature, and the quasi-split case, which we need for our final generalization.

\subsection{Parabolic averages and convex hull}\label{sec:parabolic-averages}
We start by formally defining some averaging functions and proving their basic properties. Neither our results nor our proofs in this section should be too surprising for the educated reader, especially if one keeps the example of $\GL_n$ and its Newton polygons in mind.

Let $N\geq 1$ be an integer such that the action of $\sigma^N$ on $X_\ast(T)$ becomes trivial. Then we define the \emph{$\sigma$-average} of an element $\mu \in X_\ast(T)_{\Gamma_0}\otimes \mathbb Q$ by
\begin{align*}
\avg_\sigma(\mu) := \frac 1N\sum_{k=1}^N \sigma^k(\mu)\in (X_\ast(T)_{\Gamma_0}\otimes \mathbb Q)^{\langle \sigma\rangle}.
\end{align*}
Since $\avg_{\sigma}$ vanishes on terms of the form $\mu -\sigma(\mu)$, it follows that we get a well-defined map $\avg_{\sigma} : X_\ast(T)_\Gamma \rightarrow (X_\ast(T)_{\Gamma_0}\otimes \mathbb Q)^{\langle \sigma\rangle}$.

A similar notion of average is the following: For $J\subseteq \Delta$, denote by $W_J$ the Coxeter subgroup of $W$ generated by the reflections $\{s_\alpha\mid \alpha\in J\}$. For $\mu \in X_\ast(T)_{\Gamma_0}\otimes \mathbb Q$, we define
\begin{align*}
\avg_J(\mu) := \frac 1{\# W_J} \sum_{w\in W_J} w(\mu)\in X_\ast(T)_{\Gamma_0}\otimes \mathbb Q.
\end{align*}
Finally, if $J = \sigma(J)$, we define the function $\pi_J$ by
\begin{align*}
\pi_J := \avg_J \circ \avg_\sigma = \avg_\sigma \circ \avg_J : X_\ast(T)_{\Gamma_0}\otimes \mathbb Q\rightarrow (X_\ast(T)_{\Gamma_0}\otimes \mathbb Q)^{\langle \sigma\rangle}.
\end{align*}
This map was introduced by Chai \cite[Definition~3.2]{Chai2000}. Again, we get an induced map $\pi_J : X_\ast(T)_\Gamma\rightarrow (X_\ast(T)_{\Gamma_0}\otimes \mathbb Q)^{\langle \sigma\rangle}$. If $G$ is split, it can be identified with the \emph{slope map} as introduced by Schieder \cite[Section~2.1.3]{Schieder2015}.

We start with a collection of easy facts on these averages.
\begin{lemma}\label{lem:avgSimpleFacts}
Let $\beta \in X_\ast(T)_{\Gamma}$ and $\mu \in X_\ast(T)_{\Gamma_0}\otimes {\mathbb Q}$. Let $J\subseteq \Delta$ be any subset.
\begin{enumerate}[(a)]
\item For any preimage $\beta'\in X_\ast(T)_{\Gamma_0}$ of $\beta$, we have
\begin{align*}
\langle \beta',2\rho\rangle = \langle \avg_\sigma(\beta),2\rho\rangle.
\end{align*}
In particular, it makes sense to write $\langle \beta,2\rho\rangle$.
\item If $\langle \mu,\alpha\rangle=0$ for all $\alpha\in J$, then $\avg_J(\mu) = \mu$.
\item For all $\alpha\in J$, we have $\langle \avg_J(\mu),\alpha\rangle=0$.
\item If $\mu\geq 0$, then $\avg_J(\mu)\geq 0$.
\item If $\langle \mu,\alpha\rangle \leq 0$ for all $\alpha\in J$, then $\mu\leq w\mu$ for all $w\in W_J$. In particular, $\mu\leq \avg_J(\mu)$.
\end{enumerate}
\end{lemma}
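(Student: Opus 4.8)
The statement to prove is Lemma~\ref{lem:avgSimpleFacts}, a collection of five elementary facts about the averaging operators $\avg_\sigma$, $\avg_J$, and $\pi_J$. Here is my plan.

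\textbf{Approach.} Each item is a direct computation from the definitions, so the plan is simply to dispatch them one by one, using only the definitions of $\avg_\sigma$ and $\avg_J$, the fact that $2\rho$ is $\Gamma$-invariant (it lies in $X_\ast(T)^\Gamma$), and basic properties of the $W_J$-action on the coroot/coweight lattices. No earlier result beyond the setup of Section~\ref{sec:parabolic-averages} is needed.

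\textbf{Key steps.}
\begin{enumerate}[(a)]
\item Since $2\rho$ is fixed by $\sigma$, for any $\beta'\in X_\ast(T)_{\Gamma_0}$ we have $\langle\sigma^k(\beta'),2\rho\rangle = \langle\beta',\sigma^{-k}(2\rho)\rangle = \langle\beta',2\rho\rangle$ for every $k$; averaging over $k=1,\dots,N$ gives $\langle\avg_\sigma(\beta),2\rho\rangle = \langle\beta',2\rho\rangle$. Independence of the chosen preimage $\beta'$ then follows because two preimages differ by a term of the form $\gamma-\sigma(\gamma)$ (the kernel of $X_\ast(T)_{\Gamma_0}\to X_\ast(T)_\Gamma$), which pairs to $0$ with $2\rho$; hence $\langle\beta,2\rho\rangle$ is well-defined.
\item If $\langle\mu,\alpha\rangle=0$ for all $\alpha\in J$, then $\mu$ is fixed by each generator $s_\alpha$ ($\alpha\in J$) of $W_J$, hence by all of $W_J$, so every summand in $\avg_J(\mu)$ equals $\mu$.
\item For $\alpha\in J$ and $w\in W_J$, the map $w\mapsto s_\alpha w$ is a bijection of $W_J$, so $s_\alpha \avg_J(\mu) = \avg_J(\mu)$; thus $\langle\avg_J(\mu),\alpha\rangle = \langle s_\alpha\avg_J(\mu),\alpha\rangle = -\langle\avg_J(\mu),\alpha\rangle$, forcing it to vanish.
\item If $\mu\geq 0$, i.e.\ $\mu$ is a $\mathbb Q_{\geq 0}$-combination of positive coroots, then each $w\mu$ with $w\in W_J$ satisfies $w\mu\geq 0$: indeed $w\mu - \mu$ is an integral combination of the coroots $\alpha^\vee$ ($\alpha\in J$), and more to the point, a standard fact is that $W_J$ permutes $\Phi^+\setminus\Phi_J^+$ (roots not in the span of $J$) while acting within $\Phi_J$ on the rest, so writing $\mu$ in terms of positive coroots and applying $w$ keeps the result $\geq 0$; averaging preserves $\geq 0$. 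Cleaner: reduce to $\mu$ a single positive coroot $\gamma^\vee$ and check $w\gamma^\vee\geq 0$ for $w\in W_J$, splitting on whether $\gamma\in\Phi_J$.
\item If $\langle\mu,\alpha\rangle\leq 0$ for all $\alpha\in J$, I want $w\mu\geq\mu$ for all $w\in W_J$. Write $w = s_{\alpha_1}\cdots s_{\alpha_k}$ reduced in $W_J$ and induct on $k$: for a single reflection, $s_\alpha\mu - \mu = -\langle\mu,\alpha\rangle\,\alpha^\vee\geq 0$ since $\langle\mu,\alpha\rangle\leq 0$ and $\alpha^\vee\succ 0$. For the inductive step one uses that $s_\alpha$ (for $\alpha$ a simple root of $\Phi_J$) sends positive coroots to positive coroots except $\alpha^\vee\mapsto-\alpha^\vee$, together with the hypothesis, to show the difference stays a nonnegative combination of positive coroots; alternatively deduce it from (d) applied to $-\mu$ restricted appropriately. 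Then $\mu\leq w\mu$ for all $w$, and averaging gives $\mu\leq\avg_J(\mu)$.
\end{enumerate}

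\textbf{Main obstacle.} Parts (a), (b), (c) are immediate. The only mildly delicate points are (d) and especially (e): one must be careful about how $W_J$ acts on positive coroots \emph{outside} the sub-root-system $\Phi_J$. The clean way is to record the standard fact that for $\alpha\in J$ the reflection $s_\alpha$ permutes $\Phi^+\setminus\{\text{multiples of }\alpha\}$ — equivalently, $s_\alpha$ sends every positive coroot other than $\alpha^\vee$ to a positive coroot — and then run the length induction on elements of $W_J$. I expect (e) to be where the bookkeeping lives, but it is still routine.
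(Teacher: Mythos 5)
Parts (a), (b), (c) and (e) of your plan are correct and essentially coincide with the paper's proof: (a) uses $\sigma(2\rho)=2\rho$ together with the fact that two preimages of $\beta$ have the same $\sigma$-average; (b) and (c) are the standard $W_J$-invariance arguments; and (e) is the same length induction as in the paper, which picks $\alpha\in J$ with $w\alpha\in\Phi^-$ and writes $w\mu=(ws_\alpha)\mu+\langle\mu,\alpha\rangle\,w\alpha^\vee\geq(ws_\alpha)\mu\geq\mu$. Your right-peeling of a reduced word $w=s_{\alpha_1}\cdots s_{\alpha_k}$ is the same computation; just be aware that the fact you actually need in the inductive step is $s_{\alpha_1}\cdots s_{\alpha_{k-1}}(\alpha_k)\in\Phi^+$ (a consequence of reducedness), not merely the statement about a single simple reflection permuting the positive coroots other than $\alpha^\vee$.

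The genuine problem is in (d). Your intermediate claim that $w\mu\geq 0$ for every $w\in W_J$ whenever $\mu\geq 0$ is false: take $\mu=\alpha^\vee$ for a simple root $\alpha\in J$ and $w=s_\alpha$; then $w\mu=-\alpha^\vee\not\geq 0$. The justification \enquote{acting within $\Phi_J$ on the rest keeps the result $\geq 0$} is exactly where this breaks, and your \enquote{cleaner} variant repeats the same false assertion ($w\gamma^\vee\geq 0$ for all $w\in W_J$) precisely in the case $\gamma\in\Phi_J$ that your case split is meant to handle. The conclusion $\avg_J(\mu)\geq 0$ is of course true, but the proof must use cancellation in the average rather than termwise positivity. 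This is what the paper does: by linearity reduce to $\mu=\alpha^\vee$ with $\alpha\in\Delta$; if $\alpha\in J$, then $\avg_J(\alpha^\vee)$ is a $W_J$-invariant element of $\mathbb Q\Phi_J^\vee$, hence equals $0$; if $\alpha\in\Delta\setminus J$, then $w\alpha\in\Phi^+$ for every $w\in W_J$ (the coefficient of $\alpha$ in $w\alpha$ stays $1$), so every summand $w\alpha^\vee$ is a positive coroot and $\avg_J(\alpha^\vee)\geq 0$. Your split on $\gamma\in\Phi_J$ gestures at this, but without the vanishing argument in the $\Phi_J$ case the step as written fails.
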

\begin{proof}
(a) follows since $\sigma(2\rho)= 2\rho$ and $\avg_\sigma(b) = \avg_\sigma(b')$.

For (b) and (c), note that the following are equivalent:
\begin{itemize}
\item $\langle \mu,\alpha\rangle=0$ for all $\alpha\in J$,
\item $w(\mu)=\mu$ for all $w\in W_J$.
\end{itemize}
Then both statements follow easily.

For (d), it suffices to only consider the case where $\mu$ is a simple coroot $\mu = \alpha^\vee$. If $\alpha\in J$, then $\avg_J(\mu)=0$. Otherwise $w(\alpha)\in \Phi^+$ for all $w\in W_J$, such that $\avg_J(\mu)>0$.

We prove (e) via induction on $\ell(w)$, the inductive start being clear. If now $\ell(w)\geq 1$ and $w\alpha\in \Phi^-$ for some $\alpha\in J$, then
\begin{align*}
w\mu = (ws_\alpha)(\mu-\langle\mu,\alpha\rangle \alpha^\vee) = (ws_\alpha)\mu + \langle\mu,\alpha\rangle w\alpha^\vee\geq (ws_\alpha)\mu\underset{\text{ind.}}\geq \mu.
\end{align*}
This finishes the induction and the proof.
\end{proof}
\begin{definition}
Let $\mu \in X_\ast(T)_{\Gamma_0}\otimes {\mathbb Q}$ and $J\subseteq \Delta$ be any subset.
\begin{enumerate}[(a)]
\item We say that $J$ is \emph{$\mu$-improving} if we can write $J = \{\alpha_1,\dotsc,\alpha_k\}$ such that
\begin{align*}
\langle \avg_{\{\alpha_1,\dotsc,\alpha_{i-1}\}}(\mu),\alpha_i\rangle \leq 0
\end{align*}
for $i=1,\dotsc,k$.
\item We say that $J$ is \emph{maximally $\mu$-improving} if it is $\mu$-improving, and any $\mu$-improving superset $J'\supseteq J$ satisfies $\avg_J(\mu) = \avg_{J'}(\mu)$.
\end{enumerate}
\end{definition}
E.g.\ any $\mu$-improving subset of maximal cardinality will be maximally $\mu$-improving. Since the empty set is $\mu$-improving, it follows that maximally $\mu$-improving subsets always exist. We make the following immediate observations:
\begin{lemma}\label{lem:convPreparation}
Let $\mu \in X_\ast(T)_{\Gamma_0}\otimes {\mathbb Q}$ and $J\subseteq \Delta$.
\begin{enumerate}[(a)]
\item If $J$ is $\mu$-improving, then $\mu\leq \avg_J(\mu)$.
\item If $J$ is maximally $\mu$-improving, then $\avg_J(\mu)$ is dominant.
\item If $c\in X_\ast(T)_{\Gamma_0}\otimes \mathbb Q$ is dominant and $\mu\leq c$, then \begin{align*}&\avg_J(\mu)\leq \avg_J(c)\leq c.\rightqed\end{align*}
\end{enumerate}
\end{lemma}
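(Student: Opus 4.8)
The plan is to treat the three parts in order, each building on the averaging facts from Lemma~\ref{lem:avgSimpleFacts}. For part (a), if $J$ is $\mu$-improving, write $J = \{\alpha_1,\dotsc,\alpha_k\}$ with $\langle\avg_{\{\alpha_1,\dotsc,\alpha_{i-1}\}}(\mu),\alpha_i\rangle\leq 0$. Set $\mu_0 = \mu$ and $\mu_i = \avg_{\{\alpha_1,\dotsc,\alpha_i\}}(\mu)$. The idea is to show $\mu_{i-1}\leq \mu_i$ for each $i$, so that $\mu = \mu_0\leq\mu_k = \avg_J(\mu)$ by transitivity of $\leq$. The step $\mu_{i-1}\leq\mu_i$ is not quite a direct application of Lemma~\ref{lem:avgSimpleFacts}(e), because $\mu_i$ is the $W_{\{\alpha_1,\dotsc,\alpha_i\}}$-average of $\mu$, not of $\mu_{i-1}$. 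However, since $\avg$ over a larger parabolic factors as $\avg_{\{\alpha_1,\dotsc,\alpha_i\}} = \avg_{\{\alpha_i\}}\circ\avg_{\{\alpha_1,\dotsc,\alpha_{i-1}\}}$ when one reorders appropriately — or more robustly, since $\avg_{W_J}$ applied to $\mu_{i-1}$ equals $\avg_{W_J}$ applied to $\mu$ whenever $\mu_{i-1}$ already lies in the $\avg$-image — one reduces to: $\mu_{i-1}\leq\avg_{\{\alpha_i\}}(\mu_{i-1})$ by Lemma~\ref{lem:avgSimpleFacts}(e) (using $\langle\mu_{i-1},\alpha_i\rangle\leq 0$), and then $\avg_{\{\alpha_i\}}(\mu_{i-1}) = \mu_i$ because averaging a $W_{\{\alpha_1,\dots,\alpha_{i-1}\}}$-invariant element over $s_{\alpha_i}$ and then symmetrizing is the same as symmetrizing over the whole group (both $\avg$ operators are projections onto the respective invariants, and the invariants of the larger group are cut out by the smaller ones together with $s_{\alpha_i}$). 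I expect this compatibility of iterated parabolic averages to be the one genuinely fiddly point; everything else is transitivity of the partial order.

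For part (b), suppose $J$ is maximally $\mu$-improving but $\avg_J(\mu)$ is not dominant, so $\langle\avg_J(\mu),\alpha\rangle < 0$ for some $\alpha\in\Delta$. By Lemma~\ref{lem:avgSimpleFacts}(c), $\alpha\notin J$, so $J' := J\cup\{\alpha\}$ is a strict superset. Ordering $J'$ by listing the elements of $J$ first (in a $\mu$-improving order) and then $\alpha$, the defining inequality for $\alpha$ reads $\langle\avg_J(\mu),\alpha\rangle\leq 0$, which holds; hence $J'$ is $\mu$-improving. Now I claim $\avg_{J'}(\mu)\neq\avg_J(\mu)$, contradicting maximality: indeed $\avg_{J'}(\mu) = \avg_{\{\alpha\}}(\avg_J(\mu))$ (same projection identity as above), and since $\langle\avg_J(\mu),\alpha\rangle < 0$ is strictly negative, $s_\alpha$ does not fix $\avg_J(\mu)$, so the $s_\alpha$-average strictly changes it. This contradiction forces $\avg_J(\mu)$ dominant.

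For part (c), assume $c$ dominant and $\mu\leq c$, i.e.\ $c - \mu$ is a $\mathbb Q_{\geq 0}$-combination of positive coroots. Applying the linear operator $\avg_J$ (which, by Lemma~\ref{lem:avgSimpleFacts}(d), sends $\mathbb Q_{\geq 0}$-combinations of positive coroots to elements $\geq 0$) gives $\avg_J(c) - \avg_J(\mu) = \avg_J(c-\mu)\geq 0$, i.e.\ $\avg_J(\mu)\leq\avg_J(c)$. For $\avg_J(c)\leq c$: since $c$ is dominant, $\langle c,\alpha\rangle\geq 0$ for all $\alpha\in J$, so by Lemma~\ref{lem:avgSimpleFacts}(e) applied with $-c$ in place of $\mu$ (for which $\langle -c,\alpha\rangle\leq 0$ on $J$), we get $-c\leq w(-c)$ for all $w\in W_J$, hence $w(c)\leq c$, hence averaging, $\avg_J(c)\leq c$. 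Combining, $\avg_J(\mu)\leq\avg_J(c)\leq c$, as desired.
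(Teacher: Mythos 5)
The paper records this lemma as an immediate observation without proof, so the only question is whether your argument is sound; parts of it are not. The step that fails, and which you use twice, is the projection identity $\avg_{\{\alpha\}}\circ\avg_{J} = \avg_{J\cup\{\alpha\}}$ (in (a) in the form $\avg_{\{\alpha_i\}}(\mu_{i-1})=\mu_i$, in (b) in the form $\avg_{J'}(\mu)=\avg_{\{\alpha\}}(\avg_J(\mu))$). A composition of two projections is the projection onto the intersection of their images only when the projections commute, which these do not: $\avg_{\{\alpha\}}(\avg_J(\mu))$ is $s_\alpha$-invariant but in general no longer $W_J$-invariant. Concretely, in type $A_2$ with $J=\{\alpha_1\}$ and $\alpha=\alpha_2$ one gets
\begin{align*}
\avg_{\{\alpha_2\}}\avg_{\{\alpha_1\}}(\mu) = \tfrac14\left(\mu+s_1\mu+s_2\mu+s_2s_1\mu\right),
\end{align*}
an average over only four of the six elements of $W$, which is generically nonzero on the reflection representation, whereas $\avg_{\{\alpha_1,\alpha_2\}}(\mu)=0$ there. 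So in (a) you have only shown $\mu_{i-1}\leq\avg_{\{\alpha_i\}}(\mu_{i-1})$, which is not the claimed $\mu_{i-1}\leq\mu_i$, and in (b) your argument that $\avg_{J'}(\mu)\neq\avg_J(\mu)$ rests on the same false identity.

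Both gaps are repairable, because the identity that \emph{is} true (by the coset-counting argument) is $\avg_{J''}\circ\avg_{J'}=\avg_{J''}$ whenever $J'\subseteq J''$. For (a): $\mu_i=\avg_{J_i}(\mu)=\avg_{J_i}(\mu_{i-1})$, and since $\langle\mu_{i-1},\beta\rangle=0$ for $\beta\in J_{i-1}$ by Lemma~\ref{lem:avgSimpleFacts}(c) and $\langle\mu_{i-1},\alpha_i\rangle\leq 0$ by hypothesis, Lemma~\ref{lem:avgSimpleFacts}(e) applied with the \emph{full} set $J_i$ gives $\mu_{i-1}\leq\avg_{J_i}(\mu_{i-1})=\mu_i$; then chain these inequalities as you intended. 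For (b): if $\avg_{J'}(\mu)=\avg_J(\mu)$, then $\avg_J(\mu)$ lies in the image of $\avg_{J'}$, hence is $W_{J'}$-invariant, so $\langle\avg_J(\mu),\alpha\rangle=0$ by Lemma~\ref{lem:avgSimpleFacts}(c), contradicting $\langle\avg_J(\mu),\alpha\rangle<0$; no composition identity is needed. Your verification that $J\cup\{\alpha\}$ is $\mu$-improving is fine, and part (c) is correct as written.
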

If follows that there is a uniquely determined maximum
\begin{align*}
\conv'(\mu) := \max_{J\subseteq \Delta} \avg_J(\mu),
\end{align*}
and that $\conv'(\mu) = \avg_J(\mu)$ for every maximally $\mu$-improving $J$. We define
\begin{align*}
\conv(\mu) := \conv' (\avg_\sigma(\mu)),\qquad \mu\in X_\ast(T)_{\Gamma_0}\otimes\mathbb Q\text{ or }\mu\in X_\ast(T)_{\Gamma}.
\end{align*}
\begin{example}
For the split group $G = \GL_n$, the operations $\conv$ and $\conv'$ agree. Drawing elements of $X_\ast(T)\otimes {\mathbb Q}$ as polygons, the function $\conv$ corresponds to taking the upper convex hull (hence its name).
\end{example}
\begin{lemma}\label{lem:convFacts}
Let $\mu \in X_\ast(T)_{\Gamma_0}\otimes {\mathbb Q}$.
\begin{enumerate}[(a)]
\item The value $\conv'(\mu)$ is the uniquely determined element $c\in X_\ast(T)_{\Gamma_0}$ satisfying the following three conditions:\begin{itemize}
\item $\mu\leq c$,\item $c$ is dominant and\item $c = \avg_J(\mu)$ for some $J\subseteq \Delta$.
\end{itemize}
\item If $\mu'\in X_\ast(T)_{\Gamma_0}\otimes \mathbb Q$ satisfies $\mu\leq \mu'$, then $\conv'(\mu)\leq \conv'(\mu')$.
\item Write
\begin{align*}
\conv'(\mu) - \mu =& \sum_{\alpha\in \Delta} c_\alpha \alpha^\vee,
\\J_1:=&\{\alpha\in \Delta\mid c_\alpha\neq 0\},
\\J_2 :=&\{\alpha\in \Delta\mid\langle \conv'(\mu),\alpha\rangle=0\}.
\end{align*}
For any subset $J\subseteq \Delta$, we have
\begin{align*}
\conv'(\mu) = \avg_J(\mu)\iff J_1\subseteq J\subseteq J_2.
\end{align*}
\item There exists $J\subseteq \Delta$ with $\sigma(J) = J$ and $\conv(\mu) = \pi_J(\mu)$. In particular,
\begin{align*}
\conv(\mu) = \max_{\substack{J\subseteq \Delta\\\sigma(J) = J}} \pi_J(\mu).
\end{align*}
\item Let $J\subseteq \Delta$ such that
\begin{align*}
\forall \alpha \in \Phi^+\setminus \Phi_J^+:~\langle \mu,\alpha\rangle\geq 0.
\end{align*}
Then there exists $J'\subseteq J$ with $\conv'(\mu) = \avg_{J'}(\mu)$. In other words, the set $J_1$ from (c) is a subset of $J$.
\end{enumerate}
\end{lemma}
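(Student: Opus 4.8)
The plan is to leverage part (c), which pins down exactly which subsets $J\subseteq\Delta$ realize $\conv'(\mu)$ as an average: they are precisely the sets with $J_1\subseteq J\subseteq J_2$, where $J_1=\{\alpha\in\Delta\mid c_\alpha\neq 0\}$ records the simple coroots actually used in writing $\conv'(\mu)-\mu$. So it suffices to show $J_1\subseteq J$ under the hypothesis that $\langle\mu,\alpha\rangle\geq 0$ for every $\alpha\in\Phi^+\setminus\Phi_J^+$; then taking $J':=J_1$ (or any set between $J_1$ and $J_2\cap J$, but $J_1\subseteq J$ is what we need) gives $\conv'(\mu)=\avg_{J'}(\mu)$ with $J'\subseteq J$.

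First I would set $c:=\conv'(\mu)$ and write $c-\mu=\sum_{\alpha\in\Delta}c_\alpha\alpha^\vee$ with all $c_\alpha\geq 0$ (by Lemma~\ref{lem:convPreparation}(a), $\mu\leq c$). Aiming for a contradiction, suppose some $\alpha_0\in J_1$ lies outside $J$, i.e.\ $c_{\alpha_0}>0$ but $\alpha_0\notin J$. The idea is to test the defining dominance of $c$ against a suitable positive root. Since $\alpha_0\notin\Phi_J$, every positive root $\beta$ whose support contains $\alpha_0$ lies in $\Phi^+\setminus\Phi_J^+$, hence satisfies $\langle\mu,\beta\rangle\geq 0$ by hypothesis. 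On the other hand, pairing $c-\mu$ with $\alpha_0$ itself: $\langle c,\alpha_0\rangle=\langle\mu,\alpha_0\rangle+\sum_\alpha c_\alpha\langle\alpha^\vee,\alpha_0\rangle$. The diagonal term contributes $2c_{\alpha_0}>0$ while off-diagonal Cartan entries $\langle\alpha^\vee,\alpha_0\rangle$ are $\leq 0$; so this alone does not force a contradiction and one needs the structural input that $c=\avg_{J_2}(\mu)$ for the maximal admissible set $J_2$ from (c), giving $\langle c,\alpha\rangle=0$ for all $\alpha\in J_2$ and $\langle c,\alpha\rangle\geq 0$ for all $\alpha\in\Delta$.

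The cleaner route, which I would actually pursue, is to reduce to the parabolic $W_J$ directly. Decompose $\mu$ according to the standard parabolic: write $\avg_J(\mu)=\mu+\sum_{\alpha\in J}d_\alpha\alpha^\vee$. The hypothesis $\langle\mu,\beta\rangle\geq 0$ for $\beta\in\Phi^+\setminus\Phi_J^+$ together with $\mu\leq\conv'(\mu)$ and Lemma~\ref{lem:convFacts}(b) applied to the inequality $\avg_J(\mu)\leq\conv'(\avg_J(\mu))$ should show that already $\avg_J(\mu)$ is ``$J$-improvable to dominance within $J$'': one runs the maximally-$\mu$-improving construction but only ever picks simple roots from $J$, because the hypothesis guarantees that the obstruction to dominance lives only among the roots in $J$ (for $\beta\notin\Phi_J^+$ one has $\langle\avg_{J'}(\mu),\beta\rangle\geq\langle\mu,\beta\rangle\geq 0$ when $J'\subseteq J$, using that $\avg_{J'}$ only subtracts positive roots supported in $J'\subseteq J$, hence never decreases the pairing with a root whose support meets the complement of $J$ --- this monotonicity is the key computational point and needs the simple-root-by-simple-root argument as in Lemma~\ref{lem:convPreparation}). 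Thus some maximally $\mu$-improving set $J'$ can be chosen inside $J$, and by Lemma~\ref{lem:convPreparation}(b) $\avg_{J'}(\mu)$ is dominant, hence by (a) equals $\conv'(\mu)$. Finally, $J_1\subseteq J'\subseteq J$ follows from (c).

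The main obstacle I anticipate is the monotonicity claim: that for $J'\subseteq J$ and $\beta\in\Phi^+\setminus\Phi_J^+$ one has $\langle\avg_{J'}(\mu),\beta\rangle\geq\langle\mu,\beta\rangle$, so that the improvement process never ``breaks'' the nonnegativity on the outside roots and therefore never needs to reach outside $J$. This is where one must argue carefully, presumably by the same induction on $\ell(w)$ for $w\in W_{J'}$ used in Lemma~\ref{lem:avgSimpleFacts}(e), noting that $w\alpha^\vee$ for $\alpha\in J'$ stays in $\mathbb{Z}_{\geq 0}\Phi_J^\vee$-translates and pairs nonnegatively with $\beta\in\Phi^+\setminus\Phi_J^+$ --- actually one wants $\langle w\mu-\mu,\beta\rangle\geq 0$, which should follow because $w\mu-\mu$ is a nonnegative combination of coroots in $\Phi_J^\vee$ and $\langle\gamma^\vee,\beta\rangle\geq 0$ whenever $\gamma\in J$ and $\beta\in\Phi^+$ has a simple component outside $J$. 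Getting the signs exactly right in this last Cartan-matrix estimate is the delicate step; everything else is bookkeeping against parts (a)--(c).
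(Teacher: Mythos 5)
Your overall strategy for (e) matches the paper's: choose a $\mu$-improving subset $J'\subseteq J$ that is maximal inside $J$, show $\avg_{J'}(\mu)$ is dominant, and conclude via parts (a) and (c). However, the step you yourself flag as delicate is exactly where the argument breaks. The monotonicity claim that $\langle \avg_{J'}(\mu),\beta\rangle\geq\langle\mu,\beta\rangle$ for $\beta\in\Phi^+\setminus\Phi_J^+$, and the Cartan-matrix estimate $\langle\gamma^\vee,\beta\rangle\geq 0$ for $\gamma\in J$ and $\beta\in\Phi^+\setminus\Phi_J^+$ that you propose to prove it, are both false. In type $A_2$ with $\Delta=\{\alpha_1,\alpha_2\}$, take $J=J'=\{\alpha_1\}$ and $\beta=\alpha_2$: then $\langle\alpha_1^\vee,\alpha_2\rangle=-1$, and if $\langle\mu,\alpha_1\rangle<0$ one gets $\avg_{J'}(\mu)=\mu-\tfrac12\langle\mu,\alpha_1\rangle\alpha_1^\vee$, hence $\langle\avg_{J'}(\mu),\alpha_2\rangle=\langle\mu,\alpha_2\rangle+\tfrac12\langle\mu,\alpha_1\rangle<\langle\mu,\alpha_2\rangle$. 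So the pairing with an outside root can genuinely decrease during the improvement process, and nonnegativity on $\Phi^+\setminus\Phi_J^+$ cannot be propagated termwise in the way you describe.

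What rescues the argument---and is how the paper actually proceeds---is to use the hypothesis on the entire $W_J$-orbit of $\beta$ rather than on $\beta$ alone: since $W_J$ permutes $\Phi^+\setminus\Phi_J^+$, one has $\langle\avg_{J'}(\mu),\beta\rangle=\frac1{\#W_{J'}}\sum_{w\in W_{J'}}\langle\mu,w\beta\rangle$, and every term is $\geq 0$ because each $w\beta$ again lies in $\Phi^+\setminus\Phi_J^+$ (in the $A_2$ example it is the hypothesis $\langle\mu,\alpha_1+\alpha_2\rangle\geq 0$ that saves the estimate). Dominance on $\Phi_J^+$ comes from maximality of $J'$ inside $J$, viewing $\mu$ as a coweight for the root system $\Phi_J$; note that Lemma~\ref{lem:convPreparation}(b) as stated applies to subsets maximal in all of $\Delta$, so it must be invoked relative to $\Phi_J$ rather than quoted directly. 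With this replacement of your monotonicity step, the rest of your bookkeeping against (a)--(c) goes through; note also that your proposal only addresses part (e), not (a)--(d).
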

\begin{proof}
(a) and (b) are immediate.
\begin{enumerate}[(a)]
\addtocounter{enumi}{2}
\item Let us first consider a subset $J\subseteq \Delta$ with $\conv'(\mu) = \avg_J(\mu)$. Then $\conv'(\mu) - \mu \in \mathbb Q\Phi_J^\vee$ by definition of $\avg_J(\mu)$. We see that $J_1\subseteq J$ must hold. Similarly, $\langle \conv'(\mu),\alpha\rangle=0$ for all $\alpha\in J$ by Lemma~\ref{lem:avgSimpleFacts}. Thus we must have $J_1\subseteq J\subseteq J_2$.

We show that $\avg_{J_1}(\mu)$ is dominant. Let $\alpha\in \Delta$. If $\alpha\in J_1$, then $\langle \avg_{J_1}(\mu),\alpha\rangle=0$ by Lemma~\ref{lem:avgSimpleFacts}. So let us assume that $\alpha\in \Delta\setminus J_1$. Because $\avg_{J_1}(\mu)\leq \conv'(\mu)$ and $\avg_{J_1}(\mu)\equiv \mu\equiv \conv'(\mu)\pmod{\mathbb Q\Phi_{J_1}^\vee}$, we can write
\begin{align*}
\conv'(\mu) - \avg_{J_1}(\mu) = \sum_{\beta\in J_1} c_\beta' \beta^\vee,\quad c_\beta'\in \mathbb Q_{\geq 0}.
\end{align*}
Now we get
\begin{align*}
\langle \avg_{J_1}(\mu),\alpha\rangle=\underbrace{\langle \conv'(\mu),\alpha\rangle}_{\geq 0} +\sum_{\beta\in J_1} \underbrace{c_\beta'\langle -\beta^\vee,\alpha\rangle}_{\geq 0}\geq 0.
\end{align*}
This shows that $\avg_{J_1}(\mu)$ is dominant.

If $J$ is chosen such that $\conv'(\mu) = \avg_J(\mu)$, then
\begin{align*}
\conv'(\mu)\geq \avg_{J_1}(\mu)\underset{\text{L\ref{lem:avgSimpleFacts}}}\geq \avg_J\avg_{J_1}(\mu)\underset{J_1\subseteq J}=\avg_J(\mu) = \conv'(\mu).
\end{align*}
Thus $\avg_{J_1}(\mu) = \conv'(\mu)$.

So if for any intermediate set $J_1\subseteq J\subseteq J_2$, we obtain
\begin{align*}
\avg_J(\mu) = \avg_J(\avg_{J_1}(\mu)) = \avg_J(\conv'(\mu))\underset{J\subseteq J_2}=\conv'(\mu).
\end{align*}
\item Replacing $\mu$ by $\avg_\sigma(\mu)$, we may certainly assume $\mu \in (X_\ast(T)_{\Gamma_0}\otimes \mathbb Q)^{\sigma}$. Since $\mu = \sigma(\mu)$, we conclude $\conv'(\mu) = \sigma(\conv'(\mu))$. Then we can choose $J$ be either of the sets $J_1$ or $J_2$ from (c).

Now the \enquote{in particular} part is easy to see.
\item Let $J'\subseteq J$ be a $\mu$-improving subset such that there is no $\mu$-improving subset $J'\subsetneq J''\subseteq J$. By Lemma~\ref{lem:convPreparation}, $\mu\leq \avg_{J'}(\mu)$. It suffices to show that $\avg_{J'}(\mu)$ is dominant. Seeing $\mu$ as a coweight for the root system $\Phi_J$, the set $J'$ is maximally $\mu$-improving from this perspective, so $\langle \avg_{J'}\mu,\alpha\rangle\geq 0$ for all $\alpha\in \Phi_J^+$.

If $\alpha\in \Phi^+\setminus \Phi_J^+$, then $w\alpha\in \Phi^+\setminus \Phi_J^+$ for all $w\in W_J$, such that
\begin{align*}
\langle \avg_{J'}(\mu),\alpha\rangle = \frac 1{\# W_{J'}} \sum_{w\in W_{J'}}\underbrace{\langle \mu,w\alpha\rangle}_{\geq 0}\geq 0.
\end{align*}
Here, we use the assumption made on $\mu$ and $J$. 

As $\avg_{J'}(\mu)$ is dominant, we get the desired result by (a).
\qedhere
\end{enumerate}
\end{proof}
As an immediate application, let us describe Newton points of elements in $\widetilde W$ with this language:
\begin{definition}
For $w\in W$,we  write $\supp(w)\subseteq \Delta$ for the set of all simple roots whose corresponding simple reflections occur in some/every reduced expression for $w$. Define $\supp_\sigma(w) := \bigcup_{n\in \mathbb Z}\sigma^n(\supp(w))$.
\end{definition}
\begin{lemma}\label{lem:newtonPointsAsAverages}
Let $x = w\varepsilon^\mu\in \widetilde W$ and $N>0$ such that $(\sigma\circ w)^N=\mathrm{id}$. Pick $v\in W$ such that
\begin{align*}
v^{-1}\frac 1N\sum_{k=1}^N (\sigma\circ w)^k(\mu)\in X_\ast(T)_{\Gamma_0}\otimes \mathbb Q
\end{align*}
becomes dominant. Let $J = \supp_{\sigma}(v^{-1}\presig(wv))$. Then
\begin{align*}
\nu(x) = \pi_J(v^{-1}\mu).
\end{align*}
\end{lemma}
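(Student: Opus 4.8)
The plan is to reduce the computation of $\nu(x)$ to the explicit description of the Newton point recalled at the start of Section~\ref{chap:sigma-conjugation}, and then to identify that dominant element with $\pi_J(v^{-1}\mu)$ by unwinding the definitions of the averaging functions. First I would set $\mu' := \frac1N\sum_{k=1}^N (\sigma\circ w)^k(\mu)$; by construction this is fixed by $\sigma\circ w$, and by the recalled formula $\nu(x)$ is the dominant element of the $W$-orbit of $\mu'$, i.e.\ $\nu(x) = v^{-1}\mu'$ for the chosen $v$. So the task is to show $v^{-1}\mu' = \pi_J(v^{-1}\mu)$ with $J = \supp_\sigma(v^{-1}\,\presig(wv))$. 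The key observation is that conjugating $\sigma\circ w$ by $v$ turns the averaging over the cyclic group generated by $\sigma\circ w$ into the averaging $\avg_\sigma$ after twisting by the finite Weyl element $v^{-1}\,\presig(wv)$: writing $\tilde w := v^{-1}\,\presig(wv)\in W$, one checks that $v^{-1}(\sigma\circ w)^k(v\,\nu) = (\sigma\circ \tilde w)^k(\nu)$ in $X_\ast(T)_{\Gamma_0}\otimes\mathbb Q$ for any $\nu$, so that $v^{-1}\mu' = \frac1N\sum_{k=1}^N (\sigma\circ\tilde w)^k(v^{-1}\mu)$, where now $(\sigma\circ\tilde w)^N = \mathrm{id}$ as well.

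Next I would analyze this twisted average. Since $v^{-1}\mu'$ is dominant and fixed by $\sigma\circ\tilde w$, and since $\tilde w$ and $\sigma$ both preserve the set $J = \supp_\sigma(\tilde w)$ of simple roots, I would argue that $\tilde w\in W_J$ and moreover that $\tilde w$ acts on the quotient of $X_\ast(T)_{\Gamma_0}\otimes\mathbb Q$ by the span of $J^\vee$ trivially, so that modulo $\mathbb Q\Phi_J^\vee$ the average $\frac1N\sum (\sigma\circ\tilde w)^k$ agrees with $\avg_\sigma$. The remaining point is that a $\sigma\circ\tilde w$-fixed dominant vector, when projected appropriately, is computed by $\pi_J$: concretely, $\avg_J$ kills exactly the $\Phi_J^\vee$-direction and produces the unique element in the coset that is $W_J$-invariant, and the dominant $\sigma$-fixed representative of this coset is precisely $\pi_J(v^{-1}\mu) = \avg_J(\avg_\sigma(v^{-1}\mu))$. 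I would make this precise by checking the characterizing properties of $v^{-1}\mu'$: it is dominant (given), it is $\sigma$-fixed (follows from $(\sigma\circ\tilde w)$-invariance together with $\tilde w\in W_J$ fixing it up to $\Phi_J^\vee$, combined with $J$-invariance under $\sigma$), it is $W_J$-invariant (since $\tilde w$ generates enough of $W_J$ — here I would need that $\supp_\sigma$ rather than just $\supp$ appears, and possibly an averaging-over-$k$ argument to see the full $W_J$ orbit is collapsed), and it is congruent to $v^{-1}\mu$ modulo $\mathbb Q\Phi_J^\vee$ after averaging over $\sigma$. By Lemma~\ref{lem:avgSimpleFacts}(b),(c) and Lemma~\ref{lem:convFacts}(a), an element with these properties is unique and equals $\pi_J(v^{-1}\mu)$.

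The main obstacle I anticipate is the step showing that the twisted cyclic average $\frac1N\sum_{k=1}^N(\sigma\circ\tilde w)^k$ applied to $v^{-1}\mu$ really does coincide with $\pi_J(v^{-1}\mu)$ and not merely some element congruent to it modulo $\mathbb Q\Phi_J^\vee$; this requires pinning down that the output is genuinely $W_J$-invariant. The delicate point is that $\tilde w = v^{-1}\,\presig(wv)$ need not lie in $W_J$ a priori for the naive $J = \supp(\tilde w)$, which is why the Frobenius-saturated support $\supp_\sigma$ is used — one needs that $\sigma\circ\tilde w$, iterated, generates (modulo the subgroup fixing $v^{-1}\mu'$) enough symmetry to force $W_J$-invariance of the average. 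I would handle this by a careful bookkeeping argument: decompose $J$ into $\sigma$-orbits of connected components of the Dynkin diagram of $\tilde w$, observe that $\sigma\circ\tilde w$ permutes these orbits in a way compatible with the standard structure of such twisted Coxeter elements, and invoke that a dominant vector fixed by $\sigma\circ\tilde w$ must be orthogonal to every simple root in $\supp_\sigma(\tilde w)$, which is exactly the defining property making $\avg_J$ act as the identity up to the $\avg_\sigma$-twist. Everything else — the conjugation identity, the dominance bookkeeping, and the final appeal to uniqueness — is routine given the lemmas already established.
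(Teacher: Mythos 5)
Your plan is essentially the \enquote{straightforward calculation} that the paper leaves to the reader (Chai's Proposition~4.1 is cited only as an alternative), and it works; the one correction is that conjugation gives $v^{-1}(\sigma\circ w)v=\tilde w\circ\sigma$ with $\tilde w=v^{-1}\presig(wv)$ (equivalently, $\sigma$-conjugate $x$ by $\prescript{\sigma^{-1}}{}{v}$ as in Lemma~\ref{lem:fundamentalDefect}), not $\sigma\circ\tilde w$ --- harmless here, since $J=\supp_\sigma(\tilde w)$ is $\sigma$-stable and every later step is insensitive to the order. Your worry about $\tilde w$ \enquote{generating enough of $W_J$} is also unnecessary: $\nu(x)$ and $\sigma(\nu(x))$ are both dominant and lie in one $W$-orbit, so fixedness under $\tilde w\circ\sigma$ forces $\sigma(\nu(x))=\nu(x)$ and $\tilde w(\nu(x))=\nu(x)$, hence $\langle\nu(x),\alpha\rangle=0$ for all $\alpha\in J$ (stabilizers of dominant elements are standard parabolic subgroups), so $\avg_J$ fixes $\nu(x)$, and applying $\avg_J$ to the congruence $\nu(x)\equiv\avg_\sigma(v^{-1}\mu)\bmod\mathbb Q\Phi_J^\vee$ immediately yields $\nu(x)=\pi_J(v^{-1}\mu)$.
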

\begin{proof}
Straightforward calculation. For an alternative proof, cf.\ \cite[Proposition~4.1]{Chai2000}.
\end{proof}
\subsection{$\lambda$-invariant and defect}\label{sec:defect}
For this section, we fix a $\sigma$-conjugacy class $[b]\in B(G)$. Following Hamacher-Viehmann \cite[Lemma/Definition~2.1]{Hamacher2018}, we define its \emph{$\lambda$-invariant} by
\begin{align*}
\lambda_G(b) := \max\{\tilde\lambda\in X_\ast(T)_{\Gamma}\mid \avg_\sigma(\tilde\lambda)\leq\nu(b)\text{ and }\kappa(b) = \lambda +\mathbb Z\Phi^\vee\text{ in }\pi_1(G)_{\Gamma}\}.
\end{align*}
While the article of Hamacher-Viehmann assumes the group to be unramified, the construction of $\lambda_G(b)$ works without changes for quasi-split $G$.

Let us write
\begin{align*}
\nu(b) - \avg_\sigma(\lambda_G(b)) =& \sum_{\alpha\in \Delta}c_\alpha\alpha^\vee,\\
J_1:=&\{\alpha\in \Delta\mid c_\alpha\neq 0\},\\
J_2:=&\{\alpha\in \Delta\mid\langle \nu(b),\alpha\rangle=0\}.
\end{align*}

We have the following simple observations:
\begin{lemma}\label{lem:lambdaSimpleFacts}
\begin{enumerate}[(a)]
\item Pick $\mu \in X_\ast(T)_{\Gamma}$ and $J\subseteq \Delta$ with $J = \sigma(J)$ such that $\nu(b) = \pi_J(\mu)$ and $\kappa(b) = \mu + \mathbb Z\Phi^\vee \in \pi_1(G)_{\Gamma}$.
Then \begin{align*}\nu(b) = \pi_J(\lambda_G(b)) = \conv(\lambda_G(b)).\end{align*}
\item We have $J_1\subseteq J_2$.
For $J\subseteq\Delta$ with $\sigma(J) = J$,
\begin{align*}
\nu(b) = \pi_{J}(\lambda_G(b))\iff J_1\subseteq J\subseteq J_2.
\end{align*}
\end{enumerate}
\end{lemma}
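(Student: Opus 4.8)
The plan is to deduce both parts from the structural description of $\conv'$ provided in Lemma~\ref{lem:convFacts}(c), applied to the $\sigma$-average $\avg_\sigma(\lambda_G(b))$, together with the defining maximality property of $\lambda_G(b)$. First I would unwind the definitions: by Lemma~\ref{lem:lambdaSimpleFacts}(a) — which I may invoke since it is part (a) of the same lemma — we have $\nu(b) = \conv(\lambda_G(b)) = \conv'(\avg_\sigma(\lambda_G(b)))$. So, writing $\bar\lambda := \avg_\sigma(\lambda_G(b)) \in (X_\ast(T)_{\Gamma_0}\otimes\mathbb Q)^{\langle\sigma\rangle}$, the sets $J_1, J_2$ defined just before the statement are \emph{exactly} the sets $J_1, J_2$ attached to $\mu = \bar\lambda$ in Lemma~\ref{lem:convFacts}(c), because $\conv'(\bar\lambda) = \nu(b)$ and hence $\conv'(\bar\lambda) - \bar\lambda = \nu(b) - \avg_\sigma(\lambda_G(b)) = \sum_\alpha c_\alpha\alpha^\vee$ and $\{\alpha \mid \langle\conv'(\bar\lambda),\alpha\rangle = 0\} = \{\alpha \mid \langle\nu(b),\alpha\rangle = 0\}$. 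The inclusion $J_1 \subseteq J_2$ is then immediate from Lemma~\ref{lem:convFacts}(c) (take $J = J_1$ there, which forces $J_1 \subseteq J_2$), so the first assertion is free.

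For the equivalence, I would argue both directions. Assume $J_1 \subseteq J \subseteq J_2$ with $\sigma(J) = J$. Then $\pi_J(\lambda_G(b)) = \avg_J(\avg_\sigma(\lambda_G(b))) = \avg_J(\bar\lambda)$, and by Lemma~\ref{lem:convFacts}(c) (the direction $J_1\subseteq J\subseteq J_2 \Rightarrow \conv'(\bar\lambda) = \avg_J(\bar\lambda)$) this equals $\conv'(\bar\lambda) = \nu(b)$. Conversely, suppose $\sigma(J) = J$ and $\nu(b) = \pi_J(\lambda_G(b)) = \avg_J(\bar\lambda)$. Then by the other direction of Lemma~\ref{lem:convFacts}(c), since $\avg_J(\bar\lambda) = \nu(b) = \conv'(\bar\lambda)$, we get $J_1 \subseteq J \subseteq J_2$. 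Here one must be slightly careful that Lemma~\ref{lem:convFacts}(c) is stated for $\conv'(\mu) = \avg_J(\mu)$ with arbitrary $J$, so no $\sigma$-stability is needed for that input — the $\sigma$-stability of $J$ is only used to rewrite $\pi_J(\lambda_G(b))$ as $\avg_J(\bar\lambda)$ via the identity $\pi_J = \avg_J \circ \avg_\sigma$.

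The one point requiring genuine care — and the step I expect to be the main obstacle — is justifying that $\conv'(\bar\lambda) = \nu(b)$ \emph{as elements of $X_\ast(T)_{\Gamma_0}\otimes\mathbb Q$}, i.e.\ checking that Lemma~\ref{lem:lambdaSimpleFacts}(a) is available and correctly applied, and more subtly that the maximality in the definition of $\lambda_G(b)$ forces $\bar\lambda$ to be the \emph{largest} element with $\avg_\sigma(\tilde\lambda) \leq \nu(b)$ in the relevant coset, so that $\conv'(\bar\lambda)$ does not overshoot. Concretely: $\bar\lambda \leq \nu(b)$ and $\nu(b)$ is dominant with $\nu(b) = \pi_J(\text{something}) = \avg_J(\avg_\sigma(\cdot))$, so $\nu(b)$ is of the form $\avg_{J}$ of an element congruent to $\bar\lambda$ mod $\mathbb Q\Phi^\vee$; combined with Lemma~\ref{lem:convFacts}(a), which characterizes $\conv'(\bar\lambda)$ as the \emph{unique} element that is $\geq \bar\lambda$, dominant, and of the form $\avg_{J'}(\bar\lambda)$, one gets $\conv'(\bar\lambda) = \nu(b)$ provided one knows $\nu(b)$ itself is of the form $\avg_{J'}(\bar\lambda)$ and not merely of the form $\avg_{J'}$ of some other representative. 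This last refinement is precisely the content of Lemma~\ref{lem:lambdaSimpleFacts}(a), so I would simply cite it; everything else is bookkeeping with the two preceding lemmas.
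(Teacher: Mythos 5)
There is a genuine gap: you never actually prove part (a), which is the substantive half of the lemma. Your entire argument for (b) rests on the identity $\nu(b)=\conv(\lambda_G(b))=\conv'(\avg_\sigma(\lambda_G(b)))$, and at the very step you yourself single out as ``the main obstacle'' you resolve it by saying you would ``simply cite'' Lemma~\ref{lem:lambdaSimpleFacts}(a) --- but (a) is part of the statement you were asked to prove, so this is circular as written. The missing content is precisely the argument that maximality in the definition of $\lambda_G(b)$ forces $\pi_J(\lambda_G(b))\geq\nu(b)$: the paper proves it by choosing a lift $\tilde\mu\in X_\ast(T)_{\Gamma_0}$ of $\mu$, replacing it by a $W_J$-translate $w\tilde\mu$ that is antidominant with respect to $J$, using Lemma~\ref{lem:avgSimpleFacts}(e) to get $w\tilde\mu\leq\pi_J(w\tilde\mu)=\nu(b)$ so that the image of $w\tilde\mu$ in $X_\ast(T)_\Gamma$ competes in the maximum defining $\lambda_G(b)$, hence $\nu(b)=\pi_J(w\tilde\mu)\leq\pi_J(\lambda_G(b))\leq\conv(\lambda_G(b))$, and finally bounding $\conv(\lambda_G(b))\leq\nu(b)$ via Lemma~\ref{lem:convPreparation} since $\avg_\sigma(\lambda_G(b))\leq\nu(b)$ and $\nu(b)$ is dominant. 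None of this appears in your proposal.

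A second, related omission: even granting (a), it is a conditional statement (``pick $\mu$ and $J$ such that\dots''), so to use its conclusion inside (b) you must verify that such a pair $(\mu,J)$ exists for the given $[b]$. The paper does this explicitly: every $[b]\in B(G)$ has a representative $x\in\widetilde W$ (by \cite[Section~3.3]{He2014}), and Lemma~\ref{lem:newtonPointsAsAverages} then produces $\mu$ and $J=\sigma(J)$ with $\nu(b)=\pi_J(\mu)$ and the correct Kottwitz point. Without this, $\conv(\lambda_G(b))$ could a priori be strictly smaller than $\nu(b)$, and your identification of $J_1,J_2$ with the sets of Lemma~\ref{lem:convFacts}(c) --- which is correct and is exactly how the paper finishes (b) --- would not get off the ground. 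So the part of your argument that is present (deducing the inclusion $J_1\subseteq J_2$ and the equivalence from Lemma~\ref{lem:convFacts}(c), noting that $\sigma$-stability of $J$ is only needed to rewrite $\pi_J(\lambda_G(b))$ as $\avg_J(\avg_\sigma(\lambda_G(b)))$) matches the paper, but the lemma's core --- part (a) and the existence input it needs --- is missing.
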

\begin{proof}
\begin{enumerate}[(a)]
\item Choose a lift $\tilde \mu\in X_\ast(T)_{\Gamma_0}$. Then
\begin{align*}
\pi_J(\mu) = \pi_J(\tilde \mu) = \avg_\sigma\sum_{w\in W_J} w\tilde \mu.
\end{align*}
We can choose an element $w\in W_J$ such that $w\tilde\mu$ becomes anti-dominant with respect to the roots in $J$, i.e.\ $\langle w\tilde \mu,\alpha\rangle\leq 0$ for all $\alpha\in J$. Then $\pi_J(\tilde \mu)=\pi_J(w\tilde \mu)\geq w\tilde \mu$ by Lemma~\ref{lem:avgSimpleFacts}.

In particular, the image of $w\tilde \mu$ in $X_\ast(T)_{\Gamma}$ is $\leq\lambda_G(b)$ by construction of $\lambda_G(b)$.  Thus
\begin{align*}
\nu(b) = \pi_{J}(w\tilde \mu)\leq \pi_{J}(\lambda_G(b))\leq \conv(\lambda_G(b)).
\end{align*}
Since $\avg_\sigma(\lambda_G(b))\leq \nu(b)$ and $\nu(b)$ is dominant, we use Lemma~\ref{lem:convPreparation} to see that $\conv(\lambda_G(b))\leq \nu(b)$. Hence $\nu(b) = \conv(\lambda_G(b)) = \pi_{J}(\lambda_G(b))$.
\item By \cite[Section~3.3]{He2014}, $b = [x]$ for some $x\in \widetilde W$. Applying Lemma~\ref{lem:newtonPointsAsAverages} to $x$, we see that $\mu$ and $J$ exist as in (a). In particular, $\nu(b) = \conv(\lambda_G(b))$.

Now all claims follow from Lemma~\ref{lem:convFacts}.\qedhere
\end{enumerate}
\end{proof}

Related to the notion of the $\lambda$-invariant is the notion of \emph{defect} of an element $[b]\in B(G)$.

Following \cite[Proposition~6.2]{Kottwitz1985}, we fix an element $x = w\varepsilon^\mu$ of length zero in the extended affine Weyl group $\widetilde W_{J_2}$ of the Levi subgroup of $G$ associated with $J_2$ such that $[b] = [x]\in B(G)$.

We denote by $J_b$ the $\sigma$-twisted centralizer of $b\in G(L)$, i.e.\ the reductive group over $F$ with $F$-valued points
\begin{align*}
J_b(F) = \{g\in G(L)\mid g^{-1} b\sigma(g) = b\}.
\end{align*}
Then the defect of $[b]$ has the following equivalent descriptions:
\begin{proposition}\label{prop:defect}
The following non-negative integers all agree. The common value is called the \emph{defect} of $[b]$, denoted $\defect(b)$.
\begin{enumerate}[(i)]
\item $\dim (X_\ast(T)_{\Gamma_0}\otimes\mathbb Q)^{\sigma} -\dim (X_\ast(T)_{\Gamma_0}\otimes\mathbb Q)^{\sigma w}$,
\item $\rk_F(G) - \rk_F(J_b)$,
\item $\langle \nu(b),2\rho\rangle-\langle\lambda_G(b),2\rho\rangle$,
\item $\#(J_1/\sigma)$, the number of $\sigma$-orbits in $J_1$,
\item $\min_{v\in W}\ell(v^{-1}\presig(w v))$,
\item $\min_{v\in W_{J_1}}\ell(v^{-1}\presig (wv))$.
\end{enumerate}
\end{proposition}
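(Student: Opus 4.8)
The plan is to combine classical results of Kottwitz on the invariants of $[b]$ and on the $\sigma$-centralizer $J_b$ with the combinatorics of Lemmas~\ref{lem:newtonPointsAsAverages} and~\ref{lem:lambdaSimpleFacts}. As a preliminary reduction one replaces $G$ by the standard Levi $M$ centralizing $\nu(b)$ (whose set of simple roots is $J_2$): then $\rk_F M = \rk_F G$, the groups $J_b$ and the invariant $\lambda(b)$ are unchanged, the difference $2\rho - 2\rho_M$ annihilates $\mathbb Q\Phi_{J_2}^\vee \ni \nu(b)-\avg_\sigma(\lambda_G(b))$, the sets $J_1, J_2$ are unaffected (now $J_2$ is the full simple system of $M$ and $\nu(b)$ is central in $M$), and by the theory of minimal-length elements in twisted conjugacy classes the minimum in (v) is attained inside $W_{J_2}$. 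So we assume $\nu(b)$ central in $G$, with representative $x = w\varepsilon^\mu$ basic.

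\textbf{The classical triangle (i) $=$ (ii) $=$ (iii).} By Kottwitz, $J_b$ is the inner twist of $G$ whose Frobenius is conjugation by a lift of $w$ followed by $\sigma$, so that on $V := X_\ast(T)_{\Gamma_0}\otimes\mathbb Q$ its Frobenius acts as $\sigma\circ w$. Since $G$ is quasi-split, $\rk_F G = \dim V^{\sigma}$, and the same description of the maximal $F$-split torus applied to $J_b$ gives $\rk_F J_b = \dim V^{\sigma w}$; subtracting proves (i) $=$ (ii). The equality (ii) $=$ (iii) is Kottwitz's formula $\defect_G(b) = \langle\nu(b)-\lambda_G(b),2\rho\rangle$ (Lemma~\ref{lem:avgSimpleFacts}(a) makes the right side meaningful); the argument of \cite{Kottwitz1985} uses only the descriptions of $\nu(b)$, $\lambda_G(b)$ and $J_b$ recalled above and carries over verbatim to quasi-split $G$.

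\textbf{The combinatorial invariants (iii) $=$ (iv) $=$ (vi) $=$ (v).} By Lemma~\ref{lem:avgSimpleFacts}(a) one has $\langle\nu(b)-\lambda_G(b),2\rho\rangle = \langle\nu(b)-\avg_\sigma(\lambda_G(b)),2\rho\rangle = \sum_{\alpha\in J_1}c_\alpha\langle\alpha^\vee,2\rho\rangle = 2\sum_{\alpha\in J_1}c_\alpha$. Now invoke the structure: with $\nu(b)$ central, Kottwitz's classification of basic $\sigma$-conjugacy classes decomposes $[b]$, along the $\sigma$-orbits of the connected components of $\Phi_{J_1}$, into "superbasic" pieces each isomorphic to the basic class of a (possibly $\sigma$-twisted) $\GL_n$; this simultaneously identifies $J_1$ with the $\sigma$-stable support of the associated $w\in W_{J_1}$ and exhibits $w$ as a $\sigma$-twisted Coxeter element of $W_{J_1}$. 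For a single twisted-$\GL_n$ piece, an explicit finite computation gives both $2\sum_\alpha c_\alpha = \#(J_1/\sigma)$ and $\ell(w) = \#(J_1/\sigma)$, the minimum in its $\sigma$-conjugacy class; this yields (iii) $=$ (iv) and the value $\#(J_1/\sigma)$ for (vi). Finally, for every $v\in W$ one has $\ell(v^{-1}\presig(wv)) \geq \#(\supp_\sigma(v^{-1}\presig(wv))/\sigma) \geq \#(J_1/\sigma)$, since $\ell(u)\geq\#(\supp_\sigma(u)/\sigma)$ always, and since $w$ is $\sigma$-elliptic in $W_{J_1}$ its $\sigma$-support cannot be shrunk below $J_1$ by $\sigma$-conjugation; as the twisted Coxeter element realises equality inside $W_{J_1}$, this gives (v) $=$ (vi) $=\#(J_1/\sigma)$.

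\textbf{Main obstacle.} The conceptual work is in the last part: one must (a) match the purely $\lambda$-invariant-theoretic set $J_1$ with the "superbasic support" of a representative $w$ chosen via Kottwitz's classification, and (b) carry out the two explicit computations for a $\sigma$-twisted $\GL_n$-piece — the normalisation $2\sum_{\alpha\in J_1}c_\alpha=\#(J_1/\sigma)$ and the Coxeter-theoretic identity $\ell(w)=\#(J_1/\sigma)$ for twisted Coxeter elements, which is precisely what turns $\min_v\ell(v^{-1}\presig(wv))$ into a count of simple-root orbits rather than merely bounding it below. Everything else is either a citation of classical results or a short argument with $\supp$, $\ell$ and the lemmas above.
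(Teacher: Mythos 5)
Your overall architecture --- reduce to a Levi, analyse the superbasic type-$A$ situation explicitly, and bound (v) from below by a support count --- runs parallel to the paper's proof, but the two steps you dispose of by citation are exactly where the work lies, and as written they are gaps. First, the only bridge between your cluster (i)$=$(ii) and your cluster (iii)$=$(iv)$=$(v)$=$(vi) is the claim that Kottwitz's identity $\defect(b)=\langle\nu(b)-\lambda_G(b),2\rho\rangle$ \enquote{carries over verbatim to quasi-split $G$}. It does not: Kottwitz proves it for split groups (in \cite{Kottwitz2006}, not \cite{Kottwitz1985}), Hamacher's extension \cite[Proposition~3.8]{Hamacher2015} is for unramified groups and itself proceeds by a superbasic reduction plus a genuine computation, and in the ramified quasi-split case $X_\ast(T)_{\Gamma_0}$ has torsion and $\Phi$ is the relative root system --- which is precisely why the paper re-proves the statement instead of citing it. The repair is the one the paper uses and is within reach of your setup: after reducing to the superbasic case, compute (i) directly as a fixed-space dimension ($\dim V^{\sigma}-\dim V^{\sigma w}=\#(\Delta/\sigma)$, part (a) of the paper's superbasic lemma) and compute $\langle\nu(b)-\avg_\sigma(\lambda_G(b)),2\rho\rangle=\#(\Delta/\sigma)$ by the type-$A$ calculation you sketch (this is Hamacher's computation, which only depends on the affine root system with its $\sigma$-action); then no transfer of the split-group theorem is needed.

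Second, your lower bound for (v) rests on the assertion that $w$ is ($\sigma$-conjugate to) a $\sigma$-elliptic twisted Coxeter element of $W_{J_1}$ whose $\sigma$-support cannot be shrunk below $J_1$ by twisted conjugation in the full group $W$. Note that the representative fixed in the proposition satisfies $w\in W_{J_2}$, not $w\in W_{J_1}$, so \enquote{$\sigma$-elliptic in $W_{J_1}$} does not literally apply to it, you do not relate the original $w$ to the Coxeter-type representative produced by the classification, and the non-shrinking of the $\sigma$-support under $W$-twisted conjugation is asserted rather than proved. The paper avoids all of this with a cleaner argument you should adopt: if $\ell(v^{-1}\presig(wv))<\#(J_1/\sigma)$ for some $v\in W$, set $J:=\supp_\sigma(v^{-1}\presig(wv))$; then $v^{-1}\presig(xv)$ lies in the extended affine Weyl group of the Levi $M_J$, so $[b]$ has a preimage in $B(M_J)$, which by the $\lambda$-invariant lemmas (Lemmas~\ref{lem:lambdaSimpleFacts} and~\ref{lem:convFacts}) forces $J_1\subseteq J$ and hence $\ell(v^{-1}\presig(wv))\geq\#\supp(v^{-1}\presig(wv))\geq\#(J/\sigma)\geq\#(J_1/\sigma)$, a contradiction. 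Finally, your preliminary passage to the $J_2$-Levi carries several unverified compatibilities (invariance of $\lambda_G(b)$, of $J_1$, and the claim that the minimum in (v) is attained in $W_{J_2}$); the paper sidesteps these by observing instead that (i), (iii), (iv), (vi) are unchanged under passage to the $J_1$-Levi, where $[b]$ is superbasic, and by treating (v) separately as above.
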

The notion of defect was originally defined in \cite[Equation~1.9.1]{Kottwitz2006} for split groups, using the expression in (i). Kottwitz shows the equality with (ii) as \cite[Theorem~1.10.1]{Kottwitz2006} and the equality with (iii) as \cite[Theorem~1.9.2]{Kottwitz2006}.

If $G$ is not split, the expression of (ii) is commonly used as definition. In the unramified case, the equality of (ii) with (iii) is then known as \cite[Proposition~3.8]{Hamacher2015}, and Hamacher's proof shows the equality with (i) and (iv).

For the remainder of this section, we sketch how to prove Proposition~\ref{prop:defect} for quasi-split groups $G$. The main idea is a reduction to the superbasic case.
\begin{lemma}
Assume that $[b]$ is superbasic.
Denote by $n = \#(\Delta/\sigma)$ the number of $\sigma$-orbits in $\Delta$.
\begin{enumerate}[(a)]
\item We have
\begin{align*}
(X_\ast(T)_{\Gamma_0}\otimes\mathbb Q)^{\sigma w} = \{\mu\in X_\ast(T)_{\Gamma_0}\otimes\mathbb Q\mid \sigma(\mu)=\mu\text{ and }\langle \mu,\Phi\rangle = \{0\}\}.
\end{align*}
In particular,
\begin{align*}
n = \dim (X_\ast(T)_{\Gamma_0}\otimes\mathbb Q)^{\sigma}-(X_\ast(T)_{\Gamma_0}\otimes\mathbb Q)^{\sigma w}.
\end{align*}
\item We have
\begin{align*}n = \min_{v\in W}\ell(v^{-1}\presig(wv)).
\end{align*}
More precisely, we find $v\in W$ and a subset $\Delta'\subseteq \Delta$ such that $\# \Delta' = n$ and $v^{-1}\presig(wv)$ is a Coxeter element for $\Delta'$.
\item We have
\begin{align*}
n=
\langle \nu(b) - \avg_\sigma(\lambda_G(b)),2\rho\rangle.
\end{align*}
\end{enumerate}
\end{lemma}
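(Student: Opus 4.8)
The plan is to exploit the rigidity of superbasic elements and reduce everything to a computation for a Weil restriction of $\GL_{n+1}$ (I use $\GL_{n+1}$ so that the lemma's $n$ equals $\#(\Delta/\sigma)$). First, since $\ell(x)=0$, Corollary~\ref{cor:IwahoriMatsumoto} forces $\ell(x,\alpha)=0$ for every $\alpha\in\Phi$, hence $\langle\mu,\alpha\rangle=\Phi^+(w\alpha)-\Phi^+(\alpha)\in\{-1,0,1\}$ and $\LP(x)=W$; moreover $x$ normalizes $I$, so $\sigma\circ x$ permutes the simple affine roots $\Delta_\af$. Superbasicity of $[b]$ is exactly the statement that this permutation is transitive on each orbit of connected components of $\Delta_\af$, and only type $A$ components admit such a transitive cyclic symmetry; thus $\Phi$ is a product of type $A$ root systems and $G^{\mathrm{ad}}\cong\prod_i\operatorname{Res}_{F_i/F}\mathrm{PGL}_{m_i}$. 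All quantities in (a)--(c) are compatible with this product decomposition and insensitive to central isogenies -- for (c) note that $\nu(b)-\avg_\sigma(\lambda_G(b))$ lies in $\mathbb{Q}\Phi^\vee$, so its pairing with $2\rho$ only depends on the adjoint data -- so it suffices to treat $G=\operatorname{Res}_{F'/F}\GL_{n+1}$ with $[F':F]=d$. Here $\Delta$ consists of $d$ cyclically $\sigma$-permuted copies of $\{\alpha_1,\dots,\alpha_n\}$, so $\#(\Delta/\sigma)=n$, and superbasicity says precisely that $(\sigma w)^d$ acts in each copy as an $(n+1)$-cycle.

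For (a), the inclusion $\supseteq$ is immediate since $w$ is a product of reflections $s_\alpha$ each fixing any root-annihilating $\mu$. For $\subseteq$: if $\mu\in V^{\sigma w}$ then $\{\alpha\in\Phi\mid\langle\mu,\alpha\rangle=0\}$ is a $\sigma w$-stable sub-root system, cutting out a $\sigma w$-stable Levi; were it proper, $[b]$ would be induced from a basic class of a proper Levi, contradicting superbasicity, so $\mu$ annihilates all roots and hence $\sigma\mu=\sigma w\mu=\mu$. For the displayed equality, decompose $V=V_{\mathrm{ss}}\oplus V_Z$ $\sigma$-equivariantly; then $V^{\sigma w}=V_Z^\sigma$ by the above, while $\dim V_{\mathrm{ss}}^\sigma=\#(\Delta/\sigma)$ because $V_{\mathrm{ss}}$ has the $\sigma$-permuted basis $\{\alpha^\vee\mid\alpha\in\Delta\}$.

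For (b), the lower bound: for any $v\in W$, the element $u:=v^{-1}\presig(wv)$ is the finite part of the $\sigma$-conjugate $v^{-1}(\sigma w)v$ of $\sigma w$, so by (a) the automorphism $u\circ\sigma$ has no nonzero fixed vector on $V_{\mathrm{ss}}$, hence neither does $(u\circ\sigma)^d\in W$; in the model $W=(S_{n+1})^d$ this means $\ell_R\big((u\circ\sigma)^d\big)=\dim V_{\mathrm{ss}}=nd$, and since $(u\circ\sigma)^d$ is a product of $d$ $\sigma$-twists of $u$ (all of equal reflection length), subadditivity of reflection length gives $\ell(u)\ge\ell_R(u)\ge n$. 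For the upper bound, choose $v=(v_1,\dots,v_d)$ recursively so that in $u=v^{-1}\presig(wv)$ all but one of the $S_{n+1}$-components become trivial; the surviving component is then a conjugate of the $(n+1)$-cycle $w_d\cdots w_1$, which we further conjugate to the Coxeter element $s_1\cdots s_n$. This $u$ has $\ell(u)=n$ and is a Coxeter element for the transversal $\Delta'=\{\alpha_1,\dots,\alpha_n\}$ of the $\sigma$-orbits, proving the refined assertion.

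For (c), basicness of $[b]$ makes $\nu(b)$ central, so $\langle\nu(b),2\rho\rangle=0$, and by Lemma~\ref{lem:avgSimpleFacts}(a) the claim reduces to $\langle\avg_\sigma(\lambda_G(b)),2\rho\rangle=-n$. Via the canonical identification $B(\operatorname{Res}_{F'/F}\GL_{n+1})=B_{F'}(\GL_{n+1})$, which matches Newton points, $\lambda$-invariants and $2\rho$-pairings, this is the classical computation for split $\GL_{n+1}$ over a local field with superbasic slope $k/(n+1)$, $\gcd(k,n+1)=1$: there $\nu(b)=(k/(n+1),\dots,k/(n+1))$, and $\lambda_G(b)$ is the maximal cocharacter below $\nu(b)$ with the prescribed image in $\pi_1$, namely $\lambda_G(b)_i=\lfloor ik/(n+1)\rfloor-\lfloor(i-1)k/(n+1)\rfloor$, whence $\nu(b)-\lambda_G(b)=\sum_{i=1}^n\{ik/(n+1)\}\,\alpha_i^\vee$; using $\langle\alpha_i^\vee,2\rho\rangle=2$ and $\sum_{i=1}^n\{ik/(n+1)\}=n/2$, we obtain $\langle\nu(b)-\lambda_G(b),2\rho\rangle=n$. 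The main obstacle is the structure-theoretic reduction itself -- pinning down which root systems carry superbasic classes, the cyclic action of $\sigma\circ x$ on $\Delta_\af$, and most delicately verifying that $\avg_\sigma$, the $\lambda$-invariant and the $2\rho$-pairing transfer correctly through the Weil-restriction decomposition in part (c); granting this, (a) and the estimates in (b) are short and (c) is the classical $\GL_{n+1}$ computation.
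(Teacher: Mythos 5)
Your parts (a) and (b) run essentially parallel to the paper's proof: (a) is the same stabilizer-Levi argument (though note the paper first conjugates $\mu$ to a dominant element so that the stabilizer becomes a \emph{$\sigma$-stable standard} Levi -- your ``$\sigma w$-stable Levi'' shortcut glosses over why $[b]$ then comes from a proper Levi defined over $F$), and in (b) the recursive collapse to a single symmetric-group factor and conjugation to a Coxeter element is exactly the paper's construction. Your lower bound via reflection length is a genuinely different and appealing alternative to the paper's support argument, but it contains a non sequitur: from ``$u\circ\sigma$ has no nonzero fixed vector on $V_{\mathrm{ss}}$'' it does \emph{not} follow that $(u\circ\sigma)^d$ has none (an order-$d$ rotation is fixed-point free while its $d$-th power is the identity). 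The statement you need -- that $(\sigma\circ w)^d$ acts on each type-$A_n$ factor as an $(n{+}1)$-cycle, hence without nonzero fixed vectors on that factor -- is true, but it comes from superbasicity (the transitive action on the affine diagram you invoke at the start), not from (a); since $(u\circ\sigma)^d$ is $v$-conjugate to $(\sigma\circ w)^d$, the bound is repairable, but as written the deduction is wrong.

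The substantive gap is in (c), where your route genuinely diverges from the paper's. Your reduction hinges on two transfers that you assert or explicitly postpone: (i) invariance of $\nu(b)-\avg_\sigma(\lambda_G(b))$ under central isogeny, and (ii) compatibility of Newton points, $\lambda$-invariants and the $2\rho$-pairing with the Shapiro identification for Weil restrictions. The one-line reason you give for (i) -- that the difference lies in $\mathbb Q\Phi^\vee$, so its pairing with $2\rho$ depends only on adjoint data -- misses the actual issue: $\lambda_G(b)$ is defined as a maximum over a coset in $X_\ast(T)_\Gamma$, and this lattice \emph{changes} under isogeny, so one must show the maxima correspond (they do, because the candidate cosets are torsors under the image of $(\mathbb Z\Phi^\vee)_\Gamma$ and map onto one another compatibly with the dominance order, but this is an argument, not a remark); (ii) is likewise exactly the delicate content you flag and leave unverified. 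In addition, the structural claim $G^{\mathrm{ad}}\cong\prod_i\operatorname{Res}_{F_i/F}\mathrm{PGL}_{m_i}$ together with the ``$d$ cyclically $\sigma$-permuted copies of $\Delta$'' picture implicitly assumes unramified restrictions, whereas the lemma must cover ramified quasi-split groups -- precisely the generality the paper is after. The paper avoids all of this by never leaving the ambient group: (a) and (b) are carried out purely in terms of the affine root system with its $\sigma$-action, and (c) is obtained by observing that Hamacher's computation (or, as the paper remarks, a direct type-$A$ Newton-polygon computation for the given group) depends only on that data, rather than by a literal transfer to split $\GL_{n+1}$. Until (i), (ii) and the ramified cases are actually carried out, part (c) is not established by your argument.
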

\begin{proof}
Superbasic elements only exist if each irreducible component of $\Phi$ is a root system of type $A$.

All claims may certainly be checked individually on each $\sigma$-connected component, so to lighten our notation, we will assume that $\Delta$ is $\sigma$-connected.
\begin{enumerate}[(a)]
\item If $\mu\in X_\ast(T)_{\Gamma_0}\otimes\mathbb Q$ is $\sigma$-stable and orthogonal to all roots, it is certainly fixed by $\sigma w$. Let conversely $\mu\in X_\ast(T)_{\Gamma_0}\otimes\mathbb Q$ satisfy $\sigma w(\mu)=\mu$. Then we find $v\in W$ such that $v\mu\in X_\ast(T)_{\Gamma_0}\otimes\mathbb Q$ is dominant. Observe that
\begin{align*}
\left(v\presig( w v^{-1})\right) \sigma v\mu = v\sigma w v^{-1} v\mu = v\mu.
\end{align*}
Since $\sigma v\mu$ is dominant and in the $W$-orbit of $v\mu$, we get $\sigma v\mu=v\mu$. In particular, the dominant coweight $v\mu$ gets stabilized by $v\presig(wv^{-1})\in W$.

Let $J:=\Stab(v\mu)$ denote the stabilizer of the dominant coweight $v\mu$. Then $J=\sigma(J)$, so $J$ defines a $\sigma$-stable Levi subgroup of $G$. Its extended affine Weyl group $\widetilde W_J$ contains $v^{-1}\presig(xv)$, so $b$ comes from a $\sigma$-conjugacy class in this Levi subgroup. This is only possible if $J=\Delta$, i.e.\ $\langle v\mu,\Phi\rangle=\{0\}$. In particular, $v\mu = v^{-1}(v\mu)=\mu$, proving the claim.
\item
Decompose the Dynkin diagram of $\Delta$ into connected components, written as $\Delta = C_1\sqcup \dotsc\sqcup C_k$, such that $\sigma(C_i) = C_{i+1}$ for $i=1,\dotsc,k-1$ and $\sigma(C_k) = C_1$. Let $W_C := W_{C_1}$ denote the Weyl group of $C:=C_1$.

Note that each $C_i$ is of type $A_n$ with $n$ as given. Write $C_\af$ for the affine Dynkin diagram associated with $C = C_1$. Then the action of $\sigma^k$ on $C_\af$ must fix the special node, and be either the identity or the unique involution on the complement, i.e.\ $C$. The element $x\presig x\cdots\prescript{\sigma^{k-1}}{}x$, being an element of length zero in the affine Weyl group of $C$, acts on $C_\af$ by some cyclic permutation. The composition of these two maps, $(\sigma\circ x)^k$, should act transitively on $C_\af$.

One quickly checks that this is only possible if $\sigma^k$ is the identity map on $C_\af$.

Now write $w = w_1 \presig(w_2)\cdots\prescript{\sigma^{k-1}}{}(w_k)$ with $w_1,\dotsc,w_k\in W_C$. Let $v_1\in W_C$ and define
\begin{align*}
v := v_1\presig(v_2)\cdots\prescript{\sigma^k}{}(v_k)\in W,\qquad v_{i+1} = w_i v_i\text{ for }i=1,\dotsc,k-1.
\end{align*}
Then
\begin{align*}
v^{-1} \presig(wv) =& v_1^{-1}\presig(v_2^{-1})\cdots\prescript{\sigma^k}{}(v_k^{-1})\cdot (w_kv_k)\presig(w_1v_1)\cdots \cdots\prescript{\sigma^{k-1}}{}(w_{k-1}v_{k-1})
\\=&v_1^{-1} w_k v_k = v_1^{-1} w_k\cdots w_1 v_1\in W_C.
\end{align*}
We know that $W_C$ is a Coxeter group of type $A_n$, so a symmetric group. It is a classical result that each element in a symmetric group is conjugate to a Coxeter element for a parabolic subgroup. In other words, we find $v_1$ and $\Delta'\subseteq C$ such that $v_1^{-1}w_k\cdots w_1 v_1$ is a Coxeter element of $\Delta'$.

In particular, we get
\begin{align*}
n=\#C\geq\#\Delta' = \ell(v^{-1}\presig(wv))\geq \#\supp(v^{-1}\presig(wv))\underset{\text{superbasic}}\geq n.
\end{align*}
Thus $\#\Delta'=n$.
\item
It remains to evaluate 
\begin{align*}
\langle \nu(b) - \avg_\sigma(\lambda_G(b)),2\rho\rangle = \sum_{\alpha\in \Delta}2c_\alpha.
\end{align*}
This calculation is carried out by Hamacher \cite[Section~3]{Hamacher2015}, and we obtain the value $n$ as claimed. The equality only depends on the affine root system together with the $\sigma$-action, so the fact that Hamacher only considers unramified groups is irrelevant. While his argument using characters of finite group representations is very elegant, one can also obtain the same result in a more straightforward manner with explicit calculations of Newton polygons (as we are in the $A_n$ case).\qedhere\end{enumerate}
\end{proof}
\begin{proof}[Proof of Proposition~\ref{prop:defect}]
The equality of (i) with (ii) is a standard Bruhat-Tits theoretic argument, cf.\ \cite[Section~4.3]{Kottwitz2006} or \cite[Proof of Prop.~3.8]{Hamacher2015}.

Observe that the values of (i), (iii), (iv) and (vi) do not change if we pass to the Levi subgroup of $G$ defined by $J_1$. If we do so, $[b]$ becomes a superbasic $\sigma$-conjugacy class. Then the equalities of (i), (iii), (iv) and (vi) follow immediately from the preceding lemma.

It remains to show that, in the general case, (v) agrees with (vi). Suppose this was not the case. Then we would find some $v\in W$ such that
\begin{align*}
\ell(v^{-1} \presig(wv))<\#(J_1/\sigma).
\end{align*}
Consider the element $y = v^{-1}\presig(xv)\in \widetilde W$ and the subset $J\subseteq \Delta$ given by $J:=\supp_\sigma(v^{-1} \presig(wv))$. Then $J$ defines a $\sigma$-stable Levi subgroup $M\subseteq G$ such that $[b]$ has a preimage in $B(M)$. This is only possible if $J_1\subseteq J$, so $J = J_1$. But we must have
\begin{align*}
\ell(v^{-1} \presig(wv))\geq \#\supp(v^{-1}\presig(wv)) \geq \#(J/\sigma) = \#(J_1/\sigma),
\end{align*}
contradiction!
\end{proof}
\subsection{Fundamental elements}\label{sec:fundamental-elements}
Recall the equivalent characterizations of fundamental elements:
\begin{proposition}\label{prop:fundamental}For $x = w\varepsilon^\mu\in\widetilde W$, the following are equivalent:
\begin{enumerate}[(i)]
\item $\ell(x) = \langle\nu(x),2\rho\rangle$.
\item For all $n\geq 1$, we have
\begin{align*}
\ell(x\cdot \presig x\cdots \prescript{\sigma^{n-1}}{}x) = n\ell(x).
\end{align*}
\item There exist $v\in \LP(x)$ and a $\sigma$-stable $J\subseteq \Delta$ such that $v^{-1}\presig(wv)\in W_J$ and for all $\alpha\in \Phi_J$, we have $\ell(x,v\alpha)=0$.
\item For every orbit $O\subseteq \Phi$ with respect to the action of $(\sigma\circ w)$ on $\Phi$, we have
\begin{align*}
\left(\forall \alpha \in O:~\ell(x,\alpha)\geq 0\right)\text{ or }\left(\forall \alpha\in O:~\ell(x,\alpha)\leq 0\right).
\end{align*}
\end{enumerate}
If $G$ is defined over $\mathcal O_F$, this is moreover equivalent to
\begin{enumerate}[(i)]
\addtocounter{enumi}{4}
\item Every element $y\in IxI$ is of the form $y = i^{-1} x\presig i$ for some $i\in I$.
\end{enumerate}
If these equivalent conditions are satisfied, we call $x$ \emph{fundamental}.\rightqed
\end{proposition}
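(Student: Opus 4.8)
The equivalences (i)–(iv) should be proved by working with the length functional $\ell(x,\cdot)$ and the quantum-Bruhat-graph / Newton-point machinery, while (v) is more geometric and requires the extra hypothesis that $G$ extends over $\mathcal O_F$. I would organize the proof around the chain (ii) $\Rightarrow$ (i) $\Rightarrow$ (iv) $\Rightarrow$ (iii) $\Rightarrow$ (ii), handling (v) separately at the end.

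For (ii) $\Rightarrow$ (i): apply the definition of the Newton point. Writing $x^n := x\cdot\presig x\cdots \prescript{\sigma^{n-1}}{}x$ for $n = N$ a multiple of the order of $\sigma\circ w$ on $X_\ast(T)_{\Gamma_0}$, the translation part of $x^N$ is $N$ times a representative of the averaged cocharacter $\frac1N\sum(\sigma\circ w)^k\mu$, so its dominant Newton vector is $N\nu(x)$; hence $\ell(x^N)\le \langle N\nu(x),2\rho\rangle$ always (this is essentially Mazur's inequality, or directly Corollary~\ref{cor:IwahoriMatsumoto} combined with the fact that for a translation element $\varepsilon^\lambda$ with $\lambda$ dominant one has $\ell(\varepsilon^\lambda)=\langle\lambda,2\rho\rangle$), and condition (ii) forces $N\ell(x)\le \langle N\nu(x),2\rho\rangle$, i.e. $\ell(x)\le\langle\nu(x),2\rho\rangle$. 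The reverse inequality $\ell(x)\ge\langle\nu(x),2\rho\rangle$ holds for every $x$ (again Mazur-type), giving (i). For (i) $\Rightarrow$ (iv): use Corollary~\ref{cor:IwahoriMatsumoto}, $\ell(x)=\sum_{\alpha\in\Phi}\max(0,\ell(x,\alpha))=\sum_{\alpha\in\Phi^+}\lvert\ell(x,\alpha)\rvert$, together with an expression of $\langle\nu(x),2\rho\rangle$ as a sum over $(\sigma\circ w)$-orbits in $\Phi$ of $\lvert\sum_{\alpha\in O}\ell(x,\alpha)\rvert/\lvert O\rvert$-type quantities; the point is that for each orbit $O$ one has $\bigl\lvert\sum_{\alpha\in O\cap\Phi^+}\ell(x,\alpha)\bigr\rvert \le \sum_{\alpha\in O\cap\Phi^+}\lvert\ell(x,\alpha)\rvert$ with equality exactly when all $\ell(x,\alpha)$, $\alpha\in O$, have the same sign, and summing these the total equality (i) forces equality in every orbit. (One has to be slightly careful because $O$ may contain both $\alpha$ and $-\alpha$; since $\ell(x,-\alpha)=-\ell(x,\alpha)$ the condition "all of the same sign on $O$" is the symmetric statement in (iv), and the bookkeeping reduces to $\sigma\circ w$-orbits on $\Phi^+/\{\pm1\}$.)

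For (iv) $\Rightarrow$ (iii): choose $v\in\LP(x)$ given by Example~\ref{ex:usualLPelement}, set $J:=\supp_\sigma(v^{-1}\presig(wv))$; one checks $v^{-1}\presig(wv)\in W_J$ by definition of $J$, and that the orbit condition (iv) together with length positivity ($\ell(x,v\alpha)\ge0$ for $\alpha\in\Phi^+$) forces $\ell(x,v\alpha)=0$ for all $\alpha\in\Phi_J$: an orbit meeting $v\Phi_J^+$ contains roots on which the length functional is $\ge 0$ as well as roots on which it must be $\le 0$ (coming from the reflections making up $v^{-1}\presig(wv)$ acting on $J$), so by (iv) it is identically $0$ there. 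For (iii) $\Rightarrow$ (ii): from the vanishing of $\ell(x,v\alpha)$ on $\Phi_J$ and $v^{-1}\presig(wv)\in W_J$ one gets, using Lemma~\ref{lem:lengthFunctionalForProducts} and Lemma~\ref{lem:lengthAdditivity}, that length is additive along the twisted powers: $\ell(v^{-1}x\presig x\cdots\prescript{\sigma^{n-1}}{}x\, v)=\sum_{i}\ell(v^{-1}\presig{}^i x\, v)$ with each factor of length $\ell(x)$ because conjugation by $v$ and the Frobenius twist preserve length (the base alcove is $\sigma$-stable), and the length-additivity criterion (iii) of Lemma~\ref{lem:lengthAdditivity} is exactly satisfied since the relevant length functionals vanish or agree in sign on the common set $\Phi_J$. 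Unwinding the $v$-conjugation gives (ii).

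**Main obstacle and the last equivalence.** The genuinely delicate step is the careful orbit-counting in (i) $\Leftrightarrow$ (iv): getting the formula for $\langle\nu(x),2\rho\rangle$ as a sum over $(\sigma\circ w)$-orbits in $\Phi$ of the absolute value of the average of $\ell(x,\cdot)$, and matching this term-by-term against $\ell(x)=\sum_{\alpha\in\Phi^+}\lvert\ell(x,\alpha)\rvert$, requires pinning down how $\sigma$ permutes $\Phi^+$ versus how $\sigma\circ w$ does, and treating orbits that are self-paired under $\alpha\mapsto-\alpha$ separately. For (v), under the hypothesis that $G$ is defined over $\mathcal O_F$ I would cite the standard fact (Görtz--He--Nie, He) that $x$ is fundamental iff $\sigma$-conjugation by $I$ on $IxI$ is "trivial" in the sense that every element of $IxI$ is $I$-$\sigma$-conjugate to $x$ itself — this is the defining property used by He and collaborators — and conversely that this $I$-homogeneity of $IxI$ forces $\dim X_x(b_x)=0$-type length identities equivalent to (i). Concretely: (v) $\Rightarrow$ (ii) because then $I(x\presig x\cdots)I$ is likewise homogeneous so its dimension $\ell(x\presig x\cdots)$ must be additive; and (i) $\Rightarrow$ (v) is the He--Nie result that fundamental elements (in the length sense) have the stated $P$-alcove/straightness property. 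I would reference \cite{Goertz2015, He2014} for this last part rather than reprove it.
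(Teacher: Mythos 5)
Your overall architecture matches the paper's: the equivalences (i)--(iv) are handled by length-functional combinatorics, and (v) is only asserted under the $\mathcal O_F$ hypothesis and delegated to the literature (the paper likewise does not reprove (v), citing Nie); your (ii)$\Rightarrow$(i) and (i)$\Rightarrow$(iv) steps are in substance the paper's computation expressing $\langle\nu(x),2\rho\rangle$ as a sum of $(\sigma\circ w)$-orbit averages of $\ell(x,\cdot)$. The genuine gap is your step (iv)$\Rightarrow$(iii), which your cycle needs in order to close. You take $v$ to be the length positive element of Example~\ref{ex:usualLPelement}, set $J=\supp_\sigma(v^{-1}\presig(wv))$, and assert that any $(\sigma\circ w)$-orbit meeting $v\Phi_J^+$ must also contain roots on which $\ell(x,\cdot)$ is forced to be $\leq 0$, whence vanishing on $\Phi_J$. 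This fails twice. First, writing $u=v^{-1}\presig(wv)$, one has $(\sigma\circ w)(v\gamma)=v\,u(\sigma(\gamma))$, so the orbits inside $v\Phi_J$ correspond to $\langle u\circ\sigma\rangle$-orbits in $\Phi_J$; such an orbit can lie entirely in $\Phi_J^+$ even when $\supp_\sigma(u)=J$: in type $B_2$ with long simple root $\alpha$ and short simple root $\beta$, the reflection $u=s_{\alpha+2\beta}=s_\beta s_\alpha s_\beta$ has full support but fixes the positive root $\alpha$, so $\{\alpha\}$ is an orbit contained in the positive roots. Second, even for an orbit meeting both $\Phi_J^+$ and $\Phi_J^-$, condition (iv) (constant weak sign) together with $v\in\LP(x)$ only forces $\ell(x,v\gamma)=0$ on the roots of the orbit lying on the ``wrong'' side, not on the whole orbit; so you do not reach $\ell(x,v\alpha)=0$ for all $\alpha\in\Phi_J$. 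Since this is your only arrow into (iii), the equivalence chain breaks here.

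The paper reaches (iii) with strictly stronger input: it chooses $v$ adapted to the Newton point (namely $v^{-1}$ applied to the $(\sigma\circ w)$-average of $\mu$ is dominant, not your $v$ with $v^{-1}\mu$ dominant), proves $\langle\nu(x),2\rho\rangle=\sum_{\alpha\in\Phi^+}\frac1N\sum_k\ell\bigl(x,(\sigma\circ w)^k v\alpha\bigr)$, and observes that equality with $\ell(x)$ forces \emph{every} iterate $(\sigma\circ w)^k v$ to be length positive; (iii) with $J=\supp_\sigma(v^{-1}\presig(wv))$ is then extracted from this family of length positive elements, not from a single one plus (iv). The remaining issues in your write-up are minor and repairable: in (ii)$\Rightarrow$(i) the parenthetical $\ell(x^N)\leq\langle N\nu(x),2\rho\rangle$ is stated in the wrong direction (the general inequality goes the other way; what you actually use is that for suitable $N$ the twisted power is a translation element, where equality holds), and in (iii)$\Rightarrow$(ii) the claim that ``conjugation by $v$ preserves length'' is false in $\widetilde W$ --- but that step can be salvaged exactly along the lines you hint at, via Lemma~\ref{lem:lengthAdditivity}: $\presig v\in\LP(\presig x)$ and $\presig(wv)=vu\in\LP(x)$ because $u\in W_J$ and $\ell(x,v\cdot)$ vanishes on $\Phi_J$, which gives additivity inductively; alternatively (iii)$\Rightarrow$(iv)$\Rightarrow$(ii), as in the paper, is shorter.
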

Let us first discuss the unramified case. In this case, the equivalence of (i) and (ii) is due to He \cite[Lemma~8.1]{He2010}. Elements satisfying these conditions are called \emph{good} in \cite{He2010} and \emph{$\sigma$-straight} in more recent literature. Condition (iii) is a reformulation of the notion of \emph{fundamental $(J,w,\delta)$-alcoves} from Goertz-He-Nie \cite[Section~3.3]{Goertz2015}. Condition (v) is the notion of fundamental elements from \cite{Goertz2010}. The equivalence of (i), (iii) and (v) is a result of Nie \cite{Nie2015}. Condition (iv) is new, but we will not need it in the sequel.

If $G$ is quasi-split but not unramified, the cited proofs fail because the map $X_\ast(T)_{\Gamma_0}\rightarrow X_\ast(T)_{\Gamma_0}\otimes\mathbb Q$ might no longer be injective. It is conceivable that the proofs might be generalized with a bit of work. Instead, we sketch how to prove the equivalences of (i)--(iv) using our language of length functionals, where issues with the torsion part of $X_\ast(T)_{\Gamma_0}$ are non-existent.
\begin{proof}[Proof of Proposition~\ref{prop:fundamental}]
Lemma~\ref{lem:lengthAdditivity} implies the equivalence of (ii) and (iv). Moreover, the implication (iii) $\implies$ (iv) is immediate.

Let $N>0$ such that the action of $(\sigma\circ w)^N$ on $X_\ast(T)_{\Gamma_0}$ becomes trivial. For any $v\in W$ and $\alpha\in \Phi$, we calculate
\begin{align*}
&\left\langle \frac 1Nv^{-1}\sum_{k=1}^N(\sigma\circ w)^k\mu,\alpha\right\rangle
\\=&\frac 1N\sum_{k=1}^N \langle \mu,(\sigma\circ w)^k v\alpha\rangle
\\=&\frac 1N\sum_{k=1}^N \langle \mu,(\sigma\circ w)^k v\alpha\rangle + \Phi^+((\sigma\circ w)^k v\alpha) - \Phi^+((\sigma\circ w)^{k+1} v\alpha)
\\=&\frac 1N\sum_{k=1}^N \ell(x,(\sigma\circ w)^kv\alpha).
\end{align*}
Pick now $v\in W$ such that $v^{-1}\sum_{k=1}^N (\sigma\circ w)^k\mu = \nu(x)$. Then
\begin{align*}
\langle \nu(x),2\rho\rangle = \sum_{\alpha\in \Phi^+}\frac 1N\sum_{k=1}^N \ell(x,(\sigma\circ w)^kv\alpha)\geq \ell(x).
\end{align*}
Equality holds if and only if $(\sigma\circ w)^kv\in \LP(x)$ for all $k\in \mathbb Z$. If we define $J := \supp_\sigma(v^{-1}\presig(wv))$, we see that (ii) implies (iii).

It remains to show that (iv) implies (ii). This follows directly from the above calculation.
\end{proof}

Fundamental elements play an important role for our description of generic $\sigma$-conjugacy classes. If $x$ is fundamental, the generic $\sigma$-conjugacy class $[b_x]$ coincides with the $\sigma$-conjugacy class of $x$, whose Newton and Kottwitz points are easily computed. The $\lambda$-invariant and the defect of $[x]$ however are less straightforward to see. For now, we compute the defect. This will later help to compute the $\lambda$-invariant, in view of
\begin{align*}
\defect([x]) = \langle \nu(x),2\rho\rangle - \langle \lambda(x),2\rho\rangle\qquad \text{(Proposition~\ref{prop:defect})}.
\end{align*}
\begin{lemma}\label{lem:fundamentalDefect}
Let $x$ be fundamental, and choose $v\in \LP(x)$ and $J\subseteq \Delta$ as in Proposition~\ref{prop:fundamental} (iii).
\begin{enumerate}[(a)]
\item Every $v'\in vW_J$ is length positive for $x$. Moreover, $(x, v', J)$ also satisfies condition (iii) of Proposition~\ref{prop:fundamental}.
\item If $v\in W^J$, then $(\prescript{\sigma^{-1}}{}v)^{-1}xv$ coincides with an element of length zero in the extended affine Weyl group $\widetilde W_J = W_J\ltimes X_\ast(T)_{\Gamma_0}$.
\item The defect of $x$ is given by
\begin{align*}
\defect([x]) = \min_{v'\in vW_J} \ell( (v')^{-1}\presig(wv')) = \min_{v'\in W}\ell( (v')^{-1}\presig(wv)).
\end{align*}
\end{enumerate}
\end{lemma}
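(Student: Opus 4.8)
The plan is to proceed through the three parts in order, using the characterization of fundamental elements from Proposition~\ref{prop:fundamental}(iii) together with the basic calculus of length functionals developed in Section~\ref{sec:root-functionals} and the defect formula of Proposition~\ref{prop:defect}.

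For part (a), I start from the given data: $v\in\LP(x)$ and $\sigma$-stable $J\subseteq\Delta$ with $v^{-1}\presig(wv)\in W_J$ and $\ell(x,v\alpha)=0$ for all $\alpha\in\Phi_J$. The key point is that $\LP(x)$ is stable under right multiplication by each $s_\alpha$ with $\alpha\in J$: indeed $\ell(x,v\alpha)=0$ gives $vs_\alpha\in\LP(x)$ by Lemma~\ref{lem:LPEnumeration}(a), and this element still kills the whole subsystem $\Phi_J$ since $s_\alpha$ permutes $\Phi_J$ and $\ell(x,(vs_\alpha)\beta)=\ell(x,v(s_\alpha\beta))=0$ for $\beta\in\Phi_J$. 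Iterating along a reduced word shows every $v'=vu$ with $u\in W_J$ lies in $\LP(x)$ and satisfies $\ell(x,v'\beta)=0$ for all $\beta\in\Phi_J$. Finally $(v')^{-1}\presig(wv') = u^{-1}\bigl(v^{-1}\presig(wv)\bigr)\presig(u)$; since $J$ is $\sigma$-stable, $\presig(u)\in W_J$, so this product lies in $W_J$, establishing condition (iii) for the triple $(x,v',J)$.

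For part (b), assuming $v\in W^J$, write $y := (\prescript{\sigma^{-1}}{}v)^{-1} x v = (\prescript{\sigma^{-1}}{}v)^{-1} w v\,\varepsilon^{v^{-1}\mu}$, whose finite part is $(\prescript{\sigma^{-1}}{}v)^{-1}wv$. A short check — using that $\presig$ sends the cocharacter part $v^{-1}\mu$ to $\sigma(v^{-1}\mu)$ and rewriting — shows that conjugation by $v$ (in the twisted sense) carries the length-functional data $\ell(x,v\cdot)$ to $\ell(y,\cdot)$, so $\ell(y,\beta)=0$ for all $\beta\in\Phi_J$ and $\ell(y,\gamma)$ has the "positive" sign for $\gamma\in\Phi^+\setminus\Phi_J^+$ coming from length positivity of $v'\in\LP(x)$ combined with $v\in W^J$. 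One then invokes Corollary~\ref{cor:IwahoriMatsumoto} applied inside $\widetilde W_J$: the vanishing of $\ell(y,\beta)$ on $\Phi_J$ forces $y$ to have length zero when viewed in $\widetilde W_J = W_J\ltimes X_\ast(T)_{\Gamma_0}$. (The element is genuinely in $\widetilde W_J$ because its finite part lies in $W_J$, which in turn holds since $v\in W^J$ normalizes the situation appropriately — this is exactly where condition (iii) and $v\in W^J$ are used together.)

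For part (c), combine the two previous parts with Proposition~\ref{prop:defect}(v)--(vi). By (a), all of $vW_J$ is length positive; by (b), after adjusting $v$ within its coset to lie in $W^J$, we realize $[x]$ via a length-zero element of $\widetilde W_J$, and the defect of a $\sigma$-conjugacy class is insensitive to passing to such a Levi — so $\defect([x])$ equals the defect computed inside $M_J$, which by the superbasic-type analysis (Proposition~\ref{prop:defect}(v) applied to $W_J$) is $\min_{u\in W_J}\ell\bigl(u^{-1}\,\presig(\text{finite part})\,\sigma(u)\bigr)$, i.e. $\min_{v'\in vW_J}\ell((v')^{-1}\presig(wv'))$. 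The remaining equality with $\min_{v'\in W}\ell((v')^{-1}\presig(wv'))$ follows from Proposition~\ref{prop:defect}(v) (which is the unrestricted minimum over all of $W$) once one checks the minimum is already attained on the coset $vW_J$; this uses that outside $vW_J$ the finite part $(v')^{-1}\presig(wv')$ acquires extra inversions coming from roots in $\Phi^+\setminus\Phi_J^+$, which cannot decrease the length below the value on $vW_J$.

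The main obstacle I anticipate is part (b): making precise that "length-zero in $\widetilde W_J$" follows from the vanishing of the length functional on $\Phi_J$, while correctly tracking the $\sigma$-twist through the conjugation by $v$ and verifying that $v\in W^J$ is exactly what is needed for $y$ to actually land in the subgroup $\widetilde W_J$ rather than merely having the right finite part. The bookkeeping with $\presig$ versus $\sigma$ acting on cocharacters, and the interplay with $J$ being $\sigma$-stable, is where care is required.
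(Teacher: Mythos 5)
Your plan matches the paper's (very terse) proof step for step: (a) is the same iteration of Lemma~\ref{lem:LPEnumeration}(a) through $W_J$, (b) is the same length computation for $y=(\prescript{\sigma^{-1}}{}v)^{-1}xv\in\widetilde W_J$ via Lemma~\ref{lem:lengthFunctionalForProducts} and Corollary~\ref{cor:IwahoriMatsumoto}, and (c) likewise reduces to Proposition~\ref{prop:defect}(v),(vi). Two small touch-ups: membership $y\in\widetilde W_J$ already follows from condition (iii) together with $\sigma(J)=J$ (the hypothesis $v\in W^J$ is needed only to make the length functional vanish on $\Phi_J$, where the correction terms $\Phi^+(v\beta)$ and $\Phi^+((\prescript{\sigma^{-1}}{}v)\beta')$ cancel), and in (c) no Levi-insensitivity claim for the defect is required: since $[y]=[x]\in B(G)$, one applies Proposition~\ref{prop:defect}(v),(vi) directly to $y$ and reindexes $v'=vu$, $u\in W$ resp.\ $u\in W_J$, to obtain both stated minima.
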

\begin{proof}
\begin{enumerate}[(a)]
\item This is a very straightforward calculation.
\item By definition, $(\prescript{\sigma^{-1}}{}v)^{-1}xv\in\widetilde W_J$. The length calculation is straightforward using Lemma~\ref{lem:lengthFunctionalForProducts}. For an alternative proof concept, cf.\ \cite[Proposition~3.2]{He2014b}.
\item In view of (a), we may assume $v\in W^J$. Then
\begin{align*}
\defect([x])=\defect\left([(\prescript{\sigma^{-1}}{}v)^{-1}xv]\right)
\end{align*}
By (b), the element $(\prescript{\sigma^{-1}}{}v)^{-1}xv\in \widetilde W$ satisfies the conditions needed to compute its defect using Proposition \ref{prop:defect} (v) and (vi). The claim follows.\qedhere
\end{enumerate}
\end{proof}
In order to reduce claims about arbitrary elements in $\widetilde W$ to fundamental ones, we need the following lemma. If $G$ is unramified, this is a classical result of Viehmmann \cite[Proposition~5.5]{Viehmann2014}.
\begin{lemma}\label{lem:nonEmptynessBruhatCondition}
Let $x\in \widetilde W$ and $[b]\in B(G)_x$, i.e.\ $[b]\in B(G)$ with $X_x(b)\neq\emptyset$. Then there exists a fundamental element $y\in \widetilde W$ such that $y\leq x$ in the Bruhat order and $[y] = [b]$ in $B(G)$.
\end{lemma}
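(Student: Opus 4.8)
I would prove this by the Deligne--Lusztig reduction method, following the proof of the unramified case in \cite[Proposition~5.5]{Viehmann2014} and checking that its two inputs survive the passage to quasi-split groups. The first input is the reduction lemma for affine Deligne--Lusztig varieties \cite{Goertz2010, He2014}: for $x\in\widetilde W$, a simple affine reflection $s\in S_\af$ with $sx<x$, and any $[b]\in B(G)$,
\begin{align*}
X_x(b)\neq\emptyset\iff X_{sx\presig{s}}(b)\neq\emptyset\ \text{ or }\ X_{sx}(b)\neq\emptyset,
\end{align*}
where $\ell(sx)=\ell(x)-1$ and $\ell(sx\presig{s})\le\ell(x)$, with equality only when $sx\presig{s}$ is $\sigma$-conjugate to $x$ in $\widetilde W$ (and if $sx\presig{s}=x$ the first disjunct is vacuous, so the equivalence reads $X_x(b)\neq\emptyset\iff X_{sx}(b)\neq\emptyset$). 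This is a statement about $I$-double cosets in $G(L)$ and invokes no integral structure on $G$, so it holds verbatim for quasi-split $G$. The second input is the combinatorics of minimal-length elements of $\widetilde W$ under $\sigma$-twisted conjugation \cite{He2010}: unless $x$ already has minimal length in its $\widetilde W$-$\sigma$-conjugacy class, one can join it, by a chain of length-preserving $\sigma$-conjugations $x'\mapsto sx'\presig{s}$, to an element admitting a step that strictly decreases the length. This is pure Coxeter combinatorics of $W_\af$ together with its $\Omega$-extension and $\sigma$-action; the torsion of $X_\ast(T)_{\Gamma_0}$ --- the only feature distinguishing quasi-split from unramified here --- lies entirely in $\Omega$ and never touches the length function, the Bruhat order, or $\sigma$, so this theory also transfers unchanged.

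\textbf{Induction on $\ell(x)$.} If $x$ is fundamental, I claim $B(G)_x=\{[x]\}$, so necessarily $[b]=[x]$ and $y:=x$ works. This amounts to $IxI$ lying in a single $\sigma$-conjugacy class, which for $G$ defined over $\mathcal O_F$ is Proposition~\ref{prop:fundamental}(v); in general it follows from the length-additivity $\ell(x\presig{x}\cdots\prescript{\sigma^{n-1}}{}x)=n\ell(x)$ of Proposition~\ref{prop:fundamental}(ii) by the same argument, or by reducing --- via Lemma~\ref{lem:fundamentalDefect}(b) --- to a length-zero element of the extended affine Weyl group of a $\sigma$-stable Levi, where the claim is immediate. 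If $x$ is not fundamental, I would apply the reduction lemma to a suitable $s$ with $sx<x$. If some such $s$ gives $sx\presig{s}\le x$ (i.e.\ $\ell(sx\presig{s})=\ell(x)-2$, or $sx\presig{s}=x$), then the strictly smaller element(s) $sx$ and possibly $sx\presig{s}$ still have a nonempty Newton stratum for $[b]$, and the inductive hypothesis supplies a fundamental $y$ with $[y]=[b]$ lying below one of them, hence $y<x$. If instead every available descent forces a genuine length-preserving move, the minimal-length theory guarantees that after finitely many such moves the length can again be decreased.

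\textbf{Main obstacle.} The genuinely delicate point is bookkeeping the Bruhat order across the length-preserving moves: an element reached from $x$ by such moves is $\sigma$-conjugate to $x$ of the same length, hence Bruhat-incomparable to $x$, so an inequality $y\le x'$ there does not obviously upgrade to $y\le x$. This is already the heart of Viehmann's argument in the unramified case, where it is settled by a careful analysis of the reduction tree; since that analysis is a statement about $\widetilde W$ alone, it transfers to the quasi-split setting without change. In summary, the only substantively new work is verifying --- as sketched above --- that neither the geometric reduction lemma nor the combinatorial minimal-length results are disturbed by torsion in $X_\ast(T)_{\Gamma_0}$, together with the base-case remark on fundamental elements.
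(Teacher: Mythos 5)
Your overall strategy (Deligne--Lusztig reduction plus the theory of minimal length elements, induction on $\ell(x)$) is the same as the paper's, but two essential steps are missing. First, your induction has no endgame for elements that are of minimal length in their $\sigma$-conjugacy class of $\widetilde W$ but \emph{not} fundamental: for such $x$ no chain of moves $x'\mapsto sx'\presig s$ ever decreases the length, so your claim that ``after finitely many such moves the length can again be decreased'' fails there, and your only base case is ``$x$ fundamental''. This is exactly where the paper needs two further inputs: He's theorem that $IxI$ lies in a single $\sigma$-conjugacy class whenever $x$ has minimal length in its class (\cite[Theorem~3.5]{He2014}), which forces $[b]=[x]$, and the He--Nie structure result \cite[Theorem~3.4]{He2014b} guaranteeing that after length-preserving moves one reaches an element of the form $uy$ with $y$ fundamental, $\ell(uy)=\ell(u)+\ell(y)$ and $[uy]=[y]$ --- this decomposition is what actually produces a fundamental element below. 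Note also that a statement of the shape of \cite[Proposition~5.5]{Viehmann2014} only yields some $y\le x$ with $[y]=[b]$, not a fundamental one, so ``her argument transfers'' cannot by itself give this lemma.

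Second, the point you label the main obstacle --- transporting $y'\le x_n$ back to a comparison with $x$ when $x_n$ is reached from $x$ by length-preserving $\sigma$-conjugations --- is precisely the step you do not supply. The paper resolves it by a concrete combinatorial statement, Nie's \cite[Lemma~2.3]{Nie2015}: there exists $y\le x$ with $\ell(y)\le\ell(y')$ and $y$ $\sigma$-conjugate to $y'$ in $\widetilde W$ (this part is indeed pure affine Weyl group combinatorics, hence unaffected by torsion in $X_\ast(T)_{\Gamma_0}$); one then gets $[y]=[b]$, but fundamentality of $y$ has to be re-derived from $\langle\nu(b),2\rho\rangle\le\ell(y)\le\ell(y')=\langle\nu(b),2\rho\rangle$, an argument absent from your sketch. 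Finally, a smaller slip: when $\ell(sx\presig s)=\ell(x)$ (in particular when $sx\presig s=x$) the reduction method only relates $X_x(b)$ to $X_{sx\presig s}(b)$; it does not allow you to conclude $X_{sx}(b)\neq\emptyset$, so the branch of your inductive step that invokes this is incorrect as written.
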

\begin{proof}
Induction by $\ell(x)$. We distinguish a number of cases.
\begin{enumerate}[1.]
\item Suppose that $x$ is of minimal length in its $\sigma$-conjugacy class in $\widetilde W$ and that $x = uy$ for some fundamental $y\in \widetilde W$ with $\ell(x) = \ell(u) + \ell(y)$ and $[x] = [y]$.

By \cite[Theorem~3.5]{He2014}, $[b] = [x]$ so that $y\leq x$ satisfies the desired conditions.
\item Suppose that there exists a simple affine reflection $s\in S_\af$ such that $\ell(sx\presig s)<\ell(x)$. By the \enquote{Deligne-Lusztig reduction method} of Goertz-He \cite[Corollary~2.5.3]{Goertz2010b}, we must have $[b] \in B(G)_{x'}$ for $x'=sx\presig s$ or $x'=sx$. By induction, we get an element $y\leq x'$ with the desired properties. Since $x'< x$, the claim follows.
\item In general, we find by \cite[Theorem~3.4]{He2014b} a sequence of elements
\begin{align*}
x = x_1,\dotsc,x_n\in \widetilde W
\end{align*}
such that
\begin{itemize}
\item $x_{i+1} = s_i x_i \presig s_i$ for some simple reflection $s_i\in S$ ($i=1,\dotsc, n-1$),
\item $\ell(x_{i}) = \ell(x)$ for $i=1,\dotsc,n$ and
\item $x_n$ satisfies condition 1.\ or 2.
\end{itemize}
In particular, we find $y'\leq x_n$ fundamental with $[y'] = [b]$.

By \cite[Lemma~2.3]{Nie2015}, there exists $y\leq x$ with $\ell(y)\leq \ell(y')$ and $y$ being $\sigma$-conjugate to $y'$ in $\widetilde W$. While Nie's proof only covers unramified groups, this statement is purely about combinatorics of root systems and affine Weyl groups, so the generalization to quasi-split groups is immediate.

Now observe that $[y] = [y'] = [b]\in B(G)$. In particular,
\begin{align*}
\langle \nu(b),2\rho\rangle\leq \ell(y) \leq \ell(y') = \langle \nu(b),2\rho\rangle.
\end{align*}
We see that $y$ must be fundamental as well.
\end{enumerate}
In any case, the claim follows, finishing the induction and the proof.
\end{proof}
\section{Generic $\sigma$-conjugacy class}\label{chap:generic-sigma-conjugation}
For an element $x\in \widetilde W$, the \emph{generic} $\sigma$-conjugacy class $[b] = [b_x]\in B(G)$ is the uniquely determined $\sigma$-conjugacy class such that $IxI\cap [b]$ is dense in $IxI$. For each $y\in \widetilde W$, we write $[y]\in B(G)$ for the $\sigma$-conjugacy class of any representative of $y$ in $G(L)$. We have the following description due to Viehmann:
\begin{theorem}[{\cite[Corollary~5.6]{Viehmann2014}}]\label{thm:truncations}
Let $x \in \widetilde W$. Then $[b_x]$ is the largest $\sigma$-conjugacy class in $B(G)$ of the form $[y]$ where $y\leq x$ in the Bruhat order on $\widetilde W$.\rightqed
\end{theorem}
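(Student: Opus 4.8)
The plan is to establish the two halves of the claim separately: that $[b_x]$ is of the form $[y]$ with $y\leq x$, and that $[b_x]$ dominates every such $[y]$ in the partial order on $B(G)$.

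For the first half, observe that $[b_x]\cap IxI$ is dense in $IxI$, hence nonempty, so $[b_x]\in B(G)_x$. Applying Lemma~\ref{lem:nonEmptynessBruhatCondition} to $[b]=[b_x]$ produces an element $y\in\widetilde W$ with $y\leq x$ in the Bruhat order and $[y]=[b_x]$; thus $[b_x]$ lies in the set $\{[z]\mid z\leq x\}$.

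For the second half, let $y\leq x$; I claim $[y]\leq[b_x]$. By the standard closure relation $\overline{IxI}=\bigsqcup_{z\leq x}IzI$ for Iwahori double cosets (equivalently, the Schubert cell of $y$ is contained in the Schubert variety of $x$ inside $G(L)/I$), we have $y\in IyI\subseteq\overline{IxI}$. On the other hand, density of $[b_x]\cap IxI$ in $IxI$ gives $\overline{IxI}=\overline{[b_x]\cap IxI}\subseteq\overline{[b_x]}$, so $y\in\overline{[b_x]}$. Since $\overline{[b_x]}$ is stable under $\sigma$-conjugation, the entire class $[y]$ lies in $\overline{[b_x]}$; and by the specialization theorem of Rapoport-Richartz (\cite[Theorem~3.6]{Rapoport1996}), together with He's description of the closure order on $B(G)$ (\cite{He2016}), $\overline{[b_x]}$ is a union of $\sigma$-conjugacy classes, each of which is $\leq[b_x]$. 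Hence $[y]\leq[b_x]$. Combining the two halves, $[b_x]$ belongs to $\{[z]\mid z\leq x\}$ and is an upper bound for it, so it is the largest element, as claimed.

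The only ingredients beyond pure combinatorics are the closure relation for Iwahori double cosets and the specialization theorem for Newton strata, both standard; with these and Lemma~\ref{lem:nonEmptynessBruhatCondition} in hand the remaining argument is short. The step carrying the real weight is Lemma~\ref{lem:nonEmptynessBruhatCondition} itself --- extracting from the nonempty stratum $[b_x]\cap IxI$ a genuine representative below $x$ in the Bruhat order, via the Deligne-Lusztig reduction method --- but that reduction has already been carried out, so what is left here is essentially bookkeeping with closures.
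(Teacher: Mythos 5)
Your proof is correct and follows essentially the same route as the paper, which simply defers to Viehmann's argument for \cite[Corollary~5.6]{Viehmann2014} after noting that Lemma~\ref{lem:nonEmptynessBruhatCondition} replaces \cite[Proposition~5.5]{Viehmann2014} in the quasi-split case: your first half is exactly that substitution, and your second half (Iwahori closure relations $\overline{IxI}=\bigsqcup_{z\leq x}IzI$ plus the Rapoport--Richartz specialization theorem giving $[y]\leq[b_x]$ for every point of $\overline{[b_x]}$) is precisely the remaining content of Viehmann's proof that the paper says "works without further changes." So this is a faithful spelling-out of the intended argument rather than a new one.
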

Viehmann's original proof makes the assumption that the group under consideration is unramified, but it is not hard to remove this assumption. Indeed, we saw in Lemma~\ref{lem:nonEmptynessBruhatCondition} that \cite[Proposition~5.5]{Viehmann2014} can be proved without this assumption, and then Viehmann's proof of \cite[Corollary~5.6]{Viehmann2014} works without further changes.

We can now describe this generic $\sigma$-conjugacy class more explicitly:
\begin{theorem}\label{thm:genericGKP}Assume that $G$ is quasi-split.
Let $x = w\varepsilon^\mu\in \widetilde W$ and denote by $[b_x]$ its generic $\sigma$-conjugacy class. Writing $\lambda_x := \lambda_G(b_x)$, we have
\begin{align*}
\lambda_x = \max_{v\in W} \left(v^{-1}\mu - \wt(v\Rightarrow\presig(wv))\right)\in X_\ast(T)_{\Gamma}.
\end{align*}
\end{theorem}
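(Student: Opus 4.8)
The plan is to establish the identity by showing both (i) that every element $v^{-1}\mu-\wt(v\Rightarrow\presig(wv))$ lies $\le\lambda_x$ in the dominance order on $X_\ast(T)_\Gamma$, and (ii) that $\lambda_x$ is itself attained as such an element; together these give $\lambda_x=\max_{v\in W}(\dotsb)$ and in particular that the maximum exists. Throughout one notes that the Kottwitz-point condition is automatic: $v^{-1}\mu$ and $\mu$ have the same image in $\pi_1(G)_\Gamma$ and $\wt(v\Rightarrow\presig(wv))\in\mathbb Z\Phi^\vee$ dies there, so every quantity in sight carries the Kottwitz point $\kappa(x)=\kappa(b_x)$, and $\lambda_G$ is monotone for the order on $B(G)$ among classes with fixed Kottwitz point (nested constraint sets in its defining maximum).

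For (i), I would first reduce to $v\in\LP(x)$: if $v$ is not length positive for $\ell(x,\cdot)$ and $v'=vs_\alpha$ is an adjustment, then $(v')^{-1}\mu-\wt(v'\Rightarrow\presig(wv'))\ge v^{-1}\mu-\wt(v\Rightarrow\presig(wv))$, which I would check from $s_\alpha v^{-1}\mu=v^{-1}\mu-\langle v^{-1}\mu,\alpha^\vee\rangle\alpha$ together with the effect of pre- and post-composing with a reflection on QBG weights and the sign $\ell(x,v\alpha)<0$; iterating via Corollary~\ref{cor:rootFunctionalAdjustments} lands in $\LP(x)$. For $v\in\LP(x)$, I would then turn a shortest path $v=u_0\to\dots\to u_d=\presig(wv)$ in $\QB(W)$ into an explicit element $y_v\le x$ of $\widetilde W$: this is the affine incarnation of the Lam--Shimozono dictionary, where length positivity of $v$ is precisely what makes the Bruhat comparison $y_v\le x$ go through without any shrunkenness assumption. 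One checks that $y_v$ is fundamental (Proposition~\ref{prop:fundamental}, Lemma~\ref{lem:lengthAdditivity}), computes $\nu([y_v])$ with Lemma~\ref{lem:newtonPointsAsAverages} and $\lambda_G([y_v])$ with Lemma~\ref{lem:lambdaSimpleFacts}, finding $\avg_\sigma(v^{-1}\mu-\wt(v\Rightarrow\presig(wv)))\le\nu([y_v])$, so that $v^{-1}\mu-\wt(v\Rightarrow\presig(wv))\le\lambda_G([y_v])\le\lambda_x$ by Theorem~\ref{thm:truncations} and monotonicity of $\lambda_G$.

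For (ii), apply Lemma~\ref{lem:nonEmptynessBruhatCondition} and Theorem~\ref{thm:truncations} to get a fundamental $y=w'\varepsilon^{\mu'}\le x$ with $[y]=[b_x]$, so $\lambda_x=\lambda_G([y])$. Picking $u\in\LP(y)$ and a $\sigma$-stable $J\subseteq\Delta$ as in Proposition~\ref{prop:fundamental}(iii), Lemma~\ref{lem:newtonPointsAsAverages} gives $\nu([y])=\pi_J(u^{-1}\mu')$ and Lemma~\ref{lem:lambdaSimpleFacts} then identifies $\lambda_G([y])$; using Lemma~\ref{lem:weight2rho}, Corollary~\ref{cor:positiveLengthFormula} and the defect formula of Proposition~\ref{prop:defect} one recognizes this as $u^{-1}\mu'-\wt(u\Rightarrow\presig(w'u))$ (the distance $d(u\Rightarrow\presig(w'u))$ being minimal because $\ell(y,u\alpha)=0$ on $\Phi_J$). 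Finally, from $y\le x$ I would produce $v\in W$ built from $u$ via the Bruhat-to-QBG dictionary with $u^{-1}\mu'-\wt(u\Rightarrow\presig(w'u))\le v^{-1}\mu-\wt(v\Rightarrow\presig(wv))$; combined with (i) this forces equality, giving $\lambda_x=v^{-1}\mu-\wt(v\Rightarrow\presig(wv))$ and hence (ii).

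The step I expect to be the main obstacle is the Bruhat-order/quantum-Bruhat-graph dictionary in the form needed here — both the construction $v\in\LP(x)\mapsto y_v\le x$ in (i) and its converse transport in (ii) — since this is exactly the point where one must go beyond the shrunken case handled by Mili\'cevi\'c and work with the whole set $\LP(x)$ rather than a single Weyl chamber; the length functionals of Section~\ref{sec:root-functionals} are the tool that makes this manageable. Once the formula for $\lambda_x$ is in hand, the refinement to a single minimal-distance $v\in\LP(x)$ (Corollary~\ref{cor:genericGKPMinDistance}) follows by the length bookkeeping of Corollary~\ref{cor:positiveLengthFormula} and Lemma~\ref{lem:weight2rho}, and $\nu(b_x)=\max_J\pi_J(\lambda_x)$ from Lemma~\ref{lem:lambdaSimpleFacts}(a) and Lemma~\ref{lem:convFacts}(d).
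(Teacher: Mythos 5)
Your skeleton is the same as the paper's: prove $\lambda_x\geq v^{-1}\mu-\wt(v\Rightarrow\presig(wv))$ for every $v$ by exhibiting some $x'\le x$ whose Newton point dominates the $\sigma$-average (then use Theorem~\ref{thm:truncations} and the definition of $\lambda_G$), and prove the reverse inequality by reducing via Lemma~\ref{lem:nonEmptynessBruhatCondition} to a fundamental element, where $\lambda$ can be computed by defect bookkeeping (this is Lemma~\ref{lem:genericKottwitzFundamental}). But the step you yourself flag as the main obstacle — the ``Bruhat-order/quantum-Bruhat-graph dictionary'' in both directions — is not a citable black box and is precisely the technical content of the theorem; asserting it leaves a genuine gap. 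Concretely, in (i) your plan is to turn a shortest path from $v$ to $\presig(wv)$ into an explicit fundamental $y_v\le x$, claiming that length positivity of $v$ makes the Bruhat comparison go through. It does not: length positivity only gives $\ell(x,v_i\alpha_i)\geq 0$ along the path, and at an edge with $\ell(x,v_i\alpha_i)=0$ the affine reflection $r_{v_i\alpha_i,\Phi^+(-v_i\alpha_i)}$ does not take you down in the Bruhat order, so no descending chain below $x$ can be built from that edge; this is exactly the non-shrunken phenomenon. The paper's way around it (Lemma~\ref{lem:maximalityConsequences}, Corollary~\ref{cor:genericGKPDichotomy}, Lemma~\ref{lem:genericKottwitzLowerBound}) is to work with a \emph{maximal} $v$ rather than a merely length-positive one, replace $v$ by $vs_\alpha$ at the $\ell(x,v\alpha)=0$ edges — where the invariance of $v^{-1}\mu-\wt(v\Rightarrow\presig(wv))$ is forced only because maximality turns a chain of estimates into equalities — and run an induction on $\ell(x)$ whose terminal case is settled by an averaging computation against $\nu(x)$, not by producing a fundamental element at all. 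Your reduction to $\LP(x)$ via adjustments (Lemma~\ref{lem:genericGKPImprovements}(a)) is fine but is not a substitute for this maximality device.

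The same issue recurs in (ii): after computing $\lambda_x=\lambda_G([y])$ for a fundamental $y=w'\varepsilon^{\mu'}\le x$, you invoke the dictionary once more to transport $u^{-1}\mu'-\wt(u\Rightarrow\presig(w'u))$ to some $v^{-1}\mu-\wt(v\Rightarrow\presig(wv))$ with $y\le x$. What is actually needed, and what the paper proves, is the monotonicity of $\max_v\bigl(v^{-1}\mu-\wt(v\Rightarrow\presig(wv))\bigr)$ along a single Bruhat cover $x'=xr_a<x$, established by the explicit two-case computation (according to the sign of $(v')^{-1}\alpha$) with Lemma~\ref{lem:weightEstimate} inside the induction of Lemma~\ref{lem:genericKottwitzUpperBound}; iterating over a chain of covers then does the transport. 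Without carrying out that computation (or an equivalent general Bruhat-to-QBG statement, which in this paper exists only in the superregular Lam--Shimozono setting), both halves of your argument rest on the unproven step, so the proposal as written does not constitute a proof. A minor additional point: in your fundamental-element computation the minimality of $d(u\Rightarrow\presig(w'u))$ is not ``because $\ell(y,u\alpha)=0$ on $\Phi_J$''; what is used is the choice of $u$ with $\ell(u^{-1}\presig(w'u))=\defect([y])$ from Lemma~\ref{lem:fundamentalDefect} together with the inequality $d(u\Rightarrow\presig(w'u))\le\ell(u^{-1}\presig(w'u))$.
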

We call $\lambda_x$ the \emph{generic $\lambda$-invariant} of $x$. We discuss previous works and some applications of this result now, before giving its proof in the next subsection.

If $G$ is split and $\mu$ sufficiently regular, Mili\'cevi\'c \cite{Milicevic2021} proves that
\begin{align*}
\nu_x = \lambda_x = v^{-1}\mu-\wt(v\Rightarrow wv)
\end{align*}
for the unique element $v\in \LP(x)$.
This was the first paper to derive an explicit formula for $\nu_x$ from Theorem~\ref{thm:truncations}. Her result has since been generalized by Sadhukhan \cite{Sadhukhan2021}, improving the regularity assumption significantly.
He and Nie \cite[Proposition~3.1]{He2021c} proved the same formula whenever $x$ is in a shrunken Weyl chamber, and even a generalization for non split groups $G$.
\ifthesis\else

These previous results allow for a short proof of the following essential statement on the quantum Bruhat graph.
\begin{lemma}\label{lem:weightEstimate}
Let $w\in W$ and $\alpha\in \Phi^+$. Then
\begin{align*}
\wt(ws_\alpha\Rightarrow w)\leq \alpha^\vee\Phi^+(w\alpha).
\end{align*}
\end{lemma}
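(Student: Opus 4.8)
The case $w\alpha\in\Phi^-$ is immediate: then $\ell(ws_\alpha)<\ell(w)$, hence $ws_\alpha<w$ in the Bruhat order on $W$, and so $\wt(ws_\alpha\Rightarrow w)=0$ by the fact (recalled after the weight lemma of Postnikov) that $\wt(u\Rightarrow v)=0$ iff $u\le v$. Since $\alpha^\vee\Phi^+(w\alpha)=0$ here, the claim holds. From now on I assume $w\alpha\in\Phi^+$, so the goal is $\wt(ws_\alpha\Rightarrow w)\le\alpha^\vee$.

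The plan is to make $\wt(ws_\alpha\Rightarrow w)$ appear inside a generic $\lambda$-invariant for which the already-known special cases of the generic Newton point formula (Mili\'cevi\'c for split $G$ and regular $\mu$, He--Nie for shrunken $x$) apply. As $\QB(W)$ and its weight function depend only on the root system $\Phi$, I may assume $G$ is split, so Frobenius acts trivially. Fix a dominant coweight $\mu_0$ that is sufficiently regular, say $\langle\mu_0,\beta\rangle\ge 2$ for all $\beta\in\Phi^+$; this already forces $\mu_0-\alpha^\vee$ to be dominant. Consider $x:=s_{w\alpha}\,\varepsilon^{(ws_\alpha)\mu_0}\in\widetilde W$. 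By Example~\ref{ex:usualLPelement}, $ws_\alpha$ is length positive for $x$ (indeed $(ws_\alpha)^{-1}\cdot(ws_\alpha)\mu_0=\mu_0$ is dominant), and by Proposition~\ref{prop:shrunkenWeylChambers} the regularity of $\mu_0$ gives $\ell(x,\gamma)\ne 0$ for all $\gamma$, so $\LP(x)=\{ws_\alpha\}$ and $x$ lies in a shrunken Weyl chamber. The known formula then yields
\[
\nu_x=\lambda_x=(ws_\alpha)^{-1}\bigl((ws_\alpha)\mu_0\bigr)-\wt\bigl(ws_\alpha\Rightarrow s_{w\alpha}ws_\alpha\bigr)=\mu_0-\wt(ws_\alpha\Rightarrow w),
\]
using $s_{w\alpha}ws_\alpha=(ws_\alpha w^{-1})ws_\alpha=w$.

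It remains to bound $\nu_x$ from below by $\mu_0-\alpha^\vee$, which immediately gives $\wt(ws_\alpha\Rightarrow w)\le\alpha^\vee$. By Theorem~\ref{thm:truncations}, $[b_x]$ dominates $[y]$ for every $y\le x$ in the Bruhat order on $\widetilde W$, and the partial order on $B(G)$ compares Newton points; so it suffices to exhibit one $y\le x$ with $\nu([y])\ge\mu_0-\alpha^\vee$. The natural candidate is the translation $y=\varepsilon^{\mu_0-\alpha^\vee}$, for which $\nu([y])=\mu_0-\alpha^\vee$ since $\mu_0-\alpha^\vee$ is dominant. Thus everything reduces to the single Bruhat comparison
\[
\varepsilon^{\mu_0-\alpha^\vee}\ \le\ s_{w\alpha}\,\varepsilon^{(ws_\alpha)\mu_0},
\]
which I would establish directly from the combinatorics of the Bruhat order on $\widetilde W$ — via the length-functional / Demazure-product description of admissible sets referenced in the introduction, or by an explicit reduction peeling simple affine reflections off $x$; the length count $\ell(x)-\ell(\varepsilon^{\mu_0-\alpha^\vee})=\langle\alpha^\vee,2\rho\rangle-\bigl(\ell(ws_\alpha)-\ell(w)\bigr)\ge 0$ is consistent with this, and for $\mu_0$ this regular only the ``shape'' of the inequality remains to be checked.

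The main obstacle is precisely this Bruhat comparison. Note that pairing with $2\rho$ is not enough on its own: Lemma~\ref{lem:weight2rho} together with $d(u\Rightarrow v)\le\ell(u^{-1}v)$ gives $\langle\wt(ws_\alpha\Rightarrow w),2\rho\rangle\le\langle\alpha^\vee,2\rho\rangle$ for free, whereas the lemma asserts an inequality of coweights. An alternative, purely graph-theoretic route avoids $\widetilde W$ altogether: construct an explicit path $ws_\alpha\to\cdots\to w$ in $\QB(W)$ of weight $\le\alpha^\vee$ — when $\ell(ws_\alpha)=\ell(w)+\langle\alpha^\vee,2\rho\rangle-1$ the single quantum edge $ws_\alpha\to w$ already does it, and the remaining cases need an induction (or a diamond/shellability argument) reducing to that situation while keeping the accumulated weight bounded by $\alpha^\vee$, which is the delicate point there too. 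I would pursue the generic Newton point argument as the primary approach, since it is exactly the kind of bootstrap the preceding discussion advertises.
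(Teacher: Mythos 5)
Your overall strategy is the same as the paper's: dispose of $w\alpha\in\Phi^-$ via $ws_\alpha\le w$, then for $w\alpha\in\Phi^+$ reduce to split $G$, take a sufficiently regular dominant $\mu_0$, apply Mili\'cevi\'c's (equivalently, the shrunken He--Nie) formula to $x=s_{w\alpha}\varepsilon^{ws_\alpha(\mu_0)}$ to get $\nu_x=\mu_0-\wt(ws_\alpha\Rightarrow w)$, and then bound $\nu_x$ from below through Theorem~\ref{thm:truncations} by exhibiting an element below $x$ whose Newton point is $\mu_0-\alpha^\vee$. But your argument is incomplete at exactly the step you flag yourself: the Bruhat comparison $\varepsilon^{\mu_0-\alpha^\vee}\le x$ is asserted, not proved, and it is not a routine check --- as you observe, pairing with $2\rho$ cannot give it, the two sides are not related by a single reflection, and deducing it from the weight inequality would be circular. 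The alternative purely graph-theoretic route is likewise only sketched, with its ``delicate point'' left open. So as written there is a genuine gap at the crux of the proof.

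The paper closes this gap by a smarter choice of comparison element: instead of the dominant translation $\varepsilon^{\mu_0-\alpha^\vee}$ it uses the $W$-conjugate translation $\varepsilon^{ws_\alpha(\mu_0-\alpha^\vee)} = s_{w\alpha}\varepsilon^{-w\alpha^\vee}\,x$, i.e.\ $r_{(w\alpha,-1)}x$, which differs from $x$ by a single affine reflection on the left. Its length is $\langle\mu_0-\alpha^\vee,2\rho\rangle<\ell(x)$ for $\mu_0$ sufficiently regular, so the standard reflection property of the Bruhat order immediately gives $\varepsilon^{ws_\alpha(\mu_0-\alpha^\vee)}<x$; and since the Newton point of a pure translation depends only on the $W$-orbit of its translation part, this element still has Newton point $\mu_0-\alpha^\vee$. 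Theorem~\ref{thm:truncations} then yields $\nu_x\ge\mu_0-\alpha^\vee$, hence $\wt(ws_\alpha\Rightarrow w)\le\alpha^\vee$. If you replace your comparison element by this one, your proof becomes complete and is essentially identical to the paper's.
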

\begin{proof}
If $w\alpha\in \Phi^-$, then $ws_\alpha\leq w$ in the Bruhat order of $W$, showing $\wt(ws_\alpha\Rightarrow w)=0$. Let us hence assume that $w\alpha\in \Phi^+$.

We may assume that $G$ is split and pick a dominant and sufficiently regular coweight $\mu\in X_\ast(T)_{\Gamma_0}$. By Mili\'cevi\'c's result, the generic Newton point of $x := s_{w\alpha} \varepsilon^{ws_\alpha(\mu)}$ is given by
\begin{align*}
\nu_x = \mu - \wt(ws_\alpha\Rightarrow w).
\end{align*}
Observe that
\begin{align*}
\varepsilon^{ws_\alpha(\mu-\alpha^\vee)} = s_{w\alpha} \varepsilon^{-w\alpha^\vee}x<x
\end{align*}
in the Bruhat order. By Theorem~\ref{thm:truncations}, we get
$
\nu_x \geq \mu - \alpha^\vee,
$
proving the claim.
\end{proof}
\fi

We use it to provide a more explicit way of calculating generic $\lambda$-invariants. The following lemma does not depend on the theorem, while the corollary does.
\begin{lemma}\label{lem:genericGKPImprovements}
Let $x=w\varepsilon^\mu\in \widetilde W$ and $v\in W$.
\begin{enumerate}[(a)]
\item If $v$ is not length positive for $x$, and $vs_\alpha$ is an adjustment, then
\begin{align*}
v^{-1}\mu - \wt(v\Rightarrow\presig(wv))\leq (vs_\alpha)^{-1}\mu-\wt(vs_\alpha\Rightarrow\presig(wvs_\alpha))
\end{align*}
as elements in $X_\ast(T)_{\Gamma}$.
\item We have
\begin{align*}
\langle v^{-1}\mu - \wt(v\Rightarrow\presig(wv)),2\rho\rangle \leq \ell(x)-d(v\Rightarrow\presig(wv)).
\end{align*}
Equality holds if and only if $v\in \LP(x)$.
\end{enumerate}
\end{lemma}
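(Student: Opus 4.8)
The plan is to prove part (b) first, since it follows quickly from results already in place, and then to deduce part (a) from a triangle inequality in the quantum Bruhat graph combined with \weightEstimateLong{}. For part (a) all equalities and inequalities below are to be read after projecting to $X_\ast(T)_{\Gamma}$, where $\sigma$ acts trivially and preserves the set of positive roots; this is essential, as explained at the end.

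For (b): the Frobenius permutes the simple reflections, so $\ell(\presig(wv)) = \ell(wv)$. Applying Lemma~\ref{lem:weight2rho} to a shortest path from $v$ to $\presig(wv)$ and using Lemma~\ref{lem:avgSimpleFacts}(a) to evaluate $\langle\cdot,2\rho\rangle$ on $X_\ast(T)_{\Gamma}$, one obtains
\begin{align*}
\langle v^{-1}\mu - \wt(v\Rightarrow\presig(wv)),2\rho\rangle = \langle v^{-1}\mu,2\rho\rangle - \ell(wv) + \ell(v) - d(v\Rightarrow\presig(wv)).
\end{align*}
Now \positiveLengthFormulaLong{} is exactly the statement that $\langle v^{-1}\mu,2\rho\rangle - \ell(v) + \ell(wv)\leq\ell(x)$, with equality if and only if $v\in\LP(x)$; this gives (b).

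For (a), set $v' := vs_\alpha$. Since $s_\alpha = s_{-\alpha}$ and the adjustment comes from a root in $\inv_{\ell(x,\cdot)}(v)$, we may assume $\alpha\in\Phi^+$ and, writing $\beta := v\alpha$, that $\ell(x,\beta)\leq -1$. The right-hand side of (a) minus the left-hand side equals
\begin{align*}
\bigl((v')^{-1}\mu - v^{-1}\mu\bigr) + \bigl(\wt(v\Rightarrow\presig(wv)) - \wt(v'\Rightarrow\presig(wv'))\bigr),
\end{align*}
and the first bracket is $-\langle\mu,\beta\rangle\alpha^\vee$, from $s_\alpha\xi = \xi - \langle\xi,\alpha\rangle\alpha^\vee$. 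For the second bracket I would apply the triangle inequality $\wt(a\Rightarrow c)\leq\wt(a\Rightarrow b)+\wt(b\Rightarrow c)$ — which follows by concatenating shortest paths and invoking part (c) of \cite[Lemma~1]{Postnikov2005} — twice, together with $\presig(wv') = \presig(wv)\,s_{\sigma\alpha}$, to get
\begin{align*}
\wt(v'\Rightarrow\presig(wv')) - \wt(v\Rightarrow\presig(wv)) \leq \wt(vs_\alpha\Rightarrow v) + \wt\bigl(\presig(wv)\Rightarrow\presig(wv)s_{\sigma\alpha}\bigr).
\end{align*}
By \weightEstimateLong{}, $\wt(vs_\alpha\Rightarrow v)\leq\alpha^\vee\Phi^+(\beta)$ directly, and $\wt(\presig(wv)\Rightarrow\presig(wv)s_{\sigma\alpha})\leq(\sigma\alpha)^\vee\Phi^+\bigl(-\sigma(w\beta)\bigr)$, which in $X_\ast(T)_{\Gamma}$ becomes $\alpha^\vee\Phi^+(-w\beta)$. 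Feeding this back and using $\langle\mu,\beta\rangle = \ell(x,\beta) - \Phi^+(\beta) + \Phi^+(w\beta)$ together with $\Phi^+(w\beta) + \Phi^+(-w\beta) = 1$, the difference (right minus left) is bounded below by
\begin{align*}
-\langle\mu,\beta\rangle\alpha^\vee - \alpha^\vee\bigl(\Phi^+(\beta) + \Phi^+(-w\beta)\bigr) = \bigl(-\ell(x,\beta) - 1\bigr)\alpha^\vee,
\end{align*}
a nonnegative rational multiple of the positive coroot $\alpha^\vee$ since $\ell(x,\beta)\leq -1$; this proves (a).

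The main obstacle is the Frobenius bookkeeping in (a): the raw output of \weightEstimateLong{} involves the $\sigma$-twisted coroot $(\sigma\alpha)^\vee$ and the $\Phi^+$-sign of the $\sigma$-twisted root $\sigma(w\beta)$, and only after passing to the $\Gamma$-coinvariants $X_\ast(T)_{\Gamma}$ do these collapse to $\alpha^\vee$ and $\Phi^+(-w\beta)$, so that the telescoping against the $\ell(x,\beta)$-term yields exactly $\bigl(-\ell(x,\beta)-1\bigr)\alpha^\vee\geq 0$. One should also check that the reduction to $\alpha\in\Phi^+$ with $\ell(x,\beta)<0$ is harmless, which is immediate from the definition of $\inv_\varphi$ and $s_\alpha = s_{-\alpha}$.
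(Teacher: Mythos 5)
Part (a) is correct and is essentially the paper's own argument: the same triangle inequality for $\wt$ obtained by concatenating shortest paths, the same two applications of Lemma~\ref{lem:weightEstimate} to $\wt(vs_\alpha\Rightarrow v)$ and to $\wt\bigl(\presig(wv)\Rightarrow\presig(wv)s_{\sigma\alpha}\bigr)$, and the same telescoping against $\ell(x,v\alpha)\leq -1$ producing the correction term $\bigl(-\ell(x,v\alpha)-1\bigr)\alpha^\vee$, a nonnegative multiple of a positive coroot in $X_\ast(T)_{\Gamma}$. Your explicit normalization to $\alpha\in\Phi^+$ and the explicit collapse of $(\sigma\alpha)^\vee$ and $\Phi^+(-\sigma(w\beta))$ to $\alpha^\vee$ and $\Phi^+(-w\beta)$ after passing to $\Gamma$-coinvariants are points the paper leaves implicit, and you handle them correctly.

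In part (b), however, the step as written does not close. Your displayed identity, $\langle v^{-1}\mu-\wt(v\Rightarrow\presig(wv)),2\rho\rangle=\langle v^{-1}\mu,2\rho\rangle+\ell(v)-\ell(wv)-d(v\Rightarrow\presig(wv))$, is what one gets by taking Lemma~\ref{lem:weight2rho} literally as printed; but as printed that lemma has $w$ and $w'$ interchanged (test it on a single quantum edge: the weight is $\alpha^\vee$, while $\ell(w')-\ell(w)+\ell(p)=2-\langle\alpha^\vee,2\rho\rangle$), and the correct identity is $\langle\wt(p),2\rho\rangle=\ell(w)-\ell(w')+\ell(p)$. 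With your signs, Corollary~\ref{cor:positiveLengthFormula} does not yield (b): it bounds $\langle v^{-1}\mu,2\rho\rangle-\ell(v)+\ell(wv)$, not $\langle v^{-1}\mu,2\rho\rangle+\ell(v)-\ell(wv)$, and the two expressions differ by $2\bigl(\ell(v)-\ell(wv)\bigr)$, which can have either sign. Using the corrected weight formula (and $\ell(\presig(wv))=\ell(wv)$, as you note), one gets $\langle v^{-1}\mu-\wt(v\Rightarrow\presig(wv)),2\rho\rangle=\langle v^{-1}\mu,2\rho\rangle-\ell(v)+\ell(wv)-d(v\Rightarrow\presig(wv))$, which matches Corollary~\ref{cor:positiveLengthFormula} exactly, including the equality criterion $v\in\LP(x)$; after this one-line correction your argument for (b) coincides with the paper's proof.
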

\begin{proof}
\begin{enumerate}[(a)]
\item We compute in $X_\ast(T)_{\Gamma}$ using Lemma~\ref{lem:weightEstimate}:
\begin{align*}
(vs_\alpha)^{-1}\mu - \wt(vs_\alpha\Rightarrow \presig(wvs_\alpha))\geq&v^{-1}\mu - \langle \mu,v\alpha\rangle\alpha^\vee -\wt(vs_\alpha\Rightarrow v) \\&- \wt(v\Rightarrow \presig(wv)) - \wt(\presig(wv)\Rightarrow \presig(wvs_\alpha))
\\\geq &v^{-1}\mu - \langle \mu,v\alpha\rangle\alpha^\vee -\Phi^+(v\alpha)\alpha^\vee \\&- \wt(v\Rightarrow \presig(wv)) - \Phi^+(-wv\alpha)\alpha^\vee
\\=&v^{-1}\mu- \wt(v\Rightarrow \presig(wv)) - (\ell(x,v\alpha)+1)
\\\geq&v^{-1}\mu- \wt(v\Rightarrow \presig(wv)).
\end{align*}
\item Indeed, using Corollary~\ref{cor:positiveLengthFormula} and Lemma~\ref{lem:weight2rho}, we obtain
\begin{align*}
\langle v^{-1}\mu - \wt(v\Rightarrow\presig(wv)),2\rho\rangle=&
\langle v^{-1}\mu,2\rho\rangle - \ell(v) + \ell(wv) - d(v\Rightarrow\presig(wv))
\\\leq&\ell(x) - d(v\Rightarrow\presig(wv)),
\end{align*}
with equality iff $v\in \LP(x)$.\qedhere
\end{enumerate}
\end{proof}
\begin{corollary}\label{cor:genericGKPMinDistance}
Let $x = w\varepsilon^\mu\in \widetilde W$. Among all elements $v\in \LP(x)$, pick one such that the distance $d(v\Rightarrow\presig(wv))$ in the quantum Bruhat graph becomes minimal. Then
\begin{align*}
\lambda_x = v^{-1}\mu - \wt(v\Rightarrow\presig(wv)) \in X_\ast(T)_{\Gamma}.
\end{align*}
In particular, the generic Newton point of $x$ is given by \begin{align*}\nu_x = \conv(v^{-1}\mu - \wt(v\Rightarrow\presig(wv))).\end{align*}
\end{corollary}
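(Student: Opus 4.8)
The plan is to deduce this directly from Theorem~\ref{thm:genericGKP} together with the two parts of Lemma~\ref{lem:genericGKPImprovements}. The theorem already gives $\lambda_x = \max_{v\in W}\bigl(v^{-1}\mu - \wt(v\Rightarrow\presig(wv))\bigr)$, so the remaining work is (i) to see that this maximum is attained at some length positive $v$, and (ii) to identify which length positive $v$ realizes it, namely the one of minimal quantum Bruhat graph distance.

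For (i), I would start from any $v_0\in W$ realizing the maximum. If $v_0\notin\LP(x)$, replace it by an adjustment for the length functional $\ell(x,\cdot)$: by Lemma~\ref{lem:genericGKPImprovements}(a) this only increases the value of $v^{-1}\mu - \wt(v\Rightarrow\presig(wv))$, and by Lemma~\ref{lem:rootFunctionalAdjustment} (packaged as Corollary~\ref{cor:rootFunctionalAdjustments}) finitely many such replacements reach some $v_1\in\LP(x)$. Since $v_0$ was a maximizer and the value only went up, $v_1$ is also a maximizer, so $\lambda_x = v_1^{-1}\mu - \wt(v_1\Rightarrow\presig(wv_1))$ with $v_1\in\LP(x)$.

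For (ii), let $v\in\LP(x)$ be chosen so that $d(v\Rightarrow\presig(wv))$ is minimal, in particular $d(v\Rightarrow\presig(wv))\leq d(v_1\Rightarrow\presig(wv_1))$. Pairing with $2\rho$ and using the equality case of Lemma~\ref{lem:genericGKPImprovements}(b) for both length positive elements $v$ and $v_1$ gives
\begin{align*}
\langle v^{-1}\mu - \wt(v\Rightarrow\presig(wv)),2\rho\rangle &= \ell(x) - d(v\Rightarrow\presig(wv))\\
&\geq \ell(x) - d(v_1\Rightarrow\presig(wv_1)) = \langle\lambda_x,2\rho\rangle.
\end{align*}
At the same time $v^{-1}\mu - \wt(v\Rightarrow\presig(wv))\leq \lambda_x$ straight from Theorem~\ref{thm:genericGKP}. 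Hence the difference $\lambda_x - \bigl(v^{-1}\mu - \wt(v\Rightarrow\presig(wv))\bigr)$ is a $\mathbb Q_{\geq 0}$-combination of positive coroots whose pairing with $2\rho$ is nonpositive, so it vanishes, yielding the claimed formula for $\lambda_x$. The statement about $\nu_x$ then follows by applying $\conv$ and invoking $\nu_x = \conv(\lambda_x)$ (Lemma~\ref{lem:lambdaSimpleFacts}, i.e.\ the identity $\nu(b)=\conv(\lambda_G(b))$ established in its proof via Lemma~\ref{lem:newtonPointsAsAverages}).

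I expect the only delicate point to be the final deduction in (ii): passing from the coroot-order inequality $v^{-1}\mu - \wt(v\Rightarrow\presig(wv))\leq\lambda_x$ together with the opposite inequality of $2\rho$-pairings to an honest equality. This relies on $\langle\alpha^\vee,2\rho\rangle>0$ for every positive coroot, and on the pairing with $2\rho$ being well-defined on $X_\ast(T)_\Gamma$ (Lemma~\ref{lem:avgSimpleFacts}(a)), so that the possible torsion in $X_\ast(T)_\Gamma$ causes no trouble. Everything else is a routine assembly of results already available.
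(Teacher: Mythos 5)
Your proposal is correct and follows essentially the same route as the paper: use Lemma~\ref{lem:genericGKPImprovements}(a) (with the adjustment machinery) to move a maximizer of Theorem~\ref{thm:genericGKP} into $\LP(x)$, then compare the minimal-distance element $v$ against it via the equality case of Lemma~\ref{lem:genericGKPImprovements}(b) and the $2\rho$-pairing to force equality in $X_\ast(T)_\Gamma$. Your added care about the torsion in $X_\ast(T)_\Gamma$ and the explicit appeal to $\nu_x=\conv(\lambda_x)$ from Lemma~\ref{lem:lambdaSimpleFacts} are fine elaborations of steps the paper leaves implicit.
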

\begin{proof}
We know that $\lambda_x = (v')^{-1}\mu - \wt(v'\Rightarrow \presig(wv'))$ for some $v'\in W$ by the theorem. Using the above lemma, we conclude that the same equality holds for some $v'\in \LP(x)$.

Now $v^{-1}\mu - \wt(v\Rightarrow\presig(wv))\leq (v')^{-1}\mu - \wt(v'\Rightarrow \presig(wv'))$ by the theorem, and
\begin{align*}
\langle v^{-1}\mu - \wt(v\Rightarrow\presig(wv)),2\rho\rangle\geq \langle (v')^{-1}\mu - \wt(v'\Rightarrow \presig(wv')),2\rho\rangle
\end{align*}
by choice of $v$. The claim follows.
\end{proof}
In order to compute the generic Newton point $\nu_x = \max_J \pi_J(\lambda_x)$, we can give the following estimate on the required set $J$.
\begin{lemma}\label{lem:gnpJEstimate}
Let $x = w\varepsilon^\mu\in \widetilde W$, $v\in \LP(x)$ and $J\subseteq \Delta$ such that $J=\sigma(J)$ and
\begin{align*}
\forall \alpha\in \Phi^+\setminus \Phi^+_J:~\ell(x,v\alpha)>0.
\end{align*}
Then there exists $J'\subseteq J$ with $\sigma(J') = J'$ and
\begin{align*}
\conv(v^{-1}\mu - \wt(v\Rightarrow\presig(wv))) = \pi_{J'}(v^{-1}\mu - \wt(v\Rightarrow\presig(wv))).
\end{align*}
\end{lemma}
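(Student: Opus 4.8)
The plan is to reduce the statement, by purely formal manipulations with the convex-hull and averaging operators, to a single non-negativity estimate for a weight pairing in the quantum Bruhat graph, and then to establish that estimate. Concretely, set $\tilde\eta := v^{-1}\mu - \wt(v\Rightarrow\presig(wv)) \in X_\ast(T)_{\Gamma_0}$, a lift of $\eta := v^{-1}\mu - \wt(v\Rightarrow\presig(wv)) \in X_\ast(T)_{\Gamma}$, and $\bar\eta := \avg_\sigma(\tilde\eta) = \avg_\sigma(\eta)$, which is $\sigma$-invariant. By the definitions of $\conv$ and $\pi_{J'}$ one has $\conv(\eta) = \conv'(\bar\eta)$, and $\pi_{J'}(\eta) = \avg_{J'}(\bar\eta)$ for every $\sigma$-stable $J'$; so it suffices to find a $\sigma$-stable $J'\subseteq J$ with $\conv'(\bar\eta) = \avg_{J'}(\bar\eta)$. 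Since $\bar\eta$ is $\sigma$-invariant, so is $\conv'(\bar\eta)$ (same argument as in the proof of Lemma~\ref{lem:convFacts}(d)); hence the subsets $J_1\subseteq J_2\subseteq\Delta$ attached to $\bar\eta$ in Lemma~\ref{lem:convFacts}(c) are $\sigma$-stable, and by that lemma it is enough to prove $J_1\subseteq J$ and then set $J':=J_1$. By Lemma~\ref{lem:convFacts}(e), $J_1\subseteq J$ will follow once we verify $\langle\bar\eta,\alpha\rangle\geq 0$ for all $\alpha\in\Phi^+\setminus\Phi_J^+$.

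Because $J=\sigma(J)$, the set $\Phi^+\setminus\Phi_J^+$ is $\sigma$-stable and $\langle\bar\eta,\alpha\rangle$ equals the average of $\langle\tilde\eta,\cdot\rangle$ over the $\sigma$-orbit of $\alpha$; so it remains to show, for each $\sigma$-orbit $O\subseteq\Phi^+\setminus\Phi_J^+$, that $\sum_{\beta\in O}\langle\tilde\eta,\beta\rangle\geq 0$, i.e.
\begin{align*}
\Bigl\langle\wt(v\Rightarrow\presig(wv)),\,\sum_{\beta\in O}\beta\Bigr\rangle\;\leq\;\sum_{\beta\in O}\langle v^{-1}\mu,\beta\rangle\;=\;\sum_{\beta\in O}\bigl(\ell(x,v\beta)+\Phi^+(wv\beta)-\Phi^+(v\beta)\bigr),
\end{align*}
the last equality being the definition of $\ell(x,\cdot)$. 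Each $\beta\in O$ lies in $\Phi^+\setminus\Phi_J^+$, so the hypothesis forces $\ell(x,v\beta)\geq 1$; in particular the right-hand side is already non-negative, and these strict inequalities will be used to absorb the boundary terms $\Phi^+(v\beta)\in\{0,1\}$.

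For the weight term I would bound the weight of a shortest path from $v$ to $\presig(wv)$: reading off a reduced expression $v^{-1}\presig(wv)=s_{\gamma_1}\cdots s_{\gamma_n}$ gives a path $v\to vs_{\gamma_1}\to\cdots\to\presig(wv)$ in which each step $u\to us_{\gamma_i}$ is a Bruhat edge of weight $0$ if it raises the length, and a quantum edge of weight $\gamma_i^\vee$ if it lowers it (as $\gamma_i$ is simple, $\langle\gamma_i^\vee,2\rho\rangle=2$, so a length drop by $1$ makes it a quantum edge); hence $\wt(v\Rightarrow\presig(wv))$ is at most the sum of the $\gamma_i^\vee$ over the length-lowering steps. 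Pairing this with $\sum_{\beta\in O}\beta$ and matching the quantum contributions against the values $\ell(x,v\beta)+\Phi^+(wv\beta)-\Phi^+(v\beta)$ — using $v\in\LP(x)$ (so $\ell(x,v\cdot)\geq 0$ on $\Phi^+$), the root-functional estimates of Lemma~\ref{lem:rootFunctionalAdjustment} and Lemma~\ref{lem:weightEstimate}, and the strict bounds $\ell(x,v\beta)\geq 1$ — should yield the inequality; alternatively one can argue in the spirit of the proof of Lemma~\ref{lem:weightEstimate}, producing elements $<x$ in the Bruhat order from affine reflections in hyperplanes separating $\mathfrak a$ from $x\mathfrak a$ in the relevant directions and invoking Theorem~\ref{thm:truncations}. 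This weight estimate is the only non-formal ingredient, and I expect it to be the main obstacle: combining the quantum Bruhat graph weight function with length positivity and with the strict positivity of $\ell(x,v\cdot)$ outside $\Phi_J$ is where the real work lies, while the rest is bookkeeping with the averaging operators $\avg_\sigma,\avg_J,\pi_J$ and with Lemma~\ref{lem:convFacts}.
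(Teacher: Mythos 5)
Your formal reduction coincides with the paper's: by Lemma~\ref{lem:convFacts} (c)--(e), together with the $\sigma$-invariance of $\avg_\sigma(v^{-1}\mu-\wt(v\Rightarrow\presig(wv)))$, the lemma follows as soon as one knows $\langle\avg_\sigma(v^{-1}\mu-\wt(v\Rightarrow\presig(wv))),\alpha\rangle\geq 0$ for every $\alpha\in\Phi^+\setminus\Phi_J^+$, i.e.\ exactly your per-$\sigma$-orbit inequality. Up to that point the argument is correct, and it is how the paper proceeds.

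The gap is the weight estimate itself -- precisely the step you flag as the main obstacle -- and the tools you propose cannot produce it. What the orbit inequality requires is a bound on the pairing of $\wt(v\Rightarrow\presig(wv))$ with each \emph{individual} positive root $\beta$ of the orbit by $\Phi^+(-v\beta)+\Phi^+(\presig(wv)\beta)$; after summing over the orbit and re-indexing, this is then absorbed by $\ell(x,v\beta)\geq 1$. The paper takes this per-root pairing bound from He--Nie \cite[Section~2.5]{He2021c}; it is a genuinely nontrivial property of the quantum Bruhat graph, and neither \weightEstimateLong{} (which only controls $\wt(us_\gamma\Rightarrow u)$) nor Lemma~\ref{lem:rootFunctionalAdjustment} (which concerns inversions of root functionals) implies it. Your substitute -- bounding $\wt(v\Rightarrow\presig(wv))$ by the weight of the path read off a reduced word of $v^{-1}\presig(wv)$ -- is too coarse. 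Concretely, take $G$ split of type $A_2$, $x=w_0\varepsilon^{\mu}$ with $\mu=-2\alpha_1^\vee-2\alpha_2^\vee$, so $v=w_0\in\LP(x)$, $J=\emptyset$ is allowed, and $\ell(x,v\alpha_1)=1$. The reduced-word path from $w_0$ to $1$ along $s_1s_2s_1$ consists of three quantum edges and has weight $2\alpha_1^\vee+\alpha_2^\vee$, whose pairing with $\alpha_1$ is $3$, while $\langle v^{-1}\mu,\alpha_1\rangle=2$; the required inequality holds only because the true weight is $\wt(w_0\Rightarrow 1)=\theta^\vee$, which pairs to $1$. So no bookkeeping with $\ell(x,v\cdot)\geq 1$ and the $\Phi^+$-terms can recover the estimate from the crude path bound, and the alternative you sketch via affine reflections and Theorem~\ref{thm:truncations} is not worked out (the proof of \weightEstimateLong{} only yields a bound on the full weight for a single-reflection comparison, not on pairings $\langle\wt(u\Rightarrow u'),\beta\rangle$ for general $u,u',\beta$). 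To close the proof you must either invoke the He--Nie estimate $\langle\wt(u\Rightarrow u'),\beta\rangle\leq\Phi^+(-u\beta)+\Phi^+(u'\beta)$ or prove an equivalent sharp per-root bound yourself.
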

\begin{proof}
In view of Lemma \ref{lem:convFacts} (e), it suffices to show for each $\alpha\in \Phi^+\setminus \Phi^+_J$ that
\begin{align*}
\langle \avg_\sigma(v^{-1}\mu - \wt(v\Rightarrow\presig(wv))),\alpha\rangle\geq 0.
\end{align*}
Let $N>1$ such that the action of $\sigma^N$ on $X_\ast(T)_{\Gamma_0}$ becomes trivial. Then
\begin{align*}
&\langle \avg_\sigma(v^{-1}\mu - \wt(v\Rightarrow\presig(wv))),\alpha\rangle\\=&\frac 1N\sum_{k=1}^N\langle v^{-1}\mu - \wt(v\Rightarrow\presig(wv)),\sigma^k(\alpha)\rangle
\\=&\frac 1N\sum_{k=1}^N\langle \mu,v\sigma^k(\alpha)\rangle -\langle \wt(v\Rightarrow\presig(wv)),\sigma^k(\alpha)\rangle.
\end{align*}
By\footnote{The original formulation of this statement has a small typo, the version cited here is the correct one: Indeed, let $x,y\in W$ and define dominant coweights $\mu_1, \mu_2\in X_\ast(T)_{\Gamma_0}$ on each simple root $\alpha\in \Delta$ as follows:
\begin{align*}
\langle \mu_1,\alpha\rangle := \Phi^+(-y^{-1}\alpha),\quad \langle\mu_2,\alpha\rangle := \Phi^+(x\alpha).
\end{align*}
Then one checks easily that we are in the situation of \cite[Theorem~1.1]{He2021c}, and part (1) of this theorem yields $\langle\wt(y^{-1}\Rightarrow x),\alpha\rangle\leq \Phi^+(-y^{-1}\alpha) + \Phi^+(x\alpha)$.} \cite[Section~2.5]{He2021c}, we may estimate
\begin{align*}
\langle \wt(v\Rightarrow\presig(wv)),\sigma^k(\alpha)\rangle\leq& \Phi^+(-v\sigma^k(\alpha)) + \Phi^+(\presig(wv)\sigma^k(\alpha)) \\=& \Phi^+(-v\sigma^k(\alpha)) +\Phi^+(wv\sigma^{k-1}(\alpha)).
\end{align*}
Thus
\begin{align*}
&\frac 1N\sum_{k=1}^N\left(\langle \mu,v\sigma^k(\alpha)\rangle -\langle \wt(v\Rightarrow\presig(wv)),\sigma^k(\alpha)\rangle\right)
\\\geq&\frac 1N\sum_{k=1}^N\left(\langle \mu,v\sigma^k(\alpha)\rangle -\Phi^+(-v\sigma^k(\alpha)) -\Phi^+(wv\sigma^{k-1}(\alpha))\right)
\\=&\frac 1N\sum_{k=1}^N\left(\langle \mu,v\sigma^k(\alpha)\rangle -\Phi^+(-v\sigma^k(\alpha)) -\Phi^+(wv\sigma^{k}(\alpha))\right)
\\=&\frac 1N\sum_{k=1}^N\Bigl(\underbrace{\ell(x,v\sigma^k(\alpha))}_{\geq 1}-1\Bigr)\geq 0.
\end{align*}
This finishes the proof.
\end{proof}
In particular, if $x$ lies in a shrunken Weyl chamber with $\LP(x) = \{v\}$, we may set $J=\emptyset$ and obtain
\begin{align*}
\nu_x = \avg_\sigma(v^{-1}\mu-\wt(v\Rightarrow\presig(wv)))\in X_\ast(T)_{\Gamma_0}\otimes\mathbb Q.
\end{align*}
This recovers the aforementioned results of Mili\'cevi\'c, Sadhuhkhan and He-Nie as consequences of our Theorem~\ref{thm:genericGKP}.

Finally, we classify the \emph{cordial} elements from Mili\'cevi\'c-Viehmann \cite{Milicevic2020}.
\begin{definition}
Let $x =w\varepsilon^\mu\in \widetilde W$ and $v\in W$ be the specific length positive element constructed in Example~\ref{ex:usualLPelement}. Then $x$ is cordial if
\begin{align*}
\ell(x)-\ell(v^{-1}\presig(wv))= \langle \nu_x,2\rho\rangle - \defect(b_x).
\end{align*}
\end{definition}
\begin{proposition}\label{prop:cordial}
Let $x = w\varepsilon^\mu\in \widetilde W$ and $v\in \LP(x)$. Then
\begin{align*}
\ell(x) - \ell(v^{-1}\presig(wv)) \leq \langle \nu_x,2\rho\rangle - \defect(b_x).
\end{align*}
Equality holds if and only if both conditions (a) and (b) are satisfied. Moreover, the condition (a) is always equivalent to (a').
\begin{itemize}
\item[(a)] The generic $\lambda$-invariant $\lambda_x$ is given by
\begin{align*}
\lambda_x = v^{-1}\mu - \wt(v\Rightarrow\presig(wv))\in X_\ast(T)_{\Gamma}.
\end{align*}
\item[(a')] We have
\begin{align*}
d(v\Rightarrow\presig(wv)) = \min_{v'\in \LP(x)} d(v'\Rightarrow\presig(wv')).
\end{align*}
\item[(b)] We have $d(v\Rightarrow\presig(wv)) = \ell(v^ {-1}\presig(wv))$.
\end{itemize}
\end{proposition}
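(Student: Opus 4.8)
The plan is to translate the statement into a short chain of inequalities about distances in the quantum Bruhat graph, using only the invariants already computed above; no genuinely new idea is required.

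First I would record a closed formula for the right-hand side. Set $m := \min_{v'\in\LP(x)} d(v'\Rightarrow\presig(wv'))$ and fix a minimizer $v_0\in\LP(x)$. By Corollary~\ref{cor:genericGKPMinDistance} we have $\lambda_x = v_0^{-1}\mu - \wt(v_0\Rightarrow\presig(wv_0))$, and since $v_0\in\LP(x)$ the equality case of Lemma~\ref{lem:genericGKPImprovements}(b) gives $\langle\lambda_x,2\rho\rangle = \ell(x) - d(v_0\Rightarrow\presig(wv_0)) = \ell(x) - m$. Combining this with Proposition~\ref{prop:defect}(iii), which for $b_x$ reads $\defect(b_x) = \langle\nu_x,2\rho\rangle - \langle\lambda_x,2\rho\rangle$, yields
\[
\langle\nu_x,2\rho\rangle - \defect(b_x) = \langle\lambda_x,2\rho\rangle = \ell(x) - m,
\]
the pairing $\langle\,\cdot\,,2\rho\rangle$ being well-defined on $X_\ast(T)_\Gamma$ by Lemma~\ref{lem:avgSimpleFacts}(a).

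Next, for $v\in\LP(x)$ the asserted inequality becomes, after substituting this formula, simply $m\le\ell(v^{-1}\presig(wv))$, which I would deduce from the chain $m\le d(v\Rightarrow\presig(wv))\le\ell(v^{-1}\presig(wv))$, whose last step is the general bound $d(u\Rightarrow u')\le\ell(u^{-1}u')$ valid in any quantum Bruhat graph. Equality in the proposition therefore holds precisely when both inequalities of this chain are equalities, that is, exactly when (a') $d(v\Rightarrow\presig(wv)) = m$ and (b) $d(v\Rightarrow\presig(wv)) = \ell(v^{-1}\presig(wv))$ both hold. Finally, to see (a) $\iff$ (a'): if (a') holds then $v$ is itself a distance-minimizer in $\LP(x)$, so Corollary~\ref{cor:genericGKPMinDistance}, which applies to \emph{any} such minimizer, gives (a); conversely, if (a) holds then the equality case of Lemma~\ref{lem:genericGKPImprovements}(b) for $v\in\LP(x)$ gives $\langle\lambda_x,2\rho\rangle = \langle v^{-1}\mu - \wt(v\Rightarrow\presig(wv)),2\rho\rangle = \ell(x) - d(v\Rightarrow\presig(wv))$, and comparison with the formula above forces $d(v\Rightarrow\presig(wv)) = m$, i.e.\ (a').

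I do not expect a real obstacle here: the whole argument is assembled from Proposition~\ref{prop:defect}, Corollary~\ref{cor:genericGKPMinDistance} and Lemma~\ref{lem:genericGKPImprovements}. The only points demanding care are bookkeeping ones — invoking the equality clause of Lemma~\ref{lem:genericGKPImprovements}(b) only for $v\in\LP(x)$, and using that Corollary~\ref{cor:genericGKPMinDistance} produces the formula for $\lambda_x$ from \emph{every} distance-minimizing element of $\LP(x)$ rather than from a single distinguished one.
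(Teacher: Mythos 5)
Your proposal is correct and takes essentially the same route as the paper, which proves the bound by the single chain $\ell(x)-\ell(v^{-1}\presig(wv)) \le \ell(x)-d(v\Rightarrow\presig(wv)) = \langle v^{-1}\mu-\wt(v\Rightarrow\presig(wv)),2\rho\rangle \le \langle \lambda_x,2\rho\rangle = \langle \nu_x,2\rho\rangle-\defect(b_x)$ and gets (a) $\iff$ (a') from Theorem~\ref{thm:genericGKP} and Lemma~\ref{lem:genericGKPImprovements} --- precisely the ingredients you use, with Corollary~\ref{cor:genericGKPMinDistance} standing in for the theorem. Your rewriting of the right-hand side as $\ell(x)-\min_{v'\in\LP(x)}d(v'\Rightarrow\presig(wv'))$ is a harmless repackaging, and your equality analysis and careful use of the equality clause of Lemma~\ref{lem:genericGKPImprovements}(b) match the paper's argument.
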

\begin{proof}
By Lemma~\ref{lem:genericGKPImprovements} and Theorem~\ref{thm:genericGKP}, (a) $\iff$ (a').

For the remaining claims, we calculate
\begin{align*}
\ell(x) - \ell(v^{-1}\presig(wv)) \leq& \ell(x) - d(v\Rightarrow\presig(wv))
\\=&\langle v^{-1}\mu-\wt(v\Rightarrow\presig(wv)),2\rho\rangle
\\\underset{\text{T\ref{thm:genericGKP}}}\leq&\langle \lambda_x,2\rho\rangle \underset{\text{P\ref{prop:defect}}}=  \langle \nu_x,2\rho\rangle - \defect(b_x).\qedhere
\end{align*}
\end{proof}
\begin{corollary}\label{cor:cordial}
Let $x = w\varepsilon^\mu\in \widetilde W$ and $v\in W$ be of minimal length such that $v^{-1}\mu$ is dominant. Then $x$ is cordial if and only if the following two conditions are both satisfied:
\begin{enumerate}[(1)]
\item For each $v'\in \LP(x)$, $d(v\Rightarrow\presig(wv))\leq d(v'\Rightarrow\presig(wv'))$.
\item $d(v\Rightarrow\presig(wv)) = \ell(v^{-1}\presig(wv))$.\rightqed
\end{enumerate}
\end{corollary}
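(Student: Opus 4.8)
The plan is to obtain this as an essentially immediate consequence of Proposition~\ref{prop:cordial}, so the proof will mostly consist of matching up definitions. The first step is to recall from Example~\ref{ex:usualLPelement} that the (unique) element $v \in W$ of minimal length with $v^{-1}\mu$ dominant is length positive for $x$, i.e.\ $v \in \LP(x)$; this is also, by definition, exactly the element used to define cordiality of $x$. Hence $x$ is cordial precisely when $\ell(x) - \ell(v^{-1}\presig(wv)) = \langle \nu_x, 2\rho\rangle - \defect(b_x)$ for this particular $v$, and no change of element is needed when passing between the two statements.

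The second step is to apply Proposition~\ref{prop:cordial} with this choice of $v \in \LP(x)$: the displayed equality holds if and only if both conditions (a) and (b) of that proposition are satisfied. Condition (b) is literally condition (2) of the corollary. For condition (a), Proposition~\ref{prop:cordial} already records the equivalence (a) $\iff$ (a'), and (a') reads $d(v\Rightarrow\presig(wv)) = \min_{v'\in\LP(x)} d(v'\Rightarrow\presig(wv'))$; since $v$ itself lies in $\LP(x)$, this is the same as the family of inequalities $d(v\Rightarrow\presig(wv)) \leq d(v'\Rightarrow\presig(wv'))$ for all $v'\in\LP(x)$, which is condition (1). Stringing these equivalences together yields the corollary.

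There is no genuine obstacle here: the mathematical content is carried entirely by Theorem~\ref{thm:genericGKP} and Proposition~\ref{prop:cordial} (and behind them Lemma~\ref{lem:weightEstimate} and Proposition~\ref{prop:defect}). The only points needing a moment's attention are the identification, established in Example~\ref{ex:usualLPelement}, of the minimal-length $v$ with $v^{-1}\mu$ dominant as a length positive element — which is precisely what makes Proposition~\ref{prop:cordial} applicable to it — together with the harmless reformulation of the minimum appearing in (a') as a family of inequalities once one knows $v\in\LP(x)$. Accordingly I expect the write-up to be only a few lines.
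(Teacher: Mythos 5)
Your proposal is correct and matches the paper's intent exactly: the corollary carries no separate proof in the paper (it is marked as immediate) precisely because it follows from Proposition~\ref{prop:cordial} applied to the minimal-length $v$ with $v^{-1}\mu$ dominant, which Example~\ref{ex:usualLPelement} identifies as the length positive element entering the definition of cordiality, with (a') rephrased as condition (1) since $v\in\LP(x)$. No gaps; your write-up is the intended argument.
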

This corollary generalizes the description of superregular cordial element for split $G$ due to Mili\'cevi\'c-Viehmann \cite[Proposition~4.2]{Milicevic2020} and the description of shrunken cordial elements due to He-Nie \cite[Remark~3.2]{He2021c}. One can generalize the statement and proof of \cite[Theorem~1.2~(b),(c)]{Milicevic2020} accordingly.
\subsection{Proof of the Theorem}
Fix $x=w\varepsilon^\mu\in \widetilde W$. We need to show the following two claims:
\begin{itemize}
\item There exists some $v\in W$ such that
\begin{align*}
\lambda_x \leq v^{-1}\mu - \wt(v\Rightarrow\presig(wv))\in X_\ast(T)_{\Gamma}.
\end{align*}
\item For each $v\in W$, we have
\begin{align*}
v^{-1}\mu - \wt(v\Rightarrow\presig(wv)) \leq \lambda_x\in X_\ast(T)_{\Gamma}.
\end{align*}
By definition of $\lambda_G(x)$, this is equivalent to
\begin{align*}
\avg_\sigma(v^{-1}\mu - \wt(v\Rightarrow\presig(wv)))\leq \nu_x\in X_\ast(T)_{\Gamma_0}\otimes\mathbb Q.
\end{align*}
\end{itemize}
Let us use the shorthand notation $\lambda \leq^\sigma \lambda'$ to say that the image of $\lambda$ in $X_\ast(T)_\Gamma$ is less than or equal to the image of $\lambda'$ in $X_\ast(T)_\Gamma$ ($\lambda, \lambda'$ being elements of $X_\ast(T), X_\ast(T)_{\Gamma_0}$ or $X_\ast(T)_\Gamma$).

We write $\lambda\equiv^\sigma \lambda'$ to denote $\lambda\leq^\sigma\lambda'$ and $\lambda'\leq^\sigma \lambda$. Similarly, we write $\lambda<^\sigma\lambda'$ to denote $\lambda\leq^\sigma\lambda'$ but $\lambda'\not\leq^\sigma \lambda$.

For this section, call an element $v\in W$ \emph{maximal} if there exists no $v'\in W$ such that
\begin{align*}
v^{-1}\mu - \wt(v\Rightarrow\presig(wv))<^\sigma(v')^{-1}\mu - \wt(v'\Rightarrow\presig(wv')).
\end{align*}
\begin{lemma}\label{lem:maximalityConsequences}
Let $v\in W$ be maximal. Moreover, fix a root $\alpha\in \Phi^+$ such that
\begin{align*}
\wt(v\Rightarrow\presig(wv)) \equiv^\sigma \alpha^\vee\Phi^+(-v\alpha) + \wt(vs_\alpha\Rightarrow \presig(wv)).
\end{align*}
Then precisely one of the following conditions is satisfied:
\begin{enumerate}[(1)]
\item We have $\ell(x,v\alpha)>0$ and the element
\begin{align*}
x' := w'\varepsilon^{\mu'} := xr_{v\alpha,\Phi^+(-v\alpha)}\in \widetilde W
\end{align*}
satisfies $x'<x$ and
\begin{align*}
(vs_\alpha)^{-1}\mu' - \wt(vs_\alpha\Rightarrow \presig(w'vs_\alpha)) \equiv^\sigma v^{-1}\mu -\wt(v\Rightarrow \presig(wv)).
\end{align*}
\item We have $\ell(x,v\alpha)=0$, $vs_\alpha\in W$ is maximal with
\begin{align*}
v^{-1}\mu - \wt(v\Rightarrow\presig(wv))\equiv^\sigma 
(vs_\alpha)^{-1}\mu - \wt(vs_\alpha \Rightarrow\presig(wvs_\alpha))
\end{align*}
and
\begin{align*}
\wt(vs_\alpha\Rightarrow \presig(wvs_\alpha)) \equiv^\sigma \wt(vs_\alpha\Rightarrow \presig(wv)) + \alpha^\vee\Phi^+(-wv\alpha).
\end{align*}
\end{enumerate}
\end{lemma}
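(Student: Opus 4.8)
The plan is to derive everything from a single comparison between the ``value'' $v^{-1}\mu - \wt(v\Rightarrow\presig(wv))$ attached to $v$ and the one attached to $vs_\alpha$, using only \weightEstimateLong, the triangle inequality $\wt(a\Rightarrow c)\leq\wt(a\Rightarrow b)+\wt(b\Rightarrow c)$ for quantum Bruhat graph weights, and the hypothesis on $\alpha$. The whole trichotomy (case (1), case (2), or impossible) is then read off from the sign of $\ell(x,v\alpha)$.

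First I would record the elementary identities $(vs_\alpha)^{-1}\mu = v^{-1}\mu - \langle\mu,v\alpha\rangle\alpha^\vee$ and $\presig(wvs_\alpha) = \presig(wv)s_{\sigma\alpha}$, where $\sigma\alpha\in\Phi^+$ since $\sigma$ preserves $\Delta$. Applying \weightEstimateShort\ to the pair $\presig(wv),\presig(wv)s_{\sigma\alpha}$ gives, after unwinding $\presig(wv)(\sigma\alpha)=\sigma(wv\alpha)$ and using $(\sigma\alpha)^\vee\equiv^\sigma\alpha^\vee$, the bound $\wt(\presig(wv)\Rightarrow\presig(wvs_\alpha))\leq^\sigma \alpha^\vee\Phi^+(-wv\alpha)$. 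Combining this with the triangle inequality split through the vertex $\presig(wv)$, then substituting the hypothesis $\wt(v\Rightarrow\presig(wv))\equiv^\sigma \alpha^\vee\Phi^+(-v\alpha) + \wt(vs_\alpha\Rightarrow\presig(wv))$ and simplifying via the arithmetic identity $-\langle\mu,v\alpha\rangle + \Phi^+(-v\alpha) - \Phi^+(-wv\alpha) = -\ell(x,v\alpha)$, I arrive at the key estimate
\begin{align*}
(vs_\alpha)^{-1}\mu - \wt(vs_\alpha\Rightarrow\presig(wvs_\alpha)) \geq^\sigma v^{-1}\mu - \wt(v\Rightarrow\presig(wv)) - \ell(x,v\alpha)\,\alpha^\vee .
\end{align*}

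From here the three cases are immediate. If $\ell(x,v\alpha)<0$, then the right-hand side is $>^\sigma v^{-1}\mu - \wt(v\Rightarrow\presig(wv))$ — here one uses $\langle\alpha^\vee,2\rho\rangle>0$, which forces $\alpha^\vee>^\sigma 0$ in $X_\ast(T)_\Gamma$ — contradicting maximality of $v$; hence $\ell(x,v\alpha)\geq 0$. If $\ell(x,v\alpha)=0$, the estimate says the value at $vs_\alpha$ is $\geq^\sigma$ that at $v$, so by maximality of $v$ it is $\equiv^\sigma$; in particular $vs_\alpha$ is maximal, the first $\equiv^\sigma$ of case (2) holds, and forcing equality in the one genuinely lossy step of the chain (triangle inequality plus \weightEstimateShort) yields $\wt(vs_\alpha\Rightarrow\presig(wvs_\alpha))\equiv^\sigma\wt(vs_\alpha\Rightarrow\presig(wv)) + \alpha^\vee\Phi^+(-wv\alpha)$, i.e.\ the remaining assertion of case (2). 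If $\ell(x,v\alpha)>0$, I verify case (1) directly: the affine root $a=(v\alpha,\Phi^+(-v\alpha))$ lies in $\Phi_\af^+$, and $x(a) = (wv\alpha,\ \Phi^+(-v\alpha)-\langle\mu,v\alpha\rangle)\in\Phi_\af^-$ is equivalent to $\ell(x,v\alpha)>0$, so by the reflection–length relation in $\widetilde W$ we get $x' := xr_a<x$; writing $x'=w'\varepsilon^{\mu'}$ one computes $w'=ws_{v\alpha}$, hence $w'(vs_\alpha)=wv$, and $(vs_\alpha)^{-1}\mu' = v^{-1}\mu - \Phi^+(-v\alpha)\alpha^\vee$, so that the value of $x'$ at $vs_\alpha$ equals $v^{-1}\mu - \Phi^+(-v\alpha)\alpha^\vee - \wt(vs_\alpha\Rightarrow\presig(wv))$, which is $\equiv^\sigma v^{-1}\mu - \wt(v\Rightarrow\presig(wv))$ precisely by the hypothesis on $\alpha$. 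Since $\ell(x,v\alpha)$ is either zero or positive, and these are exactly the defining conditions of cases (2) and (1), exactly one of the two occurs.

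The content here is light — \weightEstimateLong\ and the maximality hypothesis do all the work — so the only real care is bookkeeping. One must split the triangle inequality through $\presig(wv)$ (and not through $v$) so that the hypothesis on $\alpha$ substitutes without loss, and one must track which (in)equalities are honest in $X_\ast(T)_{\Gamma_0}$ and which survive only after passing to $X_\ast(T)_\Gamma$: the weight estimate for $\wt(\presig(wv)\Rightarrow\presig(wvs_\alpha))$ only holds modulo $\sigma$ because it produces $(\sigma\alpha)^\vee$ rather than $\alpha^\vee$. I expect the residual computations in case (1) — namely $w'=ws_{v\alpha}$, the formula for $(vs_\alpha)^{-1}\mu'$, and the equivalence $x(a)\in\Phi_\af^-\iff\ell(x,v\alpha)>0$ — to be entirely routine once the conventions $(w\varepsilon^\mu)(\alpha,k)=(w\alpha,k-\langle\mu,\alpha\rangle)$ and $r_{(\beta,k)}=s_\beta\varepsilon^{k\beta^\vee}$ are unwound.
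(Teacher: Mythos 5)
Your proof is correct and follows essentially the same route as the paper's: the identical comparison chain for $(vs_\alpha)^{-1}\mu - \wt(vs_\alpha\Rightarrow\presig(wvs_\alpha))$ (triangle inequality split through $\presig(wv)$, \weightEstimateLong, substitution of the hypothesis on $\alpha$, simplification to $-\ell(x,v\alpha)\alpha^\vee$), the same trichotomy on the sign of $\ell(x,v\alpha)$, and the same explicit computation of $x'=xr_{v\alpha,\Phi^+(-v\alpha)}$ in case (1); your extra details (strictness via pairing with $2\rho$, forcing equality in the lossy steps) just make explicit what the paper leaves implicit. Your bound $\wt(\presig(wv)\Rightarrow\presig(wvs_\alpha))\leq^\sigma\alpha^\vee\Phi^+(-wv\alpha)$ has the correct sign, consistent with the lemma's statement; the paper's proof display writes $\Phi^+(wv\alpha)$ in that underbrace, which is a typo.
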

\begin{remark}
If $v\neq \presig(wv)$ and $v\rightarrow vs_\alpha$ is an edge in $\QB(W)$ that is part of a shortest path from $v$ to $\presig(wv)$, then the root $\alpha\in \Phi^+$ will satisfy the condition of the Lemma.
\end{remark}
\begin{proof}[Proof of Lemma~\ref{lem:maximalityConsequences}]
We use maximality of $v$ by comparing to $vs_\alpha$. Now calculate
\begin{align*}
&(vs_\alpha)^{-1}\mu - \wt(vs_\alpha\Rightarrow\presig(wvs_\alpha))
\\\geq&(vs_\alpha)^{-1}\mu - \wt(vs_\alpha\Rightarrow \presig(wv)) -
 \wt(\presig(wv)\Rightarrow \presig(wvs_\alpha)).
\\\equiv^\sigma &(vs_\alpha)^{-1}\mu + \alpha^\vee \Phi^+(-v\alpha) - \wt(v\Rightarrow\presig(wv)) - \underbrace{\wt(wv\Rightarrow wvs_\alpha)}_{\leq \alpha^\vee \Phi^+(wv\alpha)\text{ by \weightEstimateShort}}
\\\geq&v^{-1}\mu - \langle \mu,v\alpha\rangle \alpha^\vee + \alpha^\vee\Phi^+(-v\alpha) -  \wt(v\Rightarrow\presig(wv)) - \alpha^\vee\Phi^+(wv\alpha)
\\=&v^{-1}\mu -\wt(v\Rightarrow\presig(wv)) -\ell(x,v\alpha)\alpha^\vee.
\end{align*}
If $\ell(x,v\alpha)<0$, we get a contradiction to the maximality of $v$.

Next assume that $\ell(x,v\alpha)=0$. Then every inequality in the above computation must be an equality (up to $\sigma$-coinvariants), or we would again get a contradiction. In particular, $vs_\alpha$ must be maximal, as \begin{align*}
v^{-1}\mu - \wt(v\Rightarrow\presig(wv))\equiv^\sigma 
(vs_\alpha)^{-1}\mu - \wt(vs_\alpha \Rightarrow\presig(wvs_\alpha)).
\end{align*}
Moreover, we obtain
\begin{align*}
\wt(vs_\alpha\Rightarrow\presig(wvs_\alpha)) \equiv^\sigma \wt(vs_\alpha\Rightarrow \presig(wv)) + \alpha^\vee(-wv\alpha).
\end{align*}
This shows all the claims in (2).

Finally assume $\ell(x,v\alpha)>0$. Then $x'<x$, i.e.\ $x(v\alpha, \Phi^+(-v\alpha))\in \Phi^-_{\af}$, follows from Lemma~\ref{lem:lengthFunctionalAsCountingAffineRoots}. Calculating explicitly, we get
\begin{align*}
w' \varepsilon^{\mu'} = w\varepsilon^\mu s_{v\alpha} \varepsilon^{\Phi^+(-v\alpha)v\alpha^\vee}
=ws_{v\alpha}\varepsilon^{s_{v\alpha}(\mu) + \Phi^+(-v\alpha)v\alpha^\vee}.
\end{align*}
So indeed,
\begin{align*}
(vs_\alpha)^{-1}\mu' - \wt(vs_\alpha\Rightarrow \presig(w'vs_\alpha)) =& v^{-1}\mu - \alpha^\vee\Phi^+(-v\alpha) - \wt(vs_\alpha\Rightarrow\presig(wv))
\\\equiv^\sigma&v^{-1}\mu - \wt(v\Rightarrow\presig(wv)).\qedhere
\end{align*}
\end{proof}
\begin{corollary}\label{cor:genericGKPDichotomy}
Let $v$ be maximal. Then at least one of the following conditions is satisfied:
\begin{enumerate}[(1)]
\item There exists $x' = w'\varepsilon^{\mu'}<x$ and $v'\in W$ such that
\begin{align*}
v^{-1}\mu - \wt(v\Rightarrow \presig(wv)) \equiv^\sigma (v')^{-1}\mu' - \wt(v'\Rightarrow \presig(w'v')).
\end{align*}
\item The element $\presig(wv)\in W$ is maximal, and we have
\begin{align*}
v^{-1}\mu - \wt(v\Rightarrow \presig(wv))~\equiv^\sigma~ \presig(wv)^{-1}\mu - \wt(\presig(wv)\Rightarrow\presig(w\presig(wv))).
\end{align*}
\end{enumerate}
\end{corollary}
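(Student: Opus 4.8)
The plan is to apply Lemma~\ref{lem:maximalityConsequences} repeatedly, keeping track of an auxiliary finite quantity that must strictly decrease unless we land in case (1). Let $v$ be maximal. If $v = \presig(wv)$, then $\wt(v\Rightarrow\presig(wv)) = 0$ and case (2) holds trivially, so assume $v \neq \presig(wv)$. Pick a root $\alpha\in\Phi^+$ such that $v\to vs_\alpha$ is the first edge of a shortest path from $v$ to $\presig(wv)$ in $\QB(W)$; as noted in the remark following Lemma~\ref{lem:maximalityConsequences}, this $\alpha$ satisfies the hypothesis $\wt(v\Rightarrow\presig(wv)) \equiv^\sigma \alpha^\vee\Phi^+(-v\alpha) + \wt(vs_\alpha\Rightarrow\presig(wv))$, since $\wt(v\Rightarrow\presig(wv)) = \wt(v\Rightarrow vs_\alpha) + \wt(vs_\alpha\Rightarrow\presig(wv))$ and the weight of the first edge is $0$ if $v\alpha\in\Phi^-$ (Bruhat edge) and $\alpha^\vee$ if $v\alpha\in\Phi^+$ (quantum edge), which in both cases equals $\alpha^\vee\Phi^+(-v\alpha)$.

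Now apply Lemma~\ref{lem:maximalityConsequences}. In case (1) of that lemma we get $x' = xr_{v\alpha,\Phi^+(-v\alpha)} < x$ together with $v' = vs_\alpha$ satisfying $v^{-1}\mu - \wt(v\Rightarrow\presig(wv)) \equiv^\sigma (v')^{-1}\mu' - \wt(v'\Rightarrow\presig(w'v'))$, which is exactly conclusion (1) of the corollary, and we are done. In case (2) of the lemma, we have $\ell(x,v\alpha) = 0$, the element $vs_\alpha$ is again maximal with the same $\leq^\sigma$-value, and
\begin{align*}
\wt(vs_\alpha\Rightarrow\presig(wvs_\alpha)) \equiv^\sigma \wt(vs_\alpha\Rightarrow\presig(wv)) + \alpha^\vee\Phi^+(-wv\alpha).
\end{align*}
The key point is that the path $v \to vs_\alpha$ followed by a shortest path $vs_\alpha \Rightarrow \presig(wv)$ shows $d(vs_\alpha\Rightarrow\presig(wv)) \geq d(v\Rightarrow\presig(wv)) - 1$, while the displayed weight identity together with Lemma~\ref{lem:weight2rho} forces $d(vs_\alpha\Rightarrow\presig(wvs_\alpha)) = d(vs_\alpha\Rightarrow\presig(wv)) \pm 1$ depending on whether $\presig(wv)\to\presig(wvs_\alpha)$ is a Bruhat or quantum edge, i.e.\ on whether $wv\alpha\in\Phi^+$ or $wv\alpha\in\Phi^-$. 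So I would set up an induction on the distance $d(v\Rightarrow\presig(wv))$: replacing the maximal element $v$ by the maximal element $vs_\alpha$ (which has the same $\leq^\sigma$-value), we strictly decrease this distance, since by choice of $\alpha$ we have $d(vs_\alpha\Rightarrow\presig(wv)) = d(v\Rightarrow\presig(wv)) - 1$. Iterating, after finitely many steps we reach a maximal element $\tilde v$ with $\tilde v = \presig(w\tilde v) \cdot (\text{something trivial})$; more precisely, the process terminates exactly when $\presig(wv) = v$, i.e.\ when we have pushed $v$ all the way to $\presig(wv)$ along a shortest path. At that terminal stage $v$ has been replaced by $\presig(wv)$, which is therefore maximal, and the chain of $\equiv^\sigma$'s gives $v^{-1}\mu - \wt(v\Rightarrow\presig(wv)) \equiv^\sigma \presig(wv)^{-1}\mu - \wt(\presig(wv)\Rightarrow\presig(w\presig(wv)))$, which is conclusion (2).

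The one subtlety I expect to be the main obstacle is making the induction on $d(v\Rightarrow\presig(wv))$ actually decrease in case (2): I must verify that the new maximal element $vs_\alpha$ genuinely has strictly smaller distance to its own target $\presig(wvs_\alpha)$, not merely to the intermediate vertex $\presig(wv)$. The targets differ ($\presig(wv)$ versus $\presig(wvs_\alpha)$), so the naive count can fail to decrease if $wv\alpha\in\Phi^+$. The fix is to run the induction not on $d(v\Rightarrow\presig(wv))$ but on the pair $(d(v\Rightarrow\presig(wv)), \text{some secondary length})$ lexicographically, or — cleaner — to observe that the chosen $\alpha$ lies on a shortest path so that $d(v\Rightarrow\presig(wv))$ drops by exactly one when passing to $vs_\alpha\Rightarrow\presig(wv)$, and then separately argue that the full sequence of such steps is finite because each step decreases $d(\,\cdot\Rightarrow\presig(wv))$ for the \emph{fixed} endpoint $\presig(wv)$ attached to the \emph{original} $v$ — i.e.\ first choose a single shortest path $v = v_0 \to v_1 \to \cdots \to v_m = \presig(wv)$ in $\QB(W)$, apply Lemma~\ref{lem:maximalityConsequences} at each consecutive edge, and note that as long as we stay in case (2) all the $v_i$ are maximal with a common $\leq^\sigma$-value; if we ever hit case (1) we are done, and otherwise we reach $v_m = \presig(wv)$, which is then maximal, giving (2). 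This reorganization sidesteps the moving-target problem entirely, since the path is fixed in advance.
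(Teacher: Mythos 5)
Your overall strategy is the same as the paper's: fix a shortest path $v = v_0\to v_1\to\cdots\to v_m=\presig(wv)$ in $\QB(W)$ in advance, apply Lemma~\ref{lem:maximalityConsequences} at each consecutive edge, and conclude that either some edge lands you in case (1), or you walk all the way to $v_m=\presig(wv)$ and obtain case (2). However, your claim that fixing the path in advance ``sidesteps the moving-target problem entirely'' hides a genuine gap. To apply Lemma~\ref{lem:maximalityConsequences} to the pair $(v_i,\alpha_{i+1})$ you must verify its hypothesis, namely that $\wt(v_i\Rightarrow\presig(wv_i))\equiv^\sigma \alpha_{i+1}^\vee\Phi^+(-v_i\alpha_{i+1})+\wt(v_{i+1}\Rightarrow\presig(wv_i))$; by the remark after the lemma this amounts to the edge $v_i\to v_{i+1}$ lying on a shortest path from $v_i$ to the \emph{current} target $\presig(wv_i)$. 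Fixing the path only guarantees that this edge lies on a shortest path from $v_i$ to the \emph{original} endpoint $\presig(wv)$, which is a different vertex once $i\geq 1$. So the moving target reappears exactly in the hypothesis of the lemma, and for $i\geq 1$ you have not justified that the lemma applies at all.

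The paper closes this gap by carrying an extra inductive invariant: condition (b), the distance additivity $d(v_i\Rightarrow\presig(wv_i)) = d(v_i\Rightarrow\presig(wv)) + d(\presig(wv)\Rightarrow\presig(wv_i))$, which guarantees that a shortest path from $v_i$ to $\presig(wv)$ prolongs to a shortest path to $\presig(wv_i)$, so the fixed edge does satisfy the lemma's hypothesis. Establishing (b) for $v_{i+1}$ in the inductive step is not automatic: it uses the weight identity from case (2) of Lemma~\ref{lem:maximalityConsequences}, the estimate \weightEstimateLong{} applied to $\presig(wv_i)\to\presig(wv_{i+1})$, and the triangle inequality for weights, forcing equality throughout, after which Lemma~\ref{lem:weight2rho} converts the weight equality into the distance statement. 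Your proposal contains none of this bookkeeping, and your earlier attempt to fix the issue by a lexicographic induction does not supply it either, so as written the argument only works for the first edge of the path. The remedy is precisely to add and propagate the invariant (b) alongside maximality and the $\equiv^\sigma$-identity, as in the paper's proof.
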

\begin{proof}
Choose a shortest path in $\QB(W)$
\begin{align*}
p : v\rightarrow vs_{\alpha_1}\rightarrow vs_{\alpha_1}s_{\alpha_2}\rightarrow\cdots\rightarrow vs_{\alpha_1}s_{\alpha_2}\cdots s_{\alpha_k} = \presig(wv).
\end{align*}
Consider the roots
\begin{align*}
\beta_i = vs_{\alpha_1}\cdots s_{\alpha_{i-1}}(\alpha_i)\in \Phi,\qquad i=1,\dotsc,k.
\end{align*}
We fix $i^\ast \in \{0,\dotsc,k\}$ maximally such that $\ell(x,\beta_i)=0$ for $1\leq i\leq i^\ast$.

We claim that each $v_i$ for $i=0,\dotsc,i^\ast$ satisfies the following conditions:
\begin{enumerate}[(a)]
\item $v_i$ is maximal,
\item $d(v_i\Rightarrow\presig(wv_i)) = d(v_i\Rightarrow\presig(wv)) + d(\presig(wv)\Rightarrow\presig(wv_i))$.
\item $v^{-1}\mu -\wt(v\Rightarrow\presig(wv)) \equiv^\sigma v_i^{-1}\mu-\wt(v_i\Rightarrow\presig(wv_i))$.
\end{enumerate}
Induction on $i$. Since $v_0=v$, the claim is clear for $i=0$. Now in the inductive step,
assume that $i<i^\ast$ and that the conditions (a)--(c) are true for $v_i$. We apply Lemma~\ref{lem:maximalityConsequences} to $(v_i,\alpha_i)$. This is possible, as $v_i\rightarrow v_{i+1}$ is part of a shortest path from $v_i$ to $\presig(wv)$ (by choice of the path $p$), hence part of a shortest path from $v_i$ to $\presig(wv_i)$ by (b).

Since $i<i^\ast$, we get $\ell(x,v_i\alpha_i)=0$, so condition (2) of Lemma~\ref{lem:maximalityConsequences} must be satisfied. Now (a) and (c) follow immediately for $v_{i+1}$. For condition (b), use condition (2) of the lemma to compute
\begin{align*}
&\wt(v_{i+1}\Rightarrow\presig(wv_{i+1}))\\\equiv^\sigma &\wt(v_{i+1}\Rightarrow \presig(wv_i))+\alpha_i^\vee\Phi^+(-wv_i\alpha_i)
\\\underset{\text{(b)}}=&\wt(v_{i+1}\Rightarrow \presig(wv)) + \wt(\presig(wv)\Rightarrow \presig(wv_i)) + \alpha_i^\vee\Phi^+(-wv_i\alpha_i)
\\\underset{\text{\weightEstimateShort}}{\geq^\sigma}&\wt(v_{i+1}\Rightarrow \presig(wv)) + \wt(\presig(wv)\Rightarrow \presig(wv_i)) +\wt(\presig(wv_i)\Rightarrow\presig(wv_{i+1})
\\\geq&\wt(v_{i+1}\Rightarrow \presig(wv)) + \wt(\presig(wv)\Rightarrow \presig(wv_{i+1}))
\\\geq&\wt(v_{i+1}\Rightarrow\presig(wv_{i+1})).
\end{align*}
We see that equality must hold in every step (up to the $\sigma$-action). In light of Lemma~\ref{lem:weight2rho}, condition (b) for $v_{i+1}$ follows, finishing the induction.

With the above claim proved for all $i\in\{0,\dotsc,i^\ast\}$, we distinguish two cases:
\begin{enumerate}[(1)]
\item Case $i^\ast<k$. Then $\ell(x, \beta_{i^\ast+1}) = \ell(x, v_{i^\ast}(\alpha_{i^\ast+1}))>0$ by choice of $i^\ast$. Applying Lemma~\ref{lem:maximalityConsequences} to $v_{i^\ast}$ and $\alpha_{i^\ast+1}$, we immediately get the desired $x'$.
\item Case $i^\ast =k$. Then $\presig(wv) = v_{i^\ast}$ and we obtain everything claimed.\qedhere
\end{enumerate}
\end{proof}
\begin{lemma}\label{lem:genericKottwitzLowerBound}
Let $v\in W$. Then there exists some $x'\leq x$ with
\begin{align*}
\nu(x')\geq \avg_{\sigma}(v^{-1}\mu - \wt(v\Rightarrow \presig(wv))).
\end{align*}
In other words, $\lambda_x\geq^\sigma v^{-1}\mu - \wt(v\Rightarrow \presig(wv))$.
\end{lemma}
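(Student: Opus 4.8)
The plan is to establish the equivalent statement that for every $v\in W$ there is some $x'\le x$ with $\nu(x')\ge\avg_\sigma(v^{-1}\mu-\wt(v\Rightarrow\presig(wv)))$: by Theorem~\ref{thm:truncations} this gives $\nu_x=\nu(b_x)\ge\nu(x')$, and since $v^{-1}\mu-\wt(v\Rightarrow\presig(wv))\equiv\mu\pmod{\mathbb Z\Phi^\vee}$ represents $\kappa(b_x)$ in $\pi_1(G)_\Gamma$, the maximality built into the definition of $\lambda_G$ then forces $\lambda_x\ge^\sigma v^{-1}\mu-\wt(v\Rightarrow\presig(wv))$. So the whole task is to produce such an $x'$.

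First I would reduce to a convenient $v$. Because $\avg_\sigma$ is monotone for $\le^\sigma$ and $W$ is finite, it suffices to treat $v$ that is \emph{maximal} in the sense introduced before Lemma~\ref{lem:maximalityConsequences}. If such a $v$ is not length positive for $x$, then for any adjustment $vs_\alpha$ the estimate in Lemma~\ref{lem:genericGKPImprovements}(a) combined with maximality of $v$ forces the two values to agree in $X_\ast(T)_{\Gamma}$, so $vs_\alpha$ is again maximal with the same value while having strictly fewer $\ell(x,\cdot)$-inversions than $v$ by Lemma~\ref{lem:rootFunctionalAdjustment}; iterating as in Corollary~\ref{cor:rootFunctionalAdjustments}, I may assume that $v$ is maximal \emph{and} length positive for $x$.

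Next I would induct on $\ell(x)$ and apply Corollary~\ref{cor:genericGKPDichotomy} to this $v$. In case~(1) it yields $x'=w'\varepsilon^{\mu'}<x$ and $v'\in W$ with $v^{-1}\mu-\wt(v\Rightarrow\presig(wv))\equiv^\sigma(v')^{-1}\mu'-\wt(v'\Rightarrow\presig(w'v'))$; as $\ell(x')<\ell(x)$, the induction hypothesis applied to $x'$ and $v'$ gives $x''\le x'\le x$ with $\nu(x'')\ge\avg_\sigma((v')^{-1}\mu'-\wt(v'\Rightarrow\presig(w'v')))=\avg_\sigma(v^{-1}\mu-\wt(v\Rightarrow\presig(wv)))$, finishing this case. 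In case~(2), $\presig(wv)$ is again maximal with the same $\sigma$-value; I re-reduce it into $\LP(x)$ as above and apply Corollary~\ref{cor:genericGKPDichotomy} again, repeating. If case~(1) ever occurs, I conclude as before; otherwise I obtain an infinite sequence $v=v_0,v_1,v_2,\dots$ of maximal length-positive elements, all sharing the value $\lambda':=v^{-1}\mu-\wt(v\Rightarrow\presig(wv))$ and all landing in case~(2).

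The hard part will be this terminal situation where case~(2) persists forever (this also subsumes the base case $\ell(x)=0$). Here I expect one must show that $x$ is \emph{fundamental}: reading off the proof of Corollary~\ref{cor:genericGKPDichotomy}, in case~(2) every root label $\beta$ on a shortest $\QB(W)$-path from $v_n$ to $\presig(wv_n)$ satisfies $\ell(x,\beta)=0$, and feeding this together with $v_n\in\LP(x)$ and the $\sigma$-periodicity of the sequence into Proposition~\ref{prop:fundamental}(iii) (taking $J:=\supp_\sigma(v^{-1}\presig(wv))$) should verify fundamentality. Once $x$ is fundamental, $[b_x]=[x]$, so it is enough to take $x'=x$ and check $\nu(x)\ge\avg_\sigma(\lambda')$; for this I would compute $\nu(x)=\pi_J(v^{-1}\mu)$ via Lemma~\ref{lem:newtonPointsAsAverages} (adjusting $v$ inside $vW_J$ so that $v^{-1}\mu$ is antidominant on $J$, which is harmless by Lemma~\ref{lem:fundamentalDefect}(a)), note that $\wt(v\Rightarrow\presig(wv))$ is a non-negative combination of coroots from $\Phi_J$, and deduce $\avg_\sigma(\lambda')=\pi_J(\avg_\sigma(\lambda'))\le\pi_J(v^{-1}\mu)=\nu(x)$ using Lemmas~\ref{lem:avgSimpleFacts} and~\ref{lem:convFacts}. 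The genuinely delicate points are therefore two: that unbounded iteration of case~(2) really does force $x$ to be fundamental, and the final comparison $\avg_\sigma(\lambda')\le\nu(x)$ in the fundamental case; everything else is bookkeeping or a direct appeal to results already available.
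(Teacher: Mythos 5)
Your skeleton up to the terminal case matches the paper: reduce to a maximal $v$, induct on $\ell(x)$, and apply Corollary~\ref{cor:genericGKPDichotomy}, handling case~(1) by the induction hypothesis. The gap is in the terminal case, and it is a real one. You assert that if case~(2) persists forever then $x$ must be fundamental, to be verified via Proposition~\ref{prop:fundamental}(iii) with $J=\supp_\sigma(v^{-1}\presig(wv))$. But condition (iii) demands $\ell(x,v\alpha)=0$ for \emph{all} $\alpha\in\Phi_J$, whereas the terminal situation only gives vanishing of $\ell(x,\cdot)$ on the $d(v\Rightarrow\presig(wv))$ labels of shortest quantum Bruhat paths, and these labels need not even be simple roots, so they can miss most of $\Phi_J$. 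Nothing in the paper (nor in your sketch) bridges this, and it is not clear the implication ``terminal $\Rightarrow$ fundamental'' is true at all; the paper deliberately avoids claiming it. Your reduction also insists on re-adjusting each iterate into $\LP(x)$, which destroys the exact recursion $v_{n+1}=\presig(wv_n)$ that any periodicity or averaging argument over the $(\sigma\circ w)$-orbit would have to use. Finally, even granting fundamentality, your closing comparison is shaky: replacing $v$ by an element of $vW_J$ changes the value $v^{-1}\mu-\wt(v\Rightarrow\presig(wv))$ you are trying to bound (and maximality only bounds the new value from above, the wrong direction), and the claim $\wt(v\Rightarrow\presig(wv))\in\mathbb Z_{\geq 0}\Phi_J^\vee$ is nowhere established in the paper.

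The paper's terminal-case argument needs none of this. Keeping $v_{n+1}:=\presig(wv_n)$ (no length positivity required), all $v_n$ are maximal with the same value $\lambda'$, so one may drop the nonnegative weights:
\begin{align*}
\avg_\sigma(\lambda')\;=\;\frac 1N\sum_{n=1}^N\avg_\sigma\left(v_n^{-1}\mu-\wt(v_n\Rightarrow\presig(wv_n))\right)\;\leq\;\frac 1N\sum_{n=1}^N\avg_\sigma\left(v_n^{-1}\mu\right),
\end{align*}
and the identity $v_n^{-1}\mu=\sigma^n v^{-1}(\sigma\circ w)^{-n}\mu$ turns the right-hand side into $\avg_\sigma\bigl(v^{-1}\tfrac 1N\sum_n(\sigma\circ w)^{-n}\mu\bigr)\leq\nu(x)$, since $\nu(x)$ is the dominant representative of that $W$-orbit. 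Hence $x'=x$ itself works, with no fundamentality statement needed. If you want to salvage your route, you would have to either prove the fundamentality claim (supplying the missing $\Phi_J$-vanishing) or, more simply, replace your terminal-case analysis by this direct averaging computation.
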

\begin{proof}
Induction on $\ell(x)$. We may certainly assume that $v$ is maximal. If there exists  an element $x' = w'\varepsilon^{\mu'}<x$ and $v'\in W$ with
\begin{align*}
v^{-1}\mu - \wt(v\Rightarrow\presig(wv))~\equiv^\sigma~ (v')^{-1}\mu'-\wt(v'\Rightarrow\presig(w'v')),
\end{align*}
we may apply the inductive hypothesis to $x'$ and are done.

Let us assume that this is not the case. By the above corollary, we see that $\presig(wv)$ is maximal and 
\begin{align*}
v^{-1}\mu - \wt(v\Rightarrow \presig(wv))~\equiv^\sigma ~\presig(wv)^{-1}\mu - \wt(\presig(wv)\Rightarrow\presig(w\presig(wv))).
\end{align*}
For $n\geq 0$, we define the element $v_n\in W$ by $v_0 := v$ and $v_{n+1} := \presig(wv_n)\in W$. A simple induction argument shows that each $v_n$ is maximal and
\begin{align*}
v^{-1}\mu - \wt(v\Rightarrow \presig(wv))\equiv^\sigma v_n^{-1}\mu - \wt(v_n\Rightarrow \presig(wv_n)).
\end{align*}
We calculate for $\lambda\in X_\ast(T)_{\Gamma_0}$:
\begin{align*}
v_{n}\lambda = \presig(wv_{n-1})\lambda = \sigma\circ wv_{n-1}\left(\sigma^{-1}\lambda\right) = (\sigma\circ w)^n v(\sigma^{-n} \lambda).
\end{align*}
Thus
\begin{align*}
v_n^{-1}\lambda = \sigma^n v^{-1}(\sigma\circ w)^{-n}(\lambda).
\end{align*}
Let $N\geq 1$ such that the action of $(\sigma\circ w)^N$ on $X_\ast(T)$ becomes trivial. We see that
\begin{align*}
\avg_\sigma\left(v^{-1}\mu - \wt(v\Rightarrow\presig(wv))\right) =& \frac 1N\sum_{n=1}^N \avg_\sigma\left(v_n^{-1}\mu - \wt(v_n\Rightarrow\presig(wv_n))\right)
\\\leq&\frac 1N\sum_{n=1}^N \avg_\sigma\left(v_n^{-1}\mu \right)
\\=&\frac 1N\sum_{n=1}^N \avg_\sigma\left(v^{-1}(\sigma\circ w)^{-n}\mu \right)
\\=&\avg_\sigma v^{-1}\frac 1N\sum_{n=1}^N (\sigma\circ w)^{-n}\mu.
\\\leq&\avg_\sigma \nu(x) = \nu(x).
\end{align*}
Thus we may choose $x'=x$, finishing the induction and the proof.
\end{proof}
\begin{lemma}\label{lem:genericKottwitzFundamental}
Let $x=w\varepsilon^\mu\in \widetilde W$ be a fundamental element, and choose $v'\in \LP(x)$ with $\defect([x]_\sigma) = \ell((v')^{-1}\presig(wv'))$ as in Lemma~\ref{lem:fundamentalDefect}. Then
\begin{align*}
\lambda_x \equiv^\sigma (v')^{-1}\mu - \wt(v'\Rightarrow\presig(wv')).
\end{align*}
\end{lemma}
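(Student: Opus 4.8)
The plan is to deduce the identity from the lower bound of Lemma~\ref{lem:genericKottwitzLowerBound} together with the defect formula of Proposition~\ref{prop:defect}, via a pairing‑with‑$2\rho$ squeeze. Since $x$ is fundamental, its generic $\sigma$-conjugacy class $[b_x]$ is $[x]$ itself (by Theorem~\ref{thm:truncations}, using that no $[y]$ with $y\leq x$ can have $\langle\nu(y),2\rho\rangle$ exceed $\langle\nu(x),2\rho\rangle=\ell(x)$), so $\lambda_x=\lambda_G(x)$ and $\nu_x=\nu(x)$, and $\langle\nu_x,2\rho\rangle=\ell(x)$ by Proposition~\ref{prop:fundamental}~(i). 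Write $Y:=(v')^{-1}\mu-\wt(v'\Rightarrow\presig(wv'))\in X_\ast(T)_{\Gamma}$ for the element in question. It then suffices to establish $\lambda_x\geq^\sigma Y$ and $\langle\lambda_x,2\rho\rangle=\langle Y,2\rho\rangle$: by Lemma~\ref{lem:avgSimpleFacts}~(a) the difference $\avg_\sigma(\lambda_x)-\avg_\sigma(Y)$ is then a $\mathbb Q_{\geq 0}$-combination of positive coroots pairing to $0$ with $2\rho$, hence is $0$, and so $\lambda_x\equiv^\sigma Y$.

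First I would apply Lemma~\ref{lem:genericKottwitzLowerBound} with $v=v'$; this gives $\lambda_x\geq^\sigma Y$ at once, and in particular $\langle\lambda_x,2\rho\rangle\geq\langle Y,2\rho\rangle$. Next I compute $\langle\lambda_x,2\rho\rangle$ exactly: Proposition~\ref{prop:defect}~(iii) gives $\langle\lambda_x,2\rho\rangle=\langle\nu(x),2\rho\rangle-\defect([x])=\ell(x)-\ell\big((v')^{-1}\presig(wv')\big)$, where the last step uses fundamentality (for $\langle\nu(x),2\rho\rangle=\ell(x)$) and the fact that $v'$ was chosen, as in Lemma~\ref{lem:fundamentalDefect}~(c), to realize the minimum $\defect([x])=\ell\big((v')^{-1}\presig(wv')\big)$.

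It remains to bound $\langle Y,2\rho\rangle$ from below by the same value. Since $v'\in\LP(x)$, Lemma~\ref{lem:genericGKPImprovements}~(b) evaluates $\langle Y,2\rho\rangle=\ell(x)-d(v'\Rightarrow\presig(wv'))$, and the elementary inequality $d(u\Rightarrow u')\leq\ell(u^{-1}u')$ (a reduced word for $u^{-1}u'$ is a path of Bruhat edges in $\QB(W)$) yields $d(v'\Rightarrow\presig(wv'))\leq\ell\big((v')^{-1}\presig(wv')\big)$. Hence $\langle Y,2\rho\rangle\geq\ell(x)-\ell\big((v')^{-1}\presig(wv')\big)=\langle\lambda_x,2\rho\rangle$. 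Combining with the reverse inequality from the previous paragraph forces $\langle\lambda_x,2\rho\rangle=\langle Y,2\rho\rangle$, and together with $\lambda_x\geq^\sigma Y$ this gives $\lambda_x\equiv^\sigma Y$, as desired.

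There is no serious obstacle; the only care required is in keeping straight the three inequalities that make the squeeze close ($\lambda_x\geq^\sigma Y$ from Lemma~\ref{lem:genericKottwitzLowerBound}, the \emph{exact} value of $\langle\lambda_x,2\rho\rangle$ from the defect identity, and $d\leq\ell$ for the quantum Bruhat distance), and in verifying that equality of the $2\rho$-pairings upgrades $\geq^\sigma$ to $\equiv^\sigma$. As a by-product the argument shows $d(v'\Rightarrow\presig(wv'))=\ell\big((v')^{-1}\presig(wv')\big)$ for this $v'$, which anticipates condition (b) of Proposition~\ref{prop:cordial}.
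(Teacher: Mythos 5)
Your proof is correct and follows essentially the same route as the paper's: the lower bound from Lemma~\ref{lem:genericKottwitzLowerBound} combined with the $\langle\cdot,2\rho\rangle$-computation via Lemma~\ref{lem:genericGKPImprovements}(b), Proposition~\ref{prop:defect}(iii), fundamentality, and the bound $d(v'\Rightarrow\presig(wv'))\leq\ell\bigl((v')^{-1}\presig(wv')\bigr)$, merely reorganized as a squeeze with the paper's implicit steps (that $[b_x]=[x]$ for fundamental $x$, and the upgrade from equal $2\rho$-pairings plus $\geq^\sigma$ to $\equiv^\sigma$) spelled out. One small nitpick: along a reduced word the length-decreasing steps are quantum edges of $\QB(W)$ rather than Bruhat edges, but every step is still an edge, so your justification of $d\leq\ell$ (which the paper instead cites from \cite[Lemma~4.3]{Milicevic2020}) goes through.
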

\begin{proof}
By Lemma~\ref{lem:genericKottwitzLowerBound}, we have
\begin{align*}
\lambda_x \geq^\sigma (v')^{-1}\mu - \wt(v'\Rightarrow\presig(wv')).
\end{align*}
Now we calculate
\begin{align*}
&\langle \lambda_x- (v')^{-1}\mu + \wt(v'\Rightarrow\presig(wv')),2\rho\rangle
\\\underset{\text{L\ref{lem:genericGKPImprovements}}}=&\langle \lambda_x,2\rho\rangle - \ell(x) + d(v'\Rightarrow\presig(wv'))
\\\underset{\text{fund.}}=&\langle \lambda_G(x),2\rho\rangle - \langle \nu(x),2\rho\rangle+ d(v'\Rightarrow\presig(wv'))
\\\underset{\text{P\ref{prop:defect}}}=&-\defect([x]_\sigma) + d(v'\Rightarrow\presig(wv'))
\\\leq&-\defect([x]_\sigma) + \ell((v')^{-1}\presig(wv')) \underset{\text{assump.}}=0.
\end{align*}
The inequality on the last line is \cite[Lemma~4.3]{Milicevic2020}.
\end{proof}
\begin{lemma}\label{lem:genericKottwitzUpperBound}
There exists $v\in W$ such that
\begin{align*}
\lambda_x\leq^\sigma v^{-1}\mu - \wt(v\Rightarrow\presig(wv)).
\end{align*}
\end{lemma}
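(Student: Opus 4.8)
The plan is to induct on $\ell(x)$, using Lemma~\ref{lem:nonEmptynessBruhatCondition} to reduce everything to the fundamental case treated in Lemma~\ref{lem:genericKottwitzFundamental}. First I would invoke Theorem~\ref{thm:truncations} together with Lemma~\ref{lem:nonEmptynessBruhatCondition} to fix a fundamental element $y\le x$ with $[y]=[b_x]$. If $y=x$ (in particular if $\ell(x)=0$), then $x$ is fundamental and Lemma~\ref{lem:genericKottwitzFundamental} already gives the claim, even with $\equiv^\sigma$. Otherwise $y<x$, so I can choose $x'=w'\varepsilon^{\mu'}\in\widetilde W$ with $y\le x'\lessdot x$. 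Since $y\le x'\le x$, monotonicity of $z\mapsto[b_z]$ under the Bruhat order (immediate from Theorem~\ref{thm:truncations}) together with $[b_y]=[y]=[b_x]$ forces $[b_{x'}]=[b_x]$, hence $\lambda_{x'}=\lambda_x$. The inductive hypothesis applied to $x'$ then yields $v'\in W$ with $\lambda_{x'}\leq^\sigma(v')^{-1}\mu'-\wt(v'\Rightarrow\presig(w'v'))$, and by Lemma~\ref{lem:genericGKPImprovements}(a) I may replace $v'$ by an adjustment so that $v'\in\LP(x')$.

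What remains is a purely Weyl-group-theoretic step: given the cover $x'\lessdot x$, produce $v\in W$ with
\begin{align*}
(v')^{-1}\mu'-\wt(v'\Rightarrow\presig(w'v'))\ \leq^\sigma\ v^{-1}\mu-\wt(v\Rightarrow\presig(wv)),
\end{align*}
for then $\lambda_x=\lambda_{x'}\leq^\sigma v^{-1}\mu-\wt(v\Rightarrow\presig(wv))$ and the induction closes. To set this up, write the cover as $x=x'r_{(\gamma,k)}$ with $(\gamma,k)\in\Phi_\af^+$ — every cover is a right multiplication by a reflection, whose defining affine root may be taken positive — so that $w=w's_\gamma$ and $\mu=s_\gamma(\mu')+k\gamma^\vee$. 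The two natural candidates are $v=v'$, for which $wv=w's_\gamma v'$, and $v=s_\gamma v'$, for which one computes the pleasant identities $wv=w'v'$ and $v^{-1}\mu=(v')^{-1}\mu'-k\,(v')^{-1}\gamma^\vee$, reducing the desired inequality to
\begin{align*}
\wt(v'\Rightarrow\presig(w'v'))-\wt(s_\gamma v'\Rightarrow\presig(w'v'))\ \geq^\sigma\ k\,(v')^{-1}\gamma^\vee.
\end{align*}
I would then bound the quantum Bruhat weights using Lemma~\ref{lem:weightEstimate} and the distance--weight identity of Lemma~\ref{lem:weight2rho}, and use the hypothesis $\ell(x)=\ell(x')+1$ together with Lemma~\ref{lem:lengthFunctionalAsCountingAffineRoots} (equivalently Corollary~\ref{cor:IwahoriMatsumoto}) to pin down the signs of $\ell(x,\cdot)$ relative to $\ell(x',\cdot)$; this decides which of the two candidates succeeds and in which configuration, the length positivity of $v'$ for $x'$ being exactly what makes the relevant signs cooperate.

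I expect this last step to be the main obstacle. It is the careful comparison of the quantum Bruhat graph data attached to $w'v'$, $\presig(w'v')$ and $s_\gamma v'$ against that appearing for $x$ under a single Bruhat cover. Morally it is Lemma~\ref{lem:maximalityConsequences} read ``upwards'' along a cover rather than ``downwards'', and, as there, the difficulty is entirely in orchestrating the various weight and length estimates for $\QB(W)$; no further $\sigma$-conjugacy input enters beyond what has already been used in the reduction.
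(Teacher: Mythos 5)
Your overall reduction is exactly the one the paper uses: induct on $\ell(x)$; when no smaller element carries the same generic class, Lemma~\ref{lem:nonEmptynessBruhatCondition} forces $x$ to be fundamental and Lemma~\ref{lem:genericKottwitzFundamental} finishes; otherwise pass to $x'=xr_{(\gamma,k)}<x$ with $\lambda_{x'}=\lambda_x$, apply the inductive hypothesis to get $v'$, and compare along the reflection using precisely the two candidates $v'$ and $s_\gamma v'$. But the proposal stops exactly where the content of this lemma lies: the comparison inequality is never proved, only flagged as ``the main obstacle'' with a list of tools. That is a genuine gap, and your guess about what resolves it is off target: length positivity of $v'$ for $x'$ plays no role (so the adjustment via Lemma~\ref{lem:genericGKPImprovements}(a) buys nothing here), no comparison of the signs of $\ell(x,\cdot)$ against $\ell(x',\cdot)$ is needed, and the cover condition $x'\lessdot x$ is irrelevant --- any positive affine root $(\gamma,k)$ with $x(\gamma,k)\in\Phi_\af^-$ and $x'=xr_{(\gamma,k)}$ works.

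What actually decides the step is just the sign of $(v')^{-1}\gamma$. If $(v')^{-1}\gamma\in\Phi^+$, take $v:=v'$ and $\beta:=v^{-1}\gamma\in\Phi^+$; then $\mu'=s_\gamma\mu+k\gamma^\vee$ and $w'v'=wvs_\beta$, and the triangle inequality for $\wt$ together with Lemma~\ref{lem:weightEstimate} (giving $\wt(wvs_\beta\Rightarrow wv)\leq\beta^\vee\Phi^+(w\gamma)$ up to $\sigma$) yields
\begin{align*}
(v')^{-1}\mu'-\wt(v'\Rightarrow\presig(w'v'))\ \leq^\sigma\ v^{-1}\mu-\wt(v\Rightarrow\presig(wv))+\bigl(k-\langle\mu,\gamma\rangle+\Phi^+(w\gamma)\bigr)\beta^\vee,
\end{align*}
where the bracket is $\leq 0$ precisely because $x(\gamma,k)\in\Phi_\af^-$. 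If $(v')^{-1}\gamma\in\Phi^-$, take $v:=s_\gamma v'$, so $\beta:=v^{-1}\gamma\in\Phi^+$, $w'v'=wv$ and $(v')^{-1}\mu'=v^{-1}\mu-k\beta^\vee$; the triangle inequality and Lemma~\ref{lem:weightEstimate} (now giving $\wt(v\Rightarrow vs_\beta)\leq\beta^\vee\Phi^+(-\gamma)$) produce the same display with the bracket replaced by $\Phi^+(-\gamma)-k$, which is $\leq 0$ precisely because $(\gamma,k)\in\Phi_\af^+$. In both cases $\lambda_x=\lambda_{x'}\leq^\sigma v^{-1}\mu-\wt(v\Rightarrow\presig(wv))$ and the induction closes. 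Until you carry out this (or an equivalent) computation, what you have is an accurate outline of the paper's strategy, not a proof.
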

\begin{proof}
Induction on $\ell(x)$.

Let us first consider the case that there exists an element $x' = w'\varepsilon^{\mu'}<x$ with $[b_{x'}] = [b_x]\in B(G)$. If this is the case, we may further assume by definition of the Bruhat order that $x' = xr_a$ for some affine root $a\in \Phi_\af^+$.

Using the induction assumption, we find some $v'\in W$ such that
\begin{align*}
\lambda_{x'} = \lambda_x\leq^\sigma (v')^{-1}\mu'-\wt(v'\Rightarrow\presig(w'v')).
\end{align*}
Write $a = (\alpha,k)$ such that $w' = ws_\alpha$ and $\mu' = s_\alpha(\mu)+k\alpha^\vee$. The condition $\ell(x')<\ell(x)$ means that $xa \in \Phi_\af^-$, which we can rewrite as
\begin{align*}
k-\langle \mu,\alpha\rangle<\Phi^+(w\alpha).
\end{align*}
We distinguish the following cases.
\begin{itemize}
\item Case $(v')^{-1}\alpha \in \Phi^-$. Define $v := s_\alpha v'$ and compute
\begin{align*}
\lambda_x\leq^\sigma~&(v')^{-1}\mu'-\wt(v'\Rightarrow\presig(w'v'))
\\=&v^{-1}(\mu - k\alpha^\vee) - \wt(s_\alpha v \Rightarrow \presig(wv))
\\\leq&v^{-1}\mu - kv^{-1}\alpha^\vee - \wt(v\Rightarrow \presig(wv)) + \wt(v\Rightarrow s_\alpha v)
\\\underset{\text{\weightEstimateShort}}\leq&v^{-1}\mu - kv^{-1}\alpha^\vee - \wt(v\Rightarrow \presig(wv)) + v^{-1}\alpha^\vee \Phi^+(-\alpha)
\\=&v^{-1}\mu - \wt(v\Rightarrow \presig(wv)) + (\Phi^+(-\alpha)-k)v^{-1}\alpha^\vee
\\\leq&v^{-1}\mu - \wt(v\Rightarrow \presig(wv)).
\end{align*}
The inequality on the last line follows since $\Phi^+(-\alpha)-k\leq 0$ (as $a\in \Phi_\af^+$) and $v^{-1}\alpha\in \Phi^+$ by assumption.
\item Case $(v')^{-1}\alpha \in \Phi^+$. Define $v := v'$ and compute
\begin{align*}
\lambda_x\leq^\sigma~&(v')^{-1}\mu'-\wt(v'\Rightarrow\presig(w'v'))
\\=&v^{-1}(\mu - \langle\mu,\alpha\rangle \alpha^\vee+ k\alpha^\vee) - \wt(v \Rightarrow ws_\alpha v)
\\\leq^\sigma &v^{-1}\mu + (-\langle \mu,\alpha\rangle +k)v^{-1}\alpha^\vee - \wt(v\Rightarrow \presig(wv)) + \wt(ws_\alpha v\Rightarrow w v)
\\\underset{\text{\weightEstimateShort}}\leq&v^{-1}\mu + (-\langle \mu,\alpha\rangle +k)v^{-1}\alpha^\vee - \wt(v\Rightarrow \presig(wv)) - v^{-1}\alpha^\vee \Phi^+(w\alpha)
\\=&v^{-1}\mu + (-\langle \mu,\alpha\rangle +k-\Phi^+(w\alpha))v^{-1}\alpha^\vee - \wt(v\Rightarrow \presig(wv))
\\\leq&v^{-1}\mu-\wt(v\Rightarrow \presig(wv)).
\end{align*}
The inequality on the last line follows since $-\langle \mu,\alpha\rangle +k-\Phi^+(w\alpha)\leq 0$ (as $xa\in \Phi_\af^-$) and $v^{-1}\alpha\in \Phi^+$ by assumption.
\end{itemize}
In any case, we find an element $v\in W$ with the desired property, proving the claim for $x$.

It remains to study the case where $[b_x]>[b_{x'}]$ for all $x'<x$. By Lemma~\ref{lem:nonEmptynessBruhatCondition}, $x$ must be fundamental. The result follows from Lemma~\ref{lem:genericKottwitzFundamental}.
\end{proof}
\begin{proof}[Proof of Theorem~\ref{thm:genericGKP}]
The Theorem follows immediately from Lemmas \ref{lem:genericKottwitzLowerBound} and \ref{lem:genericKottwitzUpperBound}.
\end{proof}
\subsection{General groups}\label{sec:gnpArbitraryGroups}
In this section, we drop the assumption that $G$ should be quasi-split. We keep the notation from Section~\ref{sec:notation}. As announced, we show how to compute generic $\sigma$-conjugacy classes and classify cordial elements in this case.

The Frobenius action on the apartment $\mathcal A$ preserves the base alcove $\mathfrak a$, but no longer the chosen special vertex $\mathfrak x$. We denote by $\mu_\sigma\in V$ the uniquely determined element such that $\sigma(\mathfrak x) = \mathfrak x + \mu_\sigma$.

Moreover, there is a natural Frobenius action on $X_\ast(T)_{\Gamma_0}$. We denote the induced linear map by $\sigma_{\text{lin}}:V\rightarrow V$

Under the identification of $\mathcal A$ with $V$ by $\mathfrak x\mapsto 0$, the map $\sigma_{\text{lin}}$ is given by
\begin{align*}
\sigma_{\text{lin}}:V\rightarrow V,\quad v\mapsto \sigma(v) - \mu_\sigma.
\end{align*}
Since $\sigma_{\text{lin}}$ permutes the alcoves in $\mathcal A$, it permutes the Weyl chambers in $V$. We hence find a uniquely determined element $\sigma_1\in W$ with $\sigma_{\text{lin}}(C) = \sigma_1(C)$. Define $\sigma_2 := \sigma_1^{-1}\circ \sigma_{\text{lin}}$ such that $\sigma_2(C) = C$. Then the action of $\sigma$ on $V$ is given by the composed action
\begin{align*}
\sigma = t_{\mu_\sigma}\circ \sigma_1\circ \sigma_2,
\end{align*}
where $t_{\mu_\sigma}$ is the translation by $\mu_\sigma$. Note that $\sigma_2$ fixes both $0$ and $C$, hence it fixes $\mathfrak a$ being the only alcove in $C$ adjacent to $0$. It follows that also $t_{\mu_\sigma}\circ \sigma_1$ fixes $\mathfrak a$. So the map $t_{\mu_\sigma}\circ \sigma_1:V\rightarrow V$ \enquote{looks like} the action of an element in $\Omega\subseteq\widetilde W$, except that a lift of $\mu_\sigma\in V$ to $X_\ast(T)_{\Gamma_0}$ might not exist; and if it exists, it might not be unique.

For each $w_1, w_2\in W$, the difference $w_1\mu_\sigma-w_2\mu_\sigma$ lies in $\mathbb Z\Phi^\vee$, so we may consider $w_1\mu_\sigma-w_2 \mu_\sigma$ as a well-defined element of $X_\ast(T)_{\Gamma_0}$ even if neither $w_1\mu_\sigma$ nor $w_2 \mu_\sigma$ lies in $X_\ast(T)_{\Gamma_0}$.

We define maps \begin{align*}
\avg_{\sigma_2} :& X_\ast(T)_{\Gamma_0}\otimes\mathbb Q\rightarrow X_\ast(T)_{\Gamma_0}\otimes\mathbb Q,\\
\avg_J:& X_\ast(T)_{\Gamma_0}\otimes\mathbb Q\rightarrow X_\ast(T)_{\Gamma_0}\otimes\mathbb Q\quad (J\subseteq \Delta)
\end{align*}
as in Section~\ref{sec:parabolic-averages}. If $J = \sigma_2(J)$, we define $\pi_J := \avg_J\circ\avg_{\sigma_2}$. For an element $\mu\in X_\ast(T)_{\Gamma_0}\otimes\mathbb Q$ or $\mu\in X_\ast(T)_{\Gamma}$, we define
\begin{align*}
\conv(\mu) := \max_{\substack{J\subseteq \Delta\\
J=\sigma_2(J)}}\avg_J\avg_{\sigma_2}(\mu)\in X_\ast(T)_{\Gamma_0}\otimes\mathbb Q.
\end{align*}
\begin{theorem}\label{thm:gnpGeneralGroups}

Let $x = w\varepsilon^\mu\in \widetilde W$. The generic Newton point of $x$ is given by
\begin{align*}
\nu_x = \max_{v\in W} \conv\left(v^{-1}\mu - \wt(\sigma_1^{-1}v\Rightarrow\prescript{\sigma_2}{}(wv)) + \frac 1{\#W}\sum_{u\in W}(v^{-1}\mu_\sigma-u^{-1}\mu_\sigma)\right).
\end{align*}
In fact, the maximum is attained for some $v\in \LP(x)$.
\end{theorem}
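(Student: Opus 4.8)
The plan is to reduce to the quasi-split case, where the statement is Corollary~\ref{cor:genericGKPMinDistance} combined with Theorem~\ref{thm:genericGKP}, following the reduction method of Görtz-He-Nie. Let $G^\ast$ be the quasi-split inner form of $G$ over $F$. It shares with $G$ the torus $T$, the affine root system $\Phi_\af$, the alcove $\mathfrak a$, and hence the extended affine Weyl group $\widetilde W$ with its length function and its length-zero subgroup $\Omega$, since all of this depends only on $G_L$ with its $\Gamma_0$-action together with the choice of $\mathfrak a$. What differs is the Frobenius: one may choose the special vertex for $G^\ast$ so that $\sigma^\ast$ acts on $V$ and on $\widetilde W=W\ltimes X_\ast(T)_{\Gamma_0}$ through $\sigma_2$, which fixes $0$ and $C$ and preserves $\Delta$; for $G^\ast$ Theorem~\ref{thm:genericGKP} then applies verbatim, with $\presig{}$ replaced by $\prescript{\sigma_2}{}{}$. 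By the discussion opening this section the Frobenius of $G$ is $\sigma=t_{\mu_\sigma}\circ\sigma_1\circ\sigma_2$, so it differs from $\sigma^\ast$ precisely by the $\mathfrak a$-stabilising affine transformation $t_{\mu_\sigma}\circ\sigma_1$.

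Since $G$ is an inner form of $G^\ast$, Kottwitz's classification identifies $B(G)$ with $B(G^\ast)$ preserving Newton points, and the inner-form comparison of affine Deligne-Lusztig varieties of Görtz-He-Nie transports the pattern of non-empty Newton strata --- hence $[b_x]$ and $\nu_x$ --- along this identification, once $x$ is adjusted by the $\mathfrak a$-stabilising part $t_{\mu_\sigma}\circ\sigma_1$ of the Frobenius discrepancy; this is the point at which one uses that Theorem~\ref{thm:truncations} and Lemma~\ref{lem:nonEmptynessBruhatCondition} have already been freed of the unramifiedness hypothesis. Equivalently --- and this is how I would organise the bookkeeping --- one reruns the proof of Theorem~\ref{thm:genericGKP} (Lemmas~\ref{lem:genericKottwitzLowerBound}--\ref{lem:genericKottwitzUpperBound}, which rest only on Theorem~\ref{thm:truncations}, Lemma~\ref{lem:nonEmptynessBruhatCondition} and root-combinatorics) with the twisted Frobenius $t_{\mu_\sigma}\circ\sigma_1\circ\sigma_2$ in place of the split one.

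Carrying this out, the genuinely new feature is the translation $\mu_\sigma$, which does not lift to $X_\ast(T)_{\Gamma_0}$: one must keep it as an honest translation of the apartment throughout and pass to $\Gamma_0$- and $\sigma$-coinvariants only at the end. In the $\avg_\sigma$-iteration of Lemma~\ref{lem:genericKottwitzLowerBound}, where each step replaces $v_n$ by $\presig(wv_n)$, this accumulates a translation by $\mu_\sigma$ whose $\avg_\sigma$ becomes exactly the correction $\tfrac1{\#W}\sum_{u\in W}(v^{-1}\mu_\sigma-u^{-1}\mu_\sigma)$, which is a well-defined element of $X_\ast(T)_{\Gamma_0}\otimes\mathbb Q$ precisely because all $W$-differences of $\mu_\sigma$ lie in $\mathbb Z\Phi^\vee$. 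The chamber-permutation part $\sigma_1$, appearing through $\presig u=\sigma_1\,\prescript{\sigma_2}{}u\,\sigma_1^{-1}$, converts the weight $\wt(v\Rightarrow\presig(wv))$ of the quasi-split formula into $\wt(\sigma_1^{-1}v\Rightarrow\prescript{\sigma_2}{}(wv))$ after re-indexing the maximum over $v$. That the maximum may be taken over $v\in\LP(x)$ follows, as in Corollary~\ref{cor:genericGKPMinDistance}, from the length-positivity version of the argument together with the behaviour of $\LP$ under multiplication by length-zero elements (Lemma~\ref{lem:lengthFunctionalForProducts}), which matches $\LP$ of the adjusted element with $\LP(x)$ up to the translation by $\sigma_1$.

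I expect the main obstacle to be twofold: first, making the inner-form comparison rigorous when $G$ is not quasi-split already over $L$, so that $S\subsetneq T$, the lattice $X_\ast(T)_{\Gamma_0}$ acquires torsion, and $\mu_\sigma$ genuinely fails to lift; and second, the bookkeeping in the last step --- keeping $\mu_\sigma$ in the picture until coinvariants can be taken, so that it contributes only the $W$-averaged differences, and checking that the several occurrences of $\sigma_1$ coalesce into exactly the displayed weight $\wt(\sigma_1^{-1}v\Rightarrow\prescript{\sigma_2}{}(wv))$.
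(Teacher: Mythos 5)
Your overall strategy --- transfer the problem to a quasi-split inner form with Frobenius $\sigma_2$ and then invoke Theorem~\ref{thm:genericGKP}/Corollary~\ref{cor:genericGKPMinDistance} --- is the same as the paper's, but the step that actually makes this transfer legitimate is missing, and you yourself flag it as an unresolved ``main obstacle'' rather than closing it. The difficulty is exactly that the $\mathfrak a$-stabilising discrepancy $t_{\mu_\sigma}\circ\sigma_1$ between the two Frobenii need not come from an element of $\widetilde W$, because $\mu_\sigma\in V$ may fail to lift to $X_\ast(T)_{\Gamma_0}$ (and a lift, if it exists, need not be unique). The paper resolves this by a two-step reduction: first pass from $G$ to its adjoint group, comparing $B(G)_x$ with $B(G_{\mathrm{ad}})_x$ via \cite[Corollary~2.2.2]{Goertz2015}; for adjoint $G$ the element $\gamma=\varepsilon^{\mu_\sigma}\sigma_1$ \emph{is} a genuine length-zero element of $\Omega\subseteq\widetilde W$, and then \cite[Proposition~2.5.1]{Goertz2015} identifies $B(G)_x$ with $B(\tilde G)_{x\gamma}\cdot\gamma^{-1}$ for the quasi-split inner form $\tilde G$ with Frobenius $\sigma_2$. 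The whole content of the argument is that the Frobenius discrepancy is absorbed into a right multiplication of $x$ by an honest element $\gamma\in\widetilde W$, so that the quasi-split results apply verbatim to $x\gamma$. Your two proposed substitutes do not supply this: ``adjusting $x$ by the $\mathfrak a$-stabilising part'' presupposes that this part lives in $\widetilde W$, and ``rerunning the proof of Theorem~\ref{thm:genericGKP} with the twisted Frobenius, keeping $\mu_\sigma$ as an honest translation'' would require redoing the entire apparatus (Theorem~\ref{thm:truncations}, Lemma~\ref{lem:nonEmptynessBruhatCondition}, the fundamental-element analysis, the Bruhat-order manipulations) for affine transformations of $\mathcal A$ that are not elements of $\widetilde W$; you do not carry this out, and it is far from routine since torsion in $X_\ast(T)_{\Gamma_0}$ and the non-uniqueness of lifts enter precisely there.

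A second, smaller point: the assertion that Kottwitz's classification identifies $B(G)$ with $B(G^\ast)$ ``preserving Newton points'' is not correct in the form you use it. Under the identification $[b]\mapsto[b\gamma]$ actually employed, Newton points are shifted: $\nu^G([b])=\nu^{\tilde G}([b\gamma])-\frac 1{\#W}\sum_{u\in W}u\mu_\sigma$, and this shift (not an accumulation inside the $\avg_\sigma$-iteration of Lemma~\ref{lem:genericKottwitzLowerBound}) is the source of the correction term $\frac 1{\#W}\sum_{u}(v^{-1}\mu_\sigma-u^{-1}\mu_\sigma)$ in the statement, once the translation part of $x\gamma$ contributes the $v^{-1}\mu_\sigma$ summand. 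You arrive at the right correction, but via a heuristic that contradicts the ``Newton points preserved'' claim made two sentences earlier; with the adjoint reduction in place, this part becomes the ``quick calculation'' of the paper and your bookkeeping concerns about $\sigma_1$ and $\mu_\sigma$ disappear.
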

We prove this theorem by reduction to the previously established results for quasi-split groups, following Goertz-He-Nie \cite[Section~2]{Goertz2015}.

By \cite[Corollary~2.2.2]{Goertz2015}, it suffices to prove the Theorem for adjoint groups, by comparing $B(G)_x$ with $B(G_{\mathrm{ad}})_x$.

Let us now assume that $G$ is adjoint. Then $\gamma := \varepsilon^{\mu_\sigma}\circ \sigma_1$ is a well-defined element of $\widetilde W$, hence of $\Omega$. Following \cite[Proposition~2.5.1]{Goertz2015}, we can identify $B(G)_x$ with $B(\tilde G)_{x\gamma}\cdot \gamma^{-1}$. Here, $\tilde G$ is a quasi-split inner form of $G$ with maximal torus $T$ and Frobenius given by $\sigma_2$. We see that
\begin{align*}
\nu_x = \nu^G\Bigl(\max_{[b]\in B(G)_x} [b]\Bigr) = \nu^G\Bigl(\max_{[b]\in B(\tilde G)_{x\gamma}} [b\gamma^{-1}]\Bigr).
\end{align*}
A quick calculation shows that for all $[b]\in B(G)$, we have
\begin{align*}
\nu^G([b]) =& \nu^{\tilde G}([b\gamma]) - \frac 1{\# W}\sum_{u\in W} u \mu_\sigma,\\
\implies \nu_x =& \nu^{\tilde G}([b_{x\gamma}]) - \frac 1{\# W}\sum_{u\in W}u\mu_\sigma.
\end{align*}
Calculating $\nu^{\tilde G}([b_{x\gamma}])$ using Corollary~\ref{cor:genericGKPMinDistance} shows Theorem~\ref{thm:gnpGeneralGroups}.

Let us return to the general situation. Following Mili\'cevi\'c-Viehmann \cite[Remark~1.3]{Milicevic2020}, we define an element $x\in \widetilde W$ to be cordial if the corresponding element $\tilde x$ in the extended affine Weyl group of the quasi-split group $\tilde G$ under the above reduction is cordial. Then the results from \cite{Milicevic2020} on cordial elements guarantee that the affine Deligne-Lusztig varieties associated with $\tilde x$ satisfy the most desirable properties as discussed earlier. By the above reduction method of \cite{Goertz2015}, it follows that also the affine Deligne-Lusztig varieties associated with $x$ satisfy these properties.
Straightforward calculation shows the following:
\begin{proposition}\label{prop:cordialGeneralGroups}
Assume that $\mathrm{char}(F)$ does not divide the order of $\pi_1(G_{\mathrm{ad}})$.

Let $x = w\varepsilon^\mu\in \widetilde W$ and pick $v\in W$ of minimal length such that
\begin{align*}
v^{-1}\mu + v^{-1}\mu_\sigma\in V
\end{align*}
is dominant. Then $\sigma_1^{-1}v\in \LP(x)$. The element $x$ is cordial if and only if the following two conditions are both satisfied:
\begin{enumerate}[(1)]
\item For any $\sigma_1^{-1}v'\in \LP(x)$, we have
\begin{align*}
d(\sigma_1^{-1}v'\Rightarrow\prescript{\sigma_2}{}(wv'))\geq 
d(\sigma_1^{-1}v\Rightarrow\prescript{\sigma_2}{}(wv)).
\end{align*}
\item We have
\begin{align*}
&d(\sigma_1^{-1}v\Rightarrow\prescript{\sigma_2}{}(wv)) = \ell\left(v^{-1}\sigma_1\prescript{\sigma_2}{}(wv)\right).\rightqed
\end{align*}
\end{enumerate}
\end{proposition}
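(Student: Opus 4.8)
The plan is to run the reduction to the quasi-split case of Section~\ref{sec:gnpArbitraryGroups}, exactly as in the proof of Theorem~\ref{thm:gnpGeneralGroups}, but with Corollary~\ref{cor:cordial} taking the place of Corollary~\ref{cor:genericGKPMinDistance}. First I would pass to adjoint $G$: by \cite[Corollary~2.2.2]{Goertz2015} the set $B(G)_x$ agrees with $B(G_{\mathrm{ad}})_x$, and since cordiality of $x$ is \emph{defined} through the quasi-split reduction, which is compatible with $G\rightsquigarrow G_{\mathrm{ad}}$, it suffices to treat $G$ adjoint. The hypothesis $\mathrm{char}(F)\nmid\#\pi_1(G_{\mathrm{ad}})$ is precisely what makes this comparison, and the cited results of \cite{Milicevic2020}, available in the present generality.

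Assume $G$ adjoint, so that $\gamma=\varepsilon^{\mu_\sigma}\circ\sigma_1\in\Omega$ is well defined and $x$ is cordial if and only if $\tilde x:=x\gamma$ is cordial in $\widetilde W$ with respect to the quasi-split inner form $\tilde G$ (Frobenius $\sigma_2$). Writing $\tilde x=\tilde w\varepsilon^{\tilde\mu}$ in the decomposition $\widetilde W=W\ltimes X_\ast(T)_{\Gamma_0}$, a short computation expresses $\tilde w$ and $\tilde\mu$ explicitly in terms of $w,\mu,\sigma_1,\mu_\sigma$, with $\tilde w$ differing from $w$ by the Weyl part of $\gamma$. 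Because $\gamma\in\Omega$, Lemma~\ref{lem:lengthFunctionalForProducts}(a) gives $\ell(\tilde x,\alpha)=\ell(x,\sigma_1\alpha)$ for all $\alpha$, hence a bijection between $\LP(\tilde x)$ and $\LP(x)$ given by composition with $\sigma_1$. Feeding this, together with the two characterizations of the distinguished length positive element in Example~\ref{ex:usualLPelement} and the identity of alcoves $\tilde x\mathfrak a=x\gamma\mathfrak a=x\mathfrak a$, one checks that the distinguished length positive element of $\tilde x$ is $\sigma_1^{-1}v$ for $v$ the element of the statement, the passage from the dominance condition on $\mu$ to the one on $\mu+\mu_\sigma$ being exactly the difference between $\mu$ and the coweight part of $\tilde x$ read off from the Weyl chamber $\tilde w\tilde v$. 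This already yields the first assertion $\sigma_1^{-1}v\in\LP(x)$.

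It then remains to translate Corollary~\ref{cor:cordial} applied to $\tilde x$ and $\tilde G$. That corollary characterizes cordiality of $\tilde x$ by: (1') the distinguished element minimises $d(\,\cdot\Rightarrow\prescript{\sigma_2}{}(\tilde w\,\cdot))$ over $\LP(\tilde x)$, and (2') $d(\sigma_1^{-1}v\Rightarrow\prescript{\sigma_2}{}(\tilde w\sigma_1^{-1}v))=\ell((\sigma_1^{-1}v)^{-1}\prescript{\sigma_2}{}(\tilde w\sigma_1^{-1}v))$. Under the dictionary above one has $\tilde w\sigma_1^{-1}v'=wv'$ for all the relevant $v'$, whence $\prescript{\sigma_2}{}(\tilde w\sigma_1^{-1}v')=\prescript{\sigma_2}{}(wv')$ and $(\sigma_1^{-1}v')^{-1}\prescript{\sigma_2}{}(\tilde w\sigma_1^{-1}v')=v'^{-1}\sigma_1\,\prescript{\sigma_2}{}(wv')$; substituting, (1') becomes condition (1) and (2') becomes condition (2). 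This proves the Proposition for adjoint $G$, and the first reduction gives the general case.

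The difficulty is entirely in the bookkeeping: correctly carrying the Weyl twist $\sigma_1$ and the diagram twist $\sigma_2$ through the assignment $x\mapsto\tilde x=x\gamma$, and above all verifying that the distinguished length positive element of $\tilde x$ is the $v$ named in the statement. This is where the genuinely non-quasi-split feature, namely the shift of the relevant dominance condition by $\mu_\sigma$, enters, and where one reconciles the ``minimal length'' and ``Weyl chamber of the alcove'' descriptions of that element by means of $\tilde x\mathfrak a=x\mathfrak a$.
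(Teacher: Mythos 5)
Your proposal follows essentially the same route as the paper, which states this result as a ``straightforward calculation'' resting on exactly the reduction you describe: pass to the adjoint group, twist by $\gamma=\varepsilon^{\mu_\sigma}\circ\sigma_1\in\Omega$ to the quasi-split inner form $\tilde G$ with Frobenius $\sigma_2$ as in the proof of Theorem~\ref{thm:gnpGeneralGroups}, invoke the definition of cordiality for general $G$ via $\tilde x=x\gamma$, and translate Corollary~\ref{cor:cordial} through the dictionary $\ell(\tilde x,\alpha)=\ell(x,\sigma_1\alpha)$, the resulting $\sigma_1$-twist of $\LP$, and the identification of the distinguished length positive element via $\tilde x\mathfrak a=x\mathfrak a$ and Example~\ref{ex:usualLPelement}. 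Your flagged bookkeeping point (matching the minimal-length/dominance description for $\tilde\mu=\sigma_1^{-1}(\mu+\mu_\sigma)$ with the Weyl-chamber description of the alcove, using that $\mu_\sigma$ lies in the closure of the base alcove) is precisely the content the paper leaves implicit, so the approaches coincide.
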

\addcontentsline{toc}{section}{Bibliography}
\printbibliography
\end{document}